\theoremstyle{plain}
\newtheorem{proposition}{Proposition}[section]
\newtheorem{corollary}[proposition]{Corollary}
\newtheorem{conjecture}[proposition]{Conjecture}
\newtheorem{lemma}[proposition]{Lemma}
\newtheorem{question}[proposition]{Question}
\newtheorem{theorem}[proposition]{Theorem}
\theoremstyle{definition}
\newtheorem{definition}{Definition}
\newtheorem{example}[proposition]{Example}
\newtheorem{remark}[proposition]{Remark}
\numberwithin{equation}{section}
\def\Reff#1; #2; #3; #4; #5; #6; #7\par{%
\bibitem{#1} #2, {\it #3}, #4 {\bf #5} (#6) #7}
\def\Ref#1; #2; #3; #4\par{%
\bibitem{#1} #2, {\it #3}, #4}
\definecolor{Grey}{cmyk}{0,0,0,.5}%
\definecolor{Light}{cmyk}{.5,0,1,0}%
\definecolor{Medium}{cmyk}{.5,0,1,.3}%
\definecolor{Dark}{cmyk}{.5,0,1,.6}%
\definecolor{Contrast}{rgb}{1,0,0}%
\newcommand\AAA{\pmb{A}}
\newcommand\AAAh{\widehat\AAA}
\newcommand\act{\mathbin{*}}
\newcommand\add[2]{\mathrm{add}_{#1}(#2)}%
\newcommand\Blue[1]{c_{#1}}
\newcommand\Bluee[1]{c'_{#1}}
\newcommand\CC{C}
\newcommand\CbL[1]{\widetilde{C}_{#1}}
\newcommand\CbR[1]{C_{#1}}
\newcommand\cl[1]{\overline{#1}}
\newcommand\COV[1]{\vartriangleright_{#1}}
\newcommand\COVe[1]{\trianglerighteq_{#1}}
\newcommand\COVs[1]{\vartriangleright^\#_{#1}}
\newcommand\dist{\mathrm{dist}}
\newcommand\distp{\mathrm{dist}^{\scriptscriptstyle+}\!}
\newcommand\Div{\mathrm{Div}}
\newcommand\dive{\preccurlyeq\nobreak}
\newcommand\diveR{\mathrel{\widetilde{%
\raisebox{5pt}{\hspace{1pt}}\hspace{-1.5pt}\smash\preccurlyeq}}}
\newcommand\DL{D_\smallL}
\newcommand\DLR{D_{\smallL\,\smallR}}
\newcommand\DR{D_\smallR}
\newcommand\ea{\emptyset}
\newcommand\EE{E}
\newcommand\eps{\varepsilon}
\newcommand\equivp{\equiv^{\scriptscriptstyle+}}
\newcommand\et{\mathord\bullet}
\newcommand\ff{f}
\newcommand\Fp{F^{\scriptscriptstyle+}}
\newcommand\Fs{F_{\hspace{-1.5pt}\raise 1.3pt\hbox{\tiny sym}}^{\scriptscriptstyle+}}
\newcommand\ga[1]{a_{#1}}
\newcommand\gah[2]{\hat{a}_{#1,#2}}
\newcommand\GG{G}
\newcommand\gLex{>^{\scriptscriptstyle\mathrm{Lex}}}
\newcommand\GR[2]{\langle#1\,\vert\, #2\rangle}
\newcommand\hh{h}
\newcommand\ii{i}
\newcommand\II{I}
\newcommand\inv{^{-1}}
\newcommand\Is{I}
\newcounter{ITEM}
\newcommand\ITEM[1]{\setcounter{ITEM}{#1}\leavevmode\hbox{\rm(\roman{ITEM})}}
\newcommand\jj{j}
\newcommand\kk{k}
\newcommand\lab[1]{#1^{\mathtt{\#}}}
\newcommand\leLex{\leqslant^{\scriptscriptstyle\mathrm{Lex}}}
\newcommand\leT{\leqslant_{\mathcal{T}}}
\newcommand\Lg[1]{\vert{#1}\vert}
\newcommand\LGA[1]{\Vert{#1}\Vert_{\!\AAA}}
\newcommand\LGAp[1]{\Vert{#1}\Vert_{\!\AAA}^{\scriptscriptstyle+}}
\newcommand\Lgu[1]{\vert{#1}\vert_1}
\newcommand\lLex{<^{\scriptscriptstyle\mathrm{Lex}}}
\newcommand\lLR{<}
\newcommand\ma{a}
\newcommand\mA{A}
\newcommand\mb{b}
\newcommand\mc{c}
\newcommand\mg{g}
\newcommand\mm{m}
\newcommand\MM{M}
\newcommand\MON[2]{\langle#1\,\vert\, #2\rangle^{\scriptscriptstyle\!+}\!}
\newcommand\ms{s}
\newcommand\mt{t}
\newcommand\MZ[1]{\mu(#1)}
\newcommand\NL{N_\smallL}
\newcommand\NLR{N_{\smallL\,\smallR}}
\newcommand\nn{n}
\newcommand\NN{N}
\newcommand\NNNN{\mathbb{N}}
\newcommand\NR{N_\smallR}
\newcommand\OP{\mathord{^{\wedge}}}
\newcommand\OPP{\mathord{\circ}}
\newcommand\pdots{\,...\,}
\newcommand\Pol[1]{\langle#1\rangle}
\newcommand\pp{p}
\newcommand\qq{q}
\newcommand\resp{{\it resp}}
\newcommand\rev{\curvearrowright}
\newcommand\revL{\mathrel{\raisebox{5pt}{\rotatebox{180}{$\curvearrowleft$}}}}
\newcommand\rr{r}
\newcommand\RR{R}
\newcommand\RRR{\pmb{R}}
\newcommand\RRRh{{\widehat\RRR}}
\newcommand\sh[1]{\mathrm{sh}_{#1}}
\newcommand\smallL{{_{\!\ell\!}}}
\newcommand\smallR{{_{\!r\!}}}
\newcommand\Sol[2]{s(#1, #2)}
\newcommand\SOL[2]{S(#1, #2)}
\newcommand\Sp[1]{\langle#1\rangle}
\newcommand\Sym[1]{\mathfrak{S}_{#1}}
\newcommand\Tam[1]{\mathcal{T}_{#1}}
\newcommand\Tree{\mathcal{B}}
\newcommand\Treelab{\Tree^{\mathtt{\#}}}
\newcommand\trm[1]{{#1}_-}
\newcommand\trp[1]{{#1}_+}
\newcommand\TT{T}
\newcommand\TTm{T_{\vee}}
\newcommand\TTp{T_\infty}
\newcommand\uu{u}
\newcommand\vv{v}
\newcommand\wdots{, ...\hspace{0.2ex},}
\newcommand\wit{\lambda}
\newcommand\ww{w}
\newcommand\xx{x}
\newcommand\yy{y}
\newcommand\zz{z}
\newcommand\ZZ{Z}
\begin{document}

\title{Tamari Lattices and the symmetric Thompson monoid
}

\author{Patrick Dehornoy}

\address{Laboratoire de Math\'ematiques Nicolas Oresme, UMR6139, Universit\'e de Caen, F-14032 Caen, France\\ 
Laboratoire de Math\'ematiques Nicolas Oresme, UMR6139, CNRS, F-14032 Caen, France}
 
\email{patrick.dehornoy@unicaen.fr}

\keywords{Tamari lattice, Thompson groups, group of fractions, least common multiple, greatest common divisor, subword reversing}

\thanks{Work partially supported by the ANR grant TheoGar ANR-08-BLAN-0269-02}

\subjclass{06B10, 06F15, 20F38, 20M05}

\maketitle

\begin{abstract}
We investigate the connection between Tamari lattices and the Thompson group~$F$, summarized in the fact that $F$ is a group of fractions for a certain monoid~$\Fs$ whose Cayley graph includes all Tamari lattices. Under this correspondence, the Tamari lattice operations are the counterparts of the least common multiple and greatest common divisor operations in~$\Fs$. As an application, we show that, for every~$\nn$, there exists a length~$\ell$ chain in the $\nn$th Tamari lattice whose endpoints are at distance at most~$12\ell/\nn$.
\end{abstract}


\section{Introduction}

The aim of this text is to show the interest of using monoid techniques to investigate Tamari lattices. More precisely, we shall describe the very close connection existing between Tamari lattices and a certain submonoid~$\Fs$ of Richard Thompson's group~$F$: equipped with the left-divisibility relation, the monoid~$\Fs$ is a lattice that includes all Tamari lattices. Roughly speaking, the principle is to attribute to the edges of the Tamari lattices names that live in the monoid~$\Fs$. By using the subword reversing method, a general technique from the theory of monoids, we then obtain a very simple way of reproving the existence of the lattice operations, computing them, and establishing further properties.

The existence of a connection between Tamari lattices, associativity, and the Thompson group~$F$ has been known for decades and belongs to folklore. What is specific here is the role of the monoid~$\Fs$, which is especially suitable for formalizing the connection. Some of the results already appeared, implicitly in~\cite{Dfg} and explicitly in~\cite{Dhb}. Several new results are established in the current text, in particular the construction of a unique normal form in the monoid~$\Fs$ and the group~$F$ (Subsection~\ref{SS:NormalForm}) and the (surprising) result that the embedding of the monoid~$\Fs$ in the Thompson group~$F$ is not a quasi-isometry (Proposition~\ref{P:NotQuasi}). In the language of binary trees, this implies that, for every constant~$\CC$, there exist chains of length~$\ell$ whose endpoints can be connected by a path of length at most~$\ell/\CC$ (Corollary~\ref{C:Chains}).

Let us mention that a connection between the Tamari lattices and the group~$F$ is described in~\cite{Sun}. However both the objects and the technical methods are disjoint from those developed below. In particular, the approach of~\cite{Sun} does not involve the symmetric monoid~$\Fs$, which is central here, but it uses instead the standard Thompson monoid~$\Fp$, which is not directly connected with the Tamari ordering.

The text is organized as follows. In Section~\ref{S:Framework}, we recall the definition of Tamari lattices and Thompson's group~$F$, and we establish a presentation of~$F$ in terms of some specific, non-standard generators~$\ga\alpha$ indexed by binary addresses. In Section~\ref{S:Lattice}, we investigate the submonoid~$\Fs$ of~$F$ generated by the elements~$\ga\alpha$, we prove that $\Fs$ equipped with divisibility has the structure of a lattice, and we describe the (close) connection between this lattice and Tamari lattices. Then, in Section~\ref{S:Polish}, we use the Polish encoding of trees to construct an algorithm that computes common upper bounds for trees in the Tamari ordering and we deduce a unique normal form for the elements of~$F$ and~$\Fs$. Finally, in Section~\ref{S:Distance}, we gather results about the length of the elements of~$F$ with respect to the generators~$\ga\alpha$ or, equivalently, about the distance in Tamari lattices, with a specific interest on lower bounds.


\section{The framework}
\label{S:Framework}

The aim of this section is to set our notation and basic definitions. In Subsections~\ref{SS:Expressions} and~\ref{SS:Trees}, we briefly recall the definition of the Tamari lattices in terms of parenthesized expressions and of binary trees, whereas  Subsections~\ref{SS:Thompson} and~\ref{SS:Action} contain an introduction to Richard Thompson's group~$F$ and its action by rotation on trees. This leads us naturally to introducing in Subsection~\ref{SS:AAA} a new family of generators of~$F$ indexed by binary addresses, and giving in Subsection~\ref{SS:Pres} a presentation of~$F$ in terms of these generators. 


\subsection{Parenthesized expressions and associativity}
\label{SS:Expressions}

\index{Tamari!lattice}
\index{Tamari!order}
Introduced by Dov Tamari in his 1951 PhD thesis, and appearing in the 1962 article~\cite{Tam}---also see~\cite{FrT} and~\cite{HuT}---the $\nn$th Tamari lattice, here denoted by~$\Tam\nn$, is, for every positive integer~$\nn$, the poset (partially ordered set) obtained by considering all well-formed parenthesized expressions involving $\nn+1$ fixed variables and declaring that an expression~$\EE$ is smaller than another one~$\EE'$, written $\EE \leT \EE'$, if $\EE'$ may be obtained from~$\EE$ by applying the associative law $\xx(\yy\zz) = (\xx\yy)\zz$ in the left-to-right direction. As established in~\cite{Tam}, the poset~$(\Tam\nn, \leT)$ is a lattice, that is, any two elements admit a least upper bound and a greatest lower bound. Moreover, $(\Tam\nn, \leT)$ admits a top element, namely the expression in which all left parentheses are gathered on the left, and a bottom element, namely the expression in with all right parentheses are gathered on the right. 

As associativity does not change the order of variables, we may forget about their names, and use~$\et$ everywhere. So, for instance, there exist five parenthesized expressions involving four variables, namely $\et(\et(\et\et))$, $\et((\et\et)\et)$, $(\et(\et\et))\et$, $(\et\et)(\et\et)$, and $((\et\et)\et)\et$, and we have $\et((\et\et)\et) \leT (\et(\et\et))\et$ in the Tamari order as one goes from the first expression to the second by applying the associativity law with $\xx = \et$, $\yy = \et\et$, and~$\zz = \et$. The Hasse diagrams of the lattices~$\Tam3$ and~$\Tam4$ respectively are the pentagon and the $14$~vertex polyhedron displayed in Figures~\ref{F:Tam3} and~\ref{F:Tam4} below. As is well known, the number of elements of~$\Tam\nn$ is the $\nn$th Catalan number~$\frac1{\nn+1}{{2\nn} \choose \nn}$. 

\begin{figure}[htb]
\begin{picture}(47,34)(0,0)
\put(0,0){\includegraphics{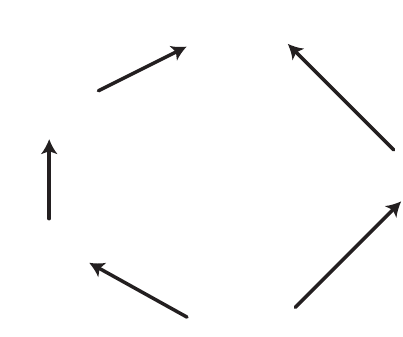}}
\put(19,-1){$\et(\et(\et\et))$}
\put(0,9){$\et((\et\et)\et)$}
\put(0,21){$(\et(\et\et))\et$}
\put(19,31){$((\et\et)\et)\et$}
\put(35,15){$(\et\et)(\et\et)$}
\end{picture}
\caption{\sf\smaller The Tamari lattice~$\Tam3$ made by the five ways of bracketing a four variable parenthesized expression.}
\label{F:Tam3}
\end{figure}

The Tamari lattice~$\Tam\nn$ is connected with a number of usual objects. For instance, its Hasse diagram is the $1$-skeleton---that is, the graph made of the $0$- and $1$-cells---of the $\nn$th Mac Lane--Stasheff associahedron~\cite{Mac, Sta}. Also $\Tam\nn$ embeds in the lattice made by the symmetric group~$\Sym\nn$ equipped with the weak order: $\Tam\nn$ identifies with the sub-poset made by all $312$-avoiding permutations (Bj\"orner \& Wachs \cite{BjW}).

For every~$\nn$, replacing in a parenthesized expression the last (rightmost) symbol~$\et$ with $\et\et$ defines an embedding~$\iota_\nn$ of~$\Tam\nn$ into~$\Tam{\nn+1}$. We denote by~$\Tam\infty$ the limit of the direct system~$(\Tam\nn, \iota_\nn)$ so obtained. Note that $\Tam\infty$ has a bottom element, namely the class of~$\et$, which is also that of~$\et\et$, $\et(\et\et)$, $\et(\et(\et\et))$, etc., but no top element.


\subsection{Trees and rotations}
\label{SS:Trees}

\index{tree}
\index{$\OP$ (tree composition)}
There exists an obvious one-to-one correspondence between parenthesized expressions involving $\nn+1$~variables and size~$\nn$ binary rooted trees, that is, trees with $\nn$~interior nodes and $\nn+1$ leaves, see Figure~\ref{F:Trees}. In this text, we shall use both frameworks equivalently. We denote by~$\TT_0 \OP \TT_1$ the tree whose left-subtree is~$\TT_0$ and whose right-subtree is~$\TT_1$, but skip the symbol~$\OP$ in concrete examples involving~$\et$. 

\begin{figure}[htb]
\begin{picture}(70,12)(0, 0)
\put(0,4){\includegraphics[scale=0.7]{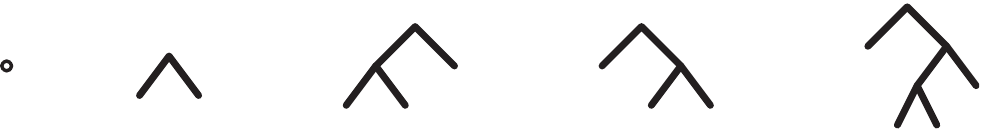}}
\put(0,0){$\et$}
\put(10,0){$\et\et$}
\put(25,0){$(\et\et)\et$}
\put(43,0){$\et(\et\et)$}
\put(60,0){$\et((\et\et)\et)$}
\end{picture}
\caption{\sf\smaller Correspondence between parenthesized expressions and trees.}
\label{F:Trees}
\end{figure}

\index{left-rotation}
When translated in terms of trees, the operation of applying associativity in the left-to-right direction corresponds to performing one left-rotation, namely replacing some subtree of the form $\TT_0 \OP (\TT_1 \OP \TT_2)$ with the corresponding tree $(\TT_0 \OP \TT_1) \OP \TT_2$, see Figure~\ref{F:Rotation}. So the Tamari lattice~$\Tam\nn$ is also the poset of size~$\nn$ trees ordered by the transitive closure of left-rotation. We naturally use~$\leT$ for the latter partial ordering.
    
\begin{figure}[htb]
\begin{picture}(75,30)(3,0)
\put(0,0){\includegraphics{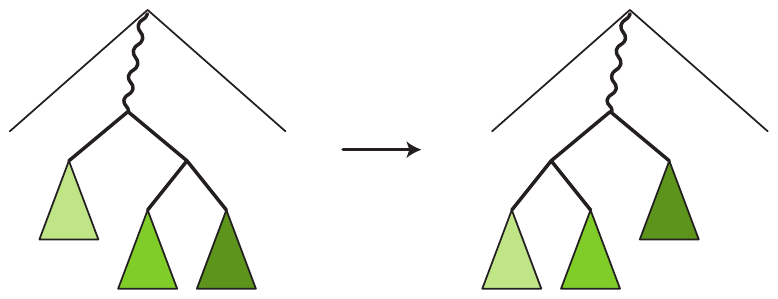}}
\put(5,3){$\TT_0$}
\put(13,-2){$\TT_1$}
\put(21,-2){$\TT_2$}
\put(50,-2){$\TT_0$}
\put(58,-2){$\TT_1$}
\put(66,3){$\TT_2$}
\put(15,20){$\alpha$}
\put(64,20){$\alpha$}
\end{picture}
\caption{\sf\smaller Applying a left rotation in a tree: replacing some subtree of the form $\TT_0 \OP (\TT_1 \OP \TT_2)$ with the corresponding tree $(\TT_0 \OP \TT_1) \OP \TT_2$.}
\label{F:Rotation}
\end{figure}

\index{left-comb}
\index{right-comb}
In terms of trees, the bottom element of the Tamari lattice~$\Tam\nn$ is the size~$\nn$ \emph{right-comb} (or right-vine)~$\CbR\nn$ recursively defined by~$\CbR0 = \et$ and $\CbR\nn = \et \OP \CbR{\nn-1}$ for $\nn \geqslant 1$, whereas the top element is the size~$\nn$ \emph{left-comb} (or left-vine)~$\CbL\nn$ recursively defined by~$\CbL0 = \et$ and $\CbL\nn = \CbL{\nn-1} \OP \et$ for $\nn \geqslant 1$.

\begin{figure}[htb]
\begin{picture}(62,108)(20,0)
\put(0,0){\includegraphics[scale=1]{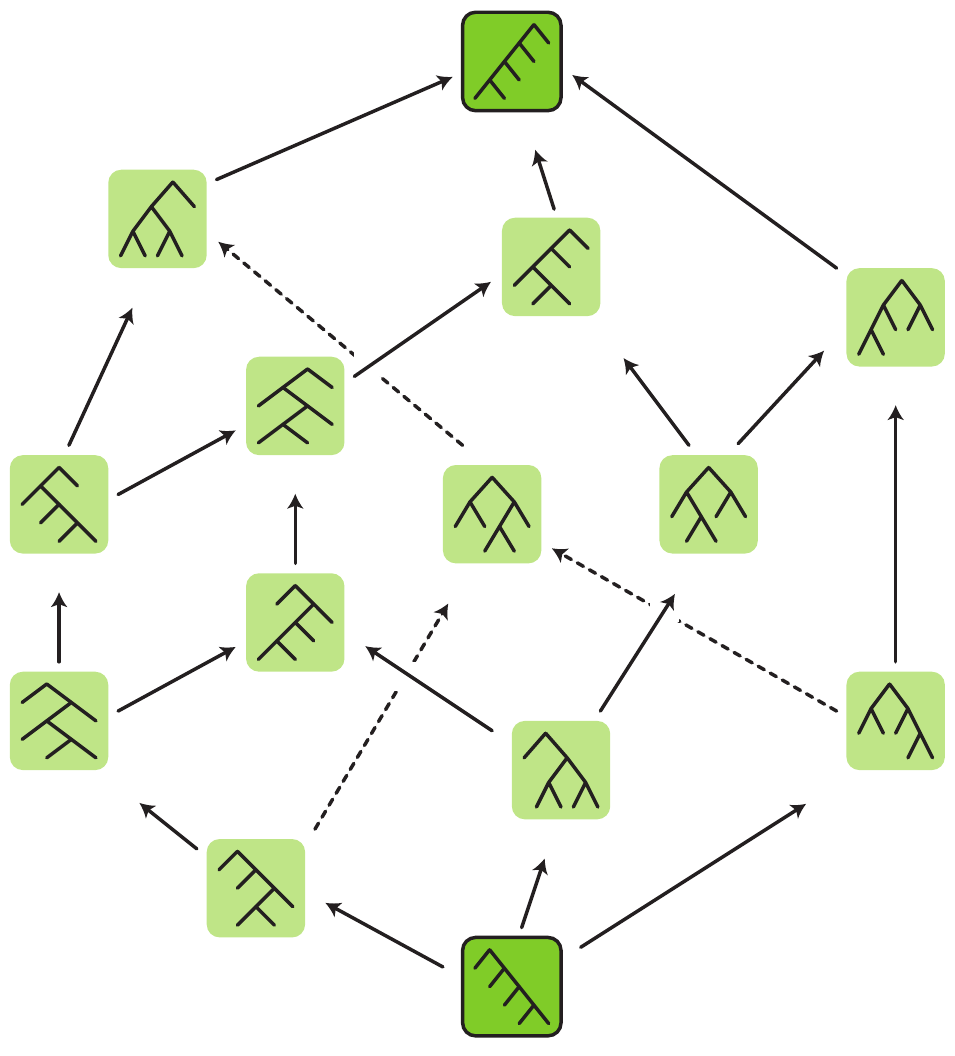}}
\put(44,-1){$\et(\et(\et(\et\et)))$}
\put(48,21){$\et((\et\et)(\et\et))$}
\put(17,9){$\et(\et((\et\et)\et))$}
\put(-3,26){$\et((\et(\et\et))\et)$}
\put(-3,48){$(\et(\et(\et\et)))\et$}
\put(21,36){$\et(((\et\et)\et)\et)$}
\put(21,58){$(\et((\et\et)\et))\et$}
\put(8,77){$((\et\et)(\et\et))\et$}
\put(41,47){$(\et\et)((\et\et)\et)$}
\put(48,72){$((\et(\et\et))\et)\et$}
\put(44,93){$(((\et\et)\et)\et)\et)$}
\put(82,26){$(\et\et)(\et(\et\et))$}
\put(83,67){$((\et\et)\et)(\et\et)$}
\put(64,48){$(\et(\et\et))(\et\et)$}
\end{picture}
\caption{\sf\smaller The Tamari lattice~$\Tam4$, both in terms of parenthesized expressions and binary trees.}
\label{F:Tam4}
\end{figure}


\subsection{Richard Thompson's group $F$}
\label{SS:Thompson}

Introduced by Richard Thompson in~1965, the group~$F$ appeared in print only later, in~\cite{McT} and~\cite{Tho}. The most common approach is to define~$F$ as a group of piecewise linear self-homeomorphisms of the unit interval~$[0,1]$.

\index{Thompson!group}
\index{$F$ (Thompson!group)}
\begin{definition}
The \emph{Thompson group}~$F$ is the group of all dyadic order-preserving self-homeomorphisms of~$[0,1]$, where a homeomorphism~$\ff$ is called \emph{dyadic} if it is piecewise linear with only finitely many breakpoints, every breakpoint of~$\ff$ has dyadic rational coordinates, and every slope of~$\ff$ is an integral power of~$2$. 
\end{definition}

Typical elements of~$F$ are displayed in Figure~\ref{F:Thompson}. In this paper, it is convenient to equip~$F$ with reversed composition, that is, $\ff \mg$ stands for~$\ff$ followed by~$\mg$---using the other convention simply amounts to reversing all expressions. The notation~$\xx_0$ is traditional for the element of~$F$ defined by
$$\xx_0(t) = 
\begin{cases}
\frac{t}2&\mbox{for $0 \leqslant t \leqslant \frac12$},\\
t-\frac14&\mbox{for $\frac12 \leqslant t \leqslant \frac34$},\\
2t-1&\mbox{for $\frac34 \leqslant t \leqslant 1$},
\end{cases}$$
and $\xx_\ii$ is used for the element that is the identity on~$[0, 1 - \frac1{2^\ii}]$ and is a rescaled copy of~$\xx_0$ on~$[1 - \frac1{2^\ii}, 1]$---see Figure~\ref{F:Thompson} again. It is easy to check that $F$ is generated by the sequence of all elements~$\xx_\ii$, with the presentation
\begin{equation}
\label{E:Pres1}
\langle \xx_0, \xx_1,... \mid \xx_{\nn+1} \xx_\ii = \xx_\ii \xx_\nn \mbox{ for $\ii < \nn$}\rangle.
\end{equation}
One deduces that $F$ is also generated by~$\xx_0$ and~$\xx_1$, with the (finite) presentation
\begin{equation}
\label{E:Pres2}
\langle \xx_0, \xx_1 \mid [\xx_0\inv \xx_1, \xx_0 \xx_1 \xx_0\inv], [\xx_0\inv \xx_1, \xx_0^2 \xx_1 \xx_0^{-2}]\rangle,
\end{equation}
where $[\xx, \yy]$ denotes the commutator~$\xx \yy \xx\inv \yy\inv$.

\begin{figure}[htb]
\begin{picture}(75,40)(0,0)
\put(-47,-35){\includegraphics{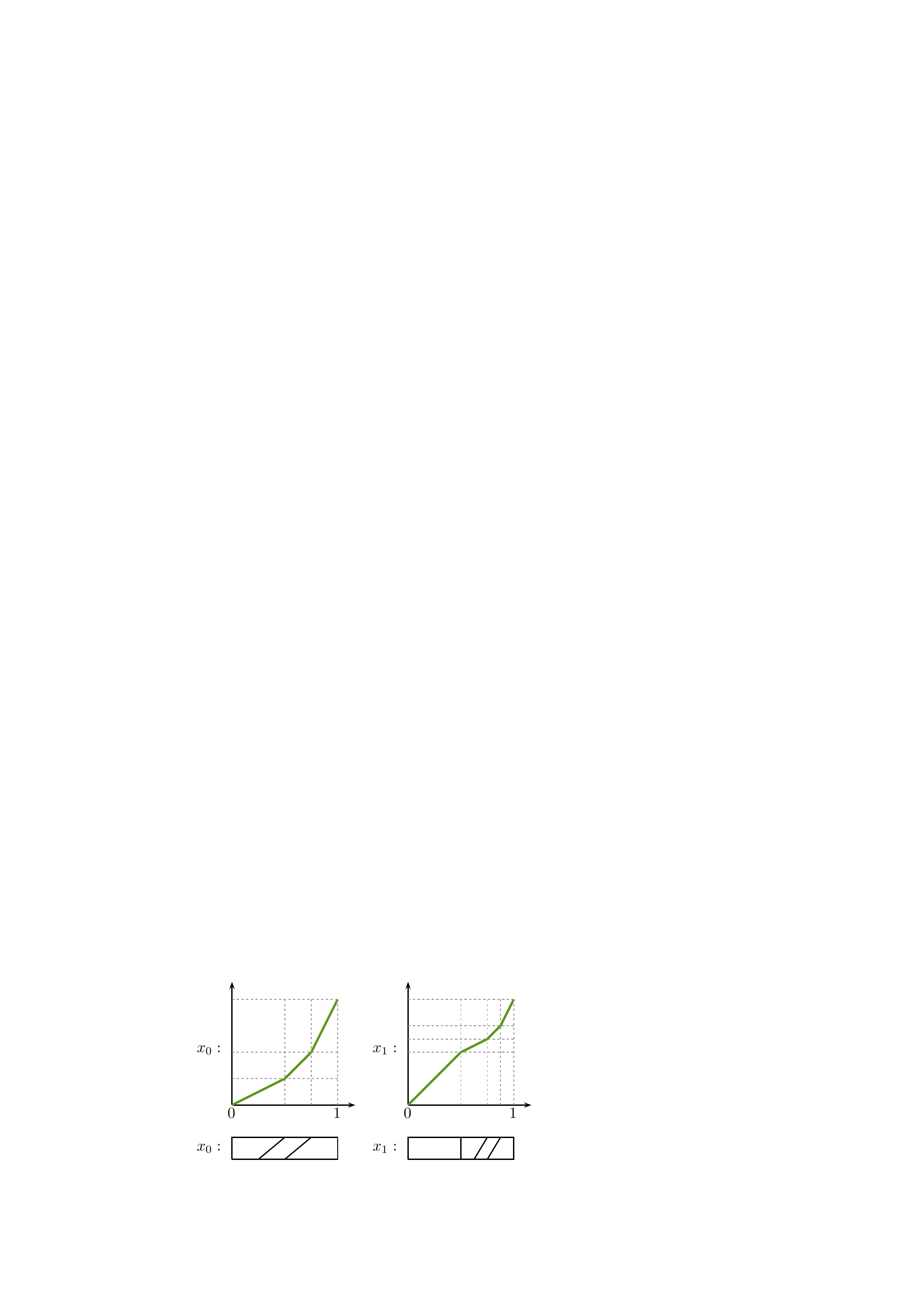}}
\end{picture}
\caption{\sf\smaller Two representations of the elements~$\xx_0$ and~$\xx_1$ of the Thompson group~$F$: above, the usual graph of a function of~$[0,1]$ into itself, below, a diagram displaying the two involved dyadic decompositions of~$[0,1]$, with the source above and the target below: this simplified diagram specifies the function entirely.}
\label{F:Thompson}
\end{figure}

The group~$F$ has many interesting algebraic and geometric properties, see~\cite{CFP}. Its center is trivial, the derived group~$[F, F]$ is a simple group, $F$ includes no free group of rank more than~$1$ (Brin--Squier \cite{BrS}), its Dehn function is quadratic (Guba \cite{Gub}). It is not known whether $F$ is automatic, nor whether $F$ is amenable. The latter question has received lot of attention as $F$ seems to lie very close to the border between amenability and non-amenability.

Owing to the developments of Section~\ref{S:Lattice} below, we mention one more (simple) algebraic result, namely that $F$ is a group of (left)-fractions, that is, there exists a submonoid of~$F$ such that every element of~$F$ can be expressed as~$\ff\inv \mg$ with $\ff, \mg$ in the considered submonoid. 

\index{Thompson!monoid}
\begin{proposition}\cite{CFP}
\label{P:Fraction}
Define the \emph{Thompson monoid}~$\Fp$ to be the submonoid of~$F$ generated by the elements~$\xx_\ii$ with $\ii \geqslant 1$. Then, as a monoid, $\Fp$ admits the presentation~\eqref{E:Pres1}, and $F$ is a group of left-fractions for~$\Fp$.
\end{proposition}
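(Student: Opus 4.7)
The plan is to establish the statement in three stages: verify the monoid relations in $F$, show the abstract monoid they present embeds into $F$, and then derive the fraction property by Ore-style arguments.

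First, I would verify that the relations $\xx_{\nn+1}\xx_\ii = \xx_\ii \xx_\nn$ (for $\ii<\nn$) actually hold in $F$. This is a direct check on the piecewise linear generators, most conveniently read off the dyadic subdivision diagrams of Figure~\ref{F:Thompson}: the support of $\xx_\nn$ rescales correctly under precomposition by $\xx_\ii$ when $\ii < \nn$. Once this is done, there is a canonical surjective monoid morphism $\pi$ from the abstract monoid $M$ presented by~\eqref{E:Pres1} onto~$\Fp$.

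Next, I would show $\pi$ is injective, which amounts to establishing~\eqref{E:Pres1} as a \emph{monoid} presentation of~$\Fp$. The natural route is a normal-form argument: using the rewriting rule $\xx_{\nn+1}\xx_\ii \to \xx_\ii \xx_\nn$ for $\ii<\nn$ (which is length-preserving) one pushes smaller indices to the left, and shows every element of $M$ is uniquely represented by a word $\xx_{\ii_1}\xx_{\ii_2}\cdots\xx_{\ii_\kk}$ with $\ii_1\leqslant\ii_2\leqslant\cdots\leqslant\ii_\kk$. Confluence (and termination) of this rewriting can be checked directly on critical pairs. It then remains to check that distinct normal forms have distinct images in~$F$; this is done by evaluating on a probing family of dyadic points (or, equivalently, reading off each normal form the associated pair of dyadic subdivisions of~$[0,1]$, which determines the element of~$F$).

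For the fraction property, I would invoke Ore's theorem: a cancellative monoid admitting common multiples embeds in its group of fractions. Cancellativity of~$\Fp$ is immediate since $\Fp\subseteq F$. Common multiples are produced by the subword reversing procedure associated with~\eqref{E:Pres1}: starting from any pair $\ma,\mb\in\Fp$, the relations let one iteratively rewrite $\ma\inv\mb$ as $\mb'\ma'^{-1}$ with $\mb',\ma'\in\Fp$, i.e.\ $\ma\mb' = \mb\ma'$; termination follows because the relations preserve length and only finitely many rewritings can occur on bounded-length words. Applying Ore, $\Fp$ embeds in its group of fractions~$\GG$; since $\GG$ satisfies~\eqref{E:Pres1} as a group presentation and is generated by the same $\xx_\ii$, and since $F$ is also presented by~\eqref{E:Pres1} (as recalled in the excerpt), we get $\GG \cong F$, so every element of~$F$ is indeed an $\ff\inv \mg$ with $\ff,\mg\in\Fp$.

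The main obstacle is the injectivity of $\pi$: confirming that the rewriting yields unique normal forms and that these normal forms faithfully label elements of~$F$. Everything else is routine verification or a direct application of Ore's theorem once the necessary ingredients (cancellativity, common multiples) are in place. The subword reversing method, which the introduction advertises as the central tool of the paper, is precisely what makes the common-multiples step clean.
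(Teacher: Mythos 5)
The paper does not actually prove Proposition~\ref{P:Fraction}; it is quoted from~\cite{CFP}, and your outline does follow the general shape of the standard argument there (a normal form for the positive monoid, then an Ore-type fraction decomposition). However, two of your load-bearing claims are false for the presentation~\eqref{E:Pres1} as written in this paper, and both stem from having the orientation backwards: the paper composes maps in reverse order, so \eqref{E:Pres1} is the \emph{opposite} of the presentation you appear to have in mind. Concretely, under your rewriting rule $\xx_{\nn+1}\xx_\ii \to \xx_\ii\xx_\nn$ ($\ii<\nn$) the word $\xx_2\xx_1$ is irreducible; worse, no relation of~\eqref{E:Pres1} has either side equal to $\xx_2\xx_1$ (one would need $\ii<\nn$ with $\nn+1=2$ and $\ii=1$), so $\xx_2\xx_1$ is alone in its congruence class and the element it represents has \emph{no} representative with non-decreasing indices. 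The irreducible words for your rule are those whose adjacent indices satisfy $\ii_{t+1}\geqslant \ii_t-1$, not the non-decreasing ones; the monotone normal form correct for~\eqref{E:Pres1} has \emph{non-increasing} indices and is reached by the opposite orientation, whose termination needs a real argument since indices then grow. As written, your set of normal forms is not a transversal, so the injectivity of~$\pi$ is not established.

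The fraction step has the same problem, more seriously. The proposition asserts left-fractions $\ff\inv\mg$, which by Ore requires common \emph{left}-multiples $\ma'\ma=\mb'\mb$, obtained by left-reversing $\ma\mb\inv$ into $\ma'{}\inv\mb'$. You instead right-reverse $\ma\inv\mb$ into $\mb'\ma'{}\inv$, which would produce common right-multiples and hence right-fractions. For~\eqref{E:Pres1} this procedure halts immediately on $\xx_1\inv\xx_2$, because no relation has its two sides beginning with $\xx_1$ and $\xx_2$ respectively; and this is not a defect of the method but of the statement: one reads off the correct normal form that no element of~$\Fp$ can be written both as $\xx_1\uu$ and as $\xx_2\vv$, so $\xx_1$ and $\xx_2$ admit no common right-multiple and $F$ is \emph{not} a group of right-fractions for~$\Fp$. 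Left-reversing, by contrast, always proceeds (for $\ii<\nn$ the two sides of $\xx_{\nn+1}\xx_\ii=\xx_\ii\xx_\nn$ end with $\xx_\ii$ and $\xx_\nn$), terminates by your length-preservation argument, and supplies exactly the common left-multiples needed. In short, the skeleton of your proof is the right one, but every ``left'' and ``right'' in the normal form, the reversing, and the Ore condition must be swapped to match the paper's conventions; as written the argument both gets stuck and, where it would go through, proves a false statement.
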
  

Thus $\Fp$ consists of the elements of~$F$ that admit at least one expression in terms of the elements~$\xx_\ii$ in which no factor~$\xx_\ii\inv$ occurs. Although easy, Proposition~\ref{P:Fraction} is technically significant as its leads to a unique normal form for the elements of~$F$. 


\subsection{The action of~$F$ on trees}
\label{SS:Action}

An element of~$F$ is determined by a pair of dyadic decompositions of the interval~$[0, 1]$ specifying the intervals on which the slope has a certain value, and, from there, by a pair of trees. 

\index{dyadic decomposition}
To make the description precise, define a \emph{dyadic decomposition} of~$[0,1]$ to be an increasing sequence $(\mt_0 \wdots \mt_\nn)$ of dyadic numbers with $\mt_0 = 0$ and $\mt_\nn = 1$, such that no interval~Ê$[\mt_\ii, \mt_{\ii+1}]$ may contain a dyadic number with denominator less that those of~$\mt_\ii$ and~$\mt_{\ii+1}$: for instance, $(0, \frac12, \frac34, 1)$ is legal, but $(0, \frac34, 1)$ is not. Then dyadic decompositions are in one-to-one correspondence with binary rooted trees: the decomposition associated with~$\et$ is~$(0,1)$, whereas the one associated with~$\TT_0 \OP \TT_1$ is the concatenation of those associated with~$\TT_0$ and~$\TT_1$ rescaled to fit in~$[0,\frac12]$ and $[\frac12,1]$.

As the diagram representation of Figure~\ref{F:Thompson} shows, every element of the group~$F$ is entirely specified by a pair of dyadic decompositions, hence by a pair of trees. Provided adjacent intervals are gathered, this pair of decompositions (hence of trees) is unique. We shall denote by~$(\trm\ff, \trp\ff)$ the pair of trees associated with~$\ff$. For instance, we have $\trm1 = \trp1 = \et$, and, as illustrated in Figure~\ref{F:Seed}, $\trm{(\xx_0)} = \et(\et\et)$, $\trp{(\xx_0)} = (\et\et)\et$, $\trm{(\xx_1)} = \et(\et(\et\et))$, and $\trp{(\xx_1)} = \et((\et\et)\et)$. By construction, the trees~$\trm\ff$ and~$\trp\ff$ have the same size. Moreover, we have $\trm{(\ff\inv)} = \trp\ff$ and $\trp{(\ff\inv)} = \trm\ff$ as taking the inverse amounts to exchanging source and target in the diagram.

\begin{figure}[htb]
\begin{picture}(75,18)(0,0)
\put(-45,-35){\includegraphics{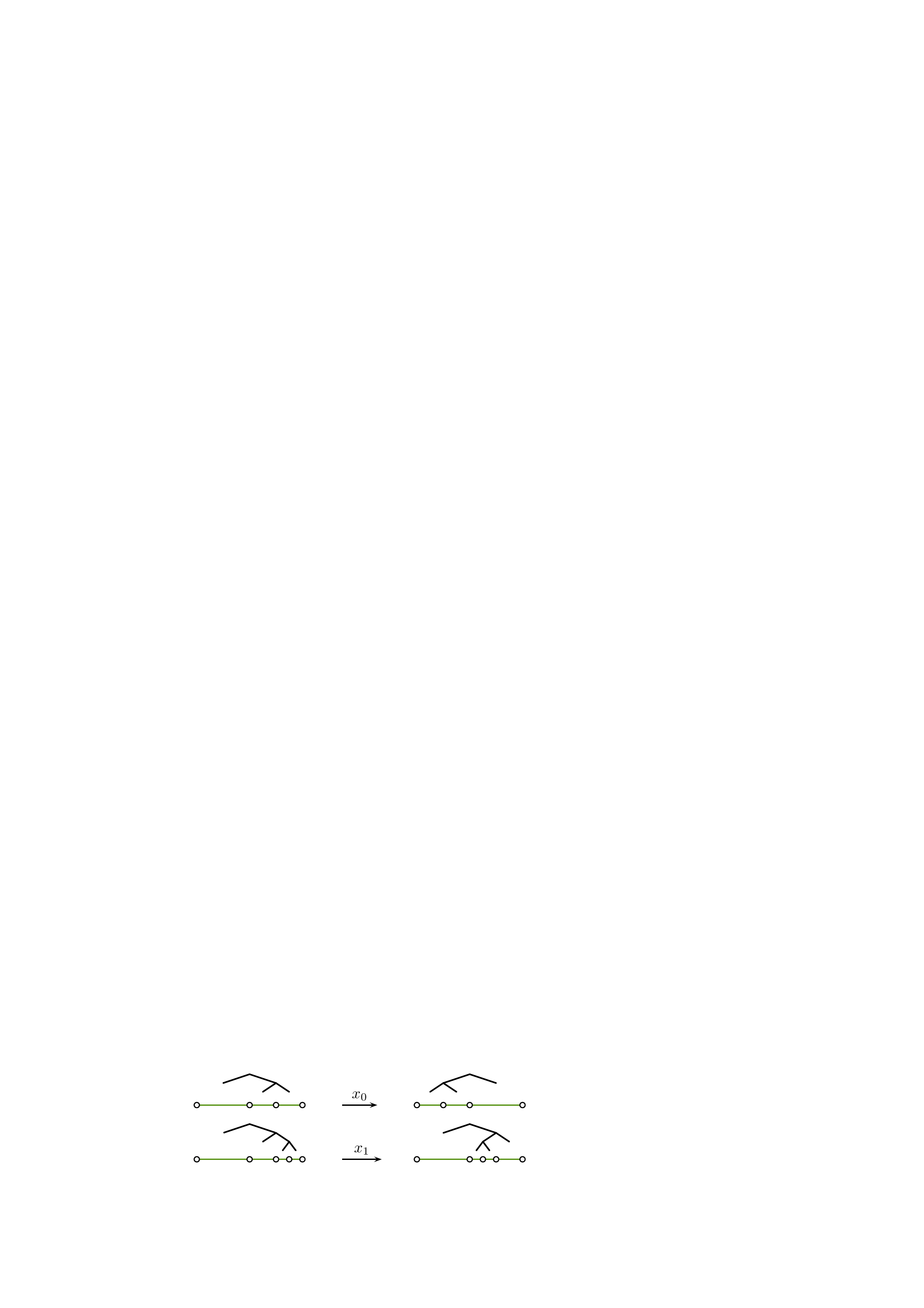}}
\end{picture}
\caption{\sf\smaller Canonical pair of trees associated with an element of the group~$F$.}
\label{F:Seed}
\end{figure}

We now define a partial action of the group~$F$ on finite trees. Hereafter, we denote by~$\Tree$ the family of all finite, binary, rooted trees, and by~$\Treelab$ the family of all (finite, binary, rooted) labeled trees whose leaves wear labels in~$\NNNN$. Thus $\Tree$ identifies with the family of all parenthesized expressions involving the single variable~$\et$, and $\Treelab$ with the family of all parenthesized expressions involving variables from the list~$\{\et_0, \et_1, ...\}$. Forgetting the labels (or the indices of variables) defines a projection of~$\Treelab$ onto~$\Tree$; by identifying~$\et$ with~$\et_0$, we can see~$\Tree$ as a subset of~$\Treelab$. If $\TT$ is a tree of~$\Tree$, we denote by~$\lab\TT$ the tree of~$\Treelab$ obtained by attaching to the leaves of~$\TT$ labels~$0, 1, ...$ starting from the left.

\index{substitution}
\begin{definition}
A \emph{substitution} is a map from~$\NNNN$ to~$\Treelab$. If $\sigma$ is a substitution and $\TT$ is a tree in~$\Treelab$, we define~$\TT^\sigma$ to be the tree obtained from~$\TT$ by replacing every $\ii$-labeled leaf of~$\TT$ by the tree~$\sigma(\ii)$.
\end{definition}

Formally, $\TT^\sigma$ is recursively defined by the rules 
$$(\et_\ii)^\sigma = \sigma(\ii), \qquad (\TT_0 \OP \TT_1)^\sigma = \TT_0^\sigma \OP \TT_1^\sigma.$$
For instance, if $\TT$ is $\et_3(\et_0\et_2)$ and we have $\sigma(0) = \et\et$ and $\sigma(2) = \sigma(3) = \et$, then $\TT^\sigma$ is~$\et((\et\et)\et)$.

\index{action!(of $F$ on trees)}
\begin{definition}
\label{D:Action}
If $\TT, \TT'$ are labeled trees and $\ff$ is an element of the Thompson group~$F$, we say that $\TT\act \ff = \TT'$ holds if we have $\TT = (\lab{\trm\ff})^\sigma$ and $\TT' = (\lab{\trp\ff})^\sigma$ for some substitution~$\sigma$.
\end{definition}

\begin{example}
\label{X:Action}
First consider $\ff = 1$. Then we have $\trm1 = \trp1 = \et$, whence $\lab{\trm1} = \lab{\trp1} =\nobreak \et_0$. For every tree~$\TT$, we have $\TT = (\lab{\trm1})^\sigma$ for any substitution satisfying $\sigma(0) = \TT$, and, in this case, we have $(\lab{\trm1})^\sigma = \TT$. So $\TT \act 1$ is always defined and it is equal to~$\TT$.

Consider now~$\xx_0$. Then we have $\trm{\xx_0} = \et(\et\et)$, whence $\lab{\trm{\xx_0}} = \et_0(\et_1\et_2)$. For a tree~$\TT$, there exists a substitution satisfying $\TT = (\et_0(\et_1\et_2))^\sigma$ if and only if $\TT$ can be expressed as $\TT_0 \OP (\TT_1 \OP \TT_2)$. In this case, the tree $((\et_0\et_1)\et_2)^\sigma$ is $(\TT_0 \OP \TT_1) \OP \TT_2$. So $\TT \act \xx_0$ is defined if and only if $\TT$ is eligible for a left-rotation and, in this case, $\TT \act \xx_0$ is the tree obtained from~$\TT$ by that left-rotation, see Figure~\ref{F:Rotation}.

Consider finally~$\xx_1$. Arguing as above, we see that $\TT \act \xx_1$ is defined if and only if $\TT$ can be expressed as $\TT_0 \OP (\TT_1 \OP (\TT_2 \OP \TT_3))$, in which case $\TT \act \ff$ is the tree $\TT_0 \OP ((\TT_1 \OP \TT_2) \OP \TT_3)$, that is, the tree obtained from~$\TT$ by a left-rotation at the right-child of the root.
\end{example}

\begin{figure}[htb]
\begin{picture}(75,24)(2,0)
\put(0,0){\includegraphics{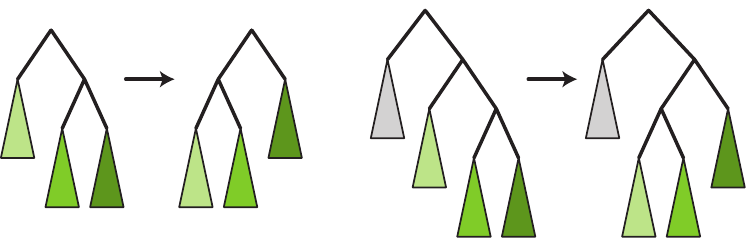}}
\put(1,6){$\TT_0$}
\put(5,1){$\TT_1$}
\put(9,1){$\TT_2$}
\put(18,1){$\TT_0$}
\put(23,1){$\TT_1$}
\put(27,6){$\TT_2$}
\put(6,22){$\TT$}
\put(27,22){$\TT \act \xx_0$}
\put(13,19){$\xx_0$}
\put(38,8){$\TT_0$}
\put(42,3){$\TT_1$}
\put(46.5,-2){$\TT_2$}
\put(51,-2){$\TT_3$}
\put(60,8){$\TT_0$}
\put(63.5,-2){$\TT_1$}
\put(68,-2){$\TT_2$}
\put(72.5,3){$\TT_3$}
\put(45,24){$\TT$}
\put(68,24){$\TT \act \xx_1$}
\put(54,19){$\xx_1$}
\end{picture}
\caption{\sf\smaller Action of~$\xx_0$ and~$\xx_1$ on a tree: respectively applying a left-rotation at the root, and at the right-child of the root.}
\label{F:Action}
\end{figure}

The above definition specifies what can naturally be called a partial action of the group~$F$ on (labeled) trees---labels are not important here as rotations do not change their order or repeat them, but they are needed for a clean definition of substitutions.

\begin{proposition}
\label{P:Action}
\ITEM1 For every (labeled) tree~$\TT$ and and every element~$\ff$ of~$F$, there exists at most one~$\TT'$ satisfying $\TT' = \TT \act \ff$.

\ITEM2 For every (labeled) tree~$\TT$, we have $\TT \act 1 = \TT$.

\ITEM3 For every (labeled) tree~$\TT$ and all $\ff, \mg$ in~$F$, we have $(\TT \act \ff) \act \mg = \TT \act \ff\mg$, this meaning that either both terms are defined and they are equal, or neither is defined.

\noindent Moreover, for all~$\ff_1 \wdots \ff_\nn$ in~$F$, there exists~$\TT$ such that $\TT \act \ff_\ii$ is defined for each~$\ii$.
\end{proposition}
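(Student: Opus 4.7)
The plan is to treat parts (i) and (ii) as short observations and then reduce (iii) to a single \emph{refinement lemma} about alternative representatives of an element of~$F$.

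For (i), once we know $\TT = (\lab{\trm\ff})^\sigma$, the substitution $\sigma$ is forced on the label set $\{0, 1, \wdots \Lg{\trm\ff}\}$ appearing in $\lab{\trm\ff}$: namely, $\sigma(\ii)$ must be the subtree of $\TT$ rooted at the position of the $\ii$th leaf of $\trm\ff$. Since $\lab{\trp\ff}$ carries the same labels in the same left-to-right order, the tree $(\lab{\trp\ff})^\sigma$ is then uniquely determined. Part (ii) is exactly the first case of Example~\ref{X:Action}, since $\trm1 = \trp1 = \et$.

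The real work is (iii), and I would base it on the following refinement lemma, which I would prove first: if $\ff \in F$ is represented not only by its canonical pair $(\trm\ff, \trp\ff)$ but by \emph{any} pair $(U, V)$ obtained from it by a finite sequence of simultaneous leaf-expansions (i.e.\ replacing the $\ii$th leaf of both $\trm\ff$ and $\trp\ff$ by $\et\OP\et$), then for every substitution $\tau$ the equality $U^\tau \act \ff = V^\tau$ holds. This is justified by a single-expansion induction, using that replacing a leaf by $\et\OP\et$ in the canonical pair corresponds to splitting one linearity interval of~$\ff$ into two linearity intervals; the functional interpretation of~$F$ as PL homeomorphisms makes the invariance transparent, and the substitution-based definition is then checked to be insensitive to such expansions. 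From this lemma one extracts the statement: for any representing pair $(U, V)$ of $\ff$, the action $\TT \act \ff$ is defined exactly when $\TT$ refines $U$ (i.e.\ $\TT = U^\sigma$ for some tree-substitution~$\sigma$), and then equals~$V^\sigma$.

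Granted this, (iii) is handled by choosing, for given $\ff, \mg \in F$, a common refinement $S$ of $\trp\ff$ and $\trm\mg$ (which exists since any two dyadic decompositions admit a common refinement). Let $U$, resp.\ $V$, be the parallel refinement of $\trm\ff$, resp.\ $\trp\mg$. Then $(U, S)$ represents $\ff$, $(S, V)$ represents $\mg$, and by concatenation $(U, V)$ represents $\ff\mg$. Using the lemma three times, $\TT \act \ff\mg$ is defined iff $\TT = U^\sigma$ for some $\sigma$, in which case it equals~$V^\sigma$. Similarly $\TT \act \ff$ is defined iff $\TT = U^\sigma$, and then $\TT \act \ff = S^\sigma$; and $S^\sigma \act \mg$ is automatically defined (since $S^\sigma$ trivially refines $S$) and equals $V^\sigma$. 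So both sides of (iii) are defined simultaneously and coincide. For the converse direction, suppose $(\TT \act \ff) \act \mg$ is defined: then $\TT \act \ff$ is defined and refines $\trm\mg$; being also of the form $S_0^\sigma$ for $S_0$ a refinement of $\trp\ff$, it refines both $\trp\ff$ and $\trm\mg$, hence refines~$S$, and a standard manipulation shows $\TT$ itself refines~$U$, so $\TT \act \ff\mg$ is defined.

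Finally, for the last assertion, given $\ff_1 \wdots \ff_\nn$ it suffices to take any tree $\TT$ that simultaneously refines the canonical sources $\trm{\ff_1} \wdots \trm{\ff_\nn}$; such a $\TT$ exists because any finite family of binary trees admits a common refinement (e.g.\ by iteratively refining one tree to accommodate each of the others). Then each $\TT \act \ff_\ii$ is defined by the lemma. The main obstacle I anticipate is a careful formulation of the refinement lemma together with the converse direction of (iii): one must keep the substitution-based bookkeeping in sync with the geometric picture of nested dyadic partitions, and not conflate the minimal (canonical) pair with the general representing pair used in the composition argument.
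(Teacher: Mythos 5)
Your argument is essentially the paper's own (sketched) proof: (i) via the forced substitution, (ii) via the example, (iii) by unifying $\trp\ff$ with $\trm\mg$ through a common refinement so that the refined pair $(U,V)$ represents $\ff\mg$, and the final claim via a common refinement of the sources $\trm{(\ff_1)} \wdots \trm{(\ff_\nn)}$; your ``refinement lemma'' is precisely the paper's implicit appeal to minimal identifiers. The one caveat --- that the ``exactly when $\TT$ refines $U$'' direction needs $(U,V)$ to be re-minimized when cancellation occurs (e.g.\ $\mg = \ff\inv$) --- is an imprecision already present in the statement and in the paper's sketch, so there is no genuine divergence.
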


\begin{proof}[Sketch, see~\cite{Dhb} for details]
\ITEM1 For~$\ff$ in~$F$, a given tree~$\TT$ can be expressed in at most one way as~$(\lab{\trm\ff})^\sigma$ and, as the same variables occur on both sides of the associativity law, there is in turn at most one corresponding tree~$(\lab{\trp\ff})^\sigma$.

Point~\ITEM2 has been established in Example~\ref{X:Action}. For~\ITEM3, the point is that there exists a simple rule for determining the pair of trees associated with~$\ff \mg$. Indeed, we have $\trm{(\ff \mg)} = \trm\ff^\sigma$ and~$\trp{(\ff \mg)} = \trp\mg^\tau$, where $\sigma$ and~$\tau $ are minimal substitutions satisfying $\trp\ff^\sigma = \trm\mg^\tau$---that is, $(\sigma, \tau)$ is a minimal identifier for~$\trp\ff$ and~$\trm\mg$.

As for the final point, it comes from the fact that, by construction, every tree~$\lab{\trm\ff}$ has pairwise distinct labels and, therefore, a tree~$\TT$ can be expressed as~$(\lab{\trm\ff})^\sigma$ if and only if the skeleton of~$\TT$ (as defined in Definition~\ref{D:Address} below) includes the skeleton of~$\lab{\trm\ff}$. Then, for $\ff_1 \wdots \ff_\nn$ in~$F$, one can always find a tree~$\TT$ whose skeleton includes those of~$\lab{\trm{(\ff_1)}} \wdots \lab{\trm{(\ff_\nn)}}$.
\end{proof}

\begin{proposition}
\label{P:Geometry}
For all (labeled) trees~$\TT, \TT'$, the following are equivalent:

\ITEM1 One can go from~$\TT$ to~$\TT'$ using a finite sequence of rotations---that is, by applying associativity;

\ITEM2 The trees~$\TT$ and~$\TT'$ have the same size, and the left-to-right enumerations of the labels in~$\TT$ and~$\TT'$ coincide;

\ITEM3 There exists~$\ff$ in~$F$ satisfying $\TT' = \TT \act \ff$.

\noindent In this case, the element~$\ff$ involved in~\ITEM3 is unique.
\end{proposition}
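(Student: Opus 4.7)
The plan is to establish the three implications \ITEM1$\Rightarrow$\ITEM2, \ITEM3$\Rightarrow$\ITEM1 and, as the main step, \ITEM2$\Rightarrow$\ITEM3, and then to derive the uniqueness of~$\ff$.

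The implication \ITEM1$\Rightarrow$\ITEM2 is immediate, since a single rotation replaces $\TT_0 \OP (\TT_1 \OP \TT_2)$ with~$(\TT_0 \OP \TT_1) \OP \TT_2$ (or the reverse), affecting neither the size nor the left-to-right listing of labels; iteration handles the general case. For \ITEM3$\Rightarrow$\ITEM1, factor~$\ff$ as a word $\xx_{\ii_1}^{\eps_1} \cdots \xx_{\ii_\kk}^{\eps_\kk}$ using presentation~\eqref{E:Pres1}. By Proposition~\ref{P:Action}\ITEM3, $\TT'$ is obtained from~$\TT$ by the successive actions of the factors, all of them defined since the composite action is; and each such action is, by Example~\ref{X:Action} (extended to deeper indices~$\ii$), a left- or right-rotation at a definite node, so $\TT'$ is reached from~$\TT$ through finitely many rotations.

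The crux is \ITEM2$\Rightarrow$\ITEM3. Let $(\mt_0 \wdots \mt_\nn)$ and $(\mt_0' \wdots \mt_\nn')$ be the dyadic decompositions attached to the underlying unlabeled versions of~$\TT$ and~$\TT'$, which have a common size~$\nn$ by hypothesis, and define $\ff \colon [0,1] \to [0,1]$ to be the increasing piecewise linear map sending each~$[\mt_\ii, \mt_{\ii+1}]$ affinely onto~$[\mt_\ii', \mt_{\ii+1}']$. Since a leaf at depth~$\mm$ in a binary tree contributes an interval of length~$2^{-\mm}$, the slope of~$\ff$ on~$[\mt_\ii, \mt_{\ii+1}]$ is the integer power~$2^{d_\ii - d_\ii'}$, so $\ff$ lies in~$F$. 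The pair $(\TT, \TT')$ represents~$\ff$, though possibly not minimally; the minimal pair~$(\trm\ff, \trp\ff)$ is obtained by simultaneously coalescing, on source and target, any two adjacent intervals on which the slope of~$\ff$ is constant. Because one and the same map~$\ff$ drives this coalescing, there is a single substitution~$\sigma_0$ with $\TT = \trm\ff^{\sigma_0}$ and $\TT' = \trp\ff^{\sigma_0}$ at the skeleton level; promoting~$\sigma_0$ to a labeled substitution~$\sigma$ using the labels that sit on matching leaves of~$\TT$ and~$\TT'$ (they do match by \ITEM2) yields $\TT = (\lab{\trm\ff})^\sigma$ and $\TT' = (\lab{\trp\ff})^\sigma$, and Definition~\ref{D:Action} concludes that $\TT' = \TT \act \ff$.

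For uniqueness, suppose $\TT \act \ff = \TT \act \ff'$ with $\ff, \ff' \in F$. The construction above shows that any~$\mg \in F$ satisfying $\TT \act \mg = \TT'$ must, viewed as a self-map of~$[0,1]$, send each~$\mt_\ii$ to~$\mt_\ii'$ and be affine on every $[\mt_\ii, \mt_{\ii+1}]$, since $\mg$ is affine on each interval of~$\trm\mg$'s decomposition and that decomposition coarsens the decomposition of~$\TT$; such a map is uniquely determined, whence $\ff = \ff'$. The main obstacle will be the \ITEM2$\Rightarrow$\ITEM3 step, in two related points: verifying that the naive PL interpolation between the two dyadic decompositions has slopes that are genuine integer powers of~$2$ (not merely dyadic rationals, which would fail to put~$\ff$ in~$F$), and then checking that the coalescing procedure producing the minimal pair uses the very same substitution on source and target---a fact that ultimately rests on $\ff$ being affine on each interval of the minimal decomposition, so that relative proportions of sub-intervals match up exactly on both sides.
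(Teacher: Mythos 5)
Your proof is correct and follows essentially the same route as the paper's (sketched) argument: rotations preserve size and label order for \ITEM1$\Leftrightarrow$\ITEM2, the dyadic piecewise-linear interpolation between the two decompositions gives \ITEM2$\Rightarrow$\ITEM3, the generators acting by rotations give \ITEM3$\Rightarrow$\ITEM1, and uniqueness comes from the pair of decompositions determining the graph of~$\ff$. You simply supply more detail than the paper does on the slope computation and on why the coalescing to the minimal pair~$(\trm\ff,\trp\ff)$ uses the same substitution on both sides; the closing sentence of your write-up reads as leftover planning notes, but the two issues it flags are in fact handled in the body.
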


\begin{proof}[Sketch, see~\cite{Dhb} for details]
The equivalence of~\ITEM1 and~\ITEM2 follows from the syntactic properties of the terms occurring in the associativity law, namely that the same variables occur on both sides, in the same order.

Next, assume that $\TT$ and~$\TT'$ are equal size trees. Then $\TT$ and~$\TT'$ determine dyadic decompositions of~$[0,1]$, and there exists a dyadic homeomorphism~$\ff$, hence an element of~$F$, that maps the first onto the second. Provided the enumerations of the labels in~$\TT$ and~$\TT'$ coincide, we have $\TT' = \TT \act \ff$. So \ITEM2 implies~\ITEM3. 

Conversely, we saw in Example~\ref{X:Action} that the action of~$\xx_0$ and~$\xx_1$ is a rotation. On the other hand, we know that $\xx_0$ and~$\xx_1$ generate~$F$. Therefore, the action of an arbitrary element of~$\ff$ is a finite product of rotations. So \ITEM3 implies~\ITEM2.

Finally, the uniqueness of the element~$\ff$ possibly satisfying $\TT' = \TT \act \ff$ follows from the fact that the pair~$(\TT, \TT')$ determines a unique pair of dyadic decompositions of~$[0, 1]$, so it directly determines the graph of the dyadic homeomorphism~$\ff$.
\end{proof}

Proposition~\ref{P:Geometry} states that $F$ is the \emph{geometry group} of associativity in the sense of~\cite{Dhb}. A similar approach can be developed for every algebraic law, and more generally every family of algebraic laws, leading to a similar geometry monoid (a group in good cases). In the case of associativity together with commutativity, the geometry group happens to be the Thompson group~$V$, whereas, in the case of the left self-distributivity law $\xx(\yy\zz) = (\xx\yy)(\xx\zz)$, the geometry group is a certain ramified extension of Artin's braid group~$B_\infty$ \cite{Dgd}---also see the case of $\xx(\yy\zz) = (\xx\yy)(\yy\zz)$ in~\cite{Dgj}. In the latter cases, the situation is more complicated than with associativity as, in particular, the counterparts of~\ITEM1 and~\ITEM2 in Proposition~\ref{P:Geometry} fail to be equivalent.


\subsection{The generators~$\ga\alpha$}
\label{SS:AAA}

Considering the action of the group~$F$ on trees invites us to introducing, beside the standard generators~$\xx_\ii$, a new, more symmetric family of generators for~$F$. 

In order to define these elements, we need an index system for the subtrees of a tree. A common solution consists in describing the path connecting the root of the tree to the root of the considered subtree using (for instance) $0$ for ``forking to the left'' and~$1$ for ``forking to the right''.

\index{address!(binary)}
\index{skeleton!(of a tree)}
\begin{definition}
\label{D:Address}
A finite sequence of~$0$'s and~$1$'s is called an {\it address}; the empty address is denoted by~$\ea$. For~$\TT$ a tree and $\alpha$ a short enough address, the $\alpha$-{\it subtree} of~$\TT$ is the part of~$\TT$ that lies below~$\alpha$. The set of all~$\alpha$'s for which the $\alpha$-subtree of~$\TT$ exists is called the \emph{skeleton} of~$\TT$.
\end{definition}

Formally, the $\alpha$-subtree is defined by the following recursive rules: the $\ea$-subtree of~$\TT$ is~$\TT$, and, for $\alpha = 0\beta$ (\resp.\ $1\beta$), the $\alpha$-subtree of~$\TT$ is the $\beta$-subtree of~$\TT_0$ (\resp.\ $\TT_1$) if $\TT$ is~$\TT_0 \OP \TT_1$, and it is undefined otherwise. 

\begin{example}
For $\TT = \et((\et\et)\et)$ (the rightmost example in Figure~\ref{F:Trees}), the $10$-subtree of~$\TT$ is $\et\et$, while the $01$- and $111$-subtrees are undefined. The skeleton of~$\TT$ consists of the seven addresses~$\ea$, $0$, $1$, $10$, $100$, $101$, and~$11$.
\end{example}
 
By definition, applying associativity in a parenthesized expression or, equivalently, applying a rotation in a tree~$\TT$ consists in choosing an address~$\alpha$ in the skeleton of~$\TT$ and either replacing the $\alpha$-subtree of~$\TT$, supposed to have the form $\TT_0 \OP (\TT_1 \OP \TT_2)$, by the corresponding $(\TT_0 \OP \TT_1) \OP \TT_2$, or vice versa, see Figure~\ref{F:Rotation} again. By Proposition~\ref{P:Geometry}, this rotation corresponds to a (unique) element of~$F$.

\begin{definition}
For every address~$\alpha$, we denote by~$\ga\alpha$ the element of~$F$ whose action is a left-rotation at~Ê$\alpha$. We denote by~$\AAA$ the family of all elements~$\ga\alpha$ for~$\alpha$ an address.
\end{definition}

According to Example~\ref{X:Action} and Figure~\ref{F:Action}, the action of~$\xx_0$ is a left-rotation at the root of the tree, and, therefore, we have $\xx_0 = \ga\ea$. Similarly, $\xx_1$ is left-rotation at the right-child of the root, that is, at the node with address~$1$, and, therefore, we have $\xx_1 = \ga1$. More generally, all elements~$\ga\alpha$ can be expressed in terms of the generators~$\xx_\ii$, as will be done in Subsection~\ref{SS:Pres} below. For the moment, we simply note that iterating the argument for~$\xx_1$ gives for every~$\ii \geqslant 1$ the equality $\xx_\ii = \ga{1^{\ii-1}}$, where $1^{\ii-1}$ denotes $11...1$, $\ii-1$~times~$1$.

The trees~$\TT$ such that $\TT \act \ga\alpha$ is defined are easily characterized. Indeed, a necessary and sufficient for~$\TT \act \ga\alpha$ to exist is that the $\alpha$-subtree of~$\TT$ is defined and a left-rotation can be applied to that subtree, that is, it can be expressed as $\TT_0 \OP (\TT_1 \OP \TT_2)$. This is true if and only if the addresses~$\alpha0$, $\alpha10$, and~$\alpha11$ lie in the skeleton of~$\TT$, hence actually if and only if $\alpha10$ lies in the skeleton of~$\TT$ since $\beta0$ may lie in the skeleton of a tree only if $\beta1$ and $\beta$ do. Symmetrically, $\TT \act \ga\alpha\inv$ is defined if and only if $\alpha01$ lies in the skeleton of~$\TT$. As a tree has a finite skeleton, there exist for every tree~$\TT$ finitely many addresses~$\alpha$ such that $\TT \act \ga\alpha^{\pm1}$ is defined, see Figure~\ref{F:ExAA}.

\begin{figure}[htb]
\begin{picture}(75,26)(1,2)
\put(0,0){\includegraphics{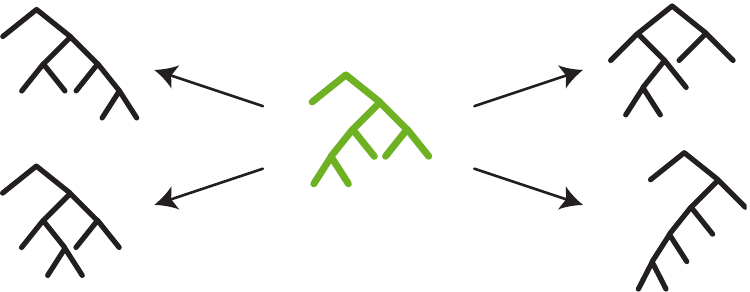}}
\put(20,7){$\ga{10}\inv$}
\put(20,23){$\ga1\inv$}
\put(51,7){$\ga1$}
\put(51,23){$\ga\ea$}
\put(35,23){$\scriptstyle\ea$}
\put(39,20){$\scriptstyle1$}
\put(32.5,16.5){$\scriptstyle10$}
\end{picture}
\caption{\sf\smaller The four elements~$\ga\alpha^{\pm1}$ such that $\TT \act \ga\alpha^{\pm1}$ is defined in the case $\TT = \et (((\et \et) \et) (\et \et))$.}
\label{F:ExAA}
\end{figure}

Before proceeding, we note that the forking nature of the family~$\AAA$ naturally gives rise to a large family of shift endomorphisms of the group~$F$.

\begin{lemma}
For every address~$\alpha$, there exists a (unique) shift endomorphism~$\sh\alpha$ of~$F$ that maps~$\ga\beta$ to~$\ga{\alpha\beta}$ for every~$\beta$.
\end{lemma}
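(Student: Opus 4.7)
The plan is to construct $\sh\alpha$ geometrically, using the realization of~$F$ as piecewise-linear dyadic homeomorphisms of~$[0,1]$. Every address~$\alpha$ picks out a canonical dyadic subinterval $I_\alpha \subseteq [0,1]$ of length~$2^{-\Lg\alpha}$, defined recursively by selecting the left or right half of the parent interval at each step. For~$\ff$ in~$F$, I would define $\sh\alpha(\ff)$ to be the map that acts as the identity on $[0,1] \setminus I_\alpha$ and, inside~$I_\alpha$, acts as an affinely rescaled copy of~$\ff$---that is, as the conjugate of~$\ff$ by the unique order-preserving affine bijection $\phi_\alpha \colon [0,1] \to I_\alpha$.

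First I would verify that $\sh\alpha(\ff)$ indeed lies in~$F$. Since $\phi_\alpha$ is affine with dyadic endpoints and dyadic slope~$2^{-\Lg\alpha}$, conjugation by~$\phi_\alpha$ preserves both the dyadicity of breakpoints and the property that slopes are integer powers of~$2$. As every element of~$F$ fixes $0$ and $1$, the rescaled copy of~$\ff$ fixes the endpoints of~$I_\alpha$, so gluing in the identity outside~$I_\alpha$ creates no new breakpoint and no non-dyadic feature. Then $\sh\alpha$ is a group endomorphism, since for $\ff, \mg \in F$ both $\sh\alpha(\ff\mg)$ and $\sh\alpha(\ff)\sh\alpha(\mg)$ equal the identity outside~$I_\alpha$ and agree with the common rescaling of~$\ff\mg$ inside~$I_\alpha$.

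Next, I would compute $\sh\alpha(\ga\beta)$. By definition, $\ga\beta$ acts by a left-rotation at address~$\beta$, which, translated to~$[0,1]$, is a piecewise-affine homeomorphism supported on~$I_\beta$ that realizes a specific dyadic pattern there. Rescaling through~$\phi_\alpha$ transports this pattern from~$I_\beta$ to $\phi_\alpha(I_\beta) = I_{\alpha\beta}$, yielding exactly the homeomorphism that implements a left-rotation at~$\alpha\beta$, namely~$\ga{\alpha\beta}$. Uniqueness is immediate because the family~$\AAA$ contains $\xx_0 = \ga\ea$ and $\xx_1 = \ga1$ and hence generates~$F$, so a homomorphism out of~$F$ is determined by its values on the~$\ga\beta$'s. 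The one delicate check is the gluing step, but the observation that every element of~$F$ fixes both endpoints of~$[0,1]$ makes it routine.
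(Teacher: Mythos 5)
Your proof is correct and follows essentially the same route as the paper: both construct $\sh\alpha$ geometrically by rescaling a homeomorphism into the dyadic interval associated with~$\alpha$ and extending by the identity, with uniqueness coming from the fact that $\AAA$ generates~$F$. The only cosmetic difference is that the paper first defines $\sh0$ and $\sh1$ on the two halves of~$[0,1]$ and then obtains $\sh\alpha$ by recursive composition, whereas you conjugate directly by the affine bijection onto~$I_\alpha$ in one step.
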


\begin{proof}
For~$\ff$ in~$F$, let $\sh1(\ff)$ denote the homeomorphism obtained by rescaling~$\ff$, applying it in the interval~$[\frac12, 1]$, and completing with the identity on~$[0,\frac12]$. Then $\sh1$ is an endomorphism of~$F$, and it  maps~$\xx_\ii$ to~$\xx_{\ii+1}$ for every~$\ii$. Moreover, for every~$\beta$, the element~$\sh1(\ga\beta)$ is the rescaled version of~$\ga\beta$ applied in the interval~$[\frac12, 1]$. By definition, this is~$\ga{1\beta}$.

Symmetrically, for~$\ff$ in~$F$, let $\sh0(\ff)$ denote the homeomorphism obtained by rescaling~$\ff$, applying it in the interval~$[0, \frac12]$, and completing with the identity on~$[\frac12, 1]$. Then $\sh0$ is an endomorphism of~$F$, and, for every~$\beta$, the element~$\sh0(\ga\beta)$ is the rescaled version of~$\ga\beta$ applied in the interval~$[0, \frac12]$, hence it is~$\ga{0\beta}$.

Finally, we recursively define~$\sh\alpha$ for every~$\alpha$ by $\sh\ea = \mathrm{id}_F$ and,  for $\ii = 0,1$,  $\sh{\ii\alpha}(\ff) = \sh\ii(\sh\alpha(\ff))$. By construction, $\sh\alpha(\ga\beta) = \ga{\alpha\beta}$ holds for all~$\alpha, \beta$.
\end{proof}


\subsection{Presentation of~$F$ in terms of the elements~$\ga\alpha$}
\label{SS:Pres}

As the family $\{\xx_0, \xx_1\}$, which is $\{\ga\ea, \ga1\}$, generates the group~$F$, the family~$\AAA$ generates~$F$ as well. By using the presentation~\eqref{E:Pres1} or~\eqref{E:Pres2}, we could easily deduce a presentation of~$F$ in terms of the elements~$\ga\alpha$. However, we can obtain a more natural and symmetric presentation by coming back to trees and associativity, and exploiting the geometric meaning of the elements of~$\AAA$.

\begin{lemma}
\label{L:Pres3}
Say that two addresses~$\alpha, \beta$ are \emph{orthogonal}, written $\alpha \perp \beta$, if there exists~$\gamma$ such that $\alpha$ begins with~$\gamma0$ and $\beta$ begins with~$\gamma1$, or vice versa. Then all relations of the following family~$\RRR$ are satisfied in~$F$:
\begin{gather}
\label{E:Comm}
\ga\alpha \, \ga\beta = \ga\beta \, \ga\alpha \mbox{\qquad for $\alpha \perp \beta$},\\
\label{E:QComm}
\ga{\alpha11\beta} \, \ga\alpha = \ga\alpha \, \ga{\alpha1\beta},
\quad
\ga{\alpha10\beta} \, \ga\alpha = \ga\alpha \, \ga{\alpha01\beta},
\quad
\ga{\alpha0\beta} \, \ga\alpha = \ga\alpha \, \ga{\alpha00\beta},\\
\label{E:Pentagon}
\ga\alpha^2 =\ga{\alpha1} \, \ga\alpha \, \ga{\alpha0}.
\end{gather}
\end{lemma}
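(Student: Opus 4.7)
My plan is to verify each relation in $\RRR$ by exploiting the faithfulness of the partial action of $F$ on trees: by Proposition~\ref{P:Geometry} combined with the uniqueness in Proposition~\ref{P:Action}\ITEM1, to establish $\ff = \mg$ in $F$ it suffices to exhibit a single tree $\TT$ for which $\TT \act \ff$ and $\TT \act \mg$ are both defined and coincide. To cut down the case analysis, I would first use the shift endomorphism~$\sh\alpha$ of the preceding lemma to reduce every relation to the case of the empty prefix, since $\sh\alpha$ carries $\ga\beta$ to $\ga{\alpha\beta}$ and hence sends the $\ea$-version of any relation to its general $\alpha$-version.

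For~\eqref{E:Comm}, after writing $\alpha = \gamma0\alpha'$ and $\beta = \gamma1\beta'$ and reducing via $\sh\gamma$ to $\gamma = \ea$, the claim becomes $\ga{0\alpha'}\,\ga{1\beta'} = \ga{1\beta'}\,\ga{0\alpha'}$. Here $\ga{0\alpha'} = \sh0(\ga{\alpha'})$ and $\ga{1\beta'} = \sh1(\ga{\beta'})$ are piecewise linear homeomorphisms supported on the disjoint intervals $[0,\tfrac12]$ and $[\tfrac12,1]$, so they commute.

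For~\eqref{E:QComm}, I would work on a tree $\TT$ whose $\alpha$-subtree has the form $\TT_0 \OP (\TT_1 \OP \TT_2)$, with the $\TT_i$ chosen large enough to contain all required $\beta$-rotation sites. The bookkeeping reduces to the change-of-address rule under $\ga\alpha$: a node sitting at address $\alpha0\delta$, $\alpha10\delta$, or $\alpha11\delta$ before the rotation sits at $\alpha00\delta$, $\alpha01\delta$, or $\alpha1\delta$ respectively after it. For instance, applying $\ga{\alpha11\beta}$ and then $\ga\alpha$ first rotates inside $\TT_2$ and then moves $\TT_2$ to address $\alpha1$, producing the same tree as $\ga\alpha$ followed by $\ga{\alpha1\beta}$; the other two identities of~\eqref{E:QComm} are handled identically, with the address correspondence as indicated.

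For the pentagon~\eqref{E:Pentagon}, the same shift trick reduces matters to $\ga\ea^2 = \ga1\,\ga\ea\,\ga0$, which I would verify on the test tree $\TT = \et(\et(\et\et))$: both words describe the two maximal chains of the pentagonal lattice $\Tam3$ from $\et(\et(\et\et))$ to $((\et\et)\et)\et$ visible in Figure~\ref{F:Tam3}, so they agree on~$\TT$ and therefore in~$F$. The main obstacle is the address bookkeeping in~\eqref{E:QComm}, where one has to be careful not to confuse which half of the rotated subtree a given position lives in; everything else is a matter of choosing a sufficiently generic test tree.
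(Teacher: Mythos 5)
Your proposal is correct and follows essentially the same route as the paper: it verifies each relation on a single suitable test tree via the uniqueness in Proposition~\ref{P:Geometry}, uses disjoint supports for~\eqref{E:Comm}, the relocation of the $0$-, $10$-, and $11$-subtrees to $00$, $01$, and $1$ for~\eqref{E:QComm}, the pentagon of~$\Tam3$ for~\eqref{E:Pentagon}, and the shift endomorphisms~$\sh\alpha$ to reduce to the empty prefix. The only cosmetic difference is that the paper packages the quasi-commutation bookkeeping as the operator identity $\ga\ea \, \sh{11}(\ff) = \sh1(\ff) \, \ga\ea$ (and its $0$- and $10$-analogues) rather than tracking addresses on an explicit tree.
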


\begin{proof}
By Proposition~\ref{P:Geometry}, in order to prove that two elements~$\ff, \ff'$ of~$F$ coincide, it is enough to exhibit a tree~$\TT$ such that $\TT \act \ff$ and~$\TT \act \ff'$ are defined and equal. 

The commutation relations of type~\eqref{E:Comm} are trivial. If $\alpha$ and~$\beta$ are orthogonal, the $\alpha$- and $\beta$-subtrees are disjoint, and the result of applying rotations (as well as any transformations) in each of these subtrees does not depend on the order. So we have 
$$\sh\alpha(\ff) \, \sh\beta(\mg) = \sh\beta(\mg) \, \sh\alpha(\ff)$$ for all transformations~$\ff, \mg$ and, in particular, $\ga\alpha \, \ga\beta = \ga\beta \, \ga\alpha$. 

The quasi-commutation relations of type~\eqref{E:QComm} are more interesting. Assume that $\TT, \TT'$ are trees and $\ga\ea$ maps~$\TT$ to~$\TT'$. Then, by definition, the $1$-subtree of~$\TT'$ is a copy of the $11$-subtree of~$\TT$. Now, assume that $\ff$ is a (partial) mapping of~$\Tree$ to itself. Then, starting from~$\TT$, first applying~$\ga\ea$ and then applying~$\ff$ to the $11$-subtree leads to the same result as first applying~$\ff$ to the $1$-subtree and then applying~$\ga\ea$, see Figure~\ref{F:QuasiComm}. Moreover, if $\ff$ is a partial mapping, the result of one operation is defined if and only if the result of the other is. So, in all cases, we have
$$\ga\ea \, \sh{11}(\ff) = \sh1(\ff) \, \ga\ea.$$
Applying this to $\ff = \ga\beta$ then gives $\ga\ea \, \ga{11\beta} = \ga{1\beta} \, \ga\ea$. Shifting by~$\alpha$ this relation, we obtain $\ga\alpha \, \ga{\alpha11\beta} = \ga{\alpha1\beta} \, \ga\alpha$, the first relation of~\eqref{E:QComm}. Arguing similarly with the $0$- and $10$-subtrees in place of the $11$-subtree, one obtains the other relations of~\eqref{E:QComm}.

Finally, the relations of~\eqref{E:Pentagon} stem from the pentagon of Figure~\ref{F:Tam3}. As Figure~\ref{F:Pentagon} shows, the relation $\ga\ea^2 = \ga{1} \,  \ga\ea \, \ga{0}$ is satisfied in~$F$ and, therefore, so is its shifted version $\ga\alpha^2 = \ga{\alpha1} \,  \ga\alpha \, \ga{\alpha0}$ for every address~$\alpha$.
\end{proof}

\begin{figure}[htb]
\begin{picture}(65,40)(5,4)
\put(0,0){\includegraphics{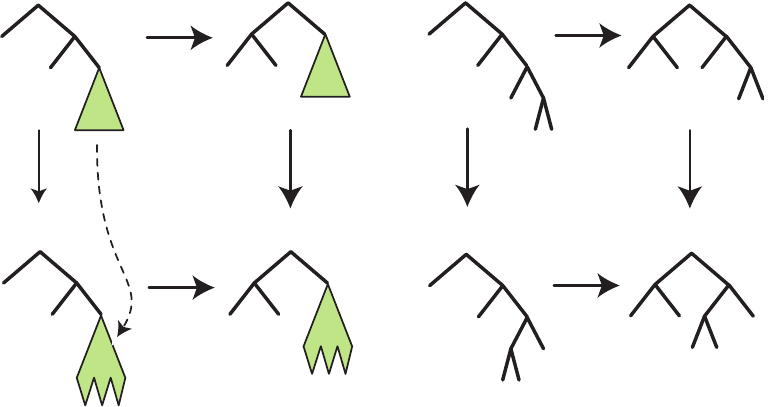}}
\put(5,42){$\TT$}
\put(31,42){$\TT'$}
\put(12,20){$\ff$}
\put(-6,24){$\sh{1\!1}\!(\!\ff)$}
\put(30,24){$\sh1(\ff)$}
\put(16,39){$\ga\ea$}
\put(16,14){$\ga\ea$}
\put(57,39){$\ga\ea$}
\put(57,14){$\ga\ea$}
\put(48.5,25){$\ga{11}$}
\put(71,25){$\ga1$}
\end{picture}
\caption{\sf\smaller Quasi-commutation relation in~$F$: the general scheme and one example.}
\label{F:QuasiComm}
\end{figure}

\begin{figure}[htb]
\begin{picture}(70,25)(0,4)
\put(0,0){\includegraphics{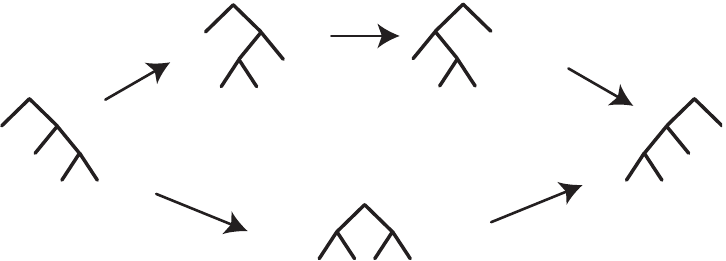}}
\put(11.5,20){$\ga1$}
\put(35,25){$\ga\ea$}
\put(60,20){$\ga0$}
\put(20,7){$\ga\ea$}
\put(51,7){$\ga\ea$}
\end{picture}
\caption{\sf\smaller Pentagon relation in the group~$F$.}
\label{F:Pentagon}
\end{figure}

It is then easy to check that the above relations actually exhaust the relations connecting the elements~$\ga\alpha$ in the group~$F$.

\begin{proposition}\cite{Dfg, Dhb}
\label{P:Pres3}
The group~$F$ admits the presentation~$\langle\AAA\mid\RRR\rangle$.
\end{proposition}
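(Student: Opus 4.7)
The plan is to prove that the natural map $\varphi \colon G \to F$ induced by $\ga\alpha \mapsto \ga\alpha$, where $G := \langle \AAA \mid \RRR \rangle$, is an isomorphism. Since $\AAA$ generates~$F$, this map is surjective, and it is well defined by Lemma~\ref{L:Pres3}. To prove injectivity I construct an explicit left inverse $\psi \colon F \to G$ and show $\psi \circ \varphi = \id_G$.

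To build $\psi$ I use the finite presentation~\eqref{E:Pres2} of~$F$, setting $\psi(\xx_0) = \ga\ea$ and $\psi(\xx_1) = \ga1$. The first of the quasi-commutation relations~\eqref{E:QComm}, applied at $\alpha = \ea$, gives $\ga\ea \ga1 \ga\ea\inv = \ga{11}$, and iterating yields $\ga\ea^k \ga1 \ga\ea^{-k} = \ga{1^{k+1}}$ in~$G$ for every~$k \geqslant 0$. Combining these identities with the further instance $\ga1 \ga{1^{k+1}} = \ga{1^{k+2}} \ga1$ (again the first relation of~\eqref{E:QComm}), each of the two commutator relators of~\eqref{E:Pres2} vanishes in~$G$, so $\psi$ is a genuine group homomorphism.

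The heart of the proof is to show by induction on~$\Lg\alpha$ that every generator $\ga\alpha$ of~$G$ equals some word $w_\alpha(\ga\ea, \ga1)$ modulo~$\RRR$. For $\Lg\alpha \leqslant 1$, only $\ga0$ needs attention, and the pentagon relation~\eqref{E:Pentagon} at the root gives $\ga0 = \ga\ea\inv \ga1\inv \ga\ea^2$. For $\Lg\alpha \geqslant 2$, I distinguish cases according to the digit pattern of~$\alpha$. If $\alpha = A\,00\,\gamma$ (\resp.\ $A\,11\,\gamma$) for some prefix~$A$, the third (\resp.\ first) relation of~\eqref{E:QComm} rewrites $\ga\alpha$ as $\ga A\inv \ga{A\,0\,\gamma} \ga A$ (\resp.\ $\ga A \ga{A\,1\,\gamma} \ga A\inv$), and both~$A$ and $A\,0\,\gamma$ (\resp.\ $A\,1\,\gamma$) are strictly shorter than~$\alpha$. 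When $\alpha$ is alternating with $\Lg\alpha = 2$, that is $\alpha \in \{01, 10\}$, one applies the pentagon at~$0$ or at~$1$: for $\alpha = 01$, from $\ga0^2 = \ga{01} \ga0 \ga{00}$ together with the already-available expression $\ga{00} = \ga\ea\inv \ga0 \ga\ea$, one recovers~$\ga{01}$ as a word in~$\ga\ea, \ga1$; the case $\alpha = 10$ is symmetric. If $\alpha$ is alternating with $\Lg\alpha \geqslant 3$, then $\alpha$ starts with~$010$ or~$101$; the second relation of~\eqref{E:QComm} with base~$0$ (\resp.\ $1$) rewrites $\ga\alpha$ as a conjugate of some~$\ga{\alpha'}$ whose address begins with~$001$ (\resp.\ $110$) and has the same length as~$\alpha$, reducing to the first case above. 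This last subcase is the main obstacle: the relevant instance of~\eqref{E:QComm} preserves address length, so it must be chained with a strictly length-reducing step before the induction hypothesis applies.

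Once every~$\ga\alpha$ has been written as $w_\alpha(\ga\ea, \ga1)$ in~$G$, applying~$\varphi$ shows $w_\alpha(\xx_0, \xx_1) = \ga\alpha$ in~$F$, whence $\psi(\ga\alpha) = w_\alpha(\ga\ea, \ga1) = \ga\alpha$ in~$G$; thus $\psi \circ \varphi = \id_G$, proving that $\varphi$ is injective and establishing the presentation.
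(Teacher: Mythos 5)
Your argument is correct, and its overall strategy---reducing $\langle\AAA\mid\RRR\rangle$ to a previously known presentation of~$F$ by a two-way map---is the same as the paper's. The execution differs in two respects, both worth recording. First, the paper's proof consists of the single observation that the relations of the infinite presentation~\eqref{E:Pres1} reappear inside~$\RRR$ as the first relations of~\eqref{E:QComm} along the addresses~$1^\ii$; taken literally this is not sufficient once the generating set has been enlarged, since one must also show that each new generator~$\ga\alpha$ is, \emph{as a consequence of~$\RRR$}, expressible in the old generators (otherwise $\langle\AAA\mid\RRR\rangle$ could surject properly onto~$F$). The paper supplies that half only in the proposition that follows, where formula~\eqref{E:Comput} is derived from~\eqref{E:QComm} and~\eqref{E:Pentagon} by a lexicographic induction; your write-up builds it into the proof itself, which makes it more self-contained. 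Second, you work with the finite presentation~\eqref{E:Pres2} rather than~\eqref{E:Pres1}, which obliges you to check the two commutator relators: your identities $\ga\ea^{k}\ga1\ga\ea^{-k}=\ga{1^{k+1}}=\ga1\ga{1^{k}}\ga1\inv$ do exactly that, since they show $\ga\ea\inv\ga1$ centralizes each~$\ga{1^{k}}$. Your induction on~$\Lg\alpha$ for expressing~$\ga\alpha$ in terms of~$\ga\ea,\ga1$ is also your own: the case split (an occurrence of~$00$ or~$11$ giving a strictly length-reducing conjugation; alternating addresses of length~$\geqslant3$ handled by one length-preserving instance of the second relation of~\eqref{E:QComm} chained with a length-reducing one) is exhaustive, each rewriting is a legitimate instance of~\eqref{E:QComm} or~\eqref{E:Pentagon}, and the recursion terminates. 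What the paper's route buys is an explicit closed formula for~$\ga\alpha$ in the~$\xx_\ii$; what yours buys is a cleanly packaged, fully verified proof of the presentation in one pass.
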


\begin{proof}
By Lemma~\ref{L:Pres3}, the relations of~$\RRR$ are valid in~$F$. Conversely, to prove that these relations make a presentation, it is sufficient to show that they include the relations of a previously known presentation. This is what happens as, for $1 \leqslant \ii < \nn$, the relation $\ga{1^\nn} \ga{1^{\ii-1}} = \ga{1^{\ii-1}} \ga{1^{\nn-1}}$, which is a reformulation of the relation $\xx_{\nn+1} \xx_\ii = \xx_\ii \xx_\nn$ of~\eqref{E:Pres1}, occurs in~$\RRR$ as the first relation of~\eqref{E:QComm} with $\alpha = 1^{\ii-1}$ and~$\beta = 1^{\nn-\ii}$.
\end{proof}

As an application, we compute the elements~$\ga\alpha$ in terms of the generators~$\xx_\ii$.

\begin{proposition}
If $\alpha$ is an address containing at least one~$0$, say $\alpha = 1^\ii0^{1+\ii_0} 10^{\ii_1}...1 0^{\ii_\mm}$ with $\mm \geqslant 0$ and $\ii, \ii_0 \wdots \ii_\mm \geqslant 0$, then, putting $\mg = \xx_{\ii+\mm+1}^{\ii_\mm + 1} \pdots \xx_{\ii+2}^{\ii_1+1}  \xx_{\ii+1}^{\ii_0+1}$, we have
\begin{equation}
\label{E:Comput}
\ga\alpha =  \mg\inv \xx_{\ii+\mm+2}\inv  \xx_{\ii+\mm+1} \mg.
\end{equation}
\end{proposition}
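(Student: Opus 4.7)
The plan is to prove~\eqref{E:Comput} by induction on~$\mm$, using the quasi-commutation relations of~\eqref{E:QComm} to peel blocks off the address and the pentagon relation~\eqref{E:Pentagon} to treat the atomic case.

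For the base case $\mm = 0$, the address is $\alpha = 1^\ii 0^{\ii_0+1}$, and the claim reads $\ga\alpha = \xx_{\ii+1}^{-(\ii_0+1)} \xx_{\ii+2}\inv \xx_{\ii+1}^{\ii_0+2}$. First, the pentagon relation applied at~$1^\ii$, combined with the identifications $\ga{1^\ii} = \xx_{\ii+1}$ and $\ga{1^\ii 1} = \xx_{\ii+2}$, gives $\ga{1^\ii 0} = \xx_{\ii+1}\inv \xx_{\ii+2}\inv \xx_{\ii+1}^2$. Next, the third relation of~\eqref{E:QComm} with $\alpha = 1^\ii$ reads $\ga{1^\ii 0^{k+1} \gamma} = \xx_{\ii+1}\inv \ga{1^\ii 0^k \gamma} \xx_{\ii+1}$ for every $k \geqslant 1$ and every string $\gamma$; iterating $\ii_0$ times with $\gamma$ empty yields the base case.

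For the inductive step $\mm \geqslant 1$, the same iteration, now with $\gamma = 1 0^{\ii_1} \pdots 1 0^{\ii_\mm}$, gives
$$\ga\alpha = \xx_{\ii+1}^{-\ii_0}\, \ga{1^\ii 0 \cdot 1 0^{\ii_1} \pdots 1 0^{\ii_\mm}}\, \xx_{\ii+1}^{\ii_0}.$$
The second relation of~\eqref{E:QComm}, applied with $\alpha = 1^\ii$ and $\beta = 0^{\ii_1} 1 0^{\ii_2} \pdots 1 0^{\ii_\mm}$, converts the central $01$ into $10$ and yields
$$\ga{1^\ii 0 \cdot 1 \beta} = \xx_{\ii+1}\inv\, \ga{1^{\ii+1} 0^{\ii_1+1} 1 0^{\ii_2} \pdots 1 0^{\ii_\mm}}\, \xx_{\ii+1}.$$
Combining, I obtain $\ga\alpha = \xx_{\ii+1}^{-(\ii_0+1)} \ga{\alpha^*} \xx_{\ii+1}^{\ii_0+1}$ with $\alpha^* = 1^{\ii+1} 0^{\ii_1+1} 1 0^{\ii_2} \pdots 1 0^{\ii_\mm}$. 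The address $\alpha^*$ has the canonical form of the statement with $(\ii, \mm)$ replaced by $(\ii+1, \mm-1)$ and the exponent sequence $(\ii_0 \wdots \ii_\mm)$ shifted to $(\ii_1 \wdots \ii_\mm)$, so the induction hypothesis gives $\ga{\alpha^*} = (\mg^*)\inv \xx_{\ii+\mm+2}\inv \xx_{\ii+\mm+1} \mg^*$ with $\mg^* = \xx_{\ii+\mm+1}^{\ii_\mm+1} \pdots \xx_{\ii+2}^{\ii_1+1}$. Since $\mg = \mg^* \xx_{\ii+1}^{\ii_0+1}$, substituting back produces~\eqref{E:Comput} after one regrouping.

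No single step is deep; the real content is organizational, namely verifying that $\alpha^*$ really has the canonical form assumed by the induction (so that the former central~$1$ is absorbed into the new leading block $1^{\ii+1}$) and that the indices produced by iterated substitution align cleanly with the factors of the stated~$\mg$.
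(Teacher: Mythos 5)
Your proof is correct and follows essentially the same route as the paper's: the pentagon relation at~$1^\ii$ for the atomic case, the third quasi-commutation relation of~\eqref{E:QComm} to absorb the extra $0$'s, and the second one to convert the leading $01$ into $10$ and decrease~$\mm$. The only difference is organizational---you run a single induction on~$\mm$, keeping $\ii$ and $\ii_0$ general throughout, whereas the paper first reduces to $\ii=0$ via the shift~$\sh1$ and then inducts lexicographically on~$(\mm,\ii_0)$---and all of your index computations check out.
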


\begin{proof}
It is sufficient to establish the formula in the case $\ii = 0$ as, then, applying $\sh1^\ii$ gives the general case. We use induction on~$(\mm, \ii_0)$ with respect to the lexicographical (well)-order, that is, $(\mm', \ii'_0)$ is smaller than $(\mm, \ii_0)$ if and only if we have either $\mm' < \mm$, or $\mm' = \mm$ and~$\ii'_0 < \ii_0$. 

Assume first $(\mm, \ii_0) = (0, 0)$, that is, $\alpha = 0$. Then the pentagon relation at~$\ea$ gives
$$\ga\alpha = \ga0 = \ga\ea\inv \ga1\inv \ga\ea^2 = \xx_1\inv (\xx_2\inv \xx_1) \xx_1,$$
which is the expected instance of~\eqref{E:Comput}. 
Assume now $\mm \geqslant 1$ and $\ii_0 = 0$, that is $\alpha = 010^{\ii_1} \pdots 10^{\ii_\mm}$. Then the quasi-commutation relation for $\ea$ and~$01\beta$ gives 
\begin{equation}
\label{E:Comput2}
\ga\alpha = \ga{010^{\ii_1} \pdots 10^{\ii_\mm}} = \ga\ea\inv \ga{10^{1+\ii_1}10^{\ii_2} \pdots 10^{\ii_\mm}} \ga\ea = \xx_1\inv  (\ga{10^{1+\ii_1}10^{\ii_2} \pdots 10^{\ii_\mm}}) \xx_1.
\end{equation}
The number of non-initial symbols~$1$ in $0^{1+\ii_1}10^{\ii_2} \pdots 10^{\ii_\mm}$ is $\mm-1$. As $(\mm-1, \ii_1)$ is smaller than~$(\mm, 0)$, the induction hypothesis gives $\ga{0^{1+\ii_1}10^{\ii_2} \pdots 10^{\ii_\mm}} = \mg\inv \xx_{\mm+1}\inv \xx_{\mm} \mg$ with $\mg = \xx_{\mm}^{\ii_\mm + 1} \pdots  \xx_{2}^{\ii_2+1} \xx_{1}^{\ii_1+1}$. Using~$\sh1$, we get $\ga{10^{1+\ii_1}10^{\ii_2} \pdots 10^{\ii_\mm}} = \hh\inv \xx_{\mm+2}\inv \xx_{\mm+1} \hh$ with $\hh = \xx_{\mm+1}^{\ii_\mm + 1} \pdots  \xx_3^{\ii_2+1} \xx_2^{\ii_1+1}$. Merging with~\eqref{E:Comput2}, we deduce the expected value for~$\ga\alpha$.  

Assume finally $\ii_0 \geqslant 1$. Then the quasi-commutation relation for $\ea$ and~$00\beta$ gives 
$$\ga\alpha = \ga{0^{1+\ii_0}10^{\ii_1} \pdots 10^{\ii_\mm}} = \ga\ea\inv \ga{0^{\ii_0}10^{\ii_1} \pdots 10^{\ii_\mm}} \ga\ea = \xx_1\inv \ga{0^{\ii_0}10^{\ii_1} \pdots 10^{\ii_\mm}} \xx_1.$$
The pair $(\mm, \ii_0-1)$ is smaller than the pair~$(\mm, \ii_0)$, so the induction hypothesis gives $\ga\alpha = \xx_1\inv (\mg\inv \xx_{\mm+2}\inv  \xx_{\mm+1} \mg) \xx_1$ with $\mg = \xx_{\mm+1}^{\ii_\mm + 1} \pdots  \xx_2^{\ii_1+1} \xx_1^{\ii_0}$, again the expected instance of~\eqref{E:Comput}. So the induction is complete.
\end{proof}

\begin{example}
Consider $\alpha = 01100$, which corresponds to $\mm = 2$, and $\ii = \ii_0 = \ii_1 =\nobreak 0$, and $\ii_2 = 2$. Then we find $\ga\alpha = \mg\inv \xx_4\inv \xx_3 \mg$ with $\mg = \xx_3^3 \xx_2 \xx_1$, that is, $\ga{01100}$ is equal to~$\xx_1\inv \xx_2\inv \xx_3^{-3} \xx_4\inv \xx_3^4 \xx_2 \xx_1$.
\end{example}


\section{A lattice structure on the Thompson group~$F$}
\label{S:Lattice}

Here comes the core of our study, namely the investigation of the submonoid~$\Fs$ of~$F$ generated by the elements~$\ga\alpha$. The main result is that $\Fs$ has the structure of a lattice when equipped with its divisibility relation, and that this lattice is closely connected with the Tamari lattices, which occur as initial sublattices. 

These results are not trivial, as, in particular, determining a presentation of~$\Fs$ is not so easy. Our approach relies on using subword reversing, a general method of combinatorial group theory that turns out to be well suited for~$\Fs$. One of the outcomes is a new proof (one more!) of the fact that Tamari posets are lattices. 

The section is organized as follows. The symmetric Thompson monoid~$\Fs$ is introduced in Subsection~\ref{SS:Fs}, and it is investigated in Subsection~\ref{SS:Rev} using subword reversing. The lattice structure on~$\Fs$ and its connection with the Tamari lattices are described in Subsection~\ref{SS:Lattice}. Finally, a few results about the algorithmic complexity of the reversing process are gathered in Subsection~\ref{SS:Algo}.


\subsection{The symmetric Thompson monoid~$\Fs$}
\label{SS:Fs}

Once new generators~$\ga\alpha$ of the Thompson group~$F$ have been introduced, it is natural to investigate the submonoid generated by these elements.

\index{symmetric!Thompson!monoid}
\begin{definition}
The \emph{symmetric Thompson monoid}~$\Fs$ is the submonoid of~$F$ generated by the elements~$\ga\alpha$ with~$\alpha$ a binary address.
\end{definition}

The family~$\AAA$ of all elements~$\ga\alpha$ is a sort of closure of the family of standard generators~$\xx_\ii$ under all local left--right symmetries, so the above terminology is natural. Another option could be to call~$\Fs$ the \emph{dual Thompson monoid} as the relation of~$\Fp$ and~$\Fs$ is reminiscent of the relation of the standard braid monoids and the dual braid monoids generated by the Birman--Ko--Lee braids.

Although straightforward, the following connection is essential for our purpose:

\begin{lemma}
\label{L:Compat}
For all trees $\TT, \TT'$, the following are equivalent

\ITEM1 We have $\TT \leT \TT'$ in the Tamari order;

\ITEM2 There exists~$\ff$ in~$\Fs$ satisfying $\TT' = \TT \act \ff$.
\end{lemma}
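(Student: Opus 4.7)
The plan is that this lemma should follow almost immediately from two facts already established: first, by the very definition of $\ga\alpha$, the action of $\ga\alpha$ on a tree is precisely a single left-rotation at the address $\alpha$ (when defined), and second, the Tamari order $\leT$ is by definition the reflexive-transitive closure of the single-left-rotation relation. The connection between iterated products in $\Fs$ and iterated actions is furnished by Proposition~\ref{P:Action}\ITEM3.

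For the direction \ITEM1 $\Rightarrow$ \ITEM2, I would start from a chain of left-rotations $\TT = \TT_0 \to \TT_1 \to \cdots \to \TT_n = \TT'$ witnessing $\TT \leT \TT'$, where the $i$-th rotation is applied at some address $\alpha_i$. By the definition of $\ga{\alpha_i}$, we have $\TT_{i-1} \act \ga{\alpha_i} = \TT_i$. Set $\ff = \ga{\alpha_1} \cdots \ga{\alpha_n}$, which lies in $\Fs$ by definition of the submonoid. Iterating Proposition~\ref{P:Action}\ITEM3 then yields $\TT \act \ff = \TT'$.

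For the direction \ITEM2 $\Rightarrow$ \ITEM1, I would write a given $\ff \in \Fs$ as a product $\ff = \ga{\alpha_1} \cdots \ga{\alpha_n}$ of generators. The key step is to show that each partial action $\TT_i := \TT \act (\ga{\alpha_1} \cdots \ga{\alpha_i})$ is defined. Here I would use the ``either both defined or neither'' clause of Proposition~\ref{P:Action}\ITEM3: since $\TT \act \ff$ is assumed defined, associativity of the partial action forces $\TT \act \ga{\alpha_1}$ to be defined and equal to some tree $\TT_1$, and then $\TT_1 \act (\ga{\alpha_2} \cdots \ga{\alpha_n})$ must be defined; an easy induction on $n$ propagates this to all intermediate products. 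Each step $\TT_{i-1} \to \TT_i$ is then a left-rotation at $\alpha_i$, which exhibits the desired chain from $\TT$ to $\TT'$ in the Tamari order.

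I do not expect any real obstacle: the argument is essentially bookkeeping. The only slightly delicate point is the definedness propagation in the backward direction, but this is handled cleanly by the ``both defined or neither'' formulation of Proposition~\ref{P:Action}\ITEM3 together with induction on the length of the chosen word for $\ff$.
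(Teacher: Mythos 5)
Your proof is correct and follows essentially the same route as the paper, which disposes of the lemma in two sentences by observing that $\TT \leT \TT'$ means a finite sequence of left-rotations and that a left-rotation at~$\alpha$ is exactly the action of~$\ga\alpha$. Your additional bookkeeping (iterating Proposition~\ref{P:Action}\ITEM3 and propagating definedness of the partial products via the ``both defined or neither'' clause) just makes explicit what the paper leaves implicit.
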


\begin{proof}
By definition, $\TT \leT \TT'$ holds if there exists a finite sequence of left-rotations transforming~$\TT$ into~$\TT'$. Now applying the left-rotation at~$\alpha$ is letting~$\ga\alpha$ act.
\end{proof}

In order to investigate the monoid~$\Fs$ and its connection with the Tamari lattices, it will be necessary to first know a presentation of~$\Fs$. Owing to Propositions~\ref{P:Fraction} and~\ref{P:Pres3}, the following result should not be a surprise.

\begin{proposition}\cite{Dfg, Dhb}
\label{P:DualFraction}
The monoid~$\Fs$ admits the presentation~$\langle\AAA\mid\RRR\rangle^{\scriptscriptstyle+}$, and $F$ is a group of right-fractions for~$\Fs$ (that is, every element of~$F$ can be expressed as $\ff \mg\inv$ with $\ff, \mg$ in~$\Fs$).
\end{proposition}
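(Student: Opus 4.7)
The plan is to work with the abstract monoid $M := \MON\AAA\RRR$ and compare it with the image of the obvious morphism $\pi: M \to F$. Since every relation of~$\RRR$ holds in $F$ by Lemma~\ref{L:Pres3}, the morphism $\pi$ is well-defined, and its image is by definition $\Fs$. So the proposition amounts to two assertions: that $\pi$ is injective (giving the monoid presentation $M \cong \Fs$), and that every element of $F$ can be written as $\ff \mg\inv$ with $\ff, \mg \in \Fs$. Both will follow at once if we can verify that $M$ is cancellative and admits common right multiples: Ore's theorem then embeds $M$ in a group of right fractions, which by Proposition~\ref{P:Pres3} must be $F$ (same presentation).

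My main tool will be \emph{subword reversing}. First I would check that $\RRR$ is a \emph{complemented} presentation, i.e.\ that for every ordered pair $(\ga\alpha, \ga\beta)$ of distinct generators there is exactly one relation in~$\RRR$ of the form $\ga\alpha\, u = \ga\beta\, v$ with $u, v$ positive. This follows from a case analysis on the relative position of $\alpha$ and $\beta$: when $\alpha \perp \beta$ the commutation~\eqref{E:Comm} applies; otherwise, after possibly swapping roles, $\beta = \alpha \gamma$ for some nonempty~$\gamma$, and the four subcases $\gamma = 1$, $\gamma = 11\delta$, $\gamma = 10\delta$, $\gamma = 0\delta$ yield respectively the pentagon~\eqref{E:Pentagon} and the three quasi-commutation rules of~\eqref{E:QComm}.

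Next I would establish termination of the reversing process and, simultaneously, the existence of common right multiples. The natural witness is the action of $F$ on trees: for any positive words $u, v$, the last clause of Proposition~\ref{P:Action} provides a tree~$\TT$ on which both $u$ and $v$ act. By Lemma~\ref{L:Compat}, $\TT \act u$ and $\TT \act v$ lie above~$\TT$ in the Tamari order, and the classical lattice structure of~$\Tam\infty$ from~\cite{Tam} gives them a common upper bound, which, by Proposition~\ref{P:Geometry}, is reached from $\TT \act u$ and from $\TT \act v$ by positive elements $u', v'$ of~$\Fs$ satisfying $u u' = v v'$ in~$F$. A routine induction then shows that the reversing procedure computes exactly such common multiples in finitely many steps, thereby yielding both termination and common right multiples inside~$M$.

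The main obstacle is cancellativity of~$M$, which I would handle via the \emph{cube condition} of the reversing framework: a purely combinatorial condition on triples of generators which, together with termination and complementation, forces cancellativity. Checking it for~$\RRR$ is a bookkeeping verification running through all configurations of three addresses $\alpha, \beta, \gamma$, and this is where the argument threatens to become tedious. Fortunately, once translated through the tree action, the cube condition amounts to the associativity of joins of three trees in the Tamari lattice~$\Tam\infty$, which is automatic from its lattice structure. With complementation, termination, and the cube condition in hand, the general theory of subword reversing (as in~\cite{Dfg, Dhb}) delivers cancellativity of~$M$, and Ore's theorem then concludes both the monoid presentation of~$\Fs$ and the description of~$F$ as its group of right-fractions.
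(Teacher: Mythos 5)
Your overall skeleton is exactly the paper's: reduce to cancellativity and existence of common right-multiples in the abstract monoid $M = \MON\AAA\RRR$, then apply Ore's theorem together with Proposition~\ref{P:Pres3}. The complementedness check (exactly one relation $\ga\alpha\pdots = \ga\beta\pdots$ per pair) is also correct. But both substantive steps are gapped, and both gaps have the same source: you import facts about trees and the Tamari order, which live in the quotient $\Fs \subseteq F$, into the abstract monoid $M$, which is not yet known to coincide with~$\Fs$. Concretely, the tree action plus Tamari's theorem gives you positive words $\uu', \vv'$ with $\uu\uu' = \vv\vv'$ \emph{in $F$}; to conclude $\uu\uu' \equivp_\RRR \vv\vv'$, hence a common right-multiple \emph{in $M$}, is precisely the presentation statement you are trying to prove. (The paper flags this circularity explicitly just before Proposition~\ref{P:Common}.) Your escape route --- ``a routine induction shows that the reversing procedure computes such multiples in finitely many steps'' --- does not work: termination of reversing is a property of the syntactic rewriting process and is not implied by the existence of common multiples in the group; the presentation $(\ma, \mb,\ \ma\mb = \mb^2\ma)$ has common right-multiples yet non-terminating reversing. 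The actual proof of termination requires exhibiting the explicit family~$\AAAh$ of words $\gah\alpha\rr = \ga\alpha \ga{\alpha0} \pdots \ga{\alpha0^{\rr-1}}$ and verifying the closure formulas~\eqref{E:Closure} (the extended pentagon relations); nothing in your argument produces this.

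The same objection applies to the cube condition. As stated in~\eqref{E:Complete}, it asserts that certain words are \emph{right-$\RRR$-reversible to the empty word} --- a syntactic statement about the rewriting procedure, strictly stronger than equality of the corresponding elements in $M$ or in~$F$. Deducing it from ``associativity of joins in~$\Tam\infty$'' presupposes that reversing detects equivalence, i.e.\ completeness of the presentation, which is exactly what the cube condition is used to establish; you would also need the $\equivp_\RRR$-invariant superadditive witness~$\wit$, the other half of the completeness criterion, which requires the counting argument with~$\MZ{}$. The verification has to be carried out at the level of words; the paper's observation that only the triples conjugate to $(\ea, 1, 11)$ are critical is what makes it feasible. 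If you insist on using the known lattice structure of the Tamari posets as an input, the honest route is the paper's first (sketched) proof via the words~$\Blue\TT$, which establishes injectivity of $M \to F$ directly --- but that is a different argument, and it forfeits the new proof of the lattice property that the reversing approach is designed to deliver.
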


However, the proof of Proposition~\ref{P:DualFraction} is more delicate than the proof of Proposition~\ref{P:Fraction}, and no very simple argument is known. 

\begin{proof}[Sketch of the proof developed in~\cite{Dfg, Dhb}]
In order to prove that the relations of~$\RRR$ generate all relations connecting the elements~$\ga\alpha$ in the monoid~$\Fp$, one introduces, for every size~$\nn$ tree~$\TT$, an explicit sequence~$\Blue\TT$ of elements~$\ga\alpha$ satisfying $\CbR\nn \act \Blue\TT = \TT$---as will be made in the proof of Proposition~\ref{P:Connection} below. The point is then to show that, if $\TT' = \TT \act \ww$ holds, then the relations of~$\RRR$ are sufficient to establish the equivalence of~$\Blue{\TT'}$ and~$\Blue\TT \ww$. Then, if two $\AAA$-words $\uu, \vv$ represent the same element of~$\Fs$, and $\TT$ is a tree such that both $\TT \act \uu$ and~$\TT \act \vv$ are defined, the above argument shows that $\Blue\TT \uu$ and $\Blue\TT \vv$ are $\RRR$-equivalent, since both are $\RRR$-equivalent to~$\Blue{\TT \act \uu}$. Provided $\RRR$-equivalence is known to allow left-cancellation, one deduces that $\uu$ and~$\vv$ are $\RRR$-equivalent, as expected. 
\end{proof}

Here we shall propose a new proof, which is more lattice-theoretic in that it exclusively relies on the so-called subword reversing method, which we shall see below is directly connected with the Tamari lattice operations. Instead of working with~$\Fs$, we investigate the abstract monoid~$\MON\AAA\RRR$ defined by the presentation~$(\AAA, \RRR)$ of Proposition~\ref{P:Pres3}. A priori, as $\Fs$ is generated by~$\AAA$ and satisfies the relations of~$\RRR$, we only know that $\Fs$ is a quotient of~$\MON\AAA\RRR$. 


\index{left-divisor}
\index{right-multiple}
\begin{definition}
Assume that $\MM$ is a monoid. For $\ff, \mg$ in~$\MM$, we say that $\ff$ \emph{left-divides}~$\mg$, or that $\mg$ is a \emph{right-multiple} of~$\ff$, written $\ff \dive \mg$, if $\ff \mg' = \mg$ holds for some~$\mg'$ of~$\MM$. We use $\Div(\ff)$ for the family of all left-divisors of~$\ff$.
\end{definition}

It is standard that the left-divisibility relation is a partial pre-ordering. Moreover, if $\MM$ contains no invertible element except~$1$, this partial pre-ordering is a partial ordering, that is, the conjunction of $\ff \dive \mg$ and $\mg \dive \ff$ implies~$\ff = \mg$.

\begin{lemma}
\label{L:Strategy}
In order to establish Proposition~\ref{P:DualFraction}, it is sufficient to prove that the monoid~$\MON\AAA\RRR$ is cancellative and any two elements admit a common right-multiple.
\end{lemma}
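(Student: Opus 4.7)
The plan is to combine \"Ore's theorem---that a cancellative monoid with the common right-multiple property embeds in a group of right-fractions---with the group presentation of~$F$ established in Proposition~\ref{P:Pres3}.

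First, since the elements $\ga\alpha$ satisfy the relations of~$\RRR$ in~$F$ by Lemma~\ref{L:Pres3}, the universal property of monoid presentations provides a canonical surjective homomorphism $\pi : \MON\AAA\RRR \twoheadrightarrow \Fs$ sending each abstract generator to the corresponding~$\ga\alpha$. The goal then splits into two assertions: that $\pi$ is injective, and that every element of~$F$ can be written as $\ff \mg\inv$ with $\ff, \mg \in \Fs$.

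Next, under the stated hypotheses---$\MON\AAA\RRR$ is cancellative and any two of its elements admit a common right-multiple---\"Ore's theorem yields an embedding $\iota : \MON\AAA\RRR \hookrightarrow G$ into a group~$G$ in which every element has the form $\iota(\ff) \iota(\mg)\inv$. The group~$G$ is generated by~$\iota(\AAA)$ and the relations of~$\RRR$ hold in it, so it is a quotient of $\langle\AAA\mid\RRR\rangle$; conversely, the universal property of~$G$ as the group of fractions (any monoid map from $\MON\AAA\RRR$ into a group factors uniquely through~$G$) produces an inverse map. Hence $G \cong \langle\AAA\mid\RRR\rangle$, which by Proposition~\ref{P:Pres3} is precisely~$F$.

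Finally, the composition $\MON\AAA\RRR \hookrightarrow G \cong F$ has image equal to the submonoid generated by the~$\ga\alpha$, namely~$\Fs$, and factors through~$\pi$; since the composition is injective, so is~$\pi$, which establishes the presentation of~$\Fs$. The right-fraction statement for~$\Fs$ then translates directly from the analogous property of~$G$ via the identification $G \cong F$. The main obstacle in this chain is not the present reduction, which is essentially a packaging of well-known theorems, but rather the two hypotheses themselves---cancellativity and existence of common right-multiples in $\MON\AAA\RRR$---which the subword reversing machinery developed in the next subsection is designed to supply.
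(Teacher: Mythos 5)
Your argument is correct and follows essentially the same route as the paper: both invoke Ore's theorem to embed $\MON\AAA\RRR$ in a group of right-fractions and then identify that group with~$F$ via Proposition~\ref{P:Pres3}. The only difference is that you explicitly verify (via the two universal-property arguments) that the Ore group of fractions inherits the presentation $\GR\AAA\RRR$, whereas the paper simply cites this as part of Ore's theorem; either way the reduction is sound.
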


\begin{proof}
A classical result of Ore (see for instance~\cite{ClP}) says that, if a monoid~$\MM$ is cancellative and any two elements of~$\MM$ admit a common right-multiple, then $\MM$ embeds in a group of right-fractions~$\GG$. Moreover, if $\MM$ admits the presentation $\MON\mA\RR$, then $\GG$ admits the presentation $\GR\mA\RR$. So, if the hypotheses of the lemma are satisfied, then the monoid~$\MON\AAA\RRR$ embeds in a group of fractions that admits the presentation~$\GR\AAA\RRR$. By Proposition~\ref{P:Pres3}, the group~$\GR\AAA\RRR$ is the group~$F$. Therefore, $\MON\AAA\RRR$ is isomorphic to the submonoid of~$F$ generated by~$\AAA$, that is, to~$\Fs$. Hence $\Fs$ admits the expected presentation, and $F$ is a group of right-fractions for~$\Fs$.
\end{proof} 

\subsection{Subword reversing}
\label{SS:Rev}

In order to apply the strategy of Lemma~\ref{L:Strategy}, we have to prove that the presented monoid~$\MON\AAA\RRR$ is cancellative and any two elements of~$\MON\AAA\RRR$ admit a common right-multiple. The subword reversing method \cite{Dff, Dgp} proves to be relevant. We recall below the basic notions, and refer to~\cite{Dia} or~\cite[Section~II.4]{Garside} for a more complete description. 

Hereafter, words in an alphabet~$\mA$ are called \emph{(positive) $\mA$-words}, whereas words in the alphabet~$\mA \cup \mA\inv$, where $\mA\inv$ consists of a copy~$\ma\inv$ for each letter~$\ma$ of~$\mA$, are called \emph{signed $\mA$-words}. We say that a group presentation~$(\mA, \RR)$ is \emph{positive} if all relations in~$\RR$ have the form~$\uu = \vv$ where $\uu$ and~$\vv$ are nonempty positive $\mA$-words. We denote by~$\MON\mA\RR$ and by~$\GR\mA\RR$ the monoid and the group presented by~$(\mA, \RR)$, respectively, and we use~$\equivp_\RR$ (\resp.\ $\equiv_\RR$) for the congruence on positive $\mA$-words (\resp.\ on signed $\mA$-words) generated by~$\RR$. Finally, for~$\ww$ a signed $\mA$-word, we denote by~$\cl\ww$ the element of~$\GR\mA\RR$ represented by~$\ww$, that is, the $\equiv_\RR$-class of~$\ww$.

\index{right-reversing}
\begin{definition}
Assume that $(\mA, \RR)$ is a positive presentation. If $\ww, \ww'$ are signed $\mA$-words, we say that $\ww$ is \emph{right-$\RR$-reversible} to~$\ww'$ in one step if $\ww'$ is obtained from~$\ww$ either by deleting some length~$2$ subword~$\ma\inv \ma$ or by replacing some length~$2$ subword~$\ma\inv \mb$ with a word~$\vv \uu\inv$ such that $\ma \vv = \mb \uu$ is a relation of~$\RR$. We write $\ww \rev_\RR \ww'$ if $\ww$ is right-$\RR$-reversible to~$\ww'$ in finitely many steps.
\end{definition}

The principle of right-$\RR$-reversing is to use the relations of~$\RR$ to push the negative letters (those with exponent~$-1$) to the right, and the positive letters (those with exponent~$+1$) to the left. The process can be visualized in diagrams as in Figure~\ref{F:Rev}.

\begin{example}
Consider the presentation~$(\AAA, \RRR)$, which is positive. Let~$\ww$ be the signed $\AAA$-word $\ga1\inv \ga\ea \ga{00}\inv \ga1$. Then $\ww$ contains two negative--positive length~$2$ subwords, namely~$\ga1\inv \ga\ea$ and $\ga{00}\inv \ga1$.  There exists in~$\RRR$ a unique relation of the form $\ga1 \pdots = \ga\ea \pdots$, namely $\ga1 \ga\ea \ga0 = \ga\ea^2$, and a unique relation $\ga{00} \pdots = \ga1 \pdots$, namely $\ga{00} \ga1 = \ga1 \ga{00}$. Therefore, there exists two ways to right-$\RRR$-reverse~$\ww$, namely replacing~$\ga1\inv \ga\ea$ with $\ga\ea \ga{00}\inv \ga\ea\inv$ and obtaining $\ww_1 = \ga\ea \ga0 \ga\ea\inv \ga1$, or replacing $\ga{00}\inv \ga1$ with $\ga1 \ga{00}\inv$ and obtaining $\ww'_1 = \ga1\inv \ga\ea \ga1 \ga{00}\inv$. The words~$\ww_1$ and~$\ww'_1$ each contain a unique negative--positive length~$2$ subword, and reversing it leads in both cases to $\ww_2 = \ga\ea \ga0 \ga\ea\inv \ga1 \ga{00}\inv$. The word~$\ww_2$ contains a unique negative--positive length two subword and reversing it leads to $\ww_3 = \ga\ea \ga0 \ga\ea \ga0\inv \ga\ea\inv \ga{00}\inv$. As the latter word contains no negative--positive subword, no further right-reversing is possible. See Figure~\ref{F:Rev}.
\end{example}

\begin{figure}[htb]
\begin{picture}(36,40)(0,0)
\put(-45,-46){\includegraphics{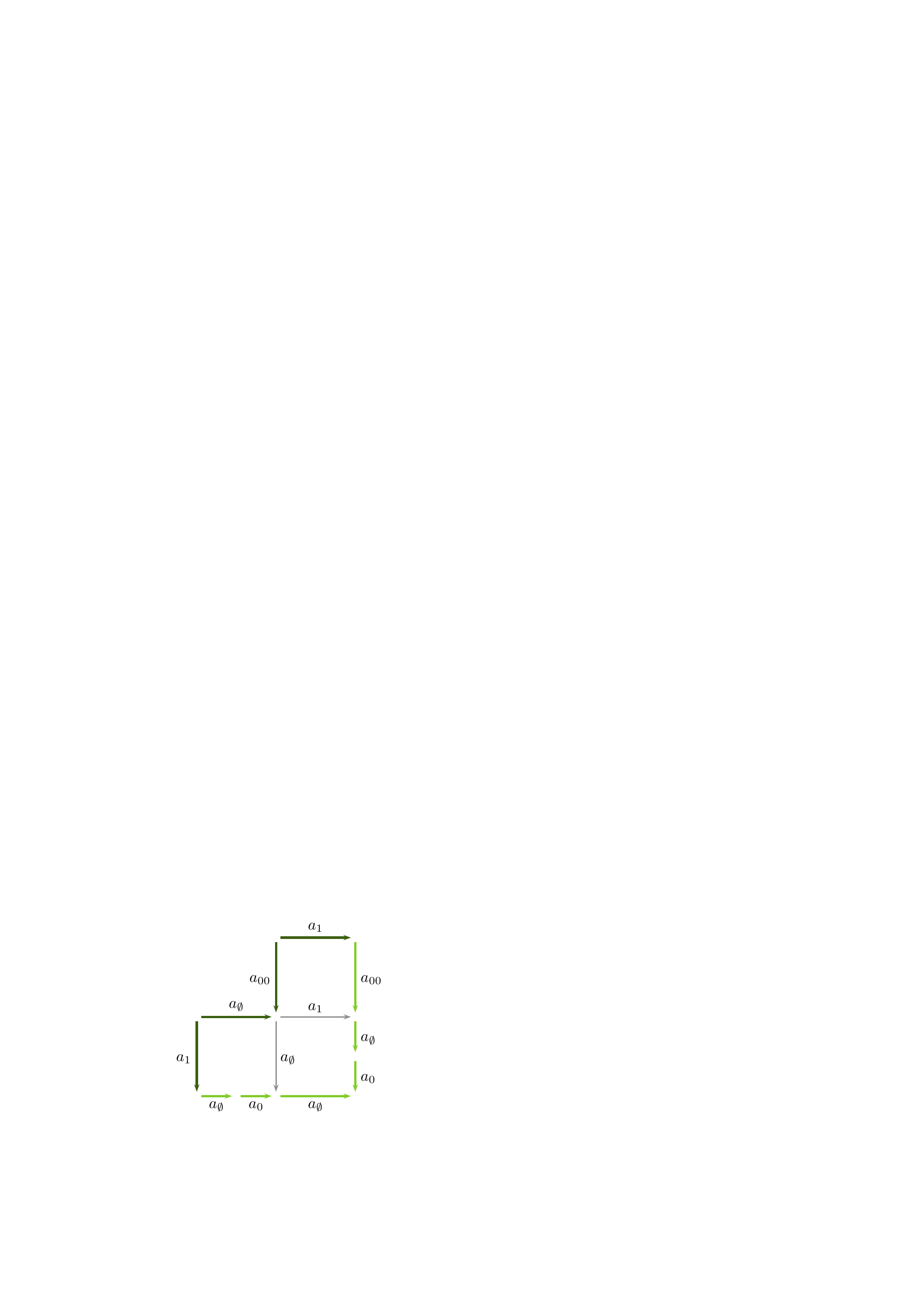}}
\end{picture}
\caption{\sf\smaller Right-$\RRR$-reversing of the signed $\AAA$-word $\ga1\inv \ga\ea \ga{00}\inv \ga1$: we draw the initial word as a zigzag path (here in dark green) from SW to NE by associating with every letter~$\ma$ a horizontal arrow labeled~$\ma$ and every letter~$\ma\inv$ a vertical arrow labeled~$\ma$ (crossed in the wrong direction); then reversing $\ma\inv \mb$ to~$\vv \uu\inv$ corresponds to closing the open pattern made by a vertical $\ma$-arrow and a horizontal $\mb$-arrow with the same source by adding horizontal arrows labeled~$\vv$ and vertical arrows labeled~$\uu$; the final word corresponds to the rightmost path from the SW corner to the NE corner, here $\ga\ea \ga0 \ga\ea \ga0\inv \ga\ea\inv \ga{00}\inv$\break (light green).}
\label{F:Rev}
\end{figure}

\vspace{-2mm}

It is easy to see that, if $(\mA, \RR)$ is a positive presentation and $\ww, \ww'$ are signed $\mA$-words, then $\ww \rev_\RR \ww'$ implies $\ww \equiv_\RR \ww'$ and that, if $\uu, \vv, \uu', \vv'$ are positive $\mA$-words, then $\uu\inv \vv \rev_\RR \vv' \uu'{}\inv$ implies $\uu \vv' \equivp_\RR\nobreak \vv \uu'$. In particular, using~$\eps$ for the empty word, 
\begin{equation}
\label{E:RevEquiv}
\uu\inv \vv \rev_\RR \eps \mbox{\quad  implies \quad} \uu \equivp_\RR \vv.
\end{equation} 
In general, \eqref{E:RevEquiv} need not be an equivalence, but it turns out that this is the interesting situation, in which case the presentation~$(\mA, \RR)$ is said to be \emph{complete with respect to right-reversing}. Roughly speaking, a presentation is complete with respect to right-reversing if right-reversing always detects equivalence. The important point here is that the presentation~$(\AAA, \RRR)$ has this property.

\begin{lemma}\cite{Dfg}
\label{L:Complete}
The presentation $(\AAA, \RRR)$ is complete with respect to right-reversing.
\end{lemma}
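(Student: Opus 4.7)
The plan is to invoke the standard confluence criterion for subword reversing (see \cite{Dia} and Section~II.4 of \cite{Garside}), which characterizes completeness of a positive presentation $(\mA, \RR)$ by a local \emph{cube condition}: for every triple $(\ma, \mb, \mc)$ of generators, the two natural orders in which one can right-reverse a signed word built from them must lead to $\equivp_\RR$-equivalent positive tails. Once this condition is checked at the level of triples, a straightforward induction on the length of a reversing sequence propagates it to arbitrary signed words, giving the sought-for converse of~\eqref{E:RevEquiv}.

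To make the check tractable for $(\AAA, \RRR)$ despite the infinite generating family, I would exploit the shift endomorphisms $\sh\delta$ of Subsection~\ref{SS:AAA}: each $\sh\delta$ sends generators to generators and relations of $\RRR$ to relations of $\RRR$, so it transports any reversing diagram to another one. Consequently, one may assume without loss of generality that the shortest common prefix of the three addresses $\alpha, \beta, \gamma$ under scrutiny is~$\ea$. Under this normalization, the triples $(\ga\alpha, \ga\beta, \ga\gamma)$ split into finitely many generic configurations classified by the relative positions of the three addresses: all three pairwise orthogonal (pure commutation only); one address strictly extending another via a $0$-, $10$-, or $11$-continuation (quasi-commutation); or at least one pair related as $\alpha$ to $\alpha 0$ or $\alpha 1$ (pentagon). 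For each such configuration, the cube condition reduces to a finite diagram chase that can be verified by hand.

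The configurations involving only commutation and quasi-commutation are routine, as the corresponding relations act on disjoint or cleanly nested subtrees and the reversing diagrams close exactly. The main obstacle will be the configurations triggering the pentagon relation $\ga\alpha^2 = \ga{\alpha 1} \ga\alpha \ga{\alpha 0}$: its asymmetric form injects three correlated generators at each application and may cascade into further pentagon or quasi-commutation steps at the neighboring addresses $\alpha 0$, $\alpha 1$, $\alpha 00$, and so on. Controlling these cascades and verifying that the two branches of the cube ultimately close up to $\equivp_\RRR$-equivalence is the core of the combinatorial work. Organizing the pentagon cases by the depth of overlap between the three addresses, and again invoking the shift endomorphisms to collapse symmetric variants, keeps the case analysis finite and explicit.
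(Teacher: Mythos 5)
Your verification of the local cube condition follows essentially the paper's route: reduce by the shift endomorphisms, observe that for each pair of addresses there is exactly one relation (so reversing is deterministic), and check that the only genuinely critical configurations are those triggering the pentagon, namely the triple $\ea, 1, 11$ and its shifted and permuted copies. That part of your plan is sound.

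However, there is a genuine gap in the step where you claim that ``a straightforward induction on the length of a reversing sequence propagates'' the local cube condition to arbitrary signed words. The criterion the paper invokes (\cite[Proposition~2.9]{Dia}) has \emph{two} hypotheses: the cube condition, \emph{and} the existence of a $\equivp_{\RRR}$-invariant map $\wit$ from positive $\AAA$-words to~$\NNNN$ that is superadditive ($\wit(\uu\vv) \geqslant \wit(\uu) + \wit(\vv)$) and satisfies $\wit(\ga\alpha) \geqslant 1$. The cube condition alone is a purely local statement and does not by itself yield completeness; some Noetherianity-type control is needed to make the inductive propagation terminate. In the present situation this is exactly where the difficulty sits: because of the pentagon relation $\ga\ea^2 = \ga1\ga\ea\ga0$, word length is \emph{not} $\equivp_{\RRR}$-invariant, so the obvious induction on length breaks down. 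The paper has to construct a nontrivial substitute, namely $\wit(\ww) = \MZ{\trp{\cl\ww}} - \MZ{\trm{\cl\ww}}$, where $\MZ\TT$ counts the occurrences of~$0$ in the addresses of the leaves of~$\TT$, and then verify superadditivity via the composition rule for the tree pairs $(\trm\ff, \trp\ff)$. Your proposal omits this ingredient entirely, and without it the argument does not close.
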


\begin{proof}[Sketch]
By~\cite[Proposition~2.9]{Dia}, a sufficient condition for a positive presentation $(\mA, \RR)$ to be complete with respect to right-reversing is that $(\mA, \RR)$ satisfies
\begin{equation}
\label{E:Complete}
\parbox{100mm}{\ITEM1 There exists a $\equivp_\RR$-invariant map $\wit$ from positive $\mA$-words to~$\NNNN$ satisfying $\wit(\uu \vv) \geqslant \wit(\uu) + \wit(\vv)$ for all~$\uu, \vv$ and $\wit(\ma) \geqslant 1$ for~$\ma$ in~$\mA$, and

\ITEM2 For all~$\ma, \mb, \mc$ in~$\mA$ and all positive $\mA$-words~$\uu, \vv$, if $\ma\inv \mc \mc\inv \mb \rev_\RR \vv \uu\inv$ holds, then $\vv\inv \ma\inv \mb \uu \rev_\RR \eps$ holds as well.}
\end{equation}
We claim that $(\AAA, \RRR)$ satisfies~\eqref{E:Complete}. As for~\ITEM1, we cannot use for~$\wit$ the length of words, as it is not $\equivp_{\RRR}$-invariant: in the pentagon relation, the length~$2$ word~$\ga\ea^2$ is $\equivp_{\RRR}$-equiv\-alent to the length~$3$ word~$\ga1 \ga\ea \ga0$. Now, for~$\TT$ a tree, let $\MZ\TT$ be the total number of~$0$'s occurring in the addresses of the leaves of~$\TT$: for instance, we have $\MZ{\et(\et\et)} = 2$ and $\MZ{(\et\et)\et} = 3$, as the leaves of~$\et(\et\et)$ have addresses~$0$, $10$,~$11$, with two~$0$'s, and those of~$(\et\et)\et$ have addresses~$00$, $01$,~$1$, with three~$0$'s. Then put
\begin{equation}
\label{E:Wit}
\wit(\ww) = \MZ{\trp{\cl\ww}} - \MZ{\trm{\cl\ww}}.
\end{equation}
For instance, if $\ww$ is~$\ga\ea$, the trees~$\trm{\cl\ww}$ and~$\trp{\cl\ww}$ are $\et(\et\et)$ and $(\et\et)\et$, and one finds $\wit(\ga\ea) = 3-2 = 1$. A similar argument gives $\wit(\ga\alpha) = 1$ for every address~$\alpha$. More generally, one easily checks that $\TT \leT \TT'$ implies $\MZ{\TT'} \geqslant \MZ\TT$. Hence the function~$\wit$ takes values in~$\NNNN$. Moreover, a counting argument shows that, in the previous situation, $\MZ{\TT'{}^\sigma} - \MZ{\TT^\sigma} \geqslant \MZ{\TT'} -\nobreak \MZ\TT$ holds for every substitution~$\sigma$. If $\uu$ and~$\vv$ are positive $\AAA$-words, then, as seen in the proof of Proposition~\ref{P:Action}, we have $\trm{\cl{(\uu \vv)}} = \trm{\cl\uu}^\sigma$ and $\trp{(\cl{\uu \vv})} = \trp{\cl\vv}^\tau$ for some substitutions~$\sigma, \tau$ satisfying $\trp{\cl\uu}^\sigma = \trm{\cl\vv}^\tau$. We deduce
\begin{align*}
\wit(\uu \vv) 
= \MZ{\trp{\cl{\uu \vv}}} - \MZ{\trm{\cl{\uu \vv}}}
&= \MZ{\trp{\cl\vv}^\tau} - \MZ{\trm{\cl\uu}^\sigma}
= \MZ{\trp{\cl\vv}^\tau} - \MZ{\trm{\cl\vv}^\tau} + \MZ{\trp{\cl\uu}^\sigma}- \MZ{\trm{\cl\uu}^\sigma}\\
&\geqslant \MZ{\trp{\cl\vv}} - \MZ{\trm{\cl\vv}} + \MZ{\trp{\cl\uu}}- \MZ{\trm{\cl\uu}}
= \wit(\vv) + \wit(\uu).
\end{align*}

As for~\ITEM2, the problem is to check that, whenever $\alpha, \beta, \gamma$ are addresses and the signed word $\ga\alpha\inv \ga\gamma \ga\gamma\inv \ga\beta$ is right-$\RRR$-reversible to some positive--negative word~$\vv \uu\inv$, then $\vv\inv \ga\alpha\inv \ga\beta \uu$ is right-$\RRR$-reversible to the empty word. The systematic verification seems tedious. Actually it is not. First, what matters is the mutual position of the addresses~$\alpha, \beta, \gamma$ with respect to the prefix ordering, and only finitely many patterns may occur. Next, for every pair of addresses~$\alpha, \beta$, there exists in~$\RRR$ exactly one relation of the form~$\ga\alpha ... = \ga\beta ...$~, which implies that, for every signed $\AAA$-word~$\ww$, there exists at most one pair of positive $\AAA$-words~$\uu, \vv$ such that $\ww$ is right-$\RRR$-reversible to~$\vv \uu\inv$. Finally, all instances involving quasi-commutation relations turn out to be automatically verified. So, the only critical cases are those corresponding to the triple of addresses~$\ea, 1, 11$ and its translated and permuted copies, and a direct verification is then easy. For instance, the reader can see on Figure~\ref{F:Cube} that we have $\ga\ea\inv \ga1 \ga1\inv \ga{11} \rev_{\RRR} \ga\ea^2 \ga{00}\inv \ga0\inv \ga\ea\inv \ga{01}\inv \ga1\inv$ and $\ga\ea^{-3} \ga{11} \ga1 \ga{10} \ga\ea \ga0 \ga{00} \rev_{\RRR} \eps$. 
\end{proof}

\begin{figure}[htb]
\begin{picture}(75,36)(0,0)
\put(-47,-45){\includegraphics[scale=0.97]{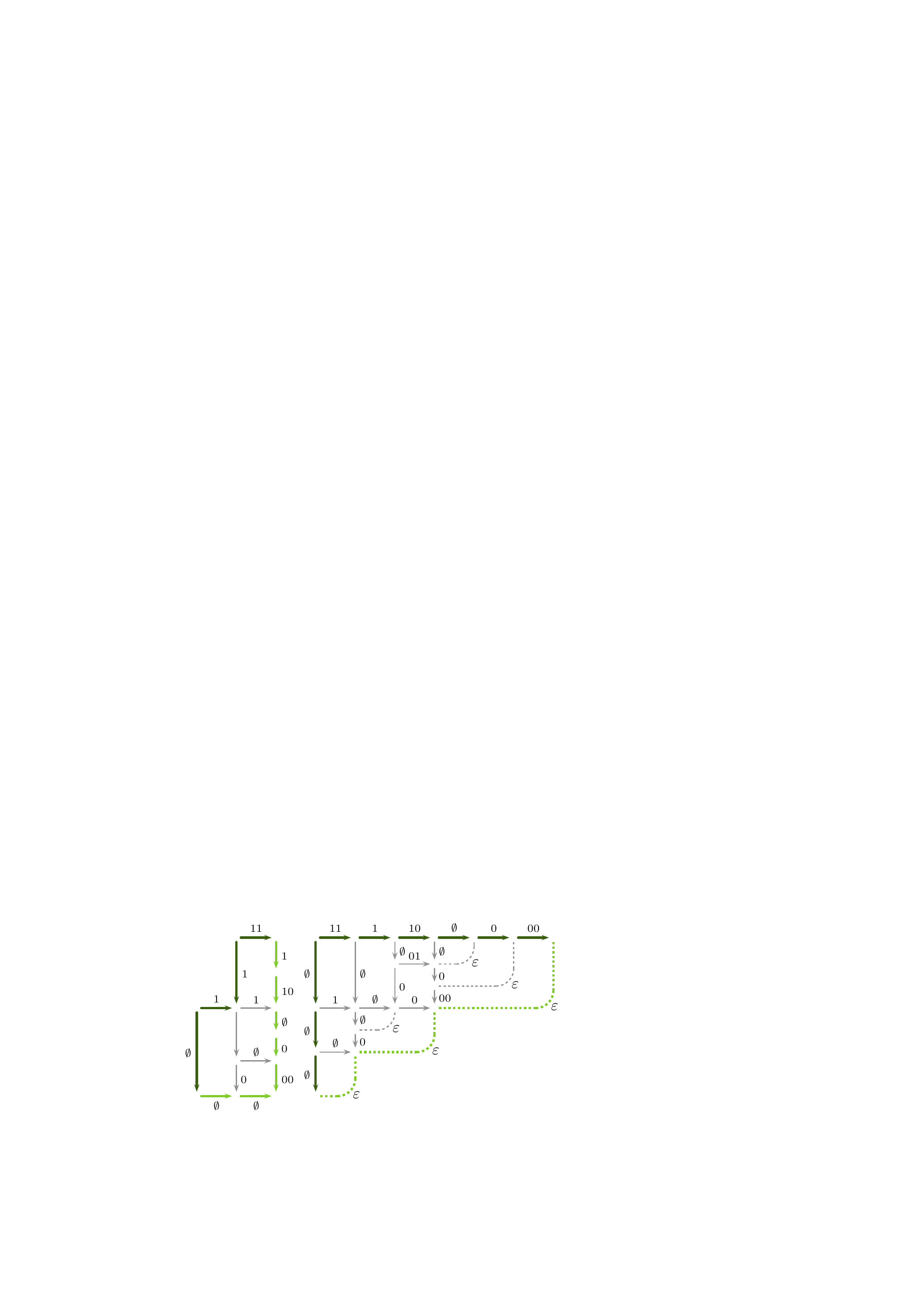}}
\end{picture}
\caption{\sf\smaller Proof of Lemma~\ref{L:Complete}: $\ga\ea\inv \ga1 \ga1\inv \ga{11}$ is right-$\RRR$-reversible to $\ga\ea^2 \ga{00}\inv \ga0\inv \ga\ea\inv \ga{01}\inv \ga1\inv$ (left), and $\ga\ea^{-3} \ga{11} \ga1 \ga{10} \ga\ea \ga0 \ga{00}$ is right-$\RRR$-reversible to the empty word; dotted lines represent the empty word\break that appears when a pattern $\ga\alpha\inv \ga\alpha$ is reversed.}
\label{F:Cube}
\end{figure}

Once a positive presentation is known to be complete with respect to right-reversing, it is easy to deduce properties of the associated monoid. 

\begin{proposition}
\label{P:LeftCancel}
The monoid~$\MON\AAA\RRR$ is left-cancellative.
\end{proposition}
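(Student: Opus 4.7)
The plan is to deduce left-cancellativity of $\MON\AAA\RRR$ from the completeness of right-$\RRR$-reversing established in Lemma~\ref{L:Complete}, via the general subword reversing scheme of~\cite{Dia} and~\cite[Section~II.4]{Garside}. By a straightforward induction on the length of $\ff$ as a positive $\AAA$-word, the problem reduces to the single-generator case: it suffices to show that, for every $\ga\alpha \in \AAA$ and all positive $\AAA$-words $\vv, \ww$, the equivalence $\ga\alpha\vv \equivp_{\RRR} \ga\alpha\ww$ implies $\vv \equivp_{\RRR} \ww$.

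So assume $\ga\alpha\vv \equivp_{\RRR} \ga\alpha\ww$. Applying the non-trivial direction of Lemma~\ref{L:Complete}---the converse of~\eqref{E:RevEquiv}, which is precisely the content of completeness---we obtain $\vv\inv\ga\alpha\inv\ga\alpha\ww \rev_{\RRR} \eps$. Since $\vv\inv$ consists only of negative letters and $\ww$ only of positive ones, the unique negative--positive junction in $\vv\inv\ga\alpha\inv\ga\alpha\ww$ occurs at the central factor $\ga\alpha\inv\ga\alpha$; consequently, the first step of any right-reversing sequence starting from that word must process this factor, deleting it to leave $\vv\inv\ww$. Since the original sequence reaches $\eps$, so does its tail, whence $\vv\inv\ww \rev_{\RRR} \eps$. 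A second application of~\eqref{E:RevEquiv} then yields $\vv \equivp_{\RRR} \ww$, as required.

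The argument is thus essentially routine once Lemma~\ref{L:Complete} is in place, the real work having been absorbed into the verification of the cube condition~\ITEM2 of~\eqref{E:Complete} carried out there. The combinatorial point that keeps the argument so short is that inserting the matched pair $\ga\alpha\inv\ga\alpha$ at the junction between the all-negative word $\vv\inv$ and the all-positive word $\ww$ creates exactly one site for reversing, so that the first reversing step is forced and no separate confluence consideration is needed at this stage.
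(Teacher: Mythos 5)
Your proof follows essentially the same route as the paper: the paper simply invokes the general criterion of~\cite[Proposition 3.1]{Dia} (completeness with respect to right-reversing together with condition~\eqref{E:LeftCancel} implies left-cancellativity), and what you have written is in effect the proof of that criterion specialized to~$(\AAA, \RRR)$. The reduction to a single generator, the appeal to the non-trivial direction of Lemma~\ref{L:Complete} to get $\vv\inv\ga\alpha\inv\ga\alpha\ww \rev_{\RRR}\eps$, and the observation that the central factor is the unique negative--positive junction are all correct.

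There is, however, one assertion you leave unjustified: that the forced first step \emph{deletes} the factor~$\ga\alpha\inv\ga\alpha$. By the definition of right-reversing, a subword $\ma\inv\mb$ with $\ma = \mb = \ga\alpha$ can be processed in two ways: either it is deleted, or it is replaced by $\vv'\uu'{}\inv$ for a relation $\ga\alpha\vv' = \ga\alpha\uu'$ of~$\RRR$. If a relation of that form with $\uu' \neq \vv'$ existed, the reversing sequence furnished by completeness could begin with such a replacement, and its tail would then reverse $\vv\inv\,\vv'\uu'{}\inv\,\ww$ to~$\eps$ rather than $\vv\inv\ww$, so your conclusion $\vv\inv\ww \rev_{\RRR}\eps$ would not follow. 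You therefore need to record, as the paper does via condition~\eqref{E:LeftCancel}, that every relation of~$\RRR$ has the form $\ga\alpha\pdots = \ga\beta\pdots$ with $\alpha \neq \beta$ (immediate from inspection of \eqref{E:Comm}--\eqref{E:Pentagon}); only then is deletion the unique available move and your argument complete. This is a small and easily repaired omission, but it is precisely the hypothesis that makes the general criterion work, so it should not be passed over silently.
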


\begin{proof}
By~\cite[Proposition 3.1]{Dia}, if $(\mA, \RR)$ is a positive presentation that is complete with respect to right-reversing, a sufficient condition for the monoid~$\MON\mA\RR$ to be left-cancellative is that 
\begin{equation}
\label{E:LeftCancel}
\parbox{100mm}{$\RR$ contains no relation of the form $\ma \uu = \ma \vv$ with $\ma$ in~$\mA$ and $\uu \not= \vv$.}
\end{equation}
By definition, $\RRR$ satisfies~\eqref{E:LeftCancel}. Hence the monoid~$\MON\AAA\RRR$ is left-cancellative.
\end{proof}

As for right-cancellation, no new computation is needed as we can exploit the symmetries of~$\RRR$. First, we introduce a counterpart of right-reversing where the roles of positive and negative letters are exchanged.

\index{left-reversing}
\begin{definition}
Assume that $(\mA, \RR)$ is a positive presentation. If $\ww, \ww'$ are signed $\mA$-words, we say that $\ww$ is \emph{left-$\RR$-reversible} to~$\ww'$ in one step if $\ww'$ is obtained from~$\ww$ either by deleting some length~$2$ subword~$\ma \ma\inv$ or by replacing some length~$2$ subword~$\ma \mb\inv$ with a word~$\uu\inv \vv$ such that $\uu \ma = \vv \mb$ is a relation of~$\RR$. We write $\ww \revL_\RR \ww'$ if $\ww$ is left-$\RR$-reversible to~$\ww'$ in finitely many steps.
\end{definition}

Of course, properties of left-reversing are symmetric to those of right-reversing. 

\begin{proposition}
\label{P:RightCancel}
The monoid~$\MON\AAA\RRR$ is right-cancellative.
\end{proposition}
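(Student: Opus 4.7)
The plan is to mirror the proof of Proposition~\ref{P:LeftCancel}, replacing ``right'' by ``left'' throughout and exploiting a structural symmetry of $\RRR$ so that no completeness calculation needs to be redone. Concretely, I will invoke the symmetric counterpart of \cite[Proposition~3.1]{Dia}, which says that a positive presentation $(\mA, \RR)$ that is complete with respect to left-reversing and contains no relation of the form $\uu \ma = \vv \ma$ with $\ma \in \mA$ and $\uu \neq \vv$ yields a right-cancellative monoid. Two ingredients must therefore be supplied: completeness of $(\AAA, \RRR)$ with respect to left-reversing, and the ``no common rightmost letter'' condition on~$\RRR$.

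The second ingredient will be immediate by inspection. In each commutation relation $\ga\alpha \ga\beta = \ga\beta \ga\alpha$ the two rightmost letters carry distinct addresses; in every quasi-commutation relation one side ends with $\ga\alpha$ and the other with a generator whose address strictly extends $\alpha$; and in the pentagon relation the two sides end with $\ga\alpha$ and $\ga{\alpha0}$, respectively.

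For the first ingredient, I intend to produce an anti-isomorphism $\phi$ of the free monoid on~$\AAA$ that stabilises~$\RRR$ and therefore induces an anti-isomorphism of $\MON\AAA\RRR$. The candidate is defined on generators by $\phi(\ga\alpha) = \ga{\bar\alpha}$, where $\bar\alpha$ denotes the address obtained by exchanging every $0$ and $1$ in~$\alpha$, and extended to words by reversing the letter order. I will verify stability of~$\RRR$ family by family: commutation relations are preserved since $\alpha \perp \beta$ is equivalent to $\bar\alpha \perp \bar\beta$; the pentagon relation is sent to itself at~$\bar\alpha$ after using $\overline{\alpha0} = \bar\alpha1$ and $\overline{\alpha1} = \bar\alpha0$; and among the three quasi-commutation families, $\phi$ will preserve the middle one (involving $\alpha10\beta$ and $\alpha01\beta$) and swap the $\alpha11\beta$ family with the $\alpha0\beta$ family. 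Under~$\phi$, a right-reversing step on a signed word becomes a left-reversing step on its image, so Lemma~\ref{L:Complete} transports verbatim to the desired left-reversing completeness of $(\AAA, \RRR)$.

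The main obstacle will be the case-check that $\phi$ permutes the three quasi-commutation relations correctly: one must keep careful track of how reversing the word order combined with mirroring each address transforms the ``extra'' bits $11$, $10$, or $0$ attached to~$\alpha$. Once this bookkeeping is settled, everything else is a formal transfer of the right-reversing machinery to its left-reversing mirror image, and the symmetric version of \cite[Proposition~3.1]{Dia} delivers right-cancellativity of $\MON\AAA\RRR$.
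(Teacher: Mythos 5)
Your proposal is correct and is essentially the paper's own argument: the paper proves right-cancellativity by introducing exactly the same anti-automorphism (reverse the letter order and exchange $0$ and~$1$ in all addresses), checking that it stabilises~$\RRR$, transporting the completeness of right-reversing to completeness of left-reversing, and then invoking the right-handed counterpart of the cancellativity criterion. Your bookkeeping of how the three quasi-commutation families are permuted (the middle one fixed, the other two swapped) is accurate and fills in a detail the paper leaves to ``a direct inspection''.
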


\begin{proof}
The argument is symmetric to the one for Proposition~\ref{P:LeftCancel}, and relies on first proving that $(\AAA, \RRR)$ is, in an obvious sense, complete with respect to left-reversing. Due to the symmetries of~$\RRR$, this is easy. Indeed, for~$\ww$ a signed $\AAA$-word, let $\widetilde\ww$ denote the word obtained by reading the letters of~$\ww$ from right to left, and exchanging~$0$ and~$1$ everywhere in the indices of the letters~$\ga\alpha$. For instance, $\widetilde{\ga{110} \ga\ea}$ is $\ga\ea \ga{001}$. A direct inspection shows that the family~$\widetilde\RRR$ of all relations $\widetilde\uu = \widetilde\vv$ for $\uu = \vv$ a relation of~$\RRR$ is $\RRR$ itself. It follows that, for all signed $\AAA$-words~$\ww, \ww'$,  the relations $\ww \rev_{\RRR} \ww'$ and $\widetilde\ww \revL_{\RRR} \widetilde{\ww'}$ are equivalent. Then, as $\ww \mapsto \widetilde\ww$ is an alphabetical anti-automorphism, the completeness of~$(\AAA, \RRR)$ with respect to right-reversing implies the completeness of~$(\AAA, \widetilde\RRR)$, hence of~$(\AAA, \RRR)$, with respect to left-reversing. As the right counterpart of~\eqref{E:LeftCancel} is satisfied, we deduce that the monoid~$\MON\AAA\RRR$ is right-cancellative.
\end{proof}

In order to complete the proof of Proposition~\ref{P:DualFraction} using the strategy of Lemma~\ref{L:Strategy}, we still need to know that any two elements of the monoid~$\MON\AAA\RRR$ admit a common right-multiple. Using the action on trees, it is easy to prove that result in~$\Fs$. But this is not sufficient here as we do not know yet that $\Fs$ is isomorphic to~$\MON\AAA\RRR$. We appeal to right-reversing once more.

\begin{proposition}
\label{P:Common}
Any two elements of $\MON\AAA\RRR$ admit a common right-multiple.
\end{proposition}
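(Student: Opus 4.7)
The plan is to use the right-$\RRR$-reversing method. Given two elements of~$\MON\AAA\RRR$ represented by positive $\AAA$-words~$\uu$ and~$\vv$, I aim to show that the signed word $\uu\inv\vv$ right-$\RRR$-reverses in finitely many steps to a positive--negative word $\vv'\uu'{}^{-1}$ with $\vv', \uu'$ positive. By the one-step implication recorded just before~\eqref{E:RevEquiv}, one then has $\uu\vv' \equivp_\RRR \vv\uu'$, so that $\cl{\uu\vv'}$ (equivalently $\cl{\vv\uu'}$) is a common right-multiple of $\cl\uu$ and~$\cl\vv$ in~$\MON\AAA\RRR$.

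Thus the real content of the statement is \emph{termination} of right-$\RRR$-reversing starting from $\uu\inv\vv$. Note that Lemma~\ref{L:Complete} ensures correctness of reversing (it detects $\equivp_\RRR$-equivalence) but says nothing about termination. The obstacle is genuine: pentagon rewritings such as $\ga{\alpha1}\inv\ga\alpha \rev_\RRR \ga\alpha\ga{\alpha0}\ga\alpha\inv$ and $\ga\alpha\inv\ga{\alpha1} \rev_\RRR \ga\alpha\ga{\alpha0}\inv\ga\alpha\inv$ strictly increase the length of the signed word, so neither the total length nor the number of negative letters is monotone under reversing.

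My approach to termination is geometric, via the action on trees of Subsection~\ref{SS:Action}. By Proposition~\ref{P:Action}\ITEM3, pick a single tree~$\TT$ such that the images in~$F$ of~$\cl\uu$, $\cl\vv$, and of every generator~$\ga\alpha$ occurring in~$\uu$ or~$\vv$, all act on~$\TT$; by Lemma~\ref{L:Compat}, the images of~$\cl\uu$ and~$\cl\vv$ then send $\TT$ to trees lying above~$\TT$ in~$\Tam{|\TT|}$. Draw the reversing process as a planar grid in the style of Figure~\ref{F:Rev}, with $\uu$ labeling the west side and $\vv$ the north side, and label each vertex of this grid by a size-$|\TT|$ tree, each horizontal $\ga\alpha$-edge and each vertical $\ga\alpha$-edge by a left-rotation at address~$\alpha$. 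The three families of relations in~$\RRR$---commutation, quasi-commutation, and pentagon---should each correspond to one of the three local cell shapes that can appear, with all four corners being bona fide trees of~$\Tam{|\TT|}$. Since $\Tam{|\TT|}$ contains only Catalan-many trees, the diagram can grow only finitely often, forcing termination.

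The main obstacle is the precise verification of this dictionary: each of the commutation, quasi-commutation and pentagon rewritings must be matched with an actual cell of rotations on size-$|\TT|$ labeled trees, and the resulting enlarging diagram must be shown to remain consistent (no vertex revisited, no contradiction between the tree-label forced by two adjacent edges). Once this combinatorial correspondence is in place, finiteness of~$\Tam{|\TT|}$ supplies termination, the final SW-to-NE path of the grid provides the desired positive--negative word $\vv'\uu'{}^{-1}$, and the opening observation then yields the required common right-multiple of $\cl\uu$ and~$\cl\vv$ in~$\MON\AAA\RRR$.
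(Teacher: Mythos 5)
Your reduction of the statement to the termination of right-$\RRR$-reversing of $\uu\inv\vv$ is exactly the right move (and it matches the paper's), and you correctly avoid the trap of arguing directly in~$\Fs$: you use the tree action only to control a purely syntactic process taking place in~$\MON\AAA\RRR$. The gap is at the crux, namely the deduction of termination. From the fact that every vertex of the reversing grid carries a label in the finite set~$\Tam{|\TT|}$ you conclude that ``the diagram can grow only finitely often''. This is a non sequitur: the labelling is not injective---two grid vertices not joined by a directed path (e.g.\ the NE and SW corners when $\uu$ and $\vv$ represent the same element) may carry the same tree---so the number of vertices, rows and columns is in no way bounded by the Catalan number. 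The growth you must control comes precisely from the pentagon cells, each of which inserts a new row or column into the grid, and nothing in your argument bounds how many such cells can occur. (By contrast, the ``dictionary'' you single out as the main obstacle is the easy part; it is essentially Lemma~\ref{L:ActRev}.)

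What the tree labels really give is that $\leT$ strictly increases along every non-empty directed edge of the grid; to turn that into termination you need a genuine well-founded induction, e.g.\ on the height of the base tree in $(\Tam{|\TT|},\leT)$: reverse the initial corner $\ma\inv\mb$ to $\vv_0\uu_0\inv$, then recursively treat the residual subproblems, whose base trees $\TT\act\ma$ and $\TT\act\mb$ lie strictly higher in the Tamari order. Even this repair is not free: since the induction fixes a particular reversing strategy, you must also invoke the fact that termination of one maximal reversing sequence implies termination of all (which is where condition (i) of~\eqref{E:Terminating}, exactly one relation $\ma\pdots=\mb\pdots$ per pair of generators, enters). The paper sidesteps all of this with a purely syntactic argument: it exhibits the family~$\AAAh$ of words $\gah\alpha\rr=\ga\alpha\ga{\alpha0}\pdots\ga{\alpha0^{\rr-1}}$, proves via the explicit formulas~\eqref{E:Closure} that this family is closed under right-$\RRR$-reversing, and applies the termination criterion of~\cite{Dia}. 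So as written your proof has a real hole exactly where the difficulty lies; the idea can be salvaged, but not by counting trees.
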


\begin{proof}
If $(\mA, \RR)$ is a positive presentation, saythat right-$\RR$-reversing is \emph{terminating} if, for all positive $\mA$-words~$\uu, \vv$, there exist positive $\mA$-words~$\uu', \vv'$ satisfying $\uu\inv \vv \rev_{\RRR} \vv' \uu'{}\inv$. We noted that the latter relation implies $\uu \vv' \equivp_{\RR} \vv \uu'$, thus implying that, in the monoid~$\MON\mA\RR$, the elements represented by~$\uu$ and~$\vv$ admit a common right-multiple. So, in order to establish the proposition, it is sufficient to prove that right-$\RRR$-reversing is terminating, a non-trivial question as, because of the pentagon relations, the length of the words may increase under right-reversing, and there might exist infinite reversing sequences---try right-reversing of~$\ma\inv \mb \ma$ in the presentation $(\ma, \mb, \ma \mb = \mb^2 \ma)$.

Now, by~\cite[Proposition~3.11]{Dia}, if $(\mA, \RR)$ is a positive presentation, a sufficient condition for right-$\RR$-reversing to be terminating is that $(\mA, \RR)$ satisfies
\begin{equation}
\label{E:Terminating}
\parbox{100mm}{\ITEM1 For all~$\ma, \mb$ in~$\mA$, there is exactly one relation $\ma \pdots = \mb \pdots$ in~$\RR$, and

\ITEM2 There exists a family~$\widehat\mA$ of positive $\mA$-words that includes~$\mA$ and is \emph{closed under right-$\RR$-reversing}, this meaning that, for all~$\uu, \vv$ in~$\widehat\mA$, there exist~$\uu', \vv'$ in~$\lab\mA \cup \{\eps\}$ satisfying $\uu\inv \vv \rev_{\RRR} \vv' \uu'{}\inv$.}
\end{equation}
We claim that $(\AAA, \RRR)$ satisfies~\eqref{E:Terminating}. Indeed, \ITEM1 follows from an inspection of~$\RRR$. As for~\ITEM2, let us put 
$$\gah\alpha\rr = \ga\alpha \ga{\alpha0} \pdots \ga{\alpha0^{\rr-1}}$$ for~$\alpha$ an address and $\rr \geqslant 1$, see Figures~\ref{F:ExtPentagon} and~\ref{F:Solution} for an illustration of the action of~$\gah\alpha\rr$ on trees. Then the family~$\AAAh$ of all words~$\gah\alpha\rr$ includes~$\AAA$ as we have $\ga\alpha = \gah\alpha1$ for every~$\alpha$, and it is closed under right-$\RRR$-reversing as we find
\begin{equation}
\label{E:Closure}
\gah\beta\ms\inv \, \gah\alpha\rr \ \rev_{\RRR}\ 
\begin{cases}
\gah{0^{\ms-\rr}}{\ms - \rr}
&\mbox{for $\beta = \alpha$ with $\rr < \ms$},\\
\gah\alpha\rr \, \gah\beta\ms\inv
&\mbox{for $\beta \perp \alpha$},\\
\gah\alpha\rr \, \gah{\alpha0^{\rr+1}\gamma}\ms\inv
&\mbox{for $\beta = \alpha0\gamma$},\\
\gah\alpha\rr \, \gah{\alpha0^\rr1\gamma}\ms\inv
&\mbox{for $\beta = \alpha10^\rr\gamma$},\\
\gah\alpha\rr \, \gah{\alpha0^\ii1\gamma}\ms\inv
&\mbox{for $\beta = \alpha10^\ii1\gamma$ with $\ii < \rr$},\\
\gah\alpha{\rr+\ms} \, \gah{\alpha0^\ii}\ms\inv
&\mbox{for $\beta = \alpha10^\ii$ with $\ii < \rr$},
\end{cases}
\end{equation}
see Figure~\ref{F:Closure}. Note that $\AAAh$ is the smallest family that includes~$\AAA$ and is closed under right-$\RRR$-reversing as the last type of relation in~\eqref{E:Closure} inductively forces any such family to contain~$\gah\alpha\rr$ for every~$\rr$.
\end{proof}

\begin{figure}[htb]
\begin{picture}(75,32)(0,0)
\put(-43,-48){\includegraphics{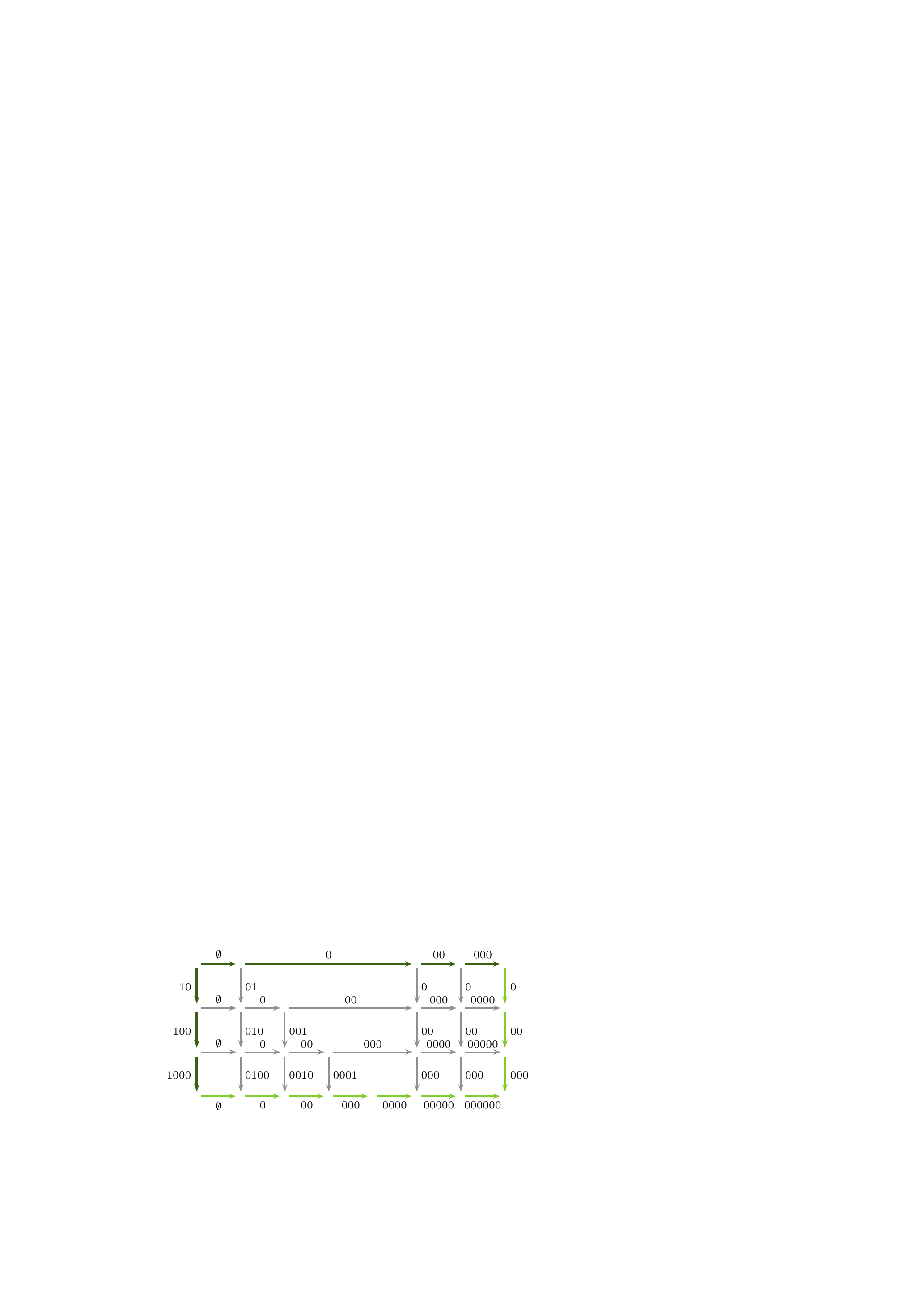}}
\end{picture}
\caption{\sf\smaller Closure of the family~$\AAAh$ under right-reversing: $\gah{10}3\inv \gah\ea4$ reverses to~$\gah\ea7 \gah03\inv$, which corresponds to the last relation in~\eqref{E:Closure} with $\alpha = \ea$, $\rr = 4$, $\ms = 3$, $\ii = 1$ (the letter ``$a$'' has been skipped everywhere).}
\label{F:Closure}
\end{figure}

In terms of the generators~$\gah\alpha\rr$, the pentagon relation can be expressed as $\ga\ea^2 = \ga1 \gah\ea2$, with both sides of length~$2$. The last type in~\eqref{E:Closure} corresponds to an extended pentagon relation $\gah\alpha\rr \, \gah{\alpha0^\ii}\ms = \gah{\alpha10^\ii}\ms \, \gah\alpha{\rr + \ms}$ for all~$\rr, \ms, \ii$ with~$\ii < \rr$, whose counterpart in terms of tree rotation is displayed in Figure~\ref{F:ExtPentagon}.

\begin{figure}[htb]
\begin{picture}(75,40)(0,2)
\put(0,0){\includegraphics{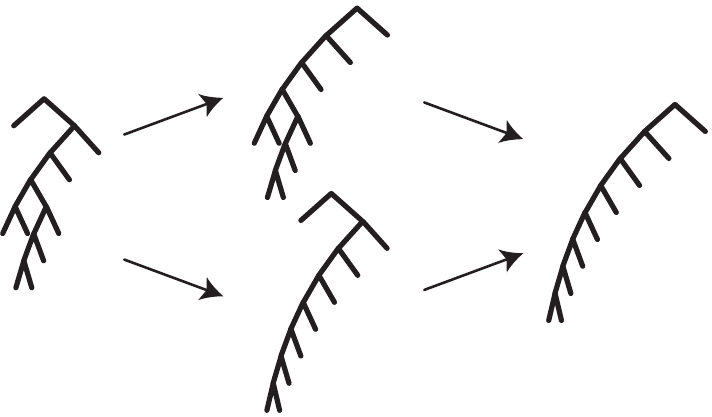}}
\put(13,10){$\gah{10^\ii}\ms$}
\put(43,10){$\gah\ea{\rr+\ms}$}
\put(14,33){$\gah\ea\rr$}
\put(45,33){$\gah{0^\ii}\ms$}
\end{picture}
\caption{\sf\smaller Extended pentagon relation $\gah\alpha\rr \, \gah{\alpha0^\ii}\ms = \gah{\alpha10^\ii}\ms \, \gah\alpha{\rr + \ms}$, here for $\alpha = \ea$, $\rr = 4$, $\ms = 3$, $\ii = 2$.}
\label{F:ExtPentagon}
\end{figure}

We thus established that the monoid~$\MON\AAA\RRR$ satisfies the conditions of  Lemma~\ref{L:Strategy} and, therefore, the proof of Proposition~\ref{P:DualFraction} is complete. 


\subsection{The lattice structure of~$\Fs$}
\label{SS:Lattice}

Here comes the central point, namely the connection between the right-divisibility relation of the monoid~$\Fs$, which we now know admits the presentation~$\MON\AAA\RRR$, and the Tamari posets. We recall that $\Div(\ff)$ denotes the family of all left-divisors of~$\ff$. 

\begin{proposition}
\label{P:Connection}
For every~$\nn \geqslant 1$, the subposet~$(\Div(\ga\ea^{\nn-1}), \dive)$ of~$(\Fs, \dive)$ is isomorphic to the Tamari poset~$(\Tam\nn, \leT)$. The poset $(\bigcup_\nn\Div(\ga\ea^\nn), \dive)$ is isomorphic to the Tamari poset~$(\Tam\infty, \leT)$. 
\end{proposition}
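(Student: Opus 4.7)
The plan is to construct, for each $\nn \geqslant 1$, a map $\Phi_\nn : \Div(\ga\ea^{\nn-1}) \to \Tam\nn$ by $\Phi_\nn(\ff) = \CbR\nn \act \ff$ and show it is a poset isomorphism; the limiting statement will then follow by checking that the $\Phi_\nn$ are compatible with the direct systems.

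First I would verify directly that $\CbR\nn \act \ga\ea^{\nn-1} = \CbL\nn$: applying $\ga\ea$ once to $\CbR\nn = \et \OP (\et \OP \CbR{\nn-2})$ produces $\CbL1 \OP \CbR{\nn-2}$, and iteration gives $\CbL\kk \OP \CbR{\nn-1-\kk}$ at step~$\kk$, reaching $\CbL\nn$ at step~$\nn-1$. Since $\CbR\nn$ (resp.\ $\CbL\nn$) is the bottom (resp.\ top) of~$\Tam\nn$, the poset $\Tam\nn$ coincides with the Tamari-interval $[\CbR\nn, \CbL\nn]$.

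Next come the four properties of~$\Phi_\nn$. For well-definedness: if $\ga\ea^{\nn-1} = \ff\mg$ with $\mg \in \Fs$, then Proposition~\ref{P:Action}\ITEM3 forces $\CbR\nn \act \ff$ to be defined, and Lemma~\ref{L:Compat} yields $\CbR\nn \leT \CbR\nn \act \ff \leT \CbL\nn$, so $\Phi_\nn(\ff) \in \Tam\nn$. For surjectivity: given $\TT \in \Tam\nn$, Lemma~\ref{L:Compat} supplies $\ff, \mg \in \Fs$ with $\CbR\nn \act \ff = \TT$ and $\TT \act \mg = \CbL\nn$; then $\CbR\nn \act \ff\mg = \CbR\nn \act \ga\ea^{\nn-1}$, and the uniqueness clause of Proposition~\ref{P:Geometry} gives $\ff\mg = \ga\ea^{\nn-1}$ in~$F$, hence in~$\Fs$ via the embedding supplied by Proposition~\ref{P:DualFraction}, so $\ff \dive \ga\ea^{\nn-1}$. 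For order preservation: if $\ff_2 = \ff_1 \hh$ in~$\Fs$, then $\Phi_\nn(\ff_2) = \Phi_\nn(\ff_1) \act \hh$ and Lemma~\ref{L:Compat} gives $\Phi_\nn(\ff_1) \leT \Phi_\nn(\ff_2)$; conversely, any $\hh \in \Fs$ realizing $\Phi_\nn(\ff_1) \act \hh = \Phi_\nn(\ff_2)$ satisfies $\CbR\nn \act \ff_1\hh = \CbR\nn \act \ff_2$, whence $\ff_1 \hh = \ff_2$ by uniqueness in Proposition~\ref{P:Geometry}, so $\ff_1 \dive \ff_2$. Injectivity is then immediate from the same uniqueness clause.

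For the second claim, I would establish the compatibility $\CbR{\nn+1} \act \ff = \iota_\nn(\CbR\nn \act \ff)$ whenever $\CbR\nn \act \ff$ is defined. By induction on the length of~$\ff$ as a word in the~$\ga\alpha$, this reduces to checking the identity $\iota_\nn(\TT) \act \ga\alpha = \iota_\nn(\TT \act \ga\alpha)$ for a single rotation: inspecting the two cases according to whether the address~$\alpha$ is or is not a prefix of the address of the rightmost leaf of~$\TT$, one sees that replacing the rightmost leaf by~$\et\et$ commutes with the rotation in both the source and the target. Applying this to $\ff \in \Div(\ga\ea^{\nn-1})$ yields $\Phi_{\nn+1}(\ff) = \iota_\nn(\Phi_\nn(\ff))$, so the isomorphisms~$\Phi_\nn$ assemble through the direct limit into the required order isomorphism onto~$\Tam\infty$. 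The main technical hinge throughout is the uniqueness statement of Proposition~\ref{P:Geometry} together with the embedding $\Fs \hookrightarrow F$ of Proposition~\ref{P:DualFraction}; the compatibility with~$\iota_\nn$ is the only new combinatorial ingredient.
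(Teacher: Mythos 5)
Your proposal is correct and follows the paper's construction: the same map $\ff \mapsto \CbR\nn \act \ff$, the same computation $\CbR\nn \act \ga\ea^{\nn-1} = \CbL\nn$, and the same appeal to the uniqueness clause of Proposition~\ref{P:Geometry} for injectivity and for the passage to the limit. The one genuinely different step is surjectivity. The paper proves it constructively, defining for each size~$\nn$ tree~$\TT$ an explicit positive word~$\Blue\TT$ with $\CbR\nn \act \Blue\TT = \TT$; you instead argue abstractly: since $\CbR\nn$ and $\CbL\nn$ are the bottom and top of~$\Tam\nn$, Lemma~\ref{L:Compat} yields $\ff, \mg$ in~$\Fs$ with $\CbR\nn \act \ff = \TT$ and $\TT \act \mg = \CbL\nn$, and uniqueness forces $\ff\mg = \ga\ea^{\nn-1}$, whence $\ff \in \Div(\ga\ea^{\nn-1})$. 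This is shorter and valid (Lemma~\ref{L:Compat} is an independent observation, so there is no circularity), but the explicit words~$\Blue\TT$ are not a detour in the paper: they are reused in the proof sketch of Proposition~\ref{P:DualFraction}, in Proposition~\ref{P:Lub}, and in Lemma~\ref{L:PolBlue}, so the constructive route buys an algorithm where yours buys only the bijection. Two smaller points in your favour: you explicitly check that $\Phi_\nn$ reflects the ordering as well as preserving it (the paper verifies only the forward direction, leaving the converse implicit), and your hands-on check that a rotation commutes with grafting~$\et\et$ onto the rightmost leaf is an adequate substitute for the paper's substitution identity $\CbR{\nn+1} = \lab{\CbR\nn}{}^\sigma$.
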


\begin{proof}
An immediate induction gives the equality $\CbR\nn \act \ga\ea^{\nn-1} = \CbL\nn$ for every~$\nn$, that is, the element~$\ga\ea^{\nn-1}$ of~$\Fs$ maps the right-comb~$\CbR\nn$ to the left-comb~$\CbL\nn$. Hence $\CbR\nn \act \ff$ is defined for every element~$\ff$ of~$\Fs$ that left-divides~$\ga\ea^{\nn-1}$. Thus, as $\CbR\nn$ belongs to~$\Tam\nn$ and the action of~$\Fs$ preserves the size of the trees, we obtain a well defined map
\begin{equation}
\label{E:Is}
\Is_\nn: \ff \mapsto \CbR\nn \act \ff
\end{equation}
of~$\Div(\ga\ea^{\nn-1})$ into~$\Tam\nn$. By Proposition~\ref{P:Geometry}, the map~$\Is_\nn$ is injective. On the other hand, we claim that $\Is_\nn$ is surjective. To prove it, it suffices to exhibit, for every size~$\nn$ tree~$\TT$, an element of~$\Fs$ that maps the right-comb~$\CbR\nn$ to~$\TT$. Now, for every tree~$\TT$, define two elements~$\Blue\TT, \Bluee\TT$ of~$\Fs$ by the recursive rules: 
\begin{equation}
\label{E:Blue}
\Blue\TT = \begin{cases}
1\\
\Bluee{\TT_0} \, \sh1(\Bluee{\TT_1}) \, \ga\ea
\end{cases}\ 
\Bluee\TT = \begin{cases}
1&\mbox{\quad for $\TT$ of size~$0$,}\\
\Bluee{\TT_0} \, \sh1(\Bluee{\TT_1}) \, \ga\ea
&\mbox{\quad for $\TT = \TT_0 \OP \TT_1$.}
\end{cases}
\end{equation}
For every size~$\nn$ tree~$\TT$ and every~$\pp \ge 1$, we have $\CbR\nn \act \Blue\TT = \TT$ and $\CbR{\nn + \pp} \act \Bluee\TT = \TT \OP \CbR\pp$, as shows an induction on~$\TT$: everything is obvious for~$\TT = \et$, and, for $\TT = \TT_0 \OP \TT_1$, it suffices to follow the diagrams of Figure~\ref{F:Blue}. Note that introducing both~$\Blue\TT$ and~$\Bluee\TT$ is necessary for the induction. However, the connection $\Bluee\TT = \Blue\TT \, \ga{1^{\ii-1}} \pdots \ga1 \ga\ea$, where $\ii$ is the length of the rightmost branch in~$\TT$, is easy to check.

Thus $\Is_\nn$ is a bijection of~$\Div(\ga\ea^{\nn-1})$ onto~$\Tam\nn$. Moreover, $\Is_\nn$ is compatible with the orderings. Indeed, assume $\ff \dive \mg$, say $\ff \mg' = \mg$. Then, by Proposition~\ref{P:Action}, we have $(\CbR\nn \act \ff) \act \mg' = \CbR\nn \act \mg$, whence $\CbR\nn \act \ff \leT \CbR\nn \act \mg$ by Lemma~\ref{L:Compat}. This completes the proof that $(\Div(\ga\ea^{\nn-1}), \dive)$ is isomorphic to the Tamari poset~$(\Tam\nn, \leT)$.

As for~$\Tam\infty$, we observe that, for every~$\nn$, we have $\CbR{\nn+1} = \lab{\CbR\nn}{}^\sigma$ where $\sigma$ is the substitution that maps $0 \wdots \nn-1$ to~$\et$ and $\nn$ to~$\et\et$. On the other hand, by definition, $\Div(\ga\ea^{\nn-1})$ is an initial segment of~$\Div(\ga\ea^\nn)$ and, for every~$\ff$ in~$\Div(\ga\ea^{\nn-1})$, we have 
$$\CbR{\nn+1} \act \ff = \lab{\CbR\nn}{}^\sigma \act \ff = \lab{(\CbR\nn \act \ff)}{}^\sigma,$$
hence $\Is_{\nn+1}(\ff) = \iota_\nn(\Is_\nn(\ff))$. It follows that the family~$(\Is_\nn)_{\nn \geqslant 1}$ induces a well defined map~$\Is_\infty$ of~$\bigcup_\nn \Div(\ga\ea^\nn)$ into~$\Tam\infty$. The map~$\Is_\infty$ is injective because~$\Is_\nn$ is, it is surjective as, by definition, $\Tam\infty$ is the limit of the directed system~$(\Tam\nn, \iota_\nn)$, and it preserves the orderings as $\Is_\nn$ does. 
\end{proof} 

\begin{figure}[htb]
\begin{picture}(75,47)(8,2)
\put(0,0){\includegraphics{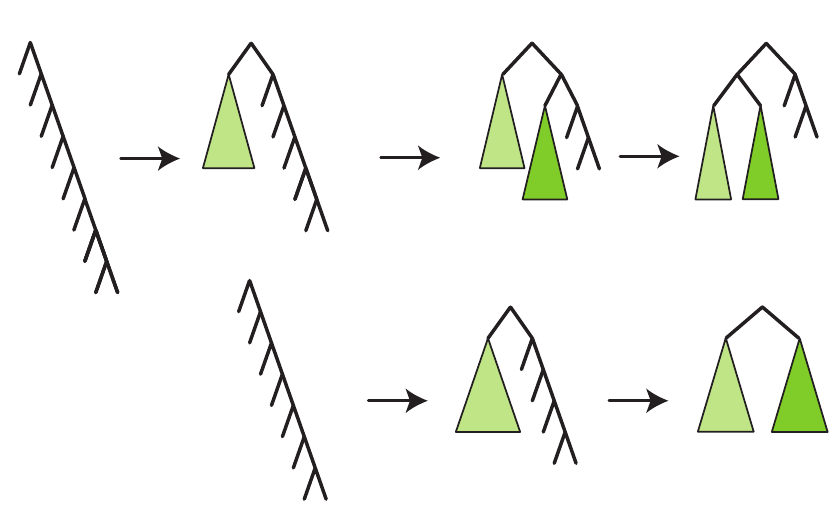}}
\put(27,23){$\CbR\nn$}
\put(55,17.5){$\CbR{\nn_1\!}$}
\put(48,5){${\TT_0}$}
\put(80,5){${\TT_1}$}
\put(72,5){${\TT_0}$}
\put(38,14){$\Bluee{\TT_0}$}
\put(59,13.5){$\sh1(\Blue{\TT_1})$}
\put(4,47){$\CbR{\nn{+}\pp}$}
\put(29,44){$\CbR{\nn_1{+}\pp}$}
\put(59.5,41){$\CbR\pp$}
\put(81,45){$\CbR\pp$}
\put(22,32){${\TT_0}$}
\put(49,32){${\TT_0}$}
\put(54,28.5){${\TT_1}$}
\put(71,28.5){${\TT_0}$}
\put(76,28.5){${\TT_1}$}
\put(13,38.5){$\Bluee{\TT_0}$}
\put(35,38.5){$\sh1(\Bluee{\TT_1})$}
\put(64,38.5){$\ga\ea$}
\end{picture}
\caption{\sf\smaller For~$\TT$ a size~$n$ tree,\break $\Blue\TT$ describes how to construct~$\TT$ from the right-comb~$\CbR\nn$, and $\Bluee\TT$ describes how to construct $\TT \OP \CbR\pp$ from~$\CbR{\nn+\pp}$; the figure illustrates the recursive definition of~$\Bluee\TT$ (above) and $\Blue\TT$ (below) for $\TT = \TT_0 \OP \TT_1$, with $\nn_1$ denoting the size of~$\TT_1$.}
\label{F:Blue}
\end{figure}

\begin{remark}
The subset $\bigcup_\nn\Div(\ga\ea^\nn)$ involved in Proposition~\ref{P:Connection} is a proper subset of~$\Fs$ as, for instance, it contains no~$\ga\alpha$ such that $0$ occurs in~$\alpha$: indeed, in this case, $\CbR\nn \act \ga\alpha$ is not defined, whereas $\CbR\nn \act \ff$ is defined for every~$\ff$ left-dividing~$\ga\ea^{\nn-1}$.
\end{remark}

The connection of Proposition~\ref{P:Connection} can be used in both directions. If we take for granted that the Tamari posets are lattices, we deduce that the subsets $(\Div(\ga\ea^{\nn-1}), \dive)$ of~$(\Fs, \dive)$ must be lattices as well, that is, with the usual terminology of left-divisibility relation, that any two elements of~$\bigcup_\nn\Div(\ga\ea^\nn)$ admit a \emph{least common right-multiple}, or \emph{right-lcm}, and a \emph{greatest common left-divisor}, or \emph{left-gcd}.  

On the other hand, if we have a direct proof that $(\Fs, \dive)$ is a lattice, then the isomorphism of Proposition~\ref{P:Connection} provides a new proof of the lattice property for the Tamari posets. This is what happens.

\begin{proposition}
\label{P:Lcm}
The poset~$(\Fs, \dive)$ is a lattice.
\end{proposition}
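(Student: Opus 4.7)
The plan is to produce right-lcms via right-reversing and then derive left-gcds from a finiteness argument on divisor sets. Given $\ff, \mg$ in~$\Fs$, pick positive $\AAA$-words $\uu, \vv$ representing them. By the termination of right-$\RRR$-reversing established in the proof of Proposition~\ref{P:Common}, there exist positive $\AAA$-words $\vv', \uu'$ with $\uu\inv \vv \rev_\RRR \vv' \uu'{}\inv$; set $\hh = \ff \cl{\vv'} = \mg \cl{\uu'}$. I claim $\hh$ is a right-lcm of $\ff$ and~$\mg$. If $\hh^*$ is any other common right-multiple, written as $\ff \cl{\vv_1} = \mg \cl{\uu_1}$, then $\uu \vv_1 \equivp_\RRR \vv \uu_1$, so by Lemma~\ref{L:Complete} we have $\vv_1\inv \uu\inv \vv \uu_1 \rev_\RRR \eps$. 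Reversing this computation in stages---first reducing the central subword $\uu\inv \vv$ to $\vv' \uu'{}\inv$---yields $\vv_1\inv \vv' \uu'{}\inv \uu_1 \rev_\RRR \eps$; a straightforward grid-diagram analysis of this terminal reversing then produces a positive word $\ww$ with $\vv_1 \equivp_\RRR \vv' \ww$ and $\uu_1 \equivp_\RRR \uu' \ww$, so that $\hh^* = \hh \cl\ww$ and $\hh \dive \hh^*$.

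Next I verify that $\dive$ is a genuine partial order on~$\Fs$, i.e., that $\Fs$ has no nontrivial invertible elements. The weight function $\wit$ constructed in the proof of Lemma~\ref{L:Complete} is $\equivp_\RRR$-invariant, superadditive, vanishes at~$1$, and satisfies $\wit(\ga\alpha) = 1$ for every~$\alpha$; hence $\ff \mg = 1$ in~$\Fs$ forces $\wit(\ff) = \wit(\mg) = 0$, and thus $\ff = \mg = 1$. Moreover, for every $\hh$ in~$\Fs$, the set $\Div(\hh)$ is finite: by Proposition~\ref{P:Action}~\ITEM1 the assignment $\ff \mapsto \trm\hh \act \ff$ is injective on $\Div(\hh)$, and by Lemma~\ref{L:Compat} its image lies in the Tamari interval between $\trm\hh$ and~$\trp\hh$, which consists of finitely many trees of fixed size.

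With these ingredients, left-gcds arise by a standard lattice argument. Given $\ff, \mg$ in~$\Fs$, pick any common right-multiple $\hh$ (which exists by Proposition~\ref{P:Common}), and let $D$ denote the set of common left-divisors of $\ff$ and~$\mg$. Then $D$ is a nonempty finite subset of~$\Div(\hh)$, and it is closed under right-lcm: if $\hh_1, \hh_2 \in D$, then both $\ff$ and~$\mg$ are common right-multiples of $\hh_1$ and~$\hh_2$, hence their right-lcm (constructed in the first paragraph) divides both, i.e., lies in~$D$. The iterated right-lcm of all elements of~$D$ is therefore itself an element of~$D$ that left-dominates every element of~$D$; this maximum is the desired left-gcd of $\ff$ and~$\mg$.

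The main obstacle lies in the first paragraph: one must promote the completeness of $(\AAA, \RRR)$ with respect to right-reversing---which a priori only certifies that $\equivp_\RRR$-equivalence of positive words is witnessed by reversing to~$\eps$---to the universal factorisation property that the output $(\vv', \uu')$ of reversing $\uu\inv \vv$ is \emph{minimal} among all pairs witnessing a common right-multiple of $\ff$ and~$\mg$. Once this minimality is established, the rest reduces to elementary bookkeeping and the finiteness of Tamari intervals.
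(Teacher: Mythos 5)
Your argument is correct in substance and, read as a whole, combines the paper's main route with an alternative the paper only sketches in a remark. For right-lcms the paper simply invokes \cite[Proposition~3.6]{Dia}: for a presentation that is complete with respect to right-reversing and has exactly one relation $\ma \pdots = \mb \pdots$ per pair of generators (condition~\ITEM1 of~\eqref{E:Terminating}), the output $(\vv', \uu')$ of reversing $\uu\inv \vv$ represents the right-lcm. Your first paragraph is essentially a re-derivation of that cited result; the ``reversing in stages'' step is legitimate precisely because condition~\ITEM1 makes the reversing grid deterministic, so the grid for $\vv_1\inv \uu\inv \vv \uu_1$ contains the grid for $\uu\inv \vv$ as a sub-block and the word~$\ww$ is read off the remaining blocks. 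Thus the ``main obstacle'' you flag at the end is real but is exactly the content of the quoted proposition of~\cite{Dia}, not a new difficulty specific to~$(\AAA, \RRR)$. For left-gcds, the paper's main proof goes through left-$\RRR$-reversing and the duality between left-lcms and left-gcds, whereas you use finiteness of~$\Div(\hh)$ together with closure of the set of common left-divisors under right-lcm; this is precisely the route of the Remark following Proposition~\ref{P:Lcm}, and your realisation of~$\Div(\hh)$ inside the Tamari interval between~$\trm\hh$ and~$\trp\hh$ is a clean way to get the needed finiteness (the paper only claims, and only needs, the absence of infinite ascending chains, via the function~$\wit$). Your antisymmetry check via~$\wit$ is also fine and is needed, since the paper's lattice statement presupposes that $\dive$ is a partial order.

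Two small corrections. Injectivity of $\ff \mapsto \trm\hh \act \ff$ on~$\Div(\hh)$ follows from the uniqueness clause of Proposition~\ref{P:Geometry}, not from Proposition~\ref{P:Action}\ITEM1, which only asserts uniqueness of the image tree for a given~$\ff$. And you should say why $\trm\hh \act \ff$ is defined for every $\ff$ in~$\Div(\hh)$: this is because $\trm\hh \act \hh$ is defined and Proposition~\ref{P:Action}\ITEM3 propagates definedness to left-divisors.
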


\begin{corollary}
For every~$\nn$, the Tamari poset~$(\Tam\nn, \leqslant)$ is a lattice.
\end{corollary}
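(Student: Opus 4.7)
The plan is to invoke the isomorphism established in Proposition~\ref{P:Connection} together with the lattice structure on~$(\Fs, \dive)$ from Proposition~\ref{P:Lcm}. Since a poset isomorphism transports meets and joins, it suffices to prove that $(\Div(\ga\ea^{\nn-1}), \dive)$ is itself a lattice, i.e., that it inherits the operations from~$(\Fs, \dive)$ and is closed under them.

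First I would observe that, for every element~$\ff$ of a left-cancellative monoid in which left-divisibility is a partial order, the set~$\Div(\ff)$ is a sublattice of the divisibility poset whenever the latter is a lattice. The argument is standard: if $\mg, \hh$ both left-divide~$\ff$, then any common left-divisor of~$\mg$ and~$\hh$ is a fortiori a left-divisor of~$\ff$, so the left-gcd of~$\mg$ and~$\hh$, taken in~$\Fs$, automatically lies in~$\Div(\ff)$; symmetrically, since $\ff$ is itself a common right-multiple of~$\mg$ and~$\hh$, the right-lcm of~$\mg$ and~$\hh$ left-divides~$\ff$, hence belongs to~$\Div(\ff)$. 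Applied to~$\ff = \ga\ea^{\nn-1}$, this shows that $(\Div(\ga\ea^{\nn-1}), \dive)$ is a lattice.

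Next I would transport this lattice structure along the bijection $\Is_\nn$ of~\eqref{E:Is}. By Proposition~\ref{P:Connection}, the map $\Is_\nn$ is an isomorphism of posets from~$(\Div(\ga\ea^{\nn-1}), \dive)$ onto~$(\Tam\nn, \leT)$, and any poset isomorphism between a lattice and another poset makes the latter a lattice with meets and joins given by the image of the meets and joins on the source side. Explicitly, for trees~$\TT, \TT'$ in~$\Tam\nn$, the join $\TT \vee \TT'$ is $\Is_\nn(\Is_\nn\inv(\TT) \vee \Is_\nn\inv(\TT'))$ and similarly for the meet.

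I do not expect any serious obstacle here: both ingredients are already in hand, and the only thing to check is the elementary sublattice argument for~$\Div(\ga\ea^{\nn-1})$, together with the formal fact that lattice structure is preserved by poset isomorphisms. The proof is essentially a three-line deduction from Propositions~\ref{P:Connection} and~\ref{P:Lcm}.
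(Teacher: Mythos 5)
Your proposal is correct and follows exactly the route the paper intends: the corollary is stated as an immediate consequence of Propositions~\ref{P:Lcm} and~\ref{P:Connection}, and your two supporting observations (that $\Div(\ga\ea^{\nn-1})$ is closed under left-gcd trivially and under right-lcm because $\ga\ea^{\nn-1}$ is itself a common right-multiple, and that poset isomorphisms transport lattice structure) are precisely the routine details the paper leaves implicit.
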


To establish Proposition~\ref{P:Lcm}, we once again appeal to subword reversing.

\begin{proof}[of Proposition~\ref{P:Lcm}]
By~\cite[Proposition~3.6]{Dia}, if $(\mA, \RR)$ is a positive presentation that is complete with respect to right-reversing, a sufficient condition for any two elements of~$\MON\mA\RR$ that admit a common right-multiple to admit a right-lcm is that $(\mA, \RR)$ satisfies Condition~\ITEM1 of~\eqref{E:Terminating}; moreover, in this case, the right-lcm of the elements represented by two $\mA$-words~$\uu, \vv$ is represented by~$\uu \vv'$ and~$\vv \uu'$, where $\uu', \vv'$ are the positive $\mA$-words for which $\uu\inv \vv \rev_\RR \vv' \uu'{}\inv$ holds. 

Now, as already noted, $(\AAA, \RRR)$ satisfies~\eqref{E:Terminating}. Hence any two elements of~$\Fs$ that admit a common right-multiple admit a right-lcm. On the other hand, by Proposition~\ref{P:Common}, any two elements of~$\MON\AAA\RRR$, that is, of~$\Fs$, admit a common right-multiple. Hence any two elements of~$\Fs$ admit a right-lcm. In other words, any two elements in the poset~$(\Fs, \dive)$ admit a least upper bound.

As for left-gcd's, we can argue as follows. Let $\diveR$ denote the right-divisibility relation, which is the binary relation such that $\ff \diveR \mg$ holds if and only if we have $\mg' \ff  = \mg$ for some~$\mg'$ (the difference with~$\dive$ is that, here, $\ff$ appears on the right and not on the left). Then we have the derived notions of a left-lcm and a right-gcd. An easy general result says that, if $\ff, \mg, \ff', \mg'$ are elements of a monoid and satisfy $\ff \mg' = \mg \ff'$, then $\ff$ and~$\mg$ admit a left-gcd if and only if $\ff'$ and~$\mg'$ admit a left-lcm. By Proposition~\ref{P:Common}, any two elements of~$\Fs$ admit a common right-multiple and so, it suffices to show that any two elements of~$\Fs$ admit a left-lcm to deduce that they admit a left-gcd. Now, the existence of left-lcm's in~$\Fs$ follows from the properties of left-$\RRR$-reversing, which we have seen in the proof of Proposition~\ref{P:RightCancel} are similar to those of right-$\RRR$-reversing. 
\end{proof}

\begin{remark}
Another way of deducing the existence of left-gcd's from that of right-lcm's is to use Noetherianity properties. The existence of the function~$\wit$ of~\eqref{E:Wit} implies that a set~$\Div(\ff)$ contains no infinite increasing sequence~$\ff_1 \prec \ff_2 \prec \pdots$ in~$\Fs$. For all $\ff, \mg$, the family $\Div(\ff) \cap \Div(\mg)$ is nonempty as it contains~$1$, and, by Noetherianity, it contains a $\dive$-maximal element, which must be a left-gcd of~$\ff$~and~$\mg$.
\end{remark}


\subsection{Computing the operations}
\label{SS:Algo}

We conclude this section with results about the algorithmic complexity of subword reversing in~$\Fs$. Here we concentrate on space complexity, namely bounds on the length of words; it would be easy to state analogous bounds on the number of reversing steps, hence for time complexity.

\begin{proposition}
\label{P:NumDen}
If $\ww, \ww'$ are signed $\AAA$-words, $\ww \rev_{\RRR} \ww'$ implies $\Lg{\ww'} \leqslant \Lg\ww^2/4 + \Lg\ww$. More precisely, we have $\Lg{\ww'} \leqslant \pp + \qq + \pp \qq$ if $\ww$ contains $\pp$~positive letters and $\qq$~negative letters. These bounds are sharp.
\end{proposition}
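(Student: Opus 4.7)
The first inequality follows from the second by the arithmetic--geometric mean bound $\pp\qq \leqslant (\pp+\qq)^2/4 = \Lg\ww^2/4$, so I focus on establishing the refined bound $\Lg{\ww'} \leqslant \pp + \qq + \pp\qq$. My plan is to read the bound off the planar diagram associated with right-$\RRR$-reversing. Drawing $\ww$ as a staircase in the plane (horizontal arrows for positive letters, vertical arrows for negative ones), each right-reversing step fills an open corner with a cell whose shape reflects the relation applied: a $1\times 1$ cell for any commutation or quasi-commutation (length preserved), a $2\times 1$ or $1\times 2$ cell for pentagon (adding one letter to the output), and a degenerate cell for cancellation (removing one positive and one negative). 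Reading off $\ww'$ along the top-right boundary then yields $\Lg{\ww'} = \pp + \qq + k - 2c$, where $k$ counts the pentagon cells and $c$ the cancellations; the refined bound is therefore equivalent to $k \leqslant \pp\qq$.

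To bound $k$, I would assign to each pentagon cell a pair consisting of one initial positive letter and one initial negative letter of $\ww$, the \emph{charge} of the cell. Trace every arrow of the diagram back to its ancestor among the initial letters: commutation and quasi-commutation cells are one-to-one on arrows and so preserve ancestry, while a pentagon cell transmits the ancestry of its unique input positive (resp.\ negative) to both its output positives (resp.\ to its single output negative). The charge of a pentagon cell is then the pair (ancestor of the input positive, ancestor of the input negative), lying in $\{1,\ldots,\pp\}\times\{1,\ldots,\qq\}$. The key combinatorial claim is that this charging is injective on pentagon cells: two cells sharing a charge would force two descendant positive tracks of a single initial positive and two descendant negative tracks of a single initial negative to engage in a pentagon configuration twice, which the address constraints of pentagon (the neg address must equal the pos address extended by a single~$1$), combined with the rigid way quasi-commutation and commutation rules transform addresses, combine to rule out.

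Sharpness is witnessed by explicit families. For $\qq = 1$, the word $\ga{1}\inv \ga\ea \ga{1}^{\pp-1}$ reverses through one pentagon-forward step, then $\pp-1$ pentagon-backward steps, then one quasi-commutation, producing a reversed word of length exactly $2\pp + 1 = \pp + \qq + \pp\qq$. Symmetrically, for $\pp = 1$, the word $\ga{10^{\qq-1}}\inv \cdots \ga{10}\inv \ga{1}\inv \ga\ea$ reverses, as an instance of the last relation of~\eqref{E:Closure}, to $\gah\ea{\qq+1}\gah\ea\qq\inv$ of length $1 + 2\qq$. The first inequality is then sharp whenever $\pp = \qq$. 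The principal obstacle is establishing the injectivity of the ancestor charging on pentagon cells: this requires a careful tracking of how the addresses of descendant tracks evolve through the diagram, with the guiding intuition that the pentagon pattern is very rigid in the addresses and that each pair of initial tracks can realize it at most once.
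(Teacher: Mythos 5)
Your reduction of the statement is fine: reading the reversing diagram gives $\Lg{\ww'}=\pp+\qq+k-2c$ with $k$ the number of pentagon cells and $c$ the number of cancellations, and the whole proposition comes down to $k\leqslant\pp\qq$. But the injectivity of your ancestor charging \emph{is} that claim, and you do not prove it: ``a careful tracking of how the addresses of descendant tracks evolve'' is the entire difficulty, and nothing you write excludes two pentagon cells carrying the same pair of ancestors. The clean way to get it --- and the way the paper argues --- is to run the reversing over the extended alphabet~$\AAAh=\{\gah\alpha\rr\}$ already introduced to prove termination (Proposition~\ref{P:Common}): by~\eqref{E:Closure} this family is closed under right-$\RRR$-reversing, so the whole diagram collapses into a $\pp\times\qq$ grid of $\AAAh$-cells, exactly one per pair of initial letters; this is your charging made rigorous. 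The decisive observation, which replaces your missing injectivity argument, is that in every rule of~\eqref{E:Closure} the index of the outgoing negative (vertical) edge is at most that of the incoming one, so, the initial negative edges having index~$1$, every vertical edge of the diagram has index at most~$1$; consequently the only length-increasing rule, $\gah\beta\ms\inv\,\gah\alpha\rr\rev_{\RRR}\gah\alpha{\rr+\ms}\,\gah{\alpha0^\ii}\ms\inv$, increases the total $\AAA$-length by $\ms\leqslant1$, i.e.\ each $\AAAh$-cell contains at most one pentagon step. Without some structural input of this kind your proposal is not a proof.

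The sharpness discussion is also incomplete. Your two families require $\min(\pp,\qq)=1$, so they witness neither the sharpness of $\pp+\qq+\pp\qq$ for general $\pp,\qq$ nor that of the first inequality beyond $\Lg\ww=2$; in particular the assertion that the first bound is ``sharp whenever $\pp=\qq$'' is only substantiated for $\pp=\qq=1$. A genuine two-parameter family is needed, for instance $(\ga{1^{\pp-1}}\pdots\ga1\ga\ea)\inv\,\ga{1^\pp}^\qq$, whose right-reversal (an induction using~\eqref{E:Closure} again) is $\ga\ea^\qq\,(\gah{1^{\pp-1}}{\qq+1}\pdots\gah1{\qq+1}\,\gah\ea{\qq+1})\inv$, of $\AAA$-length $\qq+\pp(\qq+1)=\pp+\qq+\pp\qq$; taking $\pp=\qq$ then saturates the first inequality for every even $\Lg\ww$.
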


\begin{proof}
By construction, the $\RRR$-reversing steps in the right-$\RRR$-reversing of~$\ww$ to~$\ww'$ can be gathered into $\RRRh$-reversing steps, which are at most~$\pp \qq$ in number. Consider the sum of the indices~$\rr$ of the involved generators~$\gah\alpha\rr$. Each $\RRR$-reversing step increases this sum by~$1$ at most (in the case of a pentagon relation), so the total sum in the final~$\pp + \qq$ generators~$\gah\alpha\rr$ is at most $\pp + \qq + \pp\qq$. So, when the generators~$\gah\alpha\rr$ are decomposed as products of~$\ga\alpha$'s, at most $\pp + \qq + \pp\qq$ of the latter occur.

The bound is sharp, as an easy  induction gives
$$(\ga{1^{\pp-1}} \pdots \ga1 \ga\ea)\inv \ 
\ga{1^\pp}^\qq \ \rev_{\RRR}\  \ga\ea^\qq \ (\gah{1^{\pp-1}}\qq \pdots \gah1\qq \gah\ea\qq)\inv,$$
a word of length~$\pp + \qq$ that
is right-$\RRR$-reversible to a word of length~$\pp + \qq + \pp\qq$.
\end{proof}

Other upper bounds can be obtained by using the action of~$\Fs$ on trees. To state the result, it is convenient to introduce the following natural terminology.

\index{denominator}
\index{numerator}
\begin{definition}
For every signed $\AAA$-word~$\ww$, the \emph{right-numerator}~$\NR(\ww)$ and the \emph{right-denominator}~$\DR(\ww)$ of~$\ww$ are the unique $\AAA$-words satisfying $\ww \rev_{\RRR} \NR(\ww) \DR(\ww)\inv$. Symmetrically, the \emph{left-numerator}~$\NL(\ww)$ and the \emph{left-denominator}~$\DR(\ww)$ of~$\ww$ are the unique $\AAA$-words satisfying $\ww \revL_{\RRR} \DL(\ww)\inv \NL(\ww)$.
\end{definition}

As left- and right-$\RRR$-reversings are terminating, the positive $\AAA$-words~$\NR(\ww)$, $\DR(\ww)$, $\NL(\ww)$, and~$\DL(\ww)$ exist for every signed $\AAA$-word~$\ww$. 

\begin{proposition}
\label{P:NumDenBis}
Assume that $\ww$ is a signed $\AAA$-word and $\TT \act \ww$ is defined for some size~$\nn$ tree~$\TT$. Then we have
\begin{equation} 
\label{E:NumDenBis}
\max(\Lg{\NL(\ww)} + \Lg{\DR(\ww)}, \Lg{\NL(\ww)} + \Lg{\DR(\ww)}) \leqslant (n-1)(n-2)/2.
\end{equation}
\end{proposition}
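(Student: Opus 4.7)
The plan is to translate the lengths of the reversed words into differences of the statistic~$\MZ$ on the size-$\nn$ trees visited by the action of~$\cl\ww$, using the superadditive function~$\wit$ from the proof of Lemma~\ref{L:Complete}, and then to telescope.

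Let $\TT' := \TT \act \cl\ww$. The right-reversing $\ww \rev_{\RRR} \NR(\ww)\,\DR(\ww)^{-1}$ gives $\cl\ww = \cl{\NR(\ww)}\cdot\cl{\DR(\ww)}^{-1}$ in~$F$; Proposition~\ref{P:Action}\ITEM3 then produces a size-$\nn$ tree $S := \TT \act \cl{\NR(\ww)}$ that also equals $\TT' \act \cl{\DR(\ww)}$, so $S$ sits above both $\TT$ and~$\TT'$ in~$(\Tam\nn, \leT)$. Symmetrically, the left-reversing $\ww \revL_{\RRR} \DL(\ww)^{-1}\,\NL(\ww)$ furnishes a size-$\nn$ tree $R := \TT \act \cl{\DL(\ww)}^{-1}$ satisfying $R \act \cl{\DL(\ww)} = \TT$ and $R \act \cl{\NL(\ww)} = \TT'$, sitting below both $\TT$ and~$\TT'$.

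For every positive $\AAA$-word~$\uu$, $\wit(\ga\alpha) = 1$ together with the superadditivity $\wit(\uu\vv) \geqslant \wit(\uu) + \wit(\vv)$ proved in Lemma~\ref{L:Complete} gives $\Lg\uu \leqslant \wit(\cl\uu)$ by induction on~$\Lg\uu$. Combining with the formula $\wit(\cl\uu) = \MZ{\trp{\cl\uu}} - \MZ{\trm{\cl\uu}}$ and the substitution-monotonicity inequality $\MZ{\TT^\sigma} - \MZ{{\TT'}^\sigma} \geqslant \MZ{\TT} - \MZ{\TT'}$ established in that same proof, I obtain $\Lg\uu \leqslant \MZ{V \act \cl\uu} - \MZ{V}$ whenever $V \act \cl\uu$ is defined. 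Applied to the four positive words in sight, this yields
\begin{equation*}
\Lg{\NR(\ww)} \leqslant \MZ{S} - \MZ{\TT}, \quad \Lg{\DR(\ww)} \leqslant \MZ{S} - \MZ{\TT'},
\end{equation*}
\begin{equation*}
\Lg{\NL(\ww)} \leqslant \MZ{\TT'} - \MZ{R}, \quad \Lg{\DL(\ww)} \leqslant \MZ{\TT} - \MZ{R}.
\end{equation*}

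Both mixed sums $\Lg{\NR(\ww)} + \Lg{\DL(\ww)}$ and $\Lg{\NL(\ww)} + \Lg{\DR(\ww)}$ now telescope to $\MZ{S} - \MZ{R}$, the $\MZ{\TT}$ and $\MZ{\TT'}$ contributions cancelling. Since any size-$\nn$ tree~$U$ satisfies $\MZ{\CbR\nn} = \nn \leqslant \MZ{U} \leqslant \MZ{\CbL\nn} = \nn(\nn+1)/2$, one obtains $\MZ{S} - \MZ{R} \leqslant \nn(\nn-1)/2$ at once; the delicate step, sharpening the constant to the $(\nn-1)(\nn-2)/2$ in~\eqref{E:NumDenBis}, amounts to the combinatorial observation that $R \leT S$ are size-$\nn$ trees linked by the action of a single element of~$\Fs$ defined at~$\TT$, forcing them to share at least one common leaf whose $0$-count cancels out of the telescoped difference. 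I expect this last sharpening to be the main obstacle.
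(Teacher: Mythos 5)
Your argument is essentially the paper's: both proofs bound the lengths of the four positive words by the statistic $\MZ{\cdot}$ via the superadditive function~$\wit$, using the intermediate trees $R = \TT \act \DL(\ww)\inv$ and $S = \TT \act \NR(\ww)$. The only cosmetic difference is that the paper extends the two mixed products to positive words mapping $\CbR\nn$ to $\CbL\nn$ (prepending $\Blue{R}$ and appending a word sending $S$ to the left comb) and then compares with a power of~$\ga\ea$, whereas you telescope the $\MZ{}$-inequalities directly; both routes reduce to the same quantity $\MZ{\CbL\nn} - \MZ{\CbR\nn} = \nn(\nn+1)/2 - \nn = \nn(\nn-1)/2$. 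Two remarks. First, the ``main obstacle'' you flag is not an obstacle in your proof but an error in the statement: the paper's own proof reaches $(\nn-1)(\nn-2)/2$ only by asserting that $\ga\ea^{\nn-2}$ maps $\CbR\nn$ to $\CbL\nn$, whereas by Proposition~\ref{P:Connection} the correct exponent is $\nn-1$, with $\wit(\ga\ea^{\nn-1}) = \nn(\nn-1)/2$. The case $\nn = 2$, $\ww = \ga\ea$ already gives $\Lg{\NL(\ww)} + \Lg{\DR(\ww)} = 1 > 0 = (\nn-1)(\nn-2)/2$, so the sharpening you were hoping for is false and your bound $\nn(\nn-1)/2$ is the right constant; your proof is complete as written. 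Second, a small citation point: to know that $\TT \act \NR(\ww)$ and $\TT \act \DL(\ww)\inv$ are defined you should invoke Lemma~\ref{L:ActRev} (reversing preserves definedness of the action, prefix by prefix) rather than Proposition~\ref{P:Action}\ITEM3, which concerns group elements and does not by itself rule out an undefined intermediate stage along the reversed word.
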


In order to establish Proposition~\ref{P:NumDenBis}, we need a preliminary result about the action of $\AAA$-words on trees. First, if $\TT$ is a tree and $\ww$ is a signed~$\AAA$-word, we say that $\TT \act \ww$ is defined if $\TT \act \cl\uu$ is defined for every prefix~$\uu$ of~$\ww$. Now, if two signed $\AAA$-words~$\ww, \ww'$ represent the same element of~$F$, the hypothesis that $\TT \act \ww$ is defined for some tree~$\TT$ does not guarantee that $\TT \act \ww'$ is also defined: for instance, $\TT \act \eps$ is always defined, but $\TT \act \ga\alpha\inv \ga\alpha$ is not. However, this cannot happen with reversing.

\begin{lemma} 
\label{L:ActRev}
Assume that $\ww, \ww'$ are signed-$\AAA$-words and $\ww$ is right- or left-$\RRR$-reversible to~$\ww'$. Then, for every tree~$\TT$ such that $\TT \act \ww$ is defined, $\TT\act \ww'$ is defined as well.
\end{lemma}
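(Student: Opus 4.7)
The plan is to reduce to a single reversing step and then to a local geometric claim about each relation in~$\RRR$. By induction on the number of steps, it suffices to treat the case where $\ww'$ is obtained from~$\ww$ by one reversing step. The deletion case $\ww = \ww_1 \ma\inv \ma \ww_2 \rev_{\RRR} \ww_1 \ww_2$ is immediate: any prefix of~$\ww_1 \ww_2$ maps under $\cl{\,\cdot\,}$ to an $F$-element of the form $\cl{\ww_1 \vv'}$ for $\vv' \prefe \ww_2$, which equals $\cl{\ww_1 \ma\inv \ma \vv'}$ and is therefore defined from~$\TT$ by hypothesis on~$\ww$.

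For the substitution case $\ww = \ww_1 \ma\inv \mb \ww_2 \rev_{\RRR} \ww_1 \vv \uu\inv \ww_2 = \ww'$ based on a relation $\ma \vv = \mb \uu$ of~$\RRR$, set $\TT_1 = \TT \act \cl{\ww_1}$, $\TT_2 = \TT_1 \act \ma\inv$, and $\TT_3 = \TT_2 \act \mb$. The argument reduces to the following local claim: \emph{for every relation $\ma \vv = \mb \uu$ in~$\RRR$ and every tree~$\TT^*$ with both $\TT^* \act \ma$ and $\TT^* \act \mb$ defined, the trees $\TT^* \act \cl{\ma \vv'}$ and $\TT^* \act \cl{\mb \uu'}$ are defined for all prefixes $\vv' \prefe \vv$ and $\uu' \prefe \uu$.} Granting the claim and applying it with $\TT^* = \TT_2$, one verifies each prefix of~$\ww'$ using the equalities $\cl{\vv \uu''{}\inv} = \cl{\ma\inv \mb \uu'}$ in~$F$ (for $\uu = \uu' \uu''$) and Proposition~\ref{P:Action}\ITEM3: a prefix $\ww_1 \vv'$ acts as $\TT_2 \act \cl{\ma \vv'}$, a prefix $\ww_1 \vv \uu''{}\inv$ as $\TT_2 \act \cl{\mb \uu'}$, and a prefix $\ww_1 \vv \uu\inv \vv'$ with $\vv' \prefe \ww_2$ as $\TT_3 \act \cl{\vv'}$, which is defined from the action of~$\ww$ on~$\TT$.

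It remains to check the local claim for each family in~$\RRR$. Commutation $\ga\alpha \ga\beta = \ga\beta \ga\alpha$ with $\alpha \perp \beta$ is trivial since the two rotations take place in disjoint subtrees. For a quasi-commutation such as $\ga{\alpha11\beta} \ga\alpha = \ga\alpha \ga{\alpha1\beta}$, the hypothesis $\TT^* \act \ga\alpha$ defined forces the $\alpha$-subtree of~$\TT^*$ to be $\TT_0 \OP (\TT_1 \OP \TT_2)$; the letter~$\ga{\alpha11\beta}$ operates inside~$\TT_2$ alone, and performing $\ga\alpha$ first simply relocates~$\TT_2$ from address~$\alpha11$ to address~$\alpha1$, so that $\ga{\alpha1\beta}$ realizes the same inner rotation in the transformed tree; the three other quasi-commutation relations are treated identically. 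Only the pentagon $\ga\alpha^2 = \ga{\alpha1} \ga\alpha \ga{\alpha0}$ needs genuine inspection: the joint hypothesis that $\TT^* \act \ga\alpha$ and $\TT^* \act \ga{\alpha1}$ are both defined forces the $\alpha$-subtree of~$\TT^*$ to have the refined form $\TT_0 \OP (\TT_1 \OP (\TT_1' \OP \TT_2'))$, after which tracing rotations (cf.\ Figure~\ref{F:Pentagon}) shows that every intermediate tree along both $\ga\alpha \ga\alpha$ and $\ga{\alpha1} \ga\alpha \ga{\alpha0}$ is defined, with the common terminal shape $((\TT_0 \OP \TT_1) \OP \TT_1') \OP \TT_2'$. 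The left-reversing statement is obtained by the symmetric argument---alternatively, by transport through the antimorphism $\ww \mapsto \widetilde\ww$ of the proof of Proposition~\ref{P:RightCancel}, which carries~$\RRR$ to itself and conjugates left-reversing with right-reversing. The main obstacle is conceptual: one must resist reasoning letter by letter and instead work with the prefix-based definition of $\TT \act \ww$---the two viewpoints coincide by Proposition~\ref{P:Action}\ITEM3.
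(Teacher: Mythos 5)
Your proof is correct and takes the route the paper itself indicates: the paper's own argument merely observes that reversing can only delete, never create, cancelling pairs, and then declares the case-by-case verification easy and skips it. Your reduction to a local claim for each relation family of~$\RRR$ (commutation, quasi-commutation, pentagon), together with the prefix bookkeeping via Proposition~\ref{P:Action}\ITEM3, supplies exactly those omitted details, and each case checks out.
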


\begin{proof}
The problem with arbitrary equivalences is that new pairs~$\ga\alpha\inv \ga\alpha$ or~$\ga\alpha \ga\alpha\inv$ may be created. This however is impossible in the case of (right- or left-) reversing, as we can only delete such pairs, but not create them. A complete formal proof requires to check all possible cases: this is easy, and we skip the details.
\end{proof}

\begin{proof}[of Proposition~\ref{P:NumDenBis}]
Let $\TT' = \TT \act \ww$. By definition, $\ww$ is right-$\RRR$-reversible to~$\NR(\ww) \DR(\ww)\inv$, and left-$\RRR$-reversible to $\DL(\ww)\inv \NL(\ww)$. By Lemma~\ref{L:ActRev}, this implies that $\TT \act \NR(\ww) \DR(\ww)\inv$ and $\TT \act \DL(\ww)\inv \NL(\ww)$ are defined. Put $\TT_\smallL = \TT \act \DL(\ww)\inv$ and $\TT_\smallR = \TT \act \NR(\ww)$. By hypothesis, the terms~$\TT, \TT'$, $\TT_\smallL$, and~$\TT_\smallR$ all have size~$\nn$. Hence there exists a positive $\AAA$-word~$\uu$ (namely~$\Blue{\TT_\smallL}$) mapping the right comb~$\CbR\nn$ to~$\TT_\smallL$. By symmetry, there exists a positive $\AAA$-word~$\vv$ mapping~$\TT_\smallR$ to the left comb~$\CbL\nn$. Then $\uu \NL(\ww) \DR(\ww) \vv$ and $\uu \DL(\ww) \NR(\ww) \vv$ are $\RRR$-equivalent positive $\AAA$-words, and both map~$\CbR\nn$ to~$\CbL\nn$, see Figure~\ref{F:Bound}. Now $\ga\ea^{n-2}$ also maps~$\CbR\nn$ to~$\CbL\nn$. Hence, by Proposition~\ref{P:Geometry}, we must have
\begin{equation}
\ga\ea^{n-2} \equivp_{\RRR} \uu \NL(\ww) \DR(\ww) \vv \equivp_{\RRR} \uu \DL(\ww) \NR(\ww) \vv.
\end{equation}
Then the function~$\wit$ of~\eqref{E:Wit} provides an upper bound for the lengths of the words $\RRR$-equivalent to a given word. In the current case, we have $\wit(\ga\ea^{\nn-2}) = (\nn-1)(\nn-2)/2$, and \eqref{E:NumDenBis} follows.
\end{proof}

\begin{figure}[htb]
\begin{picture}(70,25)(3,3)
\put(7,3){\includegraphics{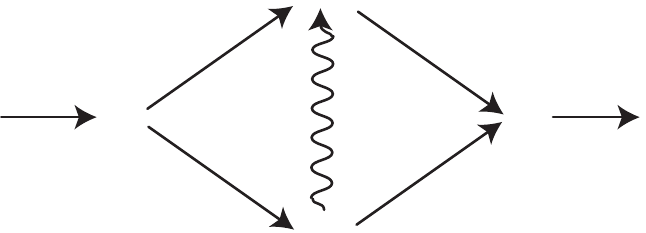}}
\put(39, 1){$\TT$}
\put(39, 27){$\TT'$}
\put(2, 14){$\CbR\nn$}
\put(74, 14){$\CbL\nn$}
\put(42, 17){$\ww$}
\put(45, 14){$\rev_{\RRR}$}
\put(28, 14){$\reflectbox{${}_{\RRR}$}\!\!\mathrel{\raisebox{5pt}{\rotatebox{180}{$\curvearrowright$}}}$}
\put(20, 22){$\NL(\ww)$}
\put(51, 22){$\DR(\ww)$}
\put(51, 6){$\NR(\ww)$}
\put(20, 6){$\DL(\ww)$}
\put(10, 16){$\uu$}
\put(65, 16){$\vv$}
\put(17.5, 14){$\TT_\smallL$}
\put(59, 14){$\TT_\smallR$}
\end{picture}
\caption{\sf\smaller Bounding the lengths of the left- and right-numerators and denominators of a signed $\AAA$-word~$\ww$ in terms of the size of a term~$\TT$ such that $\TT \act \ww$ is defined.}
\label{F:Bound}
\end{figure}

The upper bound of~\eqref{E:NumDenBis} is close to sharp: for $\ww = (\ga{1^{\pp-1}} \pdots \ga1 \ga\ea)\inv \, \ga{1^\pp}^\qq$, the word~$\DR(\ww)$ is $\gah{1^{\pp-1}}\qq \pdots \gah1\qq \gah\ea\qq$, which has length~$\pp\qq$ in the alphabet~$\AAA$, so the sum of the lengths of~$\NL(\ww)$ and~$\DR(\ww)$ is~$\pp + \pp \qq$, while the minimal size of a term~$\TT$ such that $\TT \act \ww$ is defined is~$\pp + \qq + 2$.

To conclude with subword reversing, we mention one more result that involves both left- and right-reversing. The example of~$\eps$ and~$\ga\ea \ga\ea\inv$ shows that $\RRR$-equivalent words need not have $\RRR$-equivalent numerators and denominators: the right-numera\-tor of~$\eps$ is~$\eps$, whereas the right-numerator of~$\ga\ea \ga\ea\inv$ is~$\ga\ea$. This cannot happen when left- and right-numerators are mixed in a double reversing.

\begin{proposition}
\label{P:MinFrac}
For~$\ww$ a signed $\AAA$-word, define $\NLR(\ww)$ to be $\NR(\DL(\ww)\inv \NL(\ww))$ and $\DLR(\ww)$ to be $\DR(\DL(\ww)\inv \NL(\ww))$. Then $\ww \equiv_{\RRR} \ww'$ implies $\NLR(\ww') \equivp_{\RRR} \NLR(\ww)$ and~$\DLR(\ww') \equivp_{\RRR} \DLR(\ww)$.
\end{proposition}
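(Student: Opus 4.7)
The plan is to show that $(\NLR(\ww), \DLR(\ww))$ is, up to $\equivp_{\RRR}$, the unique pair $(\ff, \mg) \in \Fs \times \Fs$ that represents $\cl\ww$ as a right-fraction $\cl\ff \cl\mg\inv$ in~$F$ and additionally satisfies the \emph{coprimality} condition that $\ff$ and $\mg$ have no common right-divisor in~$\Fs$ other than~$1$. Uniqueness of such a representation immediately yields the proposition: if $\ww \equiv_{\RRR} \ww'$, then $\cl\ww = \cl{\ww'}$, so $(\NLR(\ww), \DLR(\ww))$ and $(\NLR(\ww'), \DLR(\ww'))$ are two coprime right-fraction representations of the same element of~$F$, hence coincide in~$\Fs$.

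First I would interpret the double reversing correctly: by definition, $\ww \revL_{\RRR} \DL(\ww)\inv \NL(\ww) \rev_{\RRR} \NLR(\ww) \DLR(\ww)\inv$, so $\cl\ww = \cl{\NLR(\ww)} \cl{\DLR(\ww)}\inv$ in~$F$; and, by the characterization of right-lcm via right-reversing that underlies Proposition~\ref{P:Lcm}, the word $\DL(\ww) \NLR(\ww) \equivp_{\RRR} \NL(\ww) \DLR(\ww)$ represents a right-lcm of $\DL(\ww)$ and $\NL(\ww)$ in~$\Fs$. Coprimality of the pair $(\NLR(\ww), \DLR(\ww))$ then follows: if $\NLR(\ww) \equivp_{\RRR} \alpha h$ and $\DLR(\ww) \equivp_{\RRR} \beta h$ for some positive $\AAA$-words $\alpha, \beta, h$, right-cancellation (Proposition~\ref{P:RightCancel}) applied to $\DL(\ww) \alpha h \equivp_{\RRR} \NL(\ww) \beta h$ gives $\DL(\ww) \alpha \equivp_{\RRR} \NL(\ww) \beta$, so $\DL(\ww) \alpha$ is a common right-multiple of $\DL(\ww)$ and $\NL(\ww)$ that is left-divided by the right-lcm $\DL(\ww) \alpha h$; this forces $h$ to be right-invertible in~$\Fs$, and the function~$\wit$ of~\eqref{E:Wit} rules out non-trivial units in~$\Fs$, so $h \equivp_{\RRR} 1$.

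The heart of the argument is the uniqueness of the coprime right-fraction. Given two coprime pairs $(\ff, \mg)$ and $(\ff', \mg')$ with $\cl\ff \cl\mg\inv = \cl{\ff'} \cl{\mg'}\inv$, Proposition~\ref{P:Common} yields $h, h' \in \Fs$ with $\ff h \equivp_{\RRR} \ff' h'$, and the fraction identity then forces $\mg h \equivp_{\RRR} \mg' h'$. The key algebraic lemma is that, in any right-cancellative monoid carrying right-gcds, the right-gcd distributes over right-multiplication: the right-gcd of $\ff c$ and $\mg c$ equals the right-gcd of $\ff$ and $\mg$ times~$c$; this is a short computation using right-cancellation. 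Applying this to both sides of $(\ff h, \mg h) \equivp_{\RRR} (\ff' h', \mg' h')$ and using the coprimality of the two starting pairs shows that the right-gcd of $\ff h$ and $\mg h$ equals both~$h$ and~$h'$, whence $h \equivp_{\RRR} h'$; right-cancellation in $\ff h \equivp_{\RRR} \ff' h$ and $\mg h \equivp_{\RRR} \mg' h$ then yields $\ff \equivp_{\RRR} \ff'$ and $\mg \equivp_{\RRR} \mg'$.

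The main obstacle I anticipate is preparing the ground for the distributivity lemma, which requires knowing that $\Fs$ admits right-gcds. This follows from the symmetric versions of the results leading to Proposition~\ref{P:Lcm}---obtained by running the reversing arguments of Subsection~\ref{SS:Rev} through left-reversing rather than right-reversing, as already exploited in the proof of Proposition~\ref{P:RightCancel}---which show that $\Fs$ is also a lattice under the right-divisibility relation~$\diveR$. Once this structural fact is in hand, together with the Noetherianity delivered by~$\wit$, the remainder is routine lattice manipulation.
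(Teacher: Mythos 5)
Your argument is correct, but it takes a genuinely different route from the paper's. The paper first proves an auxiliary lemma (its Lemma~\ref{L:MinFrac}): for \emph{any} $\ww'$ with $\ww' \equiv_{\RRR} \ww$, there is a positive word~$\uu$ with $\NR(\ww') \equivp_{\RRR} \NLR(\ww)\,\uu$ and $\DR(\ww') \equivp_{\RRR} \DLR(\ww)\,\uu$; this follows from the fact you also use---that $\cl{\DL(\ww)\NLR(\ww)}$ is the right-lcm of~$\cl{\DL(\ww)}$ and~$\cl{\NL(\ww)}$---together with the embedding of~$\Fs$ into~$F$. Applying the lemma in both directions gives $\NLR(\ww) \equivp_{\RRR} \NLR(\ww)\,\uu\uu'$, and left-cancellativity plus the absence of nontrivial units (via~$\wit$) forces $\uu = \uu' = \eps$. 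So the paper characterizes $(\NLR(\ww), \DLR(\ww))$ as the \emph{minimal} fraction through which every other right-fraction expression factors, whereas you characterize it as the unique \emph{coprime} fraction. Your route is sound: the coprimality step is a correct use of the right-lcm property plus right-cancellation and~$\wit$, and your gcd-distributivity lemma does hold in a cancellative monoid with $\diveR$-greatest common right-divisors and no units. The cost is that you must first secure the existence of right-gcds in~$\Fs$ (the paper only states left-gcds explicitly, though the symmetry $\ww \mapsto \widetilde\ww$ from the proof of Proposition~\ref{P:RightCancel} does deliver them), and you must supply the uniqueness-of-coprime-fractions argument, neither of which the paper needs. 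What your approach buys is a cleaner conceptual statement---$\NLR(\ww)\DLR(\ww)\inv$ is \emph{the} irreducible fraction, a general Ore-theoretic fact---and the coprimality of the pair, which the paper never isolates; what the paper's approach buys is economy, since it runs on left-cancellation and~$\wit$ alone, and its lemma gives the stronger comparison with an arbitrary $\NR(\ww')$, $\DR(\ww')$, which is what Proposition~\ref{P:Invariant} later consumes.
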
 

We first observe that $\NLR(\ww) \DLR(\ww)\inv$ is a minimal fractionary expression of~$\cl\ww$:

\begin{lemma}
\label{L:MinFrac}
If $\ww, \ww'$ are $\RRR$-equivalent signed $\AAA$-words, there exist a positive $\AAA$-word~$\uu$ satisfying 
\begin{equation}
\label{E:MinFrac}
\NR(\ww') \equivp_{\RRR} \NLR(\ww) \, \uu \mbox{\quad and \quad} \DR(\ww') \equivp_{\RRR} \DLR(\ww) \, \uu.
\end{equation}
\end{lemma}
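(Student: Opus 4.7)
\emph{Plan.} I will identify $(\NLR(\ww), \DLR(\ww))$ as the essentially unique \emph{coprime} representation of~$\cl\ww$ as a right-fraction $\vv \uu\inv$ in~$\Fs$, so that every other fractional expression of~$\cl\ww$ is obtained by right-multiplying both components by a common factor. From there the lemma will follow by a short cancellation argument.

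The first and main step is to show that $\NLR(\ww)$ and $\DLR(\ww)$ share no non-trivial common right-divisor in~$\Fs$. By definition, $\DL(\ww)\inv \NL(\ww)$ right-$\RRR$-reverses to $\NLR(\ww) \DLR(\ww)\inv$, so $\DL(\ww) \cdot \NLR(\ww) \equivp_\RRR \NL(\ww) \cdot \DLR(\ww)$ in~$\Fs$, and, as observed in the proof of Proposition~\ref{P:Lcm}, this common value is the right-lcm of $\DL(\ww)$ and $\NL(\ww)$. Assume $\hh$ right-divides both, say $\NLR(\ww) \equivp_\RRR \vv'' \hh$ and $\DLR(\ww) \equivp_\RRR \uu'' \hh$. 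Right-cancelling~$\hh$ (Proposition~\ref{P:RightCancel}) produces the common right-multiple $\DL(\ww) \vv'' \equivp_\RRR \NL(\ww) \uu''$, which must in turn be left-divisible by the right-lcm; left-cancelling first~$\DL(\ww)$ and then~$\vv''$ (Proposition~\ref{P:LeftCancel}) yields $\hh Y \equivp_\RRR 1$ for some~$Y$. Since $\Fs$ has no non-trivial invertible element, an easy consequence of the function~$\wit$ of~\eqref{E:Wit}, we conclude $\hh \equivp_\RRR 1$.

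For the second step, let $\ww'$ be any signed $\AAA$-word equivalent to~$\ww$. Then $\NR(\ww') \DR(\ww')\inv$ and $\NLR(\ww) \DLR(\ww)\inv$ represent~$\cl\ww$ in~$F$. Since $F$ is a group of right-fractions for~$\Fs$ (Proposition~\ref{P:DualFraction}), standard Ore theory yields positive $\AAA$-words $\xx, \yy$ with
\[
\NR(\ww') \, \yy \equivp_\RRR \NLR(\ww) \, \xx
\qquad\text{and}\qquad
\DR(\ww') \, \yy \equivp_\RRR \DLR(\ww) \, \xx
\]
in~$\Fs$. It remains to show that $\yy$ right-divides~$\xx$: $\yy$ is a common right-divisor of $\NLR(\ww) \xx$ and $\DLR(\ww) \xx$, and a routine argument from cancellation shows that multiplying on the right preserves right-gcds, so the right-gcd of these two elements equals the right-gcd of $\NLR(\ww)$ and $\DLR(\ww)$ times~$\xx$, which by the first step is $\xx$ itself. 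Writing $\xx \equivp_\RRR \uu \, \yy$ and right-cancelling~$\yy$ from the two displayed identities yields the desired word~$\uu$.

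The main obstacle is the coprimality assertion in the first step; once it is in place, the rest is standard Ore-theoretic formalism. The geometric content is precisely that in the cancellative lattice monoid~$\Fs$, the complementary parts of a right-lcm cannot share a common right-divisor.
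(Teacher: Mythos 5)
Your proof is correct, but it follows a genuinely different route from the paper's. The paper argues by a single divisibility comparison: from the reversing characterization of right-lcms (quoted in the proof of Proposition~\ref{P:Lcm}), the element $\cl{\DL(\ww)\NLR(\ww)} = \cl{\NL(\ww)\DLR(\ww)}$ is the right-lcm of $\cl{\DL(\ww)}$ and $\cl{\NL(\ww)}$, while $\cl{\DL(\ww)\NR(\ww')} = \cl{\NL(\ww)\DR(\ww')}$ is a common right-multiple of the same two elements (because $\NR(\ww')\DR(\ww')\inv$ also represents~$\cl\ww$ and $\Fs$ embeds in~$F$); the lcm property plus left-cancellation of~$\DL(\ww)$, resp.~$\NL(\ww)$, then produces~$\uu$ directly. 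You instead first establish that $\NLR(\ww)$ and $\DLR(\ww)$ admit no nontrivial common right-divisor, and then run an Ore-type argument at the level of fractions. Both proofs rest on the same key input, namely the identification of $\DL(\ww)\NLR(\ww)$ as a right-lcm, but yours needs three further ingredients: the absence of nontrivial invertible elements in~$\Fs$ (fine, via the map~$\wit$ of~\eqref{E:Wit}); the existence of right-gcds (available by the symmetry $\widetilde\RRR = \RRR$ exploited in the proofs of Propositions~\ref{P:RightCancel} and~\ref{P:Lcm}); and the identity $\mathrm{rgcd}(\ma\xx,\mb\xx)=\mathrm{rgcd}(\ma,\mb)\,\xx$, which is indeed routine from right-cancellativity and the triviality of invertibles, though you should say a word more than ``a routine argument'' since it is the hinge of your second step. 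What your version buys is an explicit proof that $\NLR(\ww)\DLR(\ww)\inv$ is the coprime (irreducible) fraction representing~$\cl\ww$, a fact the paper only asserts in passing just before the lemma; the cost is a longer argument with more machinery.
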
 

\begin{proof}
By construction, the word~$\ww$ is $\RRR$-equivalent to $\DL(\ww)\inv \NL(\ww)$, and the latter word is right-$\RRR$-reversible to~$\NLR(\ww) \DLR(\ww)\inv$. Hence we have 
\begin{equation}
\label{E:MinFrac1}
\DL(\ww) \NLR(\ww) \equivp_{\RRR} \NL(\ww) \DLR(\ww),
\end{equation}
and, moreover, as mentioned in the proof of Proposition~\ref{P:Lcm}, the element of~$\Fs$ represented by~$\DL(\ww) \NLR(\ww)$ and~$\NL(\ww) \DLR(\ww)$ is the right-lcm of~$\cl{\NL(\ww)}$ and~$\cl{\DL(\ww)}$.

On the other hand, $\NR(\ww') \DR(\ww')\inv$ is $\RRR$-equivalent to~$\ww'$, hence to~$\ww$, and therefore to~$\DL(\ww)\inv \NL(\ww)\inv$. We deduce $\DL(\ww) \NR(\ww') \equiv_{\RRR} \NL(\ww) \DR(\ww')$, whence 
\begin{equation}
\label{E:MinFrac2}
\DL(\ww) \NR(\ww') \equivp_{\RRR} \NL(\ww) \DR(\ww')
\end{equation} 
since $\Fs$ embeds in~$F$. As $\cl{\DL(\ww) \NLR(\ww)}$ is the right-lcm of~$\cl{\NL(\ww)}$ and~$\cl{\DL(\ww)}$, comparing~\eqref{E:MinFrac1} and~\eqref{E:MinFrac2} implies the existence of~$\uu$ satisfying~\eqref{E:MinFrac}.
\end{proof}

\begin{proof}[of Proposition~\ref{P:MinFrac}]
By Lemma~\ref{L:MinFrac}, there exist positive $\AAA$-words~$\uu$ and~$\uu'$ satisfying $\NLR(\ww') \equivp_{\RRR} \NLR(\ww) \uu$ and $\NLR(\ww) \equivp_{\RRR} \NLR(\ww') \uu'$, whence $\NLR(\ww) \equivp_{\RRR} \NLR(\ww) \uu \uu'$. As $\Fs$ is left-cancellative, we deduce $\eps \equivp_{\RRR} \uu \uu'$. The only possibility is then that $\uu$ and~$\uu'$ are empty.
\end{proof}

We now return to Tamari lattices, and show how to use right-reversing to compute lowest upper bounds in the Tamari poset~$\Tam\nn$ appealing to the words~$\Blue\TT$ of~\eqref{E:Blue}. Of course, left-reversing can be used symmetrically to compute greatest lower bounds.

\begin{proposition}
\label{P:Lub}
Assume that $\TT, \TT'$ are size~$\nn$ trees. Then the least upper bound~$\TT''$ of~$\TT$ and~$\TT'$ in the Tamari lattice~$\Tam\nn$ is determined by
$$\TT'' =  \TT \act \NR(\Blue\TT\inv \Blue{\TT'}) = \TT' \act \DR(\Blue\TT\inv \Blue{\TT'}).$$
\end{proposition}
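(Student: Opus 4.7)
The plan is to transport the right-lcm computation in~$\Fs$ to the Tamari lattice~$\Tam\nn$ via the isomorphism~$\Is_\nn$ of Proposition~\ref{P:Connection}. The elements $\cl{\Blue\TT}$ and $\cl{\Blue{\TT'}}$ both lie in~$\Div(\ga\ea^{\nn-1})$, and, by~\eqref{E:Blue} and the subsequent discussion, $\Is_\nn$ sends them to $\CbR\nn \act \Blue\TT = \TT$ and $\CbR\nn \act \Blue{\TT'} = \TT'$, respectively. Since $\Is_\nn$ is an order-isomorphism, the least upper bound~$\TT''$ of~$\TT$ and~$\TT'$ in~$\Tam\nn$ is the image under~$\Is_\nn$ of the least upper bound of~$\cl{\Blue\TT}$ and $\cl{\Blue{\TT'}}$ in the sub-poset $(\Div(\ga\ea^{\nn-1}), \dive)$.

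I then identify this sub-poset supremum with the ambient right-lcm in~$\Fs$. The element~$\ga\ea^{\nn-1}$ is a common right-multiple of $\cl{\Blue\TT}$ and $\cl{\Blue{\TT'}}$, so by the universal property of the right-lcm (which exists in~$\Fs$ by Proposition~\ref{P:Lcm}), the right-lcm $\ff''$ of these two elements left-divides~$\ga\ea^{\nn-1}$, hence lies in~$\Div(\ga\ea^{\nn-1})$. Therefore $\ff''$ is the required supremum in the sub-poset.

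Next, I invoke the explicit reversing formula recalled in the proof of Proposition~\ref{P:Lcm}: if $\uu, \vv$ are positive $\AAA$-words and $\uu\inv \vv \rev_{\RRR} \vv' \uu'{}\inv$, then the right-lcm of~$\cl\uu$ and~$\cl\vv$ is represented by~$\uu \vv'$ and also by~$\vv \uu'$. Applied with $\uu = \Blue\TT$ and $\vv = \Blue{\TT'}$, this yields
\[
\ff'' = \cl{\Blue\TT \cdot \NR(\Blue\TT\inv \Blue{\TT'})} = \cl{\Blue{\TT'} \cdot \DR(\Blue\TT\inv \Blue{\TT'})}.
\]

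Finally, I translate back through~$\Is_\nn$ using part~\ITEM3 of Proposition~\ref{P:Action}: both compositions below are defined because $\ff''$ lies in~$\Div(\ga\ea^{\nn-1})$ and so $\CbR\nn \act \ff''$ is defined, and we obtain
\[
\TT'' = \CbR\nn \act \ff'' = (\CbR\nn \act \Blue\TT) \act \NR(\Blue\TT\inv \Blue{\TT'}) = \TT \act \NR(\Blue\TT\inv \Blue{\TT'}),
\]
and symmetrically $\TT'' = \TT' \act \DR(\Blue\TT\inv \Blue{\TT'})$, as claimed. No single step is delicate; the only point requiring some care is the identification of the sub-poset supremum with the ambient right-lcm, which is handled in the second paragraph.
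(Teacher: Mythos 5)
Your proof is correct and follows essentially the same route as the paper: compute the right-lcm of~$\cl{\Blue\TT}$ and~$\cl{\Blue{\TT'}}$ via the reversing formula recalled in the proof of Proposition~\ref{P:Lcm}, then transport it through the isomorphism~$\Is_\nn$ of Proposition~\ref{P:Connection}. The only difference is that you spell out the (correct) observation that the ambient right-lcm left-divides~$\ga\ea^{\nn-1}$ and therefore agrees with the supremum in the sub-poset~$\Div(\ga\ea^{\nn-1})$, a point the paper leaves implicit.
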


\begin{proof}
As mentioned in the proof of Proposition~\ref{P:Lcm}, the words~$\Blue\TT \, \NR(\Blue\TT\inv \Blue{\TT'})$ and $\Blue{\TT'} \, \DR(\Blue\TT\inv \Blue{\TT'})$ both represent the right-lcm of~$\cl{\Blue\TT}$ and~$\cl{\Blue{\TT'}}$ in~$\Fs$. Hence, owing to Proposition~\ref{P:Connection}, the image of $\cl{\Blue\TT \, \NR(\Blue\TT\inv \Blue{\TT'})}$ under~$\II_\nn$, which, by definition, is $\CbR\nn \act \Blue\TT \, \NR(\Blue\TT\inv \Blue{\TT'})$, that is, $\TT \act \NR(\Blue\TT\inv \Blue{\TT'})$, is the least upper bound in~$\Tam\nn$ of~$\II_\nn(\cl{\Blue\TT})$, that is, of~$\CbR\nn \act \Blue\TT$, which is~$\TT$, and $\II_\nn(\cl{\Blue{\TT'}})$, that is, of~$\CbR\nn \act \Blue{\TT'}$, which is~$\TT'$.
\end{proof}

\begin{example}
\label{X:Lub}
Let $\TT = \et(((\et(\et\et))\et)\et)$ and $\TT' = (\et(\et\et))(\et(\et\et))$. Using the method explained in Lemma~\ref{L:PolBlue} below, one obtains $\Blue\TT = \ga{11} \ga1^2$ and~$\Blue{\TT'} = \ga1 \ga\ea$. Right-reversing $\ga1^{-2} \ga{11}\inv \ga1 \ga\ea$ leads to $\ga{100} \ga\ea \ga0 \ga{00} \ga\ea^{-2}$ (see Figure~\ref{F:Lub}), and we deduce that the least upper bound of~$\TT$ and~$\TT'$ in the Tamari poset is the tree $\TT \act \ga{100} \ga\ea \ga0 \ga{00}$, namely $(((\et(\et\et))\et)\et)\et$ (which is also $\TT' \act \ga\ea^2$).
\end{example}

\begin{figure}[htb]
\begin{picture}(65,28)(0,0)
\put(-52,-50){\includegraphics[scale=1]{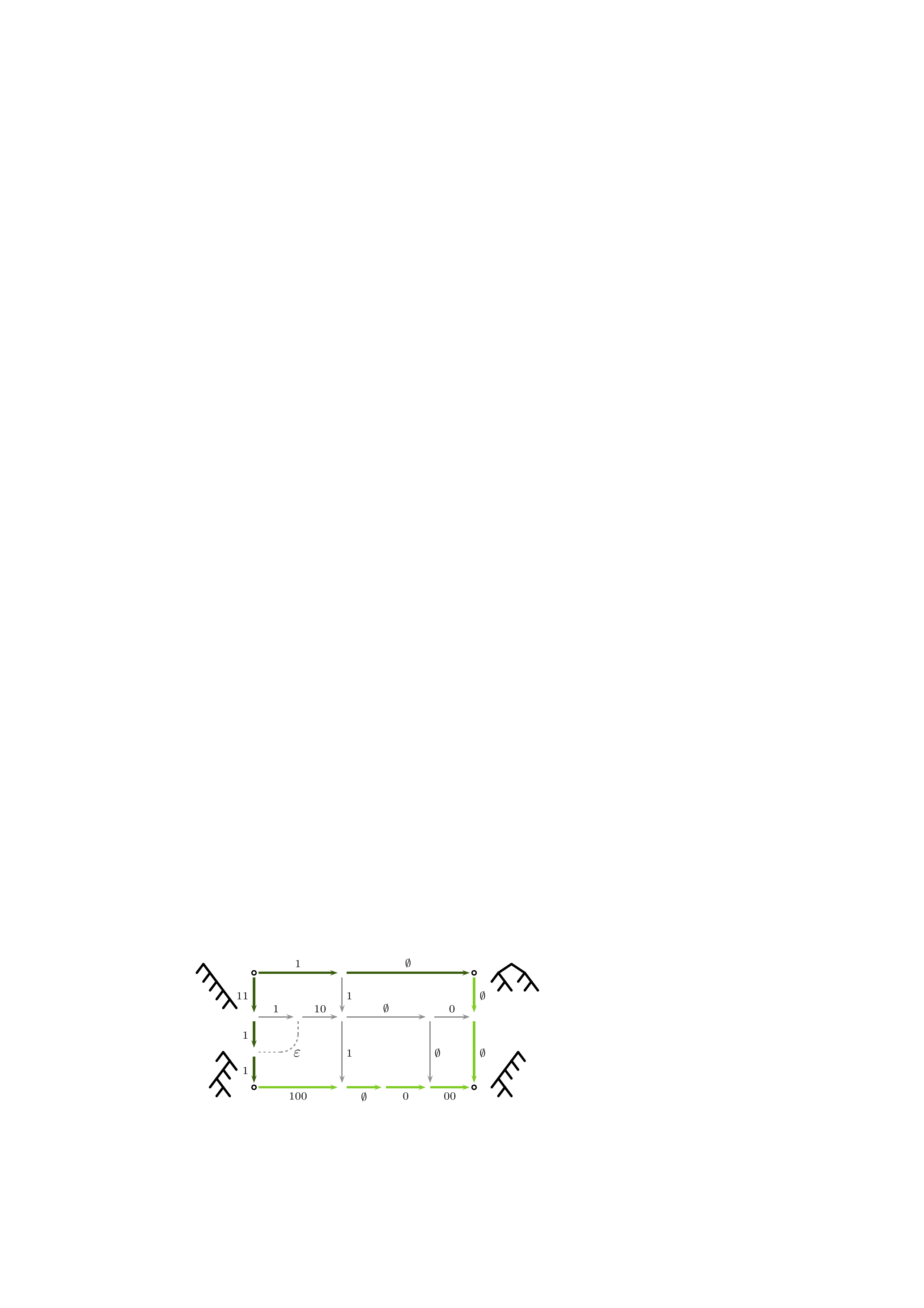}}
\end{picture}
\caption{\sf\smaller Computing the right-lcm of~$\Blue\TT$ and~$\Bluee\TT$ by right-reversing determines the least upper bound of~$\TT$ and~$\TT'$ in the Tamari lattice, here for $\TT = \et(((\et(\et\et))\et)\et)$ and $\TT' = (\et(\et\et))(\et(\et\et))$.}
\label{F:Lub}
\end{figure}


\section{The Polish normal form on~$F$}
\label{S:Polish}

We now develop another approach for determining least common multiples in the monoid~$\Fs$, whence, equivalently, least upper bounds in the Tamari lattices, namely using what is known as the Polish algorithm. Initially introduced in the case of the self-distributivity law \cite[Chapter~IX]{Dgd}, the latter is easily adapted to our current context, where it provides a unique normal form for the elements of~$\Fs$ and a method for determining the upper bound of two trees in the Tamari lattice. The main technical tool here is the covering relation of~\cite{Dhw}, a variant of the weight sequences of~\cite{Pal1}---also see~\cite{Pal0} and~\cite{BaP}.

The section is organized as follows. In Subsection~\ref{SS:Encoding} we recall the standard Polish encoding of trees and its connection with the Tamari ordering. In Subsection~\ref{SS:PolAlgo} we describe an algorithm that, starting with the Polish encoding of two trees, determines a common upper bound of the latter in the Tamari lattice together with a distinguished way of performing the rotations. Then, in Subsection~\ref{SS:Covering}, we use the covering relation to control the previous algorithm and, in particular, prove that it always determines the least upper bound of the initial trees. Finally, in Subsection~\ref{SS:NormalForm}, we deduce a unique normal form for the elements of~$F$ that enjoys a sort of weak rationality property.


\subsection{The Polish encoding of trees}
\label{SS:Encoding}

As is well known, trees or, equivalently, parenthesized expressions can be encoded without parentheses using the Polish notation. Here we consider the right version, and use~$\OPP$ as the operation symbol.

\index{Polish!encoding}
\begin{definition}
For~$\TT$ a tree, the \emph{(right)-Polish encoding} of~$\TT$ is the word~$\Pol\TT$ recursively defined by $\Pol\TT = \TT$ if $\TT$ has size~$0$, and $\Pol\TT = \Pol{\TT_0} \, \Pol{\TT_1} \OPP$ for $\TT = \TT_0 \OP \TT_1$.
\end{definition}

\index{origin}
For~$\TT$ of size~$\nn$, the Polish encoding~$\Pol\TT$ is a word of length~$2\nn+1$, which we consider as a map of~$\{1 \wdots 2\nn + 1\}$ into~$\{\et, \OPP\}$ (when we restrict to unlabeled trees): thus $\Pol\TT(\kk)$ refers to the $\kk$th letter of the word~$\Pol\TT$. There exists a natural one-to-one \emph{origin} function from the positions of the letters of~$\Pol\TT$ to the addresses of the nodes of~$\TT$, recursively defined for~$\TT = \TT_0 \OP \TT_1$ with $\TT_0$ of size~$\nn_0$ by the rule that the origin of~$\kk$ in~$\TT$ is $0\alpha$ where $\alpha$ is the origin of~$\kk$ in~$\TT_0$ for $\kk \leqslant 2\nn_0+1$, it is $1\alpha$ where $\alpha$ is the origin of~$\kk-2\nn_0-1$ in~$\TT_1$ for $2\nn_0+1 < \kk \leqslant 2\nn$, and it is $\ea$ for $\kk = 2\nn+1$. For instance, the Polish encoding of the tree $\et((\et\et)\et)$ of Figure~\ref{F:Trees} is $\et \et \et \OPP \et \OPP \OPP$, and the corresponding origins are $\et_{(0)} \et_{(100)} \et_{(101)} \OPP_{(10)} \et_{(11)} \OPP_{(1)} \OPP_{(\ea)}$.

For our current purpose, it is important to note the following connection between the Polish encoding and the Tamari order.

\begin{lemma}
Let $\lLex$ denote the lexicographical extension of the ordering~$\et < \OPP$ to $\{\et, \OPP\}$-words. Then, for all trees~$\TT, \TT'$, the relation $\TT \leT \TT'$ implies $\Pol\TT \leLex \Pol{\TT'}$.
\end{lemma}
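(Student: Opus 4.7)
The plan is to reduce to a single left-rotation and then compare the two Polish encodings directly, position by position, at the unique place where they differ.

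First I would observe that by definition $\TT \leT \TT'$ means there is a finite sequence of left-rotations from $\TT$ to $\TT'$, and since $\leLex$ is transitive it suffices to prove the implication when $\TT'$ is obtained from $\TT$ by a single left-rotation at some address $\alpha$. Writing $L$ for the prefix of $\Pol\TT$ corresponding to the positions outside the $\alpha$-subtree and $R$ for the suffix, we have
\[
\Pol{\TT} = L \cdot \Pol{\TT_0 \OP (\TT_1 \OP \TT_2)} \cdot R, \qquad
\Pol{\TT'} = L \cdot \Pol{(\TT_0 \OP \TT_1) \OP \TT_2} \cdot R,
\]
for some subtrees $\TT_0, \TT_1, \TT_2$, where $L$ and $R$ are the same in both words because only the $\alpha$-subtree is modified. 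This step is immediate from the recursive definition of $\Pol{\cdot}$ together with the recursive definition of $\alpha$-subtree in Definition~\ref{D:Address}.

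Next I would unfold the Polish encodings of the two replaced subtrees:
\[
\Pol{\TT_0 \OP (\TT_1 \OP \TT_2)} = \Pol{\TT_0} \, \Pol{\TT_1} \, \Pol{\TT_2} \OPP \OPP, \qquad
\Pol{(\TT_0 \OP \TT_1) \OP \TT_2} = \Pol{\TT_0} \, \Pol{\TT_1} \OPP \, \Pol{\TT_2} \OPP.
\]
Both words agree on the common prefix $\Pol{\TT_0}\Pol{\TT_1}$. At the next position, $\Pol\TT$ reads the first letter of $\Pol{\TT_2}$, whereas $\Pol{\TT'}$ reads $\OPP$. A trivial induction on tree size shows that every Polish encoding starts with the letter $\et$ (the leftmost leaf is always a leaf), so the next letter of $\Pol\TT$ is $\et$. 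Since $\et < \OPP$ by definition of~$\lLex$, the word $\Pol\TT$ is strictly lex-smaller than $\Pol{\TT'}$ at that position, and because the common prefix $L \cdot \Pol{\TT_0}\Pol{\TT_1}$ and the fact that the suffixes $R$ are identical do not affect this comparison, we conclude $\Pol\TT \lLex \Pol{\TT'}$.

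This handles a single rotation; applying transitivity then gives $\Pol\TT \leLex \Pol{\TT'}$ in general. I do not anticipate a real obstacle: the only tiny point to check is that $\Pol{\TT_2}$ is always nonempty and starts with $\et$, which is a one-line induction, and that $L$ and $R$ are literally identical in the two encodings, which follows from the recursive definition of $\Pol{\cdot}$ applied along the path to~$\alpha$.
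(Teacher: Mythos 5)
Your proof is correct and is essentially the paper's own argument: reduce to a single left-rotation, note that in right-Polish notation it replaces a factor $\Pol{\TT_0}\,\Pol{\TT_1}\,\Pol{\TT_2}\OPP\OPP$ by $\Pol{\TT_0}\,\Pol{\TT_1}\OPP\,\Pol{\TT_2}\OPP$ inside an unchanged context, and observe that the first point of difference changes an $\et$ (the first letter of $\Pol{\TT_2}$) into an $\OPP$. The extra details you supply (transitivity, the contiguity of the subtree's block, and the fact that every Polish encoding begins with $\et$) are exactly the points the paper leaves implicit.
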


\begin{proof}
When translated to the right Polish notation, applying a left-rotation in a tree amounts to replacing some subword of the form $\Pol{\TT_0} \, \Pol{\TT_1} \, \Pol{\TT_2} \OPP \OPP$ with the corresponding word $\Pol{\TT_0} \, \Pol{\TT_1} \OPP \Pol{\TT_2} \OPP$. The latter word is $\lLex$-larger than the former, as the beginning of the word is preserved, until the first letter~$\et$ associated with~$\Pol{\TT_2}$, which is replaced with~$\OPP$.  
\end{proof}

When the initial letter~$\et$ is erased, the words that are the Polish encoding of a trees identify with Dyck words, defined as those words in the alphabet~$\{\et, \OPP\}$ such that no initial segment has more~$\OPP$'s than~$\et$'s, see for instance~\cite{Stn}. Using the standard correspondence between such words and random walks in~$\NNNN^2$, we obtain a simple receipe for determining the elements~$\Blue\TT$ and~$\Bluee\TT$ of~\eqref{E:Blue} from~$\Pol\TT$. 

\begin{lemma}
\label{L:PolBlue}
(See Figure~\ref{F:PolBlue}.) Assume that $\TT$ is a size~$\nn$ tree. For~$\kk$ in~$\{1 \wdots 2\nn+1\}$ recursively define~$\nu_\TT(\kk)$ by $\nu_\TT(1) = -1$ and, for $\kk \geqslant 2$,
\begin{equation}
\label{E:Level}
\nu_\TT(\kk) = \begin{cases}
\nu_\TT(\kk-1) + 1 
&\mbox{for $\Pol\TT(\kk-1) = \Pol\TT(\kk) = \et$},\\
\nu_\TT(\kk-1) - 1 
&\mbox{for $\Pol\TT(\pp-1) = \Pol\TT(\pp) = \OPP$},\\
\nu_\TT(\kk-1)
&\mbox{otherwise}.
\end{cases}
\end{equation}
Then $\Bluee\TT$ is obtained from~$\Pol\TT$ by replacing each letter~$\Pol\TT(\kk)$ with~$\eps$ if it is~$\et$ and with~$\ga{1^\ii}$ with $\ii = \nu_\TT(\kk)$ if is it~$\OPP$; the word~$\Blue\TT$ is obtained similarly after erasing the last block of~$\OPP$.
\end{lemma}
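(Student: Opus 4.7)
The plan is to argue by induction on the tree $\TT$, paralleling the recursive decomposition $\Pol\TT = \Pol{\TT_0}\,\Pol{\TT_1}\,\OPP$ on the Polish side with $\Bluee\TT = \Bluee{\TT_0}\,\sh1(\Bluee{\TT_1})\,\ga\ea$ on the $\Bluee$-side for $\TT = \TT_0 \OP \TT_1$ (write $\nn_0, \nn_1$ for the sizes of $\TT_0, \TT_1$). The base case $\TT = \et$ is immediate: $\Pol\TT$ consists of the single letter $\et$, which is replaced by $\eps$, matching $\Bluee\TT = 1$.

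The central technical step is to understand how $\nu_\TT$ behaves on the two sub-blocks of $\Pol\TT$. I would establish three auxiliary facts simultaneously by induction on~$\TT$. First, $\nu_\TT(\kk) = \nu_{\TT_0}(\kk)$ for $1 \leqslant \kk \leqslant 2\nn_0+1$; this is automatic, since the recursion of~$\nu$ depends only on transitions between consecutive letters, which coincide on the common prefix~$\Pol{\TT_0}$. Second, $\nu_\TT(2\nn_0+1+\jj) = \nu_{\TT_1}(\jj) + 1$ for $1 \leqslant \jj \leqslant 2\nn_1+1$; the same locality argument applied inside $\Pol{\TT_1}$ reduces this to the initial case $\jj = 1$, where one must check $\nu_\TT(2\nn_0+2) = 0$. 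When $\TT_0 = \et$ this comes from the $\et\et$ transition at positions $1,2$ bumping $\nu$ from $-1$ to $0$; when $\TT_0 \neq \et$, the last letter of $\Pol{\TT_0}$ is $\OPP$ and the ``otherwise'' rule forces $\nu_\TT(2\nn_0+2) = \nu_{\TT_0}(2\nn_0+1)$, and the latter equals~$0$ thanks to the third fact. Third, $\nu_\TT(2\nn+1) = 0$ for every $\TT$ of size~$\geqslant 1$; this is proved by case-splitting on whether $\TT_1 = \et$ (the final $\et\OPP$ transition then preserves the value~$0$ produced at position $2\nn_0+2$ by the second fact) or $\TT_1$ has size~$\geqslant 1$ (the last two letters of $\Pol\TT$ are then $\OPP\OPP$, decrementing from the value~$1$ at position $2\nn$ obtained by combining the second fact with induction on~$\TT_1$).

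With these three facts in hand the main induction becomes routine. On the $\Pol{\TT_0}$-block, the replacement rule with $\nu_\TT = \nu_{\TT_0}$ produces $\Bluee{\TT_0}$ by the induction hypothesis. On the $\Pol{\TT_1}$-block, each $\OPP$ at position $\kk = 2\nn_0+1+\kk'$ contributes $\ga{1^{\nu_{\TT_1}(\kk')+1}} = \sh1(\ga{1^{\nu_{\TT_1}(\kk')}})$; concatenating and using that $\sh1$ is an endomorphism (hence commutes with concatenation and fixes~$\eps$), we obtain $\sh1(\Bluee{\TT_1})$. The trailing $\OPP$ at position $2\nn+1$ contributes $\ga{1^0} = \ga\ea$ by the third fact. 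The three pieces assemble to $\Bluee{\TT_0}\,\sh1(\Bluee{\TT_1})\,\ga\ea$, matching~\eqref{E:Blue}.

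For the companion statement on $\Blue\TT$, one can either redo the induction with the parallel recursion that drops the final $\ga\ea$, or invoke the identity $\Bluee\TT = \Blue\TT\,\ga{1^{\ii-1}}\pdots\ga1\ga\ea$ noted in the proof of Proposition~\ref{P:Connection} and observe that the last block of $\OPP$'s in $\Pol\TT$---of length~$\ii$, the depth of the rightmost leaf---consists of successive $\OPP\OPP$ transitions, so its $\nu$-values decrease from $\ii-1$ down to~$0$, producing exactly the suffix that must be removed. The only real obstacle is the careful bookkeeping required for the third auxiliary fact; everything else is a direct comparison of two matching recursions.
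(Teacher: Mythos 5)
Your proof is correct, and it follows exactly the route the paper indicates: the paper explicitly skips the verification of Lemma~\ref{L:PolBlue}, describing it as ``a comparison of the recursive definitions of~$\Pol\TT$ and~$\Bluee\TT$'', which is precisely the induction you carry out, with the three auxiliary facts about~$\nu_\TT$ (locality on the $\Pol{\TT_0}$-block, the $+1$ shift on the $\Pol{\TT_1}$-block, and $\nu_\TT(2\nn+1)=0$) supplying the bookkeeping the paper leaves to the reader. Your treatment of~$\Blue\TT$ via the identity $\Bluee\TT = \Blue\TT\,\ga{1^{\ii-1}}\pdots\ga1\ga\ea$ and the final block of~$\OPP$'s is likewise sound.
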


\begin{figure}[htb]
\begin{picture}(70,32)(0,2)
\put(-44,-43){\includegraphics{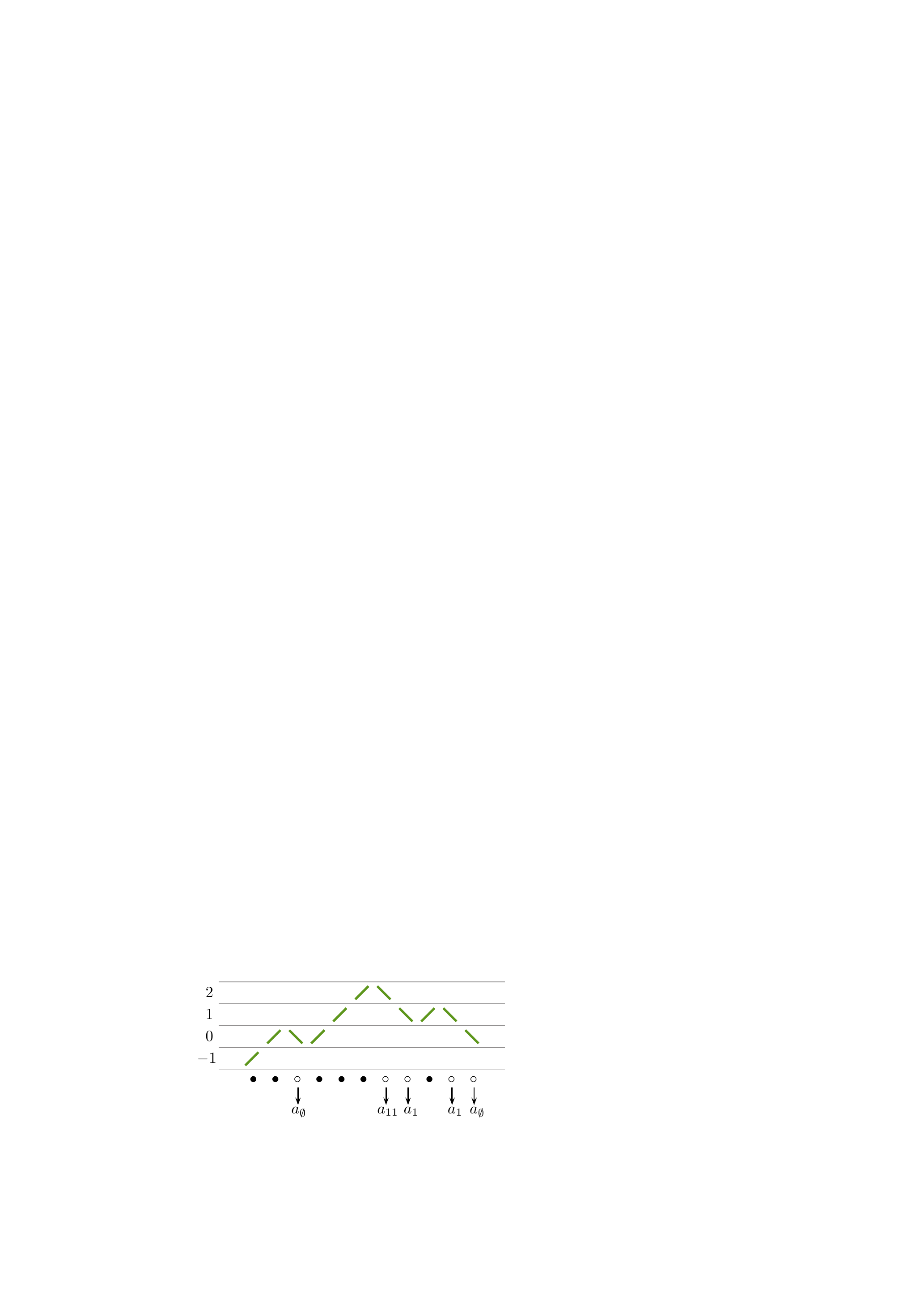}}
\end{picture}
\caption{\sf\smaller Computing $\Blue\TT$ and~$\Bluee\TT$ from the Polish encoding~$\Pol\TT$ of~$\TT$: write the $\kk$th letter of~$\Pol\TT$ at level $\nu_\TT(\kk)$; then $\Bluee\TT$ is read from the levels of the letters~$\OPP$. Here, for $(\et\et)((\et(\et\et))\et)$, we read $\Bluee\TT = \ga\ea \ga{11}\ga1^2\ga\ea$, and, discarding the last two symbols~$\OPP$, $\Blue\TT = \ga\ea \ga{11}\ga1$.}
\label{F:PolBlue}
\end{figure}

We skip the verification, a comparison of the recursive definitions of~$\Pol\TT$ and~$\Bluee\TT$.


\subsection{The Polish algorithm}
\label{SS:PolAlgo}

Assume that $\TT, \TT'$ are trees of size~$\nn$ and we look for a (minimal) tree~$\TT''$ that is an upper bound of~$\TT$ and~$\TT'$ in the Tamari order. If $\TT$ and~$\TT'$ do not coincide, then one of the words~$\Pol\TT$, $\Pol{\TT'}$ is lexicographically smaller than the other, say for instance~$\Pol\TT$. This means means that there exists~$\kk$ such that $\Pol\TT(\kk)$ is~$\et$, whereas $\Pol{\TT'}(\kk)$  is~$\OPP$. In this case, we shall say that $\TT$ and~$\TT'$ have a \emph{clash at~$\kk$}. Here is the point. 

\begin{lemma}
\label{L:Solution}
Assume that $\TT$ is a tree and that the $\kk$th letter in~$\Pol\TT$ is~$\et$. Then there exists at most one pair~$(\alpha, \rr)$ such that $\TT \act \gah\alpha\pp$ is defined and $\TT$ and $\TT \act \gah\alpha\rr$ have a clash at~$\kk$. Moreover, if there exists~$\TT''$ such that $\TT$ and~$\TT''$ have a clash at~$\kk$, there exists exactly one pair as above.
\end{lemma}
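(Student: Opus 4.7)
The plan is to translate the question into simple combinatorics on the address~$\lambda$ of the leaf of~$\TT$ corresponding to position~$\kk$, i.e., the origin of~$\kk$ in~$\TT$. The first task is to describe how~$\gah\alpha\rr$ acts on the Polish encoding. Unfolding $\gah\alpha\rr = \ga\alpha \ga{\alpha 0} \pdots \ga{\alpha 0^{\rr-1}}$ by induction on~$\rr$ (compare Figures~\ref{F:ExtPentagon} and~\ref{F:Solution}), one checks that $\TT \act \gah\alpha\rr$ is defined exactly when the $\alpha$-subtree of~$\TT$ has the form
\[
X \OP \bigl((\pdots(Y \OP Z_\rr) \OP Z_{\rr-1}) \pdots \OP Z_1\bigr)
\]
with Polish word $\Pol X \, \Pol Y \, \Pol{Z_\rr} \OPP \, \Pol{Z_{\rr-1}} \OPP \pdots \Pol{Z_1} \OPP \OPP$, and is then replaced with $(\pdots(X \OP Y) \OP Z_\rr) \pdots \OP Z_1$, whose Polish word is $\Pol X \, \Pol Y \OPP \, \Pol{Z_\rr} \OPP \pdots \Pol{Z_1} \OPP$. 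The two Polish words agree on the initial block~$\Pol X \, \Pol Y$ and first disagree one letter further to the right, where the leaf~$\et$ at the beginning of~$\Pol{Z_\rr}$ (the leftmost leaf of~$Z_\rr$) is replaced with~$\OPP$.

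Consequently, the first disagreement between $\Pol\TT$ and $\Pol{\TT \act \gah\alpha\rr}$ lands on the leaf of~$\TT$ at address $\alpha \cdot 1 \cdot 0^{\rr-1} \cdot 1 \cdot 0^d$, where $d$ is the depth of the leftmost leaf of the $\alpha 10^{\rr-1}1$-subtree. Demanding that this position be exactly~$\kk$ therefore amounts to solving
\[
\lambda \ = \ \alpha \cdot 1 \cdot 0^{\rr-1} \cdot 1 \cdot 0^d,
\]
which I parse unambiguously from the right: $d$ is the length of the trailing run of~$0$'s of~$\lambda$; stripping the trailing~$10^d$ leaves a string whose own trailing run of~$0$'s has length $\rr-1$; stripping the further trailing~$10^{\rr-1}$ then leaves~$\alpha$. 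Thus $(\alpha, \rr)$ is uniquely determined whenever the parsing succeeds, which happens precisely when~$\lambda$ contains at least two~$1$'s. When it does, the addresses $\alpha, \alpha 1, \alpha 10, \pdots, \alpha 10^{\rr-1}$ are all proper prefixes of the leaf~$\lambda$, hence are internal nodes of~$\TT$; this is exactly what guarantees that $\gah\alpha\rr$ is defined at~$\TT$. This proves the ``at most one'' part.

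For the existence part, suppose there is a tree~$\TT''$ whose Polish word coincides with~$\Pol\TT$ on positions $1, \pdots, \kk-1$ and satisfies $\Pol{\TT''}(\kk) = \OPP$. Define
\[
l(j) := \#\{i \leqslant j : \Pol\TT(i) = \et\} - \#\{i \leqslant j : \Pol\TT(i) = \OPP\},
\]
so that $l(j)$ is the height of the stack after reading the first~$j$ letters of~$\Pol\TT$ in the natural recursive reconstruction of~$\TT$. An easy induction on~$\TT$ shows that $l(\kk-1)$ equals the number of~$1$'s appearing in~$\lambda$. On the other hand, $\Pol{\TT''}$ can carry an~$\OPP$ at position~$\kk$ only if $l(\kk-1) \geqslant 2$, since an~$\OPP$ consumes two items from the stack. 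Hence~$\lambda$ contains at least two~$1$'s, and the parsing above yields the required unique pair~$(\alpha, \rr)$. The main subtlety I expect to handle carefully is the precise description of $\gah\alpha\rr$'s action on the Polish encoding---specifically, that all~$\rr$ rotations composing~$\gah\alpha\rr$ jointly shift a single~$\OPP$ from the end of the $\alpha$-subtree to position~$\lvert\Pol X\rvert + \lvert\Pol Y\rvert + 1$ without disturbing the initial block~$\Pol X \, \Pol Y$; once this is pinned down, the remainder is routine manipulation of binary strings.
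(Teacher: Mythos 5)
Your proposal is correct and follows essentially the same route as the paper: identify the first disagreement between $\Pol\TT$ and $\Pol{\TT\act\gah\alpha\rr}$ as the leftmost leaf of the $\alpha10^{\rr-1}1$-subtree, parse the leaf address $\alpha10^{\rr-1}10^d$ uniquely from the right (possible exactly when it contains two $1$'s), and derive existence from the fact that an $\OPP$ at position~$\kk$ forces the stack height $l(\kk-1)$, equal to the number of $1$'s in the origin of~$\kk$, to be at least~$2$. The paper phrases the last step via its level function~$\nu_\TT$ of~\eqref{E:Level} rather than your height function~$l$, but the two are equivalent, and the ``subtlety'' you flag about the block $\Pol X\,\Pol Y$ being undisturbed is exactly what the paper reads off from Figure~\ref{F:Solution}.
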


\begin{proof}
As Figure~\ref{F:Solution} shows, if we have $\TT' = \TT \act \gah\alpha\rr$, then the words~$\Pol\TT$ and~$\Pol{\TT'}$ coincide up to the first letter coming from the $\alpha10^{\rr-1}1$-subtree of~$\TT$: the latter is~$\et$ (as is always the first letter of a Polish encoding), whereas, in~$\Pol{\TT'}$, we have a letter~$\OPP$ at this position. Thus, the action of~$\gah\alpha\rr$ on~$\Pol\TT$ is to replace~$\et$ by~$\OPP$ at a position whose origin in~$\TT$ has the form~$\alpha10^{\rr-1}10^\ii$ for some~$\ii$. 

Consider now the $\kk$th letter in~$\Pol\TT$, supposed to be a letter~$\et$. The origin of~$\kk$ in~$\TT$ is a certain address of leaf in~$\TT$, say~$\beta$. By the above argument, a pair~$(\alpha, \rr)$ may result in a clash at~$\kk$ only if we can write $\beta = \alpha10^{\rr-1}10^\ii$ for some~$\rr \geqslant 1$ and~$\ii \geqslant 0$. For every~$\beta$, this happens for at most one pair~$(\alpha, \rr)$, and this happens if and only if $\beta$ contains at least two digits~$1$. 

Assume now that $\TT$ and $\TT''$ have a clash at~$\kk$, and consider the value of~$\nu_\TT(\kk)$ as defined in~\eqref{E:Level}. By construction (and by the standard properties of Dyck words), we have $\nu_{\TT''}(\kk) \geqslant 0$ as $\Pol{\TT''}(\kk)$ is~$\OPP$. By construction, we have $\nu_\TT(\kk) > \nu_{\TT''}(\kk)$ since $\Pol\TT(\kk)$ is~$\et$, whence $\nu_\TT(\kk) \geqslant 1$. This implies (actually an equivalence) that the address~$\beta$ contains at least two digits~$1$. Hence there exists a pair~$(\alpha, \rr)$ as above.
\end{proof}

\begin{figure}[htb]
\begin{picture}(75,48)(2,-2)
\put(0,0){\includegraphics{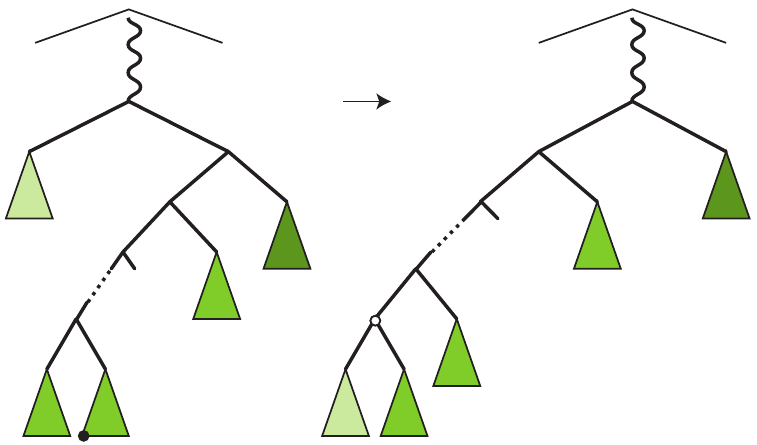}}
\put(14,45){$\TT$}
\put(65,45){$\TT'$}
\put(1,20){${\TT_0}$}
\put(2.5,-2){${\TT_1}$}
\put(9,-2){${\TT_2}$}
\put(20,9.5){${\TT_\rr}$}
\put(26,15){${\TT_{\rr{+}1}}$}
\put(35,38){$\gah\alpha\rr$}
\put(15,35){$\scriptstyle\alpha$}
\put(66,35){$\scriptstyle\alpha$}
\put(25,30){$\scriptstyle\alpha1$}
\put(11,24){$\scriptstyle\alpha10$}
\put(-1.5,7){$\scriptstyle\alpha10^\rr$}
\put(-0.5,13){$\scriptstyle\alpha10^{\rr-1}$}
\put(11.5,7){$\scriptstyle\alpha10^{\rr-1}1$}
\put(33,-2){${\TT_0}$}
\put(39.5,-2){${\TT_1}$}
\put(44.5,3){${\TT_2}$}
\put(58.5,14.5){${\TT_\rr}$}
\put(71,20){${\TT_{\rr{+}1}}$}
\put(7.6,3){$\pmb{\downarrow}$}
\put(37.2,14.5){$\pmb{\downarrow}$}
\color{Medium}
\put(4.5,20.5){$\scriptstyle\uparrow$}
\put(4,18){$\scriptstyle\jj_0$}
\put(6.2,-1.5){$\scriptstyle\uparrow$}
\put(5.7,-4){$\scriptstyle\jj_1$}
\put(12.2,-1.5){$\scriptstyle\uparrow$}
\put(12,-4){$\scriptstyle\jj_2$}
\put(23.5,10.5){$\scriptstyle\uparrow$}
\put(23.5,8){$\scriptstyle\jj_\rr$}
\put(36.7,-1.5){$\scriptstyle\uparrow$}
\put(36.5,-4){$\scriptstyle\jj_0$}
\put(42.7,-1.5){$\scriptstyle\uparrow$}
\put(42.5,-4){$\scriptstyle\jj_1$}
\put(48,3.5){$\scriptstyle\uparrow$}
\put(47.7,1){$\scriptstyle\jj_2$}
\put(62.2,15.5){$\scriptstyle\uparrow$}
\put(62,13){$\scriptstyle\jj_\rr$}
\end{picture}
\caption{\sf\smaller Action of~$\gah\alpha\rr$: the Polish encodings coincide up to the first~$\et$ corresponding to~$\TT_2$ in~$\Pol\TT$ (black arrow); the latter is replaced with~$\OPP$ in~$\Pol{\TT'}$ because, in~$\TT'$, there is one more right-edge after~$\TT_1$ than in~$\TT$, and the clash occurs between the marked letters.}
\label{F:Solution}
\end{figure}

Now the principle of an algorithm should be clear: starting with two trees~$\TT, \TT'$ such that the Polish encoding~$\Pol\TT$ and~$\Pol{\TT'}$ coincide up to position~$\kk-1$, we have found a unique way of applying an iterated left-rotation~$\gah\alpha\rr$ to one of the trees so that the clash is moved further to the right. By iterating the process, we obtain after finitely many steps two trees whose Polish encodings coincide, that is, we obtain a common upper bound for the initial trees~$\TT, \TT'$. 

\begin{definition}
\label{D:Solution}
Assume that $\TT, \TT'$ are trees of equal size.

\ITEM1 If $\Pol\TT \lLex \Pol{\TT'}$ holds, we denote by~$\Sol\TT{\TT'}$ the unique element~$\gah\alpha\rr$ such that $\TT \act \gah\alpha\rr$ and~$\TT'$ have no clash at the position where $\TT$ and~$\TT'$ have one.

\ITEM2 We denote by~$\SOL\TT{\TT'}$ the signed $\AAAh$-word recursively defined by the rules
\begin{equation}
\SOL\TT{\TT'} = 
\begin{cases}
\eps&\mbox{for $\TT = \TT'$},\\
\Sol\TT{\TT'} \, \SOL{\TT \act \Sol\TT{\TT'}}{\TT'}
&\mbox{for $\Pol\TT \lLex \Pol{\TT'}$},\\
\SOL{\TT}{\TT' \act \Sol{\TT'}\TT} \, \Sol{\TT'}\TT\inv
&\mbox{for $\Pol\TT \gLex \Pol{\TT'}$}.
\end{cases} 
\end{equation}
\end{definition}

\begin{example}
\label{X:Solution}
Let us consider the trees of Example~\ref{X:Lub} again, namely
$\TT_0 = \et(((\et(\et\et))\et)\et)$ and $\TT'_0 = (\et(\et\et))(\et(\et\et))$. We find

$\Pol{\TT_0} = \et \et \et \underline{\color{Medium}\et}\OPP \OPP \et \OPP \et \OPP \OPP$,

$\Pol{\TT'_0} = \et\et\et \OPP \OPP \et \et \et \OPP \OPP \OPP$.

\noindent Thus we have $\Pol{\TT_0} \lLex \Pol{\TT'_0}$, with a clash at~$4$ (underlined). The origin of~$4$ in~$\TT_0$ is $10011$, whence $\Sol{\TT_0}{\TT'_0} = \ga{100}$, and $\SOL{\TT_0}{\TT'_0} = \ga{100} \; \SOL{\TT_1}{\TT'_1}$ with $\TT_1 = \TT_0 \act \ga{100}$ and $\TT'_1 = \TT'_0 $, corresponding to 

$\Pol{\TT_1} = \et \et \et \OPP \underline{\color{Medium}\et} \OPP \et \OPP \et \OPP \OPP$,

$\Pol{\TT'_1} = \et\et\et \OPP \OPP \et \et \et \OPP \OPP \OPP$.

\noindent  We have now $\Pol{\TT_1} \lLex \Pol{\TT'_1}$, with a clash at~$5$. The origin of~$5$ in~$\TT_1$ is $1001$, whence $\Sol{\TT_1}{\TT'_1} = \gah\ea3$, and $\SOL{\TT_1}{\TT'_1} = \gah\ea3 \; \SOL{\TT_2}{\TT'_2}$ with $\TT_2 = \TT_1 \act \gah\ea3$ and $\TT'_2 = \TT'_1$, hence

$\Pol{\TT_2} = \et \et \et \OPP \OPP \et \OPP \et \OPP \et \OPP$,

$\Pol{\TT'_2} = \et\et\et \OPP \OPP \et \underline{\color{Medium}\et} \et \OPP \OPP \OPP$.

\noindent This time, we have $\Pol{\TT'_2} \lLex \Pol{\TT_2}$, with a clash at~$7$. The origin of~$7$ in~$\TT'_2$ is $110$, so $\Sol{\TT'_2}{\TT_2}$ is~$\ga\ea$, and $\SOL{\TT_2}{\TT'_2} = \SOL{\TT_3}{\TT'_3} \; \ga\ea\inv$ with $\TT_3 = \TT_2$ and $\TT'_3 = \TT'_2 \act \ga\ea$, that is, 

$\Pol{\TT_3} = \et \et \et \OPP \OPP \et \OPP \et \OPP \et \OPP$,

$\Pol{\TT'_3} = \et\et\et \OPP \OPP \et \OPP \et \underline{\color{Medium}\et} \OPP\OPP$.

\noindent We find now $\Pol{\TT'_3} \lLex \Pol{\TT_3}$, with a clash at~$9$. The origin of~$9$ in~$\TT'_3$ is $11$, whence $\Sol{\TT'_3}{\TT_3} = \ga\ea$, and $\SOL{\TT_3}{\TT'_3} = \SOL{\TT_4}{\TT'_4} \; \ga\ea\inv$ with $\TT_4 = \TT_3$ and $\TT'_4 = \TT'_3 \act \ga\ea$, that is, 

$\Pol{\TT_4} = \et \et \et \OPP \OPP \et \OPP \et \OPP \et \OPP$,

$\Pol{\TT'_4} = \et \et \et \OPP \OPP \et \OPP \et \OPP \et \OPP$.

\noindent We have $\TT_4 = \TT'_4$, so the algorithm halts. The tree~$\TT_4$ is a common upper bound of~$\TT_0$ and~$\TT'_0$, and the word~$\SOL{\TT_0}{\TT'_0}$ is $\ga{100} \gah\ea3 \ga\ea^{-2}$.
 \end{example}
 
Thus, for all equal size trees~$\TT, \TT'$, we obtained a distinguished signed $\AAAh$-word~$\SOL\TT{\TT'}$, and, by construction, the relation $\TT' = \TT \act \SOL\TT{\TT'}$ is satisfied. 

\begin{remark}
As mentioned in the beginning of the section, an entirely similar algorithm can be defined with the self-distributivity law $\xx(\yy\zz) = (\xx\yy) (\xx\zz)$ replacing the associativity law $\xx(\yy\zz) = (\xx\yy) \zz$. Then tree rotations are replaced with distributions, which consist in replacing subtrees $\TT_0 \OP (\TT_1 \OP \TT_2)$ with $(\TT_0 \OP \TT_1) \OP (\TT_0 \OP \TT_2)$. In this case, the size of the trees is changed by the transformations, and termination becomes problematic. Actually, in spite of experimental evidence~\cite{Dei} and positive partial results~\cite{Dgd}, the question, which seems to be extremely difficult, remains open.
\end{remark}


\subsection{The covering relation}
\label{SS:Covering}

For the moment, we have no connection between the common upper bound of two trees provided by the Polish algorithm of Subsection~\ref{SS:PolAlgo} and their least upper bound in the Tamari lattice. In particular, if $\TT, \TT'$ are trees satisfying $\TT \leT \TT'$, it is not a priori clear that the Polish algorithm terminates with the pair~$(\TT', \TT')$, that is, the clashes always occur on the first of the two current trees. We shall see now that this is actually true. The main tool will be the covering relation, a binary relation that provides a description of the shape of a tree in terms of the addresses of its leaves. We recall that, if $\TT$ is a size~$\nn$ tree, $\lab\TT$ denotes the labeled tree obtained by attributing to the leaves of~$\TT$ labels~$0$ to~$\nn$ from left to right. So, for instance, for $\TT = \et((\et\et)\et)$, we have $\lab\TT = \et_0((\et_1\et_2)\et_3)$, and $\Pol{\lab\TT} = \et_0 \et_1 \et_2 \OPP \et_3 \OPP \OPP$.

\index{covering!relation}
\begin{definition}
\label{D:Covering}
(See Figure~\ref{F:Covering}.) Assume that $\TT$ is a size~$\nn$ tree. For $0 \leqslant \ii \leqslant \nn$, we define $\add\TT\ii$ to be the origin of~$\et_\ii$ in~$\Pol{\lab\TT}$. Then, for $\jj > \ii$, we say that $\jj$ \emph{covers}~$\ii$ in~$\TT$, written $\jj \COV\TT \ii$, if there exists an address~$\gamma$ such that $\add\TT\jj$ has the form~$\gamma1^\pp$ for some positive~$\pp$ and $\add\TT\ii$ begins with~$\gamma0$. We write $\jj \COVe\TT \ii$ for ``$\jj \COV\TT \ii$ or $\jj = \ii$''.
\end{definition}

\begin{figure}[htb]
\begin{picture}(65,32)(0,0)
\put(0,0){\includegraphics{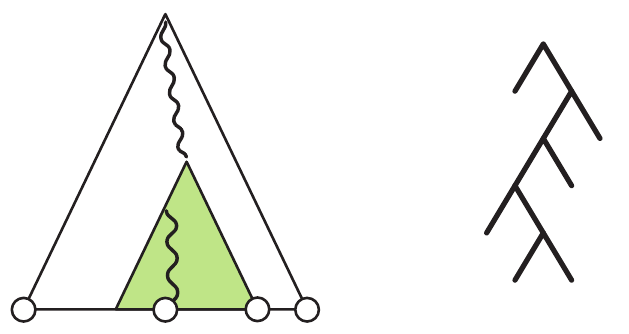}}
\put(20,30){$\TT$}
\put(19,5){$\TT'$}
\put(15,17){$\gamma$}
\put(12,12){$\gamma0$}
\put(1,-2){$0$}
\put(16,-2){$\ii$}
\put(25,-2){$\jj$}
\put(19,-2){$\vartriangleleft_\TT$}
\put(29,-2){$\nn$}
\put(50.5,21.5){$0$}
\put(48,7){$1$}
\put(51,2){$2$}
\put(57.5,2){$3$}
\put(58,12){$4$}
\put(61,17){$5$}
\end{picture}
\caption{\sf\smaller Covering relation of~$\TT$: the leaves are numbered~$0$ to~$\nn$, and $\jj$ covers~$\ii$ in~$\TT$ if there exists a subtree~$\TT'$ such that $\jj$ is the last (rightmost) label in~$\TT'$, whereas $\ii$ is a non-final label in~$\TT'$. For instance, in the right hand tree, $4$ covers~$1$, $2$, $3$, but does not cover~$0$, and $3$ covers~$1$ and~$2$, whereas $2$ covers nobody.}
\label{F:Covering}
\end{figure}

It is easily seen~\cite{Dhw} that, for every~$\jj$ occurring in a tree~$\TT$, the set of all~$\ii$'s covered by~$\jj$ is either empty or is an interval ending in~$\jj-1$: if $\jj \COV\TT \ii$ and $\jj > \ii' \geqslant \ii$ hold, then so does $\jj \COV\TT \ii'$. Also, the relation~$\COV\TT$ is transitive, and it determines~$\TT$. We shall need the more precise result that every initial fragment of the covering relation determines the corresponding initial fragment of the Polish encoding of~$\TT$.

\begin{lemma}
\label{L:Weight}
Assume that $\TT$ is a size~$\nn$ tree. Then, for $1 \leqslant \jj \leqslant \nn+1$, the number of symbols~$\OPP$ following the $\jj$th letter~$\et$ in~$\Pol\TT$ is the number of~$\ii$'s satisfying $\jj \COV\TT \ii$ and $\kk \not\COV\TT \ii$ for $\jj > \kk > \ii$.
\end{lemma}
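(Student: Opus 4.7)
The plan is to show that both quantities equal the length~$\pp$ of the maximal suffix of~$1$'s in~$\add\TT\jj$ (with the natural index shift relating the $\jj$th letter~$\et$ in~$\Pol\TT$ to the $\jj$th leaf). Write this address as~$\sigma 1^\pp$ with $\sigma$ either empty or ending in~$0$.

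First I would establish the Polish side by induction on the size of~$\TT$: for every leaf~$\ii$ of~$\TT$, the number of symbols~$\OPP$ immediately following the corresponding~$\et$ in~$\Pol\TT$ equals the length of the maximal $1$-suffix of~$\add\TT\ii$. The base $\TT = \et$ is immediate. For $\TT = \TT_0 \OP \TT_1$, the encoding $\Pol\TT = \Pol{\TT_0}\,\Pol{\TT_1}\,\OPP$ inherits the trailing $\OPP$-count from~$\Pol{\TT_0}$ or~$\Pol{\TT_1}$ for every leaf except the rightmost leaf of~$\TT_1$, whose address jumps from~$1^r$ in~$\TT_1$ to~$1^{r+1}$ in~$\TT$; the single extra final~$\OPP$ closing~$\TT$ accounts exactly for this extra trailing~$1$. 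All other leaves have the same trailing $1$-suffix in their $\TT$-address as in their subtree address.

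For the covering side, the addresses~$\gamma$ satisfying $\add\TT\jj = \gamma 1^\rr$ for some $\rr \geq 1$ are exactly $\gamma_k := \sigma 1^{\pp-k}$ for $k = 1, \ldots, \pp$. For each such~$k$, let~$\ii_k$ be the rightmost leaf of the left subtree at the node~$\gamma_k$, so that $\add\TT{\ii_k} = \gamma_k 0 1^{r_k}$ for the maximal~$r_k \geq 0$ making this a leaf. Then $\jj \COV\TT \ii_k$ holds via~$\gamma_k$, and the~$\ii_k$ are pairwise distinct since the~$\gamma_k$ are. I claim these are exactly the~$\ii$'s enumerated on the right-hand side. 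The argument rests on a monotonicity observation: any address~$\gamma'$ witnessing some coverer $\kk \COV\TT \ii$ corresponds to a prefix of~$\add\TT\ii$ ending just before one of the~$0$'s of~$\add\TT\ii$, with the associated coverer having address $\gamma' 1^{\pp'}$ for the maximal~$\pp'$. As nested $\gamma'$-prefixes produce coverers in nested right subtrees, the deeper~$\gamma'$ (corresponding to a later~$0$) yields a coverer sitting further to the left in leaf order; hence the minimal coverer of~$\ii$ corresponds to the \emph{last}~$0$ in~$\add\TT\ii$.

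Given this, the noninterference condition $\kk \not\COV\TT \ii$ for all $\jj > \kk > \ii$ is equivalent to requiring~$\jj$ itself to be the minimal coverer of~$\ii$. Writing $\add\TT\ii = \gamma^\ast 0 1^r$ with $\gamma^\ast$ the prefix up to the last~$0$, this forces $\gamma^\ast 1^{\pp^\ast} = \add\TT\jj$ (with~$\pp^\ast$ maximal), hence $\gamma^\ast = \gamma_k$ for some~$k$, and then $r = r_k$ by uniqueness of the rightmost-spine leaf, i.e.\ $\ii = \ii_k$. Conversely, the same analysis applied to each~$\ii_k$ shows that its minimal coverer is~$\gamma_k 1^k = \jj$, so~$\ii_k$ lies in the set. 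The bijection $k \leftrightarrow \ii_k$ gives the count~$\pp$, matching the Polish side.

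The main delicacy is the monotonicity of coverers with respect to prefix depth, which is what allows the ``no intermediate coverer'' condition to be read off from a single~$0$ in the address. The rest is careful bookkeeping of addresses along the right spine leading to~$\jj$.
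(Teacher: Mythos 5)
Your proposal is correct and follows essentially the same route as the paper: both identify the two quantities with the length of the maximal suffix of $1$'s in the address of the $\jj$th leaf, and both characterize the relevant $\ii$'s (those covered by $\jj$ with no intermediate coverer) as the leaves with addresses of the form $\gamma0 1^{r}$ where $\gamma$ runs over the prefixes of $\add\TT\jj$ followed only by $1$'s. Your ``minimal coverer corresponds to the last $0$'' observation is exactly the paper's characterization of the relation it denotes $\COVs\TT$, and your induction for the Polish side merely fills in a step the paper leaves to the reader.
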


\begin{proof}
Write $\jj \COVs\TT \ii$ if we have $\jj \COV\TT \ii$ and $\kk \not\COV\TT \ii$ for $\jj > \kk > \ii$. Then, by definition, $\jj \COVs\TT \ii$ holds if and only we have $\add\TT\jj = \alpha1^\qq$ and $\add\TT\ii = \alpha01^\pp$ for some~$\alpha$ and some~$\pp, \qq \geqslant 0$. Indeed, if we have $\add\TT\ii = \alpha0\beta$ with~$\beta$ containing at least one~$0$, say $\beta = 1^\pp0\gamma$, then we have $\kk \COV\TT \ii$ for~$\kk$ satisfying $\add\TT\kk = \alpha01^\pp01^\rr$.

On the other hand, an induction shows that the $\jj$th letter~$\et$ in~$\Pol\TT$ is followed by $\rr$~letters~$\OPP$ if and only if the address~$\add\TT\jj$ has the form~$\gamma1^\rr$ for some~$\gamma$ that does not finish with~$1$, that is, is empty or finishes with~$0$. 

Now, assume that $\add\TT\jj$ is $\gamma1^\rr$. For $0 \leqslant \mm < \rr$, let $\ii_\mm$ be the (unique) position whose address in~$\TT$ has the form~$\gamma1^\mm01^\qq$ for some~$\qq$. By the above characterization, we have $\jj \COVs\TT \ii_\mm$. So the number of~$\ii$'s satisfying $\jj \COVs\TT \ii$ is at least~$\rr$. 

Conversely, assume that there are $\rr$ different positions $\ii_0 < \pdots < \ii_{\rr-1}$ satisfying $\jj \COVs\TT \ii_\mm$. By the above characterization, there exist $\alpha_0 \wdots \alpha_{\rr-1}$ satisfying $\add\TT{\ii_\mm} = \alpha_\mm01^*$ and $\add\TT\jj = \alpha1^*$ As the numbers~$\ii_\mm$ are pairwise distinct, so are the addresses~$\alpha_\mm$ and, therefore, we have $\add\TT\jj = \alpha_0 1^{\rr'}$ with $\rr' \geqslant \rr$.
\end{proof}

It directly follows from Lemma~\ref{L:Weight} that the covering relation of a tree~$\TT$ determines the Polish encoding of~$\TT$, hence $\TT$~itself. Actually, the lemma shows more.

\begin{lemma}
\label{L:CovLex}
Assume that $\TT, \TT'$ are equal size trees, and (as a set of pairs) $\COV\TT$ is properly included in~$\COV{\TT'}$. Then $\Pol\TT \lLex \Pol{\TT'}$ holds.
\end{lemma}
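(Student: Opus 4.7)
The plan is to locate the smallest leaf index $\jj^*$ at which the covering sets $C_\TT(\jj) := \{\ii : \jj \COV\TT \ii\}$ and $C_{\TT'}(\jj)$ differ, use Lemma~\ref{L:Weight} to conclude that $\Pol\TT$ and $\Pol{\TT'}$ share a long common prefix, and then show that the number of $\OPP$'s right after the $\et$ for~$\jj^*$ strictly grows from $\TT$ to $\TT'$. This forces $\Pol\TT \lLex \Pol{\TT'}$ at the first position of discrepancy.

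Proper inclusion $\COV\TT \subsetneq \COV{\TT'}$ guarantees the existence of such a~$\jj^*$, and necessarily $C_\TT(\jj^*) \subsetneq C_{\TT'}(\jj^*)$. Below~$\jj^*$, the sets $C_\TT(\kk) = C_{\TT'}(\kk)$ coincide by minimality, and the immediate-cover refinements appearing in Lemma~\ref{L:Weight}---those $\ii \in C(\kk)$ with no intermediate $\kk' \in (\ii,\kk)$ satisfying $\kk' \COV \ii$---coincide as well. Lemma~\ref{L:Weight} then delivers equal $\OPP$-counts after each such $\et$, so $\Pol\TT$ and $\Pol{\TT'}$ share a common initial segment reaching through the $\et$ associated to~$\jj^*$.

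The crux is the strict inequality $|C^\#_\TT(\jj^*)| < |C^\#_{\TT'}(\jj^*)|$. Because covers indexed below~$\jj^*$ agree in both trees, the predicate $P(\ii) := $ ``no $\kk \in (\ii,\jj^*)$ satisfies $\kk \COV \ii$'' is tree-independent, which immediately gives the inclusion $C^\#_\TT(\jj^*) \subseteq C^\#_{\TT'}(\jj^*)$. For strictness I would exploit the interval structure of covering sets (noted just after Definition~\ref{D:Covering}): write $C_\TT(\jj^*) = [a, \jj^*{-}1]$ (with the convention $a = \jj^*$ when the set is empty) and $C_{\TT'}(\jj^*) = [a', \jj^*{-}1]$ with $a' < a$, and take $\ii^* := a-1$. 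Then $\ii^* \in C_{\TT'}(\jj^*) \setminus C_\TT(\jj^*)$, and $P(\ii^*)$ holds because any $\kk \in (\ii^*, \jj^*) = [a, \jj^*{-}1]$ covering~$\ii^*$ would already lie in $C_\TT(\jj^*)$ by the interval property, whereupon transitivity of $\COV\TT$ would produce $\jj^* \COV\TT \ii^*$, contradicting $\ii^* < a$. Combining the equal prefix with $r_\TT(\jj^*) < r_{\TT'}(\jj^*)$ yields $\Pol\TT \lLex \Pol{\TT'}$.

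The one delicate step is the verification of $P(\ii^*)$, where one must juggle the interval structure of $C(\jj^*)$ together with the transitivity of $\COV\TT$; everything else is bookkeeping around Lemma~\ref{L:Weight}.
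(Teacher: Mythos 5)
Your proposal is correct and follows essentially the same route as the paper: locate the minimal index $\jj$ where the covering sets differ, invoke Lemma~\ref{L:Weight} to get agreement of the $\OPP$-blocks before that index, transfer the immediate covers $\COVs\TT$ to $\COVs{\TT'}$, and produce one extra immediate cover of $\jj$ in $\TT'$ by taking the maximal new element. Your justification that this maximal element $\ii^*=a-1$ is indeed an immediate cover in $\TT'$ (via the interval structure of $C_\TT(\jj^*)$ and transitivity of $\COV\TT$) is exactly the detail the paper's proof asserts without elaboration, so nothing is missing.
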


\begin{proof}
Let $\jj$ be minimal such that there exists~$\ii$ satisfying $\jj \COV{\TT'} \ii$ but not $\jj \COV\TT \ii$. For $\kk < \jj$, the restriction of the covering relations~$\COV\TT$ and~$\COV{\TT'}$ to the interval~$[1, \kk]$ coincide and, therefore, by Lemma~\ref{L:Weight}, the numbers of symbols~$\OPP$ following~$\et_{\kk-1}$ in~$\Pol{\lab\TT}$ and~$\Pol{\lab{\TT'}}$ are equal. So, up to~$\et_{\jj-1}$, the words~$\Pol\TT$ and~$\Pol{\TT'}$ coincide. 

Consider now~$\et_{\jj-1}$. We claim that the number~$\rr'$ of~$\OPP$ following~$\et_{\jj-1}$ in~$\Pol{\lab{\TT'}}$ is larger than its counterpart~$\rr$ in~$\Pol{\lab\TT}$, resulting in a clash between~$\Pol\TT$ and~$\Pol{\TT'}$ and in the inequality~$\Pol\TT \lLex \Pol{\TT'}$. To see that $\rr' > \rr$ holds, we use Lemma~\ref{L:Weight} again. Using~$\COVs\TT$ as in the proof of Lemma~\ref{L:Weight}, we note that $\jj \COVs\TT \ii$ implies $\jj \COVs{\TT'}\ii$ as the restrictions of~$\COV\TT$ and~$\COV{\TT'}$ to~$[1, \jj-1]$ coincide. By hypothesis, there are $\rr$~values of~$\ii$ satisfying $\jj \COVs\TT \ii$, and these values also satisfy~$\jj \COVs{\TT'} \ii$. Now, by hypothesis, there exists~$\ii'$ satisfying $\jj \not\COV\TT \ii'$ and $\jj \COV{\TT'} \ii'$. If $\ii'$ is chosen maximal, we have $\jj \COVs{\TT'} \ii'$. Hence there are strictly more than~$\rr$ values of~$\ii$ satisfying $\jj \COVs{\TT'} \ii$, as expected.
\end{proof}

When a left-rotation transforms a tree~$\TT$ into a tree~$\TT$', the covering relation of~$\TT$ is included in that of~$\TT'$. The precise relation is as follows. Hereafter we use $\{0, 1\}^*$ (\resp.\ $\{1\}^*$) for the set of all addresses (\resp.\ all addresses of the form~$1^\ii$).

\begin{lemma}
\label{L:NewCovering}
Assume $\TT' = \TT \act \gah\alpha\rr$. Then $\COV{\TT'}$ is obtained by adding to~$\COV\TT$ the pairs~$(\jj, \ii)$ that satisfy
\begin{equation}
\label{E:NewCovering}
\exists \mm \in \{1 \wdots \rr\}\ (\add\TT\jj \in \alpha10^{\rr+1-\mm}\{1\}^*)\mbox{\quad and\quad} \add\TT\ii \in \alpha0\{0,1\}^* .
\end{equation}
\end{lemma}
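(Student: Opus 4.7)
The plan is to induct on $\rr$, using the factorization $\gah\alpha\rr = \ga\alpha \, \gah{\alpha0}{\rr-1}$ that follows immediately from the definition $\gah\alpha\rr = \ga\alpha \ga{\alpha0} \pdots \ga{\alpha0^{\rr-1}}$. The base case $\rr = 1$ reduces to analyzing a single left-rotation at $\alpha$; the inductive step then combines that with the action of $\gah{\alpha0}{\rr-1}$ on the intermediate tree $\TT_1 = \TT \act \ga\alpha$.

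For the base case, $\ga\alpha$ transforms the $\alpha$-subtree $A \OP (B \OP C)$ of $\TT$ into $(A \OP B) \OP C$, relocating the leaves of $A, B, C$ respectively from addresses $\alpha 0\sigma$, $\alpha 10\sigma$, $\alpha 11\sigma$ to $\alpha 00\sigma$, $\alpha 01\sigma$, $\alpha 1\sigma$. A case analysis on the location of a pair $(\jj, \ii)$ relative to these four regions shows that the pivot $\gamma$ realizing a covering can always be chosen so that the covering status is preserved---in particular, no covering of $\TT$ is destroyed---except in the single configuration where $\add\TT\jj \in \alpha 10 \{1\}^*$ (so $\jj$ is a rightmost leaf of $B$) and $\add\TT\ii \in \alpha 0 \{0,1\}^*$ (so $\ii$ is any leaf of $A$). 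For such a pair, no valid pivot exists in $\TT$, whereas in $\TT_1$ one has $\add{\TT_1}\jj = \alpha 0 \cdot 1^{s'+1}$ and $\add{\TT_1}\ii = \alpha 00 \sigma$, so the choice $\gamma = \alpha 0$ gives a covering. This reproduces~\eqref{E:NewCovering} for $\rr = \mm = 1$.

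For the inductive step, the base-case output is exactly the $\mm = \rr$ contribution to~\eqref{E:NewCovering}, since $\alpha 10\{1\}^* = \alpha 10^{\rr+1-\rr}\{1\}^*$. Applying the inductive hypothesis to $\gah{\alpha0}{\rr-1}$ acting on $\TT_1$ yields, for $\mm' \in \{1, \ldots, \rr-1\}$, further new pairs with $\add{\TT_1}\jj \in \alpha 01 \cdot 0^{\rr-\mm'}\{1\}^*$ and $\add{\TT_1}\ii \in \alpha 00\{0,1\}^*$. Translating $\TT_1$-addresses back to $\TT$-addresses using that leaves of $B$ (now at $\alpha 01\sigma$ in $\TT_1$) were at $\alpha 10\sigma$ in $\TT$, and leaves of $A$ (now at $\alpha 00\sigma$) were at $\alpha 0\sigma$, converts these conditions into $\add\TT\jj \in \alpha 10 \cdot 0^{\rr-\mm'}\{1\}^* = \alpha 10^{\rr+1-\mm'}\{1\}^*$ and $\add\TT\ii \in \alpha 0\{0,1\}^*$. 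Taking the union of the base contribution ($\mm = \rr$) and the inductive contribution ($\mm' \in \{1, \ldots, \rr-1\}$) gives exactly the range $\mm \in \{1, \ldots, \rr\}$ required by~\eqref{E:NewCovering}.

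The main obstacle is the bookkeeping in the base-case analysis: one must check every configuration of $(\jj, \ii)$ relative to the regions outside-$\alpha$-subtree, $A$, $B$, $C$, and verify both that existing coverings survive and that no additional coverings appear besides those listed. The key observations are that a pivot $\gamma$ lying strictly above $\alpha$ is unaffected by the rotation; a pivot $\gamma$ lying strictly inside one of $A$, $B$, $C$ moves consistently with both endpoints (which remain in the same component); and when $\gamma$ equals $\alpha$ or $\alpha 1$, the relevant configurations involve a leaf on the right spine of $C$, whose address changes from $\alpha 1^s$ to $\alpha 1^{s-1}$, so that the same or an appropriately shifted pivot continues to witness the covering. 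Only the rightmost leaf of $B$ paired with leaves of $A$ falls outside this pattern, producing the new coverings.
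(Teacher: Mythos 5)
Your proof is correct. The paper's own argument has the same substance but a different shape: it inspects the effect of the full iterated rotation $\gah\alpha\rr$ in one shot, using the before/after picture of the $\alpha$-subtree in Figure~\ref{F:Solution} to observe that the only leaves whose maximal ``rightmost-leaf'' subtree grows are the rightmost leaves $\jj_1\wdots\jj_\rr$ of the subtrees hanging off the left spine below $\alpha1$, each of which acquires exactly the leaves of the $\alpha0$-subtree; this is then converted into the address condition~\eqref{E:NewCovering}. You instead factor $\gah\alpha\rr = \ga\alpha\,\gah{\alpha0}{\rr-1}$ and induct on~$\rr$, so that the only case analysis needed is the one for a single rotation, followed by the address translation between $\TT$ and $\TT_1$; I checked that translation ($\alpha01\cdot0^{\rr-\mm'}\{1\}^*$ pulling back to $\alpha10^{\rr+1-\mm'}\{1\}^*$, and the base case supplying $\mm=\rr$) and it is right. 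Your organization buys a cleaner, fully verifiable base case at the cost of the relabelling step, whereas the paper's ``direct inspection'' is shorter but leaves essentially all of your base-case bookkeeping to the reader. Two small points you could make explicit: the inductive step uses that $\TT_1\act\gah{\alpha0}{\rr-1}$ is defined, which follows from Proposition~\ref{P:Action} since $\TT\act\gah\alpha\rr$ is; and the added pairs are genuinely new, i.e.\ not already in $\COV\TT$ (your observation that no valid pivot exists in $\TT$ covers this in the base case, and in the inductive step it follows from $\COV\TT\subseteq\COV{\TT_1}$).
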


\begin{proof}
Consider Figure~\ref{F:Solution} again. Let $\jj_1 \wdots \jj_\rr$ denote the last variable in the subtrees~$\TT_1 \wdots \TT_\rr$. A direct inspection shows that every covering pair in~$\TT$ is still a covering pair in~$\TT'$, and that the new covering pairs are the pairs~$(\jj_\mm, \ii)$ with $1 \leqslant \mm \leqslant \rr$ and $\ii$ occurring in~$\TT_0$: the action of~$\gah\alpha\rr$ is to let $\jj_1 \wdots \jj_\mm$ cover the variables of~$\TT_0$. Converted into addresses, this gives~\eqref{E:NewCovering}. 
\end{proof}

Lemma~\ref{L:NewCovering} is important for the Polish algorithm as it bounds possible coverings. 

\begin{lemma}
\label{L:Included}
Assume that $\TT, \TT'$ are equal size trees satisfying $\Pol\TT \lLex \Pol{\TT'}$. Then the covering relation of~$\TT \act \Sol\TT{\TT'}$ is included in the transitive closure of~$\COV\TT$ and~$\COV{\TT'}$.
\end{lemma}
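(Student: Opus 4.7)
Let $\gah\alpha r = \Sol\TT{\TT'}$ and set $\TT'' = \TT \act \gah\alpha r$. By Lemma~\ref{L:NewCovering}, the pairs in $\COV{\TT''} \setminus \COV\TT$ are those $(j, i)$ with $\add\TT j \in \alpha 10^{r+1-m}\{1\}^*$ for some $m \in \{1, \dots, r\}$ and $\add\TT i$ beginning with $\alpha 0$. Writing $T_0, T_1, \dots, T_{r+1}$ for the subtrees of $\TT$ in the natural decomposition at $\alpha$ (Figure~\ref{F:Solution}), the constraint on $j$ combined with $j$ being a leaf forces $j$ to be the rightmost leaf of $T_m$; call this leaf $j_m$. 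The new covering pairs are thus $\{(j_m, i) : m \in \{1, \dots, r\},\ i \text{ a leaf of } T_0\}$.

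The plan is to reduce to two claims: (a) $(j_1, i) \in \COV{\TT'}$ for every leaf $i$ of $T_0$, and (b) $(j_m, j_{m-1}) \in \COV\TT$ for every $m \in \{2, \dots, r\}$. Combining these gives the chain $j_m \COV\TT j_{m-1} \COV\TT \dots \COV\TT j_1 \COV{\TT'} i$, which places every new pair in the transitive closure of $\COV\TT \cup \COV{\TT'}$.

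Claim~(b) is direct address arithmetic: for $m \geq 2$, $j_m$ has $\TT$-address $\alpha 10^{r+1-m} 1^{p_m}$ with $p_m \geq 1$ (the subtree at $\alpha 10^{r+1-m}$ is internal), while $j_{m-1}$'s $\TT$-address begins with $\alpha 10^{r+2-m}$. Taking $\gamma = \alpha 10^{r+1-m}$ witnesses $j_m \COV\TT j_{m-1}$ via Definition~\ref{D:Covering}.

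Claim~(a) is the substantive step. By Lemma~\ref{L:Solution}, the clash position $k$ corresponds to the first leaf of $T_2$ in $\TT$, so its predecessor in the leaf ordering is $j_1$, the rightmost leaf of $T_1$. Because $\Pol\TT$ and $\Pol{\TT'}$ agree strictly before $k$, parsing this common prefix in either tree produces the same stack of subtrees; in particular, after position $k-1$ the top two stack elements are $T_0$ (below) and $T_1$ (on top). At position $k$, $\Pol\TT$ pushes $\et$ (the first letter of $T_2$) while $\Pol{\TT'}$ applies $\OPP$, combining those top two elements into the single subtree $T_0 \OP T_1$. Completing the parse of $\Pol{\TT'}$ then realises $T_0 \OP T_1$ as a subtree of $\TT'$ at some address; since $j_1$ is the rightmost leaf of $T_0 \OP T_1$, and the rightmost leaf of any tree covers all the others, we obtain $j_1 \COV{\TT'} i$ for every leaf $i$ of $T_0$. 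The main obstacle is making this Polish-stack manipulation fully rigorous, in particular verifying that the ``extra $\OPP$'' at position $k$ in $\Pol{\TT'}$ combines exactly those two top stack elements and that the subsequent completion preserves $T_0 \OP T_1$ as an identifiable subtree of~$\TT'$.
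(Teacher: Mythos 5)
Your proof is correct, and it shares the paper's skeleton: both arguments identify the new covering pairs as the $(\jj_\mm, \ii)$ with $\jj_\mm$ the rightmost leaf of~$\TT_\mm$ and $\ii$ a leaf of~$\TT_0$, and both realize each such pair by a chain through the~$\jj_\mm$'s in which exactly one link is taken in~$\COV{\TT'}$. Where you differ is in how that one link is justified. The paper invokes Lemma~\ref{L:Weight}: the clash forces the number of~$\OPP$ following~$\et_{\jj_1}$ to be larger in~$\Pol{\TT'}$ than in~$\Pol\TT$, so $\jj_1$ covers strictly more positions in~$\TT'$ than in~$\TT$; since the set covered by a given index is an interval, this yields $\jj_1 \COV{\TT'} \jj_0$, and the chain is $\jj_\mm \COVe\TT \jj_1 \COV{\TT'} \jj_0 \COVe\TT \ii$, dipping back into~$\COV\TT$ at the last step. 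You instead argue that, because $\Pol\TT$ and~$\Pol{\TT'}$ share the prefix ending with $\Pol{\TT_0}\,\Pol{\TT_1}$ and the next letter of~$\Pol{\TT'}$ is~$\OPP$, the tree $\TT_0 \OP \TT_1$ occurs as an honest subtree of~$\TT'$, whence $\jj_1 \COV{\TT'} \ii$ for \emph{every} leaf~$\ii$ of~$\TT_0$ at once; this bypasses Lemma~\ref{L:Weight} and the interval property entirely and gives a slightly stronger intermediate claim. The rigor you worry about is not a real obstacle: that a postfix parse of a common prefix yields a common stack, that appending the complete words $\Pol{\TT_0}$ and~$\Pol{\TT_1}$ leaves $\TT_0$ and~$\TT_1$ as the top two entries, and that every tree ever pushed persists as a subtree of the final tree with the same leaf indices, are all immediate inductions on the parsing process. (Your chain $\jj_\mm \COV\TT \jj_{\mm-1} \COV\TT \cdots \COV\TT \jj_1$ could be shortened to the single step $\jj_\mm \COVe\TT \jj_1$ used in the paper, but this is cosmetic.)
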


\begin{proof}
Assume $\Sol\TT{\TT'} = \gah\alpha\rr$ and let $\TT_1 = \TT \act \gah\alpha\rr$. We use the notation of Figure~\ref{F:Solution} once more, calling~$\jj_\mm$ the rightmost variable occurring in~$\TT_\mm$ for $0 \leqslant \mm \leqslant \rr$. Let~$\II$ denote the set of all~$\ii$'s occurring in the subtree~$\TT_0$. By Lemma~\ref{L:NewCovering}, the pairs that belong to~$\COV{\TT_1}$ and not to~$\COV\TT$ are the pairs $(\jj_1, \ii) \wdots (\jj_\rr, \ii)$ with $\ii$ in~$\II$. The hypothesis that $\gah\alpha\rr$ is $\Sol\TT{\TT'}$ implies that the number of~$\OPP$ following~$\et_{\jj-1}$ in~$\Pol{\lab{\TT'}}$ is larger than its counterpart in~$\TT$, so $\jj_1$ must cover strictly more positions in~$\TT'$ than in~$\TT$. So, necessarily, $\jj_1 \COV{\TT'} \jj_0$ holds. On the other hand, $\jj_0 \COVe\TT \ii$ holds for every~$\ii$ in~$\II$, and $\jj_\mm \COVe\TT \jj_1$ holds for $1 \leqslant \mm \leqslant \rr$. It follows that, for all~$\mm$ in~$\{1 \wdots \rr\}$ and~$\ii$ in~$\II$, the pair~$(\jj_\mm, \ii)$ belongs to the the transitive closure of~$\COV\TT$ and~$\COV{\TT'}$. 
\end{proof}

\begin{lemma}
\label{L:TransClos}
Assume that $\TT, \TT'$ are equal size trees. Then the Polish algorithm running on~$(\TT, \TT')$ terminates with a pair~$(\TTp, \TTp)$ such that $\COV{\TTp}$ is the transitive closure of~$\COV\TT$ and~$\COV{\TT'}$.
\end{lemma}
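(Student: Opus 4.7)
The plan is to prove, in sequence, termination of the Polish algorithm, the inclusion of the transitive closure $\CC$ of $\COV\TT \cup \COV{\TT'}$ into $\COV{\TTp}$, and the reverse inclusion.

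For termination, I would let $(\TT_i, \TT'_i)$ denote the pair produced at step~$i$, with $(\TT_0, \TT'_0) = (\TT, \TT')$, and track the least index~$\kk_i$ at which $\Pol{\TT_i}$ and $\Pol{\TT'_i}$ differ (with $\kk_i = \infty$ when the two trees coincide). Assuming $\kk_i$ is finite and, without loss of generality, $\Pol{\TT_i} \lLex \Pol{\TT'_i}$, the algorithm replaces $\TT_i$ with $\TT_i \act \Sol{\TT_i}{\TT'_i}$. The key observation, extracted from the proof of Lemma~\ref{L:Solution} and Figure~\ref{F:Solution}, is that the action of $\Sol{\TT_i}{\TT'_i} = \gah\alpha\rr$ leaves $\Pol{\TT_i}$ unchanged on positions $1,\dots,\kk_i-1$ and flips the $\kk_i$-th letter from~$\et$ to~$\OPP$, hence matching $\Pol{\TT'_i}$ at that position. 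Therefore $\kk_{i+1} > \kk_i$, and since $\kk_i \leqslant 2\nn+1$ the algorithm halts after finitely many steps with $\TT_\infty = \TT'_\infty$, which we identify with the claimed~$\TTp$.

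For the inclusion $\CC \subseteq \COV{\TTp}$, I would note that at each step one of the two trees is transformed by an iterated left-rotation~$\gah\alpha\rr$. By Lemma~\ref{L:NewCovering}, such a transformation only adds covering pairs, so both chains $\COV\TT = \COV{\TT_0} \subseteq \COV{\TT_1} \subseteq \cdots$ and $\COV{\TT'} = \COV{\TT'_0} \subseteq \COV{\TT'_1} \subseteq \cdots$ are nondecreasing and stabilize at $\COV{\TTp}$. Since the covering relation of any tree is transitive, $\CC$ is contained in~$\COV{\TTp}$.

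For the reverse inclusion, I would induct on~$i$ on the statement $\COV{\TT_i} \cup \COV{\TT'_i} \subseteq \CC$. The base case $i = 0$ is by definition. For the inductive step, Lemma~\ref{L:Included}---applied on the appropriate side, with its evident symmetric counterpart used when $\Pol{\TT'_i} \lLex \Pol{\TT_i}$---shows that the newly produced tree's covering relation is contained in the transitive closure of $\COV{\TT_i} \cup \COV{\TT'_i}$, which by the inductive hypothesis lies in~$\CC$ (itself already transitively closed). Evaluating at termination gives $\COV{\TTp} \subseteq \CC$, as required. The main obstacle is really the termination analysis: it hinges on the precise structural claim that $\Sol{\TT_i}{\TT'_i}$ preserves the common prefix of the two Polish encodings and flips exactly the clashing letter, which requires a careful parsing of Figure~\ref{F:Solution}; once that is granted, everything else is routine bookkeeping about covering pairs.
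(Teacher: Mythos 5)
Your proof is correct and follows essentially the same route as the paper's: monotonicity of the covering relations via Lemma~\ref{L:NewCovering} gives one inclusion, and an induction using Lemma~\ref{L:Included} (applied with the two arguments swapped when it is the second tree that gets rotated) gives the other. The only difference is that you spell out the termination argument explicitly, whereas the paper establishes termination separately in Subsection~\ref{SS:PolAlgo} before stating the lemma.
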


\begin{proof}
Let $(\TT_\mt, \TT'_\mt)$ denote the pair of trees obtained after $\mt$~steps of the Polish algorithm running on~$(\TT, \TT')$, and $\NN$ be the total number of steps. By Lemma~\ref{L:NewCovering}, the relations~$\COV{\TT_\mt}$ make a non-decreasing sequence with respect to inclusion, and so do the relations~$\COV{\TT'_\mt}$. So, in particular, the transitive closure of~$\COV\TT$ and~$\COV{\TT'}$ is included in the transitive closure of~$\COV{\TT_\NN}$ and~$\COV{\TT'_\NN}$. Now, by hypothesis, the latter is~$\COV{\TTp}$.

On the other hand, Lemma~\ref{L:Included} shows that, for every~$\mt$, the relation~$\COV{\TT_{\mt+1}}$ is included in the transitive closure of~$\COV{\TT_\mt}$ and~$\COV{\TT'_\mt}$, and so is~$\COV{\TT'_{\mt+1}}$. Hence the transitive closure of~$\COV{\TT_{\mt+1}}$ and~$\COV{\TT'_{\mt+1}}$ is the transitive closure of~$\COV{\TT_\mt}$ and~$\COV{\TT'_\mt}$. Hence $\COV{\TTp}$, which is the transitive closure of~$\COV{\TT_\NN}$ and~$\COV{\TT'_\NN}$, is the transitive closure of~$\COV{\TT_0}$ and~$\COV{\TT'_0}$.
\end{proof}

We are ready to put pieces together and state the main results of this section.

\begin{proposition}
\label{P:LubPol}
For $\TT, \TT', \TT''$ equal size trees, the following are equivalent:

\ITEM1 The tree~$\TT''$ is the least upper bound of~$\TT$ and~$\TT'$ in the Tamari lattice;

\ITEM2 The Polish algorithm running on~$(\TT, \TT')$ returns~$(\TT'', \TT'')$;

\ITEM3 The covering relation of~$\TT''$ is the transitive closure of those of~$\TT$ and~$\TT'$.
\end{proposition}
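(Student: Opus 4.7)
The plan rests on the following auxiliary equivalence for equal size trees~$\TT_1, \TT_2$:
\begin{equation}
\label{E:KeyAux}
\TT_1 \leT \TT_2 \ \Longleftrightarrow\ \COV{\TT_1} \subseteq \COV{\TT_2}.
\end{equation}
The forward direction in~\eqref{E:KeyAux} is an immediate induction on the length of a chain of left-rotations, Lemma~\ref{L:NewCovering} guaranteeing that each single left-rotation only adds pairs to the covering relation. For the converse, the idea is to run the Polish algorithm on~$(\TT_1, \TT_2)$ and show that every step advances the \emph{left} tree, so that the successive left trees form a chain of left-rotations witnessing $\TT_1 \leT \TT_2$. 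To this end, I maintain the invariant $\COV{\TT_1^{(\mt)}} \subseteq \COV{\TT_2}$, which is preserved by Lemma~\ref{L:Included} together with the transitivity of~$\COV{\TT_2}$ (recalled just after Definition~\ref{D:Covering}); at each step, either the inclusion is an equality and the algorithm halts (Lemma~\ref{L:Weight} recovers the tree from its covering), or the inclusion is strict and Lemma~\ref{L:CovLex} gives $\Pol{\TT_1^{(\mt)}} \lLex \Pol{\TT_2}$, forcing the next step to act on the left side.

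Granted~\eqref{E:KeyAux}, the implication $(2)\Rightarrow(3)$ is just Lemma~\ref{L:TransClos}. Conversely, for $(3)\Rightarrow(2)$, let $\TTp$ be the output of the Polish algorithm on~$(\TT, \TT')$; by Lemma~\ref{L:TransClos}, $\COV\TTp$ equals the transitive closure of $\COV\TT$ and $\COV{\TT'}$, hence $\COV\TTp = \COV{\TT''}$ by~(3), so $\TTp = \TT''$ via Lemma~\ref{L:Weight}.

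For $(3)\Rightarrow(1)$: the inclusions $\COV\TT, \COV{\TT'} \subseteq \COV{\TT''}$ give $\TT, \TT' \leT \TT''$ by~\eqref{E:KeyAux}, so $\TT''$ is an upper bound; if $\TT'''$ is any upper bound, then $\COV\TT, \COV{\TT'} \subseteq \COV{\TT'''}$ again by~\eqref{E:KeyAux}, and $\COV{\TT'''}$ being transitive forces the transitive closure $\COV{\TT''}$ to be contained in $\COV{\TT'''}$, whence $\TT'' \leT \TT'''$ by~\eqref{E:KeyAux}. Finally, for $(1)\Rightarrow(3)$, each step of the Polish algorithm acts on one of the two current trees by a positive generator $\gah\alpha\rr$, so the output $\TTp$ satisfies both $\TT \leT \TTp$ and $\TT' \leT \TTp$; thus $\TT'' \leT \TTp$ if $\TT''$ is the least upper bound. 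Conversely, $\TT, \TT' \leT \TT''$ forces $\COV\TT, \COV{\TT'} \subseteq \COV{\TT''}$, so the transitive closure $\COV\TTp \subseteq \COV{\TT''}$, whence $\TTp \leT \TT''$ and then $\TTp = \TT''$; statement~(3) then follows from Lemma~\ref{L:TransClos} applied to the pair~$(\TT, \TT')$.

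The main obstacle is the converse in~\eqref{E:KeyAux}: merely knowing that the covering relations are included in each other does not produce an explicit chain of left-rotations, and the argument succeeds only because Lemmas~\ref{L:Included} and~\ref{L:CovLex} interlock to prevent the Polish algorithm from ever advancing the right tree.
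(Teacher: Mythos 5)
Your proof is correct, and it reaches the proposition by a route that is organized differently from the paper's. The endgame is the same in both cases: Lemma~\ref{L:TransClos} identifies $\COV{\TTp}$ with the transitive closure of $\COV\TT$ and $\COV{\TT'}$, and everything follows once one knows how covering relations interact with the Tamari order. The difference is that you front-load the full equivalence $\TT_1 \leT \TT_2 \Leftrightarrow \COV{\TT_1} \subseteq \COV{\TT_2}$, proving the hard converse direction by an invariant argument showing that, when the covering relations are nested, the Polish algorithm never advances the right-hand tree (Lemmas~\ref{L:Included} and~\ref{L:CovLex} interlocking exactly as you describe, with the determination of a tree by its covering relation supplying the properness needed to invoke Lemma~\ref{L:CovLex}). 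The paper's proof gets by without that converse: writing $\TTm$ for the least upper bound, whose existence is already known from Proposition~\ref{P:Lcm}, it obtains $\COV{\TTp} \subseteq \COV{\TTm}$ from the transitive-closure description, obtains $\COV{\TTm} \subseteq \COV{\TTp}$ from $\TTm \leT \TTp$ via the easy direction (Lemma~\ref{L:NewCovering}), and concludes $\TTm = \TTp$ because a tree is determined by its covering relation; the equivalence of $\leT$ with inclusion of covering relations is then extracted afterwards as part of Corollary~\ref{C:Positive}. What your arrangement buys is that Corollary~\ref{C:Positive}---and in particular the fact that the clashes always occur on the smaller tree when $\TT \leT \TT'$ holds, which the paper announces as the goal of the subsection---becomes an immediate by-product rather than a consequence; what it costs is the extra invariant argument, which the proposition itself does not strictly require.
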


\begin{proof}
Let $\TTm$ be the least upper bound of~$\TT$ and~$\TT'$ in the Tamari lattice, and $\TTp$ be the tree such that the Polish algorithm running on~$(\TT, \TT')$ returns~$(\TTp, \TTp)$. By Lemma~\ref{L:NewCovering}, $\COV\TT$ and~$\COV{\TT'}$ are included in~$\COV{\TTm}$. Hence the transitive closure of~$\COV\TT$ and~$\COV{\TT'}$, which by Lemma~\ref{L:TransClos} is~$\COV{\TTp}$, is included in~$\COV{\TTm}$. 

On the other hand, by definition, we have $\TT \leT \TTp$ and $\TT' \leT \TTp$, whence $\TTm \leT \TTp$. This implies that $\COV{\TTm}$ is included in~$\COV{\TTp}$. Hence $\COV{\TTm}$ and~$\COV{\TTp}$ coincide, and, therefore, $\TTm = \TTp$ holds. So \ITEM1 and~\ITEM2 are equivalent. 

Next, as said above, \ITEM2 implies~\ITEM3 by Lemma~\ref{L:NewCovering}. Conversely, if $\TT''$ is such that $\COV{\TT''}$ is the transitive closure of~$\COV\TT$ and~$\COV{\TT'}$, then $\COV{\TT''}$ coincides with~$\COV{\TTp}$ and, therefore, we must have $\TT'' = \TTp$. So, \ITEM2 and~\ITEM3 are equivalent.
\end{proof}

\begin{corollary}
\label{C:Positive}
For $\TT, \TT'$ equal size trees, the following are equivalent:

\ITEM1 We have $\TT \leT \TT'$ in the Tamari order;

\ITEM2 There exists~$\ff$ in~$\Fs$ such that $\TT' = \TT \act \ff$ holds;

\ITEM3 $\SOL\TT{\TT'}$ is a positive $\AAA$-word, that is, the Polish algorithm running on~$(\TT, \TT')$ finishes with~$(\TT', \TT')$.

\ITEM4 The relation~$\COV\TT$ is included in~$\COV{\TT'}$.
\end{corollary}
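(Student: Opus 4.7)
The plan is to invoke Lemma~\ref{L:Compat} for \ITEM1 $\Leftrightarrow$ \ITEM2 and then close a triangle among \ITEM1, \ITEM3, \ITEM4, the central engine being Proposition~\ref{P:LubPol}.

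For \ITEM1 $\Rightarrow$ \ITEM4 I would just iterate Lemma~\ref{L:NewCovering}: a single left-rotation $\gah\alpha\rr$ can only enlarge the covering relation, so along a chain of left-rotations witnessing $\TT \leT \TT'$ the relation $\COV\TT$ can only gain pairs, ending included in $\COV{\TT'}$. For the converse \ITEM4 $\Rightarrow$ \ITEM1, let $\TTm$ be the Tamari least upper bound of $\TT$ and $\TT'$; the third item of Proposition~\ref{P:LubPol} identifies $\COV{\TTm}$ with the transitive closure of $\COV\TT$ and $\COV{\TT'}$. Under the hypothesis $\COV\TT \subseteq \COV{\TT'}$ and using the transitivity of $\COV{\TT'}$ recalled just after Definition~\ref{D:Covering}, that transitive closure collapses to $\COV{\TT'}$, so $\COV{\TTm} = \COV{\TT'}$. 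Since the covering relation determines the tree (an immediate consequence of Lemma~\ref{L:Weight}), this forces $\TTm = \TT'$, i.e.\ $\TT \leT \TT'$.

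For \ITEM1 $\Rightarrow$ \ITEM3 I would show that when $\TT \leT \TT'$ the Polish algorithm of Definition~\ref{D:Solution} never triggers its third clause. By Proposition~\ref{P:LubPol} the algorithm terminates at $(\TT', \TT')$, since the least upper bound of $\TT$ and $\TT'$ is $\TT'$ itself. Writing $(\TT_\mt, \TT'_\mt)$ for the current pair at step $\mt$, we have both $\TT' = \TT'_0 \leT \TT'_\mt$ (since $\TT'_\mt$ is obtained from $\TT'$ by left-rotations) and $\TT'_\mt \leT \TT'$ (as $\TT'$ is the common endpoint), so $\TT'_\mt = \TT'$ throughout; in particular the third clause, which would strictly enlarge $\TT'_\mt$, is never triggered. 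As long as $\TT_\mt \neq \TT'$, the lexicographic monotonicity of the Polish encoding observed in Subsection~\ref{SS:Encoding} yields $\Pol{\TT_\mt} \lLex \Pol{\TT'}$, so the second clause is applicable and appends a positive letter $\gah\alpha\rr$ to $\SOL\TT{\TT'}$. The output is therefore a positive $\AAAh$-word, hence a positive $\AAA$-word since each $\gah\alpha\rr$ expands as one.

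For \ITEM3 $\Rightarrow$ \ITEM2, a short induction along the recursive definition of $\SOL\TT{\TT'}$ gives $\TT' = \TT \act \SOL\TT{\TT'}$ (both clauses compose correctly), so under the positivity hypothesis the class $\ff := \cl{\SOL\TT{\TT'}}$ lies in $\Fs$ and witnesses \ITEM2. The conceptual difficulty, as expected, sits in \ITEM1 $\Rightarrow$ \ITEM3: one must exclude any drift of $\TT'_\mt$ above $\TT'$, which Proposition~\ref{P:LubPol} and the obvious Tamari monotonicity of the algorithm accomplish together.
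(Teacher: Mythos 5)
Your proof is correct and follows essentially the same route as the paper: Lemma~\ref{L:Compat} for \ITEM1$\Leftrightarrow$\ITEM2, and Proposition~\ref{P:LubPol} (together with Lemma~\ref{L:NewCovering} and the fact that the covering relation determines the tree) to close the loop through \ITEM3 and \ITEM4. The extra care you take in \ITEM1$\Rightarrow$\ITEM3 to rule out the third clause is a worthwhile elaboration of a step the paper leaves implicit, but it is not a different argument.
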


\begin{proof}
The equivalence of~\ITEM1 and~\ITEM2 has been established in Lemma~\ref{L:Compat}.

Next, it is obvious that \ITEM3 implies~\ITEM2 as every element~$\gah\alpha\rr$ belongs to~$\Fs$. Conversely, if $\TT \leT \TT'$ holds, then the least upper bound of~$\TT$ and~$\TT'$ is~$\TT'$. Hence, by Proposition~\ref{P:LubPol}, the Polish algorithm running on~$(\TT, \TT')$ finishes with~$(\TT', \TT')$. This means that the word~$\SOL\TT{\TT'}$ contains positive letters~$\gah\alpha\rr$ only. So \ITEM2 implies~\ITEM3.

Finally, as observed above, \ITEM3 is equivalent to saying that the Polish algorithm running on~$(\TT, \TT')$ finishes with~$(\TT', \TT')$, whereas \ITEM4 is equivalent to saying that $\COV{\TT'}$ is the transitive closure of~$\COV\TT$ and~$\COV{\TT'}$. By Proposition~\ref{P:LubPol}, the latter properties are equivalent and, therefore, \ITEM3 and~\ITEM4 are equivalent.
\end{proof}

It should be noted that the equivalence of~\ITEM1 and~\ITEM4 in Corollary~\ref{C:Positive} already appears as~\cite[Theorem 2.1]{Pal1}. 


\subsection{The Polish normal form}
\label{SS:NormalForm}

One of the interests of Proposition~\ref{P:LubPol} and Corollary~\ref{C:Positive} is that they provide unique distinguished decompositions for every element of~$F$ and of~$\Fs$ in terms of the generators~$\gah\alpha\rr$. Indeed, we obtained for every pair of equal size trees~$(\TT, \TT')$ a certain signed $\AAAh$-word~$\SOL\TT{\TT'}$ such that~$\TT \act \SOL\TT{\TT'}$ is defined and equal to~$\TT'$. This word~$\SOL\TT{\TT'}$ does not depend on~$\TT$.
 
\begin{lemma}
Assume that $\ff$ belongs to~$F$ and $\TT \act \ff$ is defined. Then the signed $\AAAh$-word~$\SOL\TT{\TT \act \ff}$ is an expression of~$\ff$, and it does not depend on~$\TT$.
\end{lemma}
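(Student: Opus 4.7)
The first assertion is immediate: by the definition of~$\SOL$, the relation $\TT \act \SOL\TT{\TT\act\ff} = \TT \act \ff$ holds, and by the uniqueness part of Proposition~\ref{P:Geometry} the element of~$F$ mapping~$\TT$ to~$\TT\act\ff$ is unique, so $\cl{\SOL\TT{\TT\act\ff}} = \ff$.

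For the independence from~$\TT$, my plan is an induction on the number of recursive calls of the Polish algorithm (equivalently on $\Lg{\SOL\TT{\TT\act\ff}}$), the crux of which is the following local claim: both the direction of the first clash in~$\Pol\TT$ versus~$\Pol{\TT\act\ff}$ and the iterated rotation~$\gah\alpha\rr$ that the algorithm applies at that first step depend only on~$\ff$, not on~$\TT$. To see this, write $\TT = (\lab{\trm\ff})^\sigma$ and $\TT\act\ff = (\lab{\trp\ff})^\sigma$, and observe that $\Pol\TT$ and~$\Pol{\TT\act\ff}$ arise from~$\Pol{\lab{\trm\ff}}$ and~$\Pol{\lab{\trp\ff}}$ by substituting~$\Pol{\sigma(\ii)}$ identically for each leaf~$\et_\ii$. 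Since the labels preceding the first clash between~$\Pol{\trm\ff}$ and~$\Pol{\trp\ff}$ coincide, both prefixes get inflated in the same way, so the clash direction in~$\Pol\TT$ vs.\ $\Pol{\TT\act\ff}$ matches that in~$\Pol{\trm\ff}$ vs.\ $\Pol{\trp\ff}$, and the origin of the first clash in~$\TT$ has the form $\beta = \alpha_0 \cdot 0^\mm$, where~$\alpha_0$ is the corresponding clash origin in~$\trm\ff$ (determined by~$\ff$) and $\mm$ is the depth of the leftmost leaf of~$\sigma(\ii)$. Decomposing $\alpha_0 = \alpha 10^{\rr-1} 10^{\ii_0}$ as in Lemma~\ref{L:Solution} yields $\beta = \alpha 10^{\rr-1} 10^{\ii_0 + \mm}$, so the pair~$(\alpha, \rr)$ extracted by the algorithm depends only on~$\alpha_0$, hence only on~$\ff$.

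The induction then proceeds as follows. Base case $\Lg{\SOL\TT{\TT\act\ff}} = 0$: then $\TT = \TT\act\ff$ forces~$\ff = 1$ by uniqueness, and $\SOL{\TT'}{\TT'\act\ff} = \SOL{\TT'}{\TT'} = \eps$ for every valid~$\TT'$. For the inductive step, let~$\ma$ be the iterated rotation applied at the first step and set $\ff' = \ma\inv \ff$ if the clash direction is $\lLex$, or $\ff' = \ff\ma$ if it is $\gLex$. In either case~$\ff'$ is determined by~$\ff$ (by the local claim), the modified pair $(\TT\act\ma,(\TT\act\ma)\act\ff')$ or $(\TT, \TT\act\ff')$ is valid for~$\ff'$, and the Polish algorithm on it runs in one fewer step; applying the induction hypothesis to~$\ff'$ gives that the remaining portion of the word depends only on~$\ff'$, whence assembling~$\ma$ at the front (or~$\ma\inv$ at the back) yields that the full word $\SOL\TT{\TT\act\ff}$ depends only on~$\ff$.

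The main obstacle is the careful verification of the local claim: one must check, directly from the recursive definition of the Polish encoding, that an arbitrary substitution~$\sigma$ merely appends trailing~$0$'s to the clash origin and preserves the clash direction, so that the Lemma~\ref{L:Solution} decomposition yields the same pair~$(\alpha, \rr)$. This is elementary but needs careful bookkeeping, since positions in the encoded words shift non-uniformly under substitution.
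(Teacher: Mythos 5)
Your proposal is correct and follows essentially the same route as the paper: the paper likewise derives the first assertion from the uniqueness in Proposition~\ref{P:Geometry}, and proves independence by comparing the runs of the Polish algorithm on $(\TT,\TT')$ and $(\TT^\sigma,\TT'{}^\sigma)$ step by step, observing that substitution creates no new clash so that the $\mt$th pair of trees for the substituted instance is the substituted $\mt$th pair, whence $\SOL\TT{\TT\act\ff}=\SOL{\trm\ff}{\trp\ff}$. Your explicit computation that the clash origin only acquires trailing $0$'s (so the same pair $(\alpha,\rr)$ is extracted) is a detail the paper leaves implicit, and your induction on the number of recursive calls is just the paper's step-by-step comparison made formal.
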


\begin{proof}
First, we have $\TT \act \ff = \TT \act \SOL\TT{\TT'}$, so, by Proposition~\ref{P:Geometry}, the word~$\SOL\TT{\TT'}$ is an expression of~$\ff$. Next, assume that $\sigma$ is a substitution, and let us compare the Polish algorithm running on a pair~$(\TT, \TT')$ and on the pair~$(\TT^\sigma, \TT'{}^\sigma)$. The word~$\Pol{\TT^\sigma}$ is obtained from the word~$\Pol\TT$ by replacing every variable~$\et_\ii$ with the corresponding word~$\Pol{\sigma(\ii)}$. As the variables occur in the same order in the words~$\Pol\TT$ and~$\Pol{\TT'}$, substituting~$\et_\ii$ with~$\Pol{\sigma(\ii)}$ introduces no new clash. Therefore, if $(\TT_\mt, \TT'_\mt)$ are the trees at the $\mt$th step of the algorithm running on~$(\TT, \TT')$, then $(\TT_\mt^\sigma, \TT'_\mt{}^\sigma)$ are the trees at the $\mt$th step of the algorithm running on~$(\TT^\sigma, \TT'{}^\sigma)$, implying $\SOL\TT{\TT'} = \SOL{\TT^\sigma}{\TT'{}^\sigma}$.

By definition, for every~$\ff$ in~$F$, there exists a unique pair of trees~$(\trm\ff, \trp\ff)$ such that every pair~$(\TT, \TT \act \ff)$ can be expressed as~$((\lab{\trm\ff})^\sigma, (\lab{\trp\ff})^\sigma)$. The above result then shows that $\SOL\TT{\TT \act \ff}$ coincides with~$\SOL{\trm\ff}{\trp\ff}$, which only depends on~$\ff$. 
\end{proof}

\index{Polish!normal form}
\begin{definition}
\label{D:PolishNF}
For~$\ff$ in~$F$, the \emph{Polish normal form} of~$\ff$ is the signed $\AAAh$-word~$\SOL{\trm\ff}{\trp\ff}$.
\end{definition}

\begin{example}
Let $\ff = \ga\ea \ga1 = \ga{11} \ga\ea$ (= $\xx_1\xx_2 = \xx_3\xx_1$). Then we have $\trm\ff = \et (\et (\et (\et\et)))$ and $\trp\ff = (\et \et) ((\et \et) \et)$. Running the Polish algorithm on these trees returns the (positive) $\AAAh$-word~$\ga\ea \ga1$: so the latter is the Polish normal form of~$\ff$. By contrast, $\ga{11} \ga\ea$, which is another $\AAAh$-expression of~$\ff$, is not normal. One verifies similarly that the word~$\ga\ea^2$ is normal, whereas the equivalent words~$\ga1 \gah\ea2$ and~$\ga1 \ga\ea \ga0$ are not.
\end{example}

Corollary~\ref{C:Positive} immediately implies:

\begin{proposition}
\label{P:Positive}
An element of~$F$ belongs to the submonoid~$\Fs$ if and only if its Polish normal form contains no letter~$\gah\alpha\rr\inv$.
\end{proposition}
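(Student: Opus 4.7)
The plan is to simply unfold Definition~\ref{D:PolishNF} and apply the equivalence \ITEM2$\Leftrightarrow$\ITEM3 of Corollary~\ref{C:Positive} to the canonical pair of trees $(\trm\ff, \trp\ff)$ associated with~$\ff$. Recall that, by the lemma preceding Definition~\ref{D:PolishNF}, the signed $\AAAh$-word $\SOL{\trm\ff}{\trp\ff}$ is an expression of~$\ff$ in terms of the generators $\gah\alpha\rr$.

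For the forward direction, assume $\ff$ lies in~$\Fs$. By Lemma~\ref{L:Compat} applied with $\TT = \trm\ff$ and $\TT' = \trp\ff$ (which satisfy $\trp\ff = \trm\ff \act \ff$ by construction), we deduce $\trm\ff \leT \trp\ff$. The equivalence of~\ITEM1 and~\ITEM3 in Corollary~\ref{C:Positive} then guarantees that the Polish algorithm running on $(\trm\ff, \trp\ff)$ terminates with $(\trp\ff, \trp\ff)$, which is exactly the statement that $\SOL{\trm\ff}{\trp\ff}$ contains no negative letter $\gah\alpha\rr\inv$.

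For the converse, assume the Polish normal form $\SOL{\trm\ff}{\trp\ff}$ contains no letter~$\gah\alpha\rr\inv$, i.e.\ is a positive $\AAAh$-word. Since each $\gah\alpha\rr$ expands as the product $\ga\alpha\,\ga{\alpha0}\cdots\ga{\alpha0^{\rr-1}}$ of elements of the generating family~$\AAA$, every positive $\AAAh$-word represents an element of the submonoid~$\Fs$. As $\SOL{\trm\ff}{\trp\ff}$ represents~$\ff$, we conclude $\ff\in\Fs$.

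The two directions together give the claimed equivalence; the only thing that required work is packaged into Corollary~\ref{C:Positive}, so there is no real obstacle here beyond recognising that the Polish normal form is built from the pair $(\trm\ff,\trp\ff)$ on which that corollary is immediately applicable.
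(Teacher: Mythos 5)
Your proof is correct and follows exactly the route the paper intends: the paper derives Proposition~\ref{P:Positive} as an immediate consequence of Corollary~\ref{C:Positive} applied to the pair $(\trm\ff,\trp\ff)$, which is precisely what you spell out (forward direction via \ITEM1/\ITEM2$\Rightarrow$\ITEM3, converse by noting that a positive $\AAAh$-word represents a product of the generators $\ga\alpha$ and hence an element of~$\Fs$). Nothing is missing.
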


As the family of generators~$\AAAh$ is infinite, it makes no sense to wonder whether Polish normal words form a rational language or whether the Polish normal form can be connected with an automatic structure. However, let us observe that being Polish normal is a local property that can be characterized in terms of adjacent letters.

\begin{proposition}
A positive $\AAAh$-word $\gah{\alpha_1}{\rr_1} \pdots \gah{\alpha_\ell}{\rr_\ell}$ is Polish normal if and only if
$$\alpha_\mt 0^{\rr_\mt} \lLR \alpha_{\mt+1} 1 0^{\rr_{\mt+1}-1} 1$$
holds for every~$\mt < \ell$, where $\lLR$ denotes the left--right (partial) ordering of addresses.
\end{proposition}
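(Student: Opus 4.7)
The plan is to reinterpret both addresses appearing in the stated inequality as origins of specific letters in a single Polish encoding, and then to reduce Polish normality to the condition that the clash positions strictly increase. Set $\TT_\mt = \trm{\cl{W}} \act (\gah{\alpha_1}{\rr_1} \pdots \gah{\alpha_\mt}{\rr_\mt})$ for $0 \leqslant \mt \leqslant \ell$, and let $\kk_\mt$ be the unique clash position of~$\gah{\alpha_\mt}{\rr_\mt}$ supplied by Lemma~\ref{L:Solution}, so that $\Pol{\TT_{\mt-1}}(\kk_\mt) = \et$ and $\Pol{\TT_\mt}(\kk_\mt) = \OPP$. A direct inspection of Figure~\ref{F:Solution} shows that the origin of~$\kk_\mt$ in~$\TT_\mt$ is the internal node at address~$\alpha_\mt 0^{\rr_\mt}$, namely the root of the newly formed $A \OP Y_1$, while the origin of~$\kk_{\mt+1}$ in~$\TT_\mt$ is the leftmost leaf of the subtree~$Z_1$ of the $(\mt+1)$-th rotation, whose address has the form $\alpha_{\mt+1} 1 0^{\rr_{\mt+1}-1} 1 0^{\ii_{\mt+1}}$ for some $\ii_{\mt+1} \geqslant 0$.

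The first ingredient I would establish is the elementary fact that, for any internal node~$\delta$ and any leaf~$\epsilon$ of a tree, the letter~$\OPP$ at~$\delta$ precedes the letter~$\et$ at~$\epsilon$ in the right-Polish encoding if and only if $\delta \lLR \epsilon$; this is a short case analysis on the prefix relation, the remaining case (where $\delta$ is an ancestor of~$\epsilon$) forcing the leaf to come first. Combined with the equally elementary observation that, when $\beta$ ends in~$1$, the relation $\alpha \lLR \beta 0^\ii$ is equivalent to $\alpha \lLR \beta$ for every $\ii \geqslant 0$, this yields the equivalence
$$\kk_\mt < \kk_{\mt+1} \Longleftrightarrow \alpha_\mt 0^{\rr_\mt} \lLR \alpha_{\mt+1} 1 0^{\rr_{\mt+1}-1} 1,$$
so that the stated local condition amounts to the chain $\kk_1 < \cdots < \kk_\ell$.

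The forward implication is immediate from the definition: if $W$ is Polish normal, then each~$\kk_\mt$ is the leftmost clash between~$\TT_{\mt-1}$ and~$\TT_\ell$, and after step~$\mt$ both trees carry~$\OPP$ at position~$\kk_\mt$, so the next clash lies strictly further to the right. For the converse I would proceed by induction on~$\ell$, and this is where I expect the only non-routine work: assuming $\kk_1 < \cdots < \kk_\ell$, one must show that $\kk_1$ is already the leftmost clash between~$\TT_0$ and~$\TT_\ell$. The decisive observation is that $\Pol{\TT_{\mt-1}}$ and~$\Pol{\TT_\mt}$ coincide at every position strictly below~$\kk_\mt$, because the modifications induced by $\gah{\alpha_\mt}{\rr_\mt}$ are confined to the $\alpha_\mt$-subtree and start precisely at position~$\kk_\mt$. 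Iterating this remark with $\kk_1 \leqslant \kk_\mt$ propagates agreement of all $\Pol{\TT_\mt}$ with~$\Pol{\TT_0}$ below~$\kk_1$, while a second iteration using $\kk_1 < \kk_2 \leqslant \kk_\mt$ for~$\mt \geqslant 2$ gives $\Pol{\TT_\ell}(\kk_1) = \OPP$, so that $\kk_1$ is indeed the leftmost clash. Lemma~\ref{L:Solution} then identifies $\gah{\alpha_1}{\rr_1}$ as the unique generator resolving this clash, the algorithm's first output matches~$W$, and the induction hypothesis applied to the pair~$(\TT_1, \TT_\ell)$ with the suffix $\gah{\alpha_2}{\rr_2} \pdots \gah{\alpha_\ell}{\rr_\ell}$ finishes the proof.
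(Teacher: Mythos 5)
Your proof is correct and follows essentially the same route as the paper's: identify the origins of the two clash letters in~$\Pol{\TT_\mt}$ as the node~$\alpha_\mt 0^{\rr_\mt}$ and a leaf in~$\alpha_{\mt+1}10^{\rr_{\mt+1}-1}1\{0\}^*$, and translate the requirement that the clash positions strictly increase into the left--right ordering of these addresses. Your converse direction (the induction showing that increasing clash positions force each~$\gah{\alpha_\mt}{\rr_\mt}$ to coincide with~$\Sol{\TT_{\mt-1}}{\TT_\ell}$) is carried out in more detail than in the paper, which asserts that reduction without elaboration.
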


\begin{proof}
Let $\ww = \gah{\alpha_1}{\rr_1} \pdots \gah{\alpha_\ell}{\rr_\ell}$ and assume that $\TT \act \ww$ is defined. For~$0 \leqslant \mt \leqslant \ell$, put $\TT_\mt = \TT \act \gah{\alpha_1}{\rr_1} \pdots \gah{\alpha_\mt}{\rr_\mt}$. Then $\ww$ is normal if, for every~$1 \leqslant \mt \leqslant \ell$, we have $\gah{\alpha_\mt}{\rr_\mt} = \Sol{\TT_{\mt-1}}{\TT_\ell}$, that is, $\gah{\alpha_\mt}{\rr_\mt}$  appears at the $\mt$th step of the Polish algorithm running on~$(\TT, \TT \act \ww)$. Now, as shown in Figure~\ref{F:Solution}, the origin in~$\TT_\mt$ of the letter~$\OPP$ involved in the clash between~$\TT_{\mt-1}$ and~$\TT_{\mt}$ is~$\alpha_\mt0^{\rr_\mt}$, whereas the origin in~$\TT_\mt$ of the letter~$\et$ involved in the clash between~$\TT_\mt$ and~$\TT_{\mt+1}$ lies in~$\alpha_{\mt+1}10^{\rr_{\mt+1}-1}1\{0\}^*$. The normality condition is then that, in~$\Pol{\TT_\mt}$, the former letter lies on the left of the latter. By construction of the Polish encoding, this happens if and only if the first address precedes the second in the ``left--right--root'' linear ordering of addresses. Due to the form of the second address, this is equivalent to $\alpha_\mt0^{\rr_\mt} \lLR \alpha_{\mt+1}10^{\rr_{\mt+1}-1}1$.
\end{proof}

For instance, the word $\ga\ea \ga\ea$ is normal, as we have $\ea 0^1 = 0 \lLR \ea 1 0^{1-1} 1 = 11$, but $\ga1 \gah\ea2$ is not, as we do not have $1 0^1 = 10 \lLR \ea 10^{2-1}1 = 101$. 


\section{Distance in Tamari lattices}
\label{S:Distance}

We conclude this description of the connections between the Tamari lattice and the Thompson group~$F$ with a few observations about distances in~$\Tam\nn$. The general principle is that it is easy to obtain upper bounds, but difficult to prove lower bounds and many questions remain open in this area. Our main observation here is that the embedding of the monoid~$\Fs$ into the group~$F$ is not an isometry, and not even a quasi-isometry (Definition~\ref{D:Quasi}): for every positive constant~$\CC$, there exist elements of~$\Fs$ whose length in~$F$ is smaller than their length in~$\Fs$ by a factor at least~$\CC$. In terms of Tamari lattices, this implies that chains are not geodesic (Corollary~\ref{C:Chains}). 

The plan of the section is as follows. In Subsection~\ref{SS:Diameter}, we quickly survey the known results about the diameter of Tamari lattices. Then, we show in Subsection~\ref{SS:Syntactic} how to use the syntactic relations of~$\RRR$ to obtain (rather weak) distance lower bounds. Finally, in Subsection~\ref{SS:Quasigeod}, we use the covering relation to establish (stronger) lower bounds. 


\subsection{The diameter of~$\Tam\nn$}
\label{SS:Diameter}

Surprisingly, the diameter of the Tamari lattice~$\Tam\nn$ is not known for every~$\nn$. 

\index{distance (between trees)}
\begin{definition}
For~$\TT, \TT'$ in~$\Tam\nn$, the \emph{distance} between~$\TT$ and~$\TT'$, denoted by~$\dist(\TT, \TT')$ is the minimal number of rotations needed to transform~$\TT$ into~$\TT'$. The \emph{diameter} of~$\Tam\nn$ is the maximum of~$\dist(\TT, \TT')$ for~$\TT, \TT'$ in~$\Tam\nn$.
\end{definition}

\begin{theorem}[Sleator, Tarjan, Thurston \cite{STT}]
For~$\nn \geqslant 11$, the diameter of~$\Tam\nn$ is at most~$2\nn-6$; for~$\nn$ large enough, it is exactly~$2\nn-6$.
\end{theorem}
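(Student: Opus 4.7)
The plan is to use the classical bijection between size-$\nn$ binary trees and triangulations of a convex $(\nn+2)$-gon: an internal node of the tree corresponds to a triangle, and a left-rotation at a node corresponds to flipping the diagonal shared by the triangle of that node and of its right child. Under this identification, the distance in~$\Tam\nn$ becomes the flip distance between triangulations, and I would work with triangulations throughout, calling~$\TT, \TT'$ the triangulations associated with two trees.

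For the upper bound, the easy estimate $2\nn-4$ follows by comparing any~$\TT$ with a fan triangulation: fixing a vertex~$v_0$ of the polygon, at most $\nn-2$ flips suffice to bring~$\TT$ to the fan rooted at~$v_0$, since every non-$v_0$ diagonal lies in a quadrilateral whose opposite diagonal is incident to~$v_0$, and flipping strictly reduces the number of non-$v_0$ diagonals. Applying this to~$\TT$ and~$\TT'$ and concatenating gives $2(\nn-2)$. To shave the two extra flips and reach $2\nn-6$, I would choose~$v_0$ so that its two neighbouring diagonals of the fan already belong to both~$\TT$ and~$\TT'$---possible for $\nn\geqslant 11$ by a pigeonhole argument on vertex degrees in $\TT$ and~$\TT'$---which effectively reduces the problem to triangulations of two smaller polygons and saves one flip on each side.

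The lower bound is the deep content of the Sleator--Tarjan--Thurston theorem and requires a genuinely geometric input. To a pair $(\TT, \TT')$ one associates a polyhedron $P(\TT, \TT')$ in~$\mathbb{H}^3$ whose top and bottom faces carry the triangulations $\TT$ and~$\TT'$ respectively, realised with all vertices ideal. Any flip sequence $\TT = \TT_0, \TT_1, \dots, \TT_\kk = \TT'$ decomposes $P(\TT, \TT')$ into $\kk$ ideal tetrahedra, which yields
\begin{equation*}
\kk \cdot V_{\max} \;\geqslant\; \mathrm{Vol}(P(\TT, \TT')),
\end{equation*}
where $V_{\max}$ is the volume of the regular ideal tetrahedron. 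The main obstacle, and the deep step of the proof, is to construct for each sufficiently large~$\nn$ explicit pairs $(\TT, \TT')$ for which $\mathrm{Vol}(P(\TT, \TT'))$ is within~$o(\nn)$ of $(2\nn - 6)\, V_{\max}$; this is achieved via carefully chosen families of triangulations that force almost every tetrahedron in the decomposition to be close to regular, so that the volume bound saturates.
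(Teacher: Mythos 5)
First, a point of context: the paper does not prove this theorem at all — it is quoted from~\cite{STT}, and the surrounding text only records in two sentences that the upper bound is combinatorial (flips of diagonals in triangulations of an $(\nn+2)$-gon) and that the lower bound comes from bounding hyperbolic volume. Your proposal correctly identifies that framework, but both halves of your sketch have genuine gaps. For the upper bound, the claim that ``every non-$v_0$ diagonal lies in a quadrilateral whose opposite diagonal is incident to~$v_0$'' is false in general; what is true is that, as long as $\TT$ is not the fan at~$v_0$, \emph{some} flip increases the number of diagonals at~$v_0$, so that the fan is reached in exactly $(\nn-1)-\deg_\TT(v_0)$ flips, where $\deg$ counts incident diagonals. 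More seriously, your mechanism for saving the last flips --- choosing $v_0$ so that two specified fan diagonals already lie in \emph{both} $\TT$ and~$\TT'$ --- can simply fail: if $\TT$ and $\TT'$ are the fans at two adjacent vertices of the polygon, they share no diagonal whatsoever, yet the bound of course holds for that pair. The correct pigeonhole is on degree \emph{sums}: $\sum_v\bigl(\deg_\TT(v)+\deg_{\TT'}(v)\bigr)=4(\nn-1)$ over $\nn+2$ vertices, so some $v_0$ has combined degree at least~$4$ as soon as $4(\nn-1)>3(\nn+2)$, that is $\nn\geqslant 11$, whence $\dist(\TT,\TT')\leqslant 2(\nn-1)-4=2\nn-6$.

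For the lower bound, the inequality $\kk\,V_{\max}\geqslant\mathrm{Vol}(P(\TT,\TT'))$ is the right starting point, but your accounting does not close, and the step you defer is not merely hard but impossible in the form stated. To conclude $\kk\geqslant 2\nn-6$ you need $\mathrm{Vol}(P)>(2\nn-7)V_{\max}$, i.e.\ a volume within a \emph{single} tetrahedron's volume of the target; a volume within $o(\nn)$ of $(2\nn-6)V_{\max}$ only yields $\kk\geqslant 2\nn-6-o(\nn)$. Worse, a convex ideal polyhedron on $N$ vertices has volume bounded above by roughly $\tfrac12 v_{\mathrm{oct}}\,N\approx 1.83\,N$ (about half the volume of the regular ideal octahedron per vertex), which is strictly below $2N\,V_{\max}\approx 2.03\,N$; so no choice of $(\TT,\TT')$ makes the naive ``total volume divided by $V_{\max}$'' bound reach $2\nn-6$ for large~$\nn$, and the actual argument of~\cite{STT} is necessarily more refined than the literal computation you describe. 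This is precisely why their lower bound holds only for an unspecified ``large enough''~$\nn$ (the full range $\nn\geqslant 11$ was settled only much later, and by purely combinatorial means); your sketch replaces the deep step by the assertion that it can be carried out.
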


The argument uses the fact that the maximal distance between two size~$\nn$ trees is also the maximal number of flips needed to transform two triangulations of an $(\nn+2)$-gon one into the other. A lower bound for the latter is obtained by putting the considered triangulations on the two halves of a sphere and bounding the hyperbolic volume of the resulting tiled polyhedron. It is conjectured that the value $2\nn-6$ is correct for every~$\nn \geqslant 11$. However, due to its geometric nature, the argument of~\cite{STT} works only for~$\nn \geqslant \nn_0$, with no estimation of~$\nn_0$.

By contrast, combinatorial arguments involving the covering relation of Subsection~\ref{SS:Covering} lead to (weaker) results that are valid for every~$\nn$.

\begin{theorem}\cite{Dhw}
For $\nn = 2\pp^2$, the diameter of~$\Tam\nn$ is at least $2\nn - 2\sqrt{2\nn} + 1$ and, for every~$\nn$, it is at least $2\nn - \sqrt{70\nn}$.
\end{theorem}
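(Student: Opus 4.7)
The plan is to exhibit, for each relevant $\nn$, an explicit pair of size-$\nn$ trees far apart in the rotation graph, and to lower-bound their distance by analysing, via the covering relation $\COV{}$ from Subsection~\ref{SS:Covering}, how slowly rotations can transform one covering relation into another. For a tree $\TT$ and a leaf position $\jj$, set $C_\TT(\jj) := \{\ii : \jj \COV\TT \ii\}$ throughout.

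The main tool will be Lemma~\ref{L:NewCovering} specialised to $\rr = 1$: a single left-rotation $\ga\alpha$ augments $\COV\TT$ only by pairs $(\jj, \ii)$ in which $\jj$ is a \emph{single} fixed leaf position (the rightmost leaf of the $\alpha 10$-subtree of~$\TT$) and $\ii$ ranges over leaves of the $\alpha 0$-subtree; a right-rotation removes such pairs symmetrically. Consequently, along any rotation sequence $\TT = \TT_0, \ldots, \TT_k = \TT'$ of length~$k$, each step carries a unique ``active'' leaf position~$\jj_\mt$, and setting $\kappa(\jj) := \#\{\mt : \jj_\mt = \jj\}$ gives $\dist(\TT, \TT') = \sum_\jj \kappa(\jj)$ whenever the sequence is optimal.

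For $\nn = 2\pp^2$, I would take $\TT$ and $\TT'$ to be ``mirror-image'' block trees built from $\pp$ blocks of size~$\pp$ arranged with opposite orientations---for instance, $\TT$ the left-comb of $\pp$ right-combs of size~$\pp$ and $\TT'$ its left--right symmetric counterpart. A direct computation would then reveal roughly $2\pp - 1$ critical leaf positions~$\jj$ for which $C_\TT(\jj)$ and $C_{\TT'}(\jj)$ differ on at least $2\pp - 1$ elements. Showing that each such~$\jj$ must satisfy $\kappa(\jj) \geq 2\pp - 1$ in any optimal sequence, and summing over the critical leaves, would yield $\dist(\TT, \TT') \geq (2\pp - 1)^2 = 2\nn - 2\sqrt{2\nn} + 1$. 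For arbitrary~$\nn$, I would pick the largest $\pp$ with $2\pp^2 \leq \nn$, embed the size-$2\pp^2$ extremal pair into $\Tam\nn$ by right-comb padding (which preserves distances since rotations in the padded right-spine can be projected away), and absorb the rounding loss $\nn - 2\pp^2 = O(\sqrt{\nn})$ into the constant under the square root to obtain $2\nn - \sqrt{70\nn}$.

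The main obstacle is the multiplicity bound $\kappa(\jj) \geq 2\pp - 1$ for the critical leaves: a naive count of distinct leaves with $C_\TT(\jj) \neq C_{\TT'}(\jj)$ gives only $\dist \geq 2\pp - 1$, which is far too weak. The delicate point is to exploit the rigid block structure of $\TT$ and~$\TT'$ to show that each individual visit to a critical leaf~$\jj$ can reconcile the discrepancy in $C_*(\jj)$ with respect to only \emph{one} of the $\pp$ opposing blocks, so that the $2\pp - 1$ mismatched blocks per critical leaf must be handled one at a time. This forces the costs to accumulate multiplicatively across both the number of critical leaves and the number of required visits per leaf, yielding the quadratic bound $(2\pp - 1)^2$ rather than the linear $2\pp - 1$.
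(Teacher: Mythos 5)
First, note that this theorem is not proved in the present paper: it is quoted from~\cite{Dhw}, so there is no in-text argument to compare against. Judged on its own terms, your proposal correctly identifies the covering-relation framework and the right numerology for $\nn = 2\pp^2$ (indeed $2\nn - 2\sqrt{2\nn} + 1 = (2\pp-1)^2$ suggests a product of two factors $2\pp-1$), but it is not a proof: the multiplicity bound $\kappa(\jj) \geqslant 2\pp-1$, which you yourself flag as ``the main obstacle'', is exactly the theorem's entire content, and the mechanism you hint at does not survive scrutiny. A single rotation with active leaf~$\jj$ adds the pairs $(\jj,\ii)$ for \emph{every} leaf~$\ii$ of the $\alpha0$-subtree simultaneously --- an interval of leaf positions of unbounded length --- so nothing a priori prevents one well-chosen rotation from reconciling the discrepancy of $C_{*}(\jj)$ across many blocks at once. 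The way this is excluded in the positive-distance analogue (Lemma~\ref{L:Lower} of this paper) is to observe that adding $(\jj,\ii)$ and $(\jj,\ii')$ in one step forces an auxiliary covering such as $\ii' \COV{}\ii$ at that moment, and that in a \emph{positive} rotation sequence any intermediate covering persists to the final tree, contradicting its absence there. For the diameter one must allow both left- and right-rotations, coverings can be created and later destroyed, the persistence argument collapses, and this is precisely where the genuine difficulty of the Sleator--Tarjan--Thurston problem lies. Your sketch does not engage with this at all, so the quadratic accumulation $(2\pp-1)^2$ is unsubstantiated.

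Two further points. The reduction from general $\nn$ to $\nn = 2\pp^2$ by right-comb padding, even granting that the embedding does not decrease distances, does not deliver the stated constant: taking the largest $\pp$ with $2\pp^2 \leqslant \nn$ one has $2\nn - (2\pp-1)^2 < 12\pp + 3$, while $\sqrt{70\nn} \geqslant \pp\sqrt{140} \approx 11.83\,\pp$, so the inequality $2\nn - \sqrt{70\nn} \leqslant (2\pp-1)^2$ fails for large~$\pp$; reaching $\sqrt{70\nn}$ requires a finer construction for intermediate sizes, not just absorbing the rounding loss. Finally, be aware that the identity $\dist(\TT,\TT') = \sum_\jj \kappa(\jj)$ is vacuous (it holds for every sequence, optimal or not); the substance must come entirely from lower-bounding the individual $\kappa(\jj)$, which is the part left open.
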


Although no theoretical obstruction seems to exist, the covering arguments have not yet been developed enough to lead to an exact value of the diameter. However some candidates for realizing the maximal distance are known.

\begin{conjecture}\cite{Dhw}
\label{C:Distance}
For~$\alpha$ an address, let~$\Sp\alpha$ denote the tree recursively specified by the rules $\Sp{\ea} = \et$, $\Sp{0\alpha} = \Sp\alpha \OP \et$, and~$\Sp{1\alpha} = \et \OP \Sp\alpha$. Define
$$\ZZ_\nn = 
\begin{cases}
\Sp{111(01)^{\pp-2}}\\
\Sp{111(01)^{\pp-2}0}
\end{cases}
\ZZ'_\nn = 
\begin{cases}
\Sp{000(10)^{\pp-2}}
&\mbox{for $\nn = 2\pp+3$},\\
\Sp{000(10)^{\pp-2}1}
&\mbox{for $\nn = 2\pp+4$},
\end{cases}
$$
see Figure~\ref{F:Conjecture}. Then one has $\dist(\ZZ_\nn, \ZZ'_\nn) = 2\nn-6$ for $\nn \geqslant 9$.
\end{conjecture}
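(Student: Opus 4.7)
Since the statement is an equality, I would prove it by separately establishing $\dist(\ZZ_\nn, \ZZ'_\nn) \leq 2\nn - 6$ and the reverse inequality; essentially all the novelty lies in the lower bound.

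For the upper bound I would argue in two pieces. For $\nn \geq 11$ the Sleator--Tarjan--Thurston theorem gives the bound $\dist(\TT, \TT') \leq 2\nn - 6$ for every pair in $\Tam\nn$, and in particular for $(\ZZ_\nn, \ZZ'_\nn)$. For the two remaining cases $\nn = 9, 10$ I would exhibit explicit rotation sequences of length $12$ and $14$ respectively: the natural candidate is to first unwind the right spine of $\ZZ_\nn$ into $\CbR\nn$, then climb into the top of the Tamari lattice and descend to $\ZZ'_\nn$, with a combined count that can be checked by hand or by a short computer verification.

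For the lower bound, which is the heart of the matter, I would sharpen the covering-relation method of~\cite{Dhw}. The relevant inputs are Corollary~\ref{C:Positive}, saying that $\TT \leT \TT'$ is equivalent to $\COV{\TT} \subseteq \COV{\TT'}$, and Lemma~\ref{L:NewCovering}, which pins down the exact bundle of covering pairs added by an elementary rotation $\ga\alpha$. The first step is to compute $\COV{\ZZ_\nn}$ and $\COV{\ZZ'_\nn}$ in closed form; the recursion for $\Sp{\alpha}$ and the alternating pattern of the defining addresses $111(01)^{\pp-2}$ and $000(10)^{\pp-2}$ make this a straightforward enumeration, and reveal that these two relations are in strong structural opposition. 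The second step is to introduce a weighted potential $\phi(\TT) = \sum_{(\jj, \ii) \in \COV\TT} w(\jj, \ii)$ such that $|\phi(\ZZ_\nn) - \phi(\ZZ'_\nn)| \geq 2\nn - 6$ while every elementary rotation $\ga\alpha^{\pm 1}$ modifies $\phi$ by at most~$1$. The standard potential argument then yields the desired lower bound.

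The main obstacle, and the reason the conjecture remains open in~\cite{Dhw}, is precisely the design of the weights~$w$. A single rotation can, by Lemma~\ref{L:NewCovering}, add up to $|\II|$ new covering pairs at once, where $\II$ is the set of leaves in the $\alpha0$-subtree, so any uniform weighting loses a factor of $\sqrt{\nn}$ and only recovers the weaker bound of~\cite{Dhw}. To close the gap I would attempt a potential calibrated by the ``depth along the zigzag spine'' in~$\ZZ_\nn$ (and its mirror in~$\ZZ'_\nn$), so that on an optimal path each rotation activates exactly one unit of potential while rotations that stray off the spine are penalized by an amortized bookkeeping. Making this amortization rigorous and ruling out unexpected shortcuts through trees far from the spine is, in my view, the step that has so far resisted the existing techniques.
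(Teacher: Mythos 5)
This statement is a \emph{conjecture}, not a theorem: the paper offers no proof of it, stating only that it has been verified computationally up to size~$19$ and that the covering-relation techniques ``have not yet been developed enough to lead to an exact value of the diameter.'' So there is nothing in the paper to compare your argument against, and your proposal, by your own admission, does not close the gap either. The upper bound half is unproblematic in principle (Sleator--Tarjan--Thurston gives $\dist \leqslant 2\nn-6$ for $\nn \geqslant 11$, and the cases $\nn = 9, 10$ can be settled by exhibiting explicit rotation sequences or by exhaustive search), but the lower bound is only a program: the weighted potential $\phi(\TT) = \sum_{(\jj,\ii) \in \COV\TT} w(\jj,\ii)$ with the two required properties --- a gap of at least $2\nn-6$ between the endpoints and a change of at most $1$ per rotation in either direction --- is never constructed, and constructing it is precisely the open problem. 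Lemma~\ref{L:NewCovering} shows a single rotation $\gah\alpha\rr$ can add $\rr\,\lvert\II\rvert$ covering pairs at once, so any weighting must concentrate almost all of its mass on a set of pairs that no single rotation can hit more than once; the proof of Lemma~\ref{L:Lower} in the paper shows how delicate such arguments already are for the \emph{positive} distance, where one can exploit monotonicity of the covering relation along the path. For the unrestricted distance that monotonicity is lost, and your proposed ``amortized bookkeeping'' for rotations that stray off the zigzag spine is exactly the step you concede has resisted existing techniques.

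One smaller quantitative point: you say a uniform weighting ``loses a factor of $\sqrt\nn$''; the loss in the known bound $2\nn - \sqrt{70\nn}$ is an \emph{additive} $O(\sqrt\nn)$ term, not a multiplicative factor. This does not affect the substance of your assessment, but it is worth stating precisely, since the whole difficulty of the conjecture lives in that additive $O(\sqrt\nn)$ margin.
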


\begin{figure}[htb]
\begin{picture}(45,20)(0,2)
\put(2,0){\includegraphics{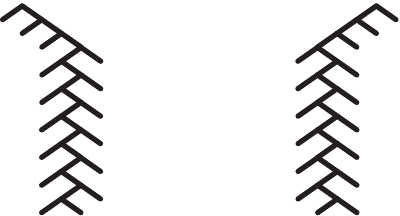}}
\put(0,10){$\ZZ_\nn$}
\put(41,10){$\ZZ'_\nn$}
\end{picture}
\caption{\sf\smaller The zigzag trees of Conjecture~\ref{C:Distance}, here for $\nn = 15$; the distance is~$24$, as predicted.}
\label{F:Conjecture}
\end{figure}

Conjecture~\ref{C:Distance} has been checked up to size~$19$ (sizes below~$9$ are special, because the trees are then too small for the generic scheme to start; by the way, the value $2\nn-6$ is valid for $\nn = 5, 6, 7$, but not for $\nn \le 4$ and for $\nn = 8$).


\subsection{Syntactic invariants}
\label{SS:Syntactic}

A natural way to investigate distances in the Tamari lattices is to use the action of~$F$ on trees and to study the length of the elements of~$F$ with respect to the generating family~$\AAA$. Indeed, Proposition~\ref{P:Geometry} directly implies 

\begin{lemma}
For all trees~$\TT, \TT'$, we have $\dist(\TT, \TT') = \LGA{\SOL\TT{\TT'}}$, where $\LGA\ff$ is the $\AAA$-length of~$\ff$, that is, the length of the shortest signed $\AAA$-word representing~$\ff$.
\end{lemma}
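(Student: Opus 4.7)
The proof naturally splits into two inequalities. Set $\ff := \cl{\SOL\TT{\TT'}}$; by Proposition~\ref{P:Geometry}, $\ff$ is the unique element of~$F$ satisfying $\TT \act \ff = \TT'$, so $\LGA{\SOL\TT{\TT'}}$ is just $\LGA\ff$, and the statement reads $\dist(\TT, \TT') = \LGA\ff$.

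For the inequality $\LGA\ff \leqslant \dist(\TT, \TT')$, I would convert a minimal rotation sequence into a signed $\AAA$-word. Given any rotation sequence $\TT = \TT_0 \to \TT_1 \to \pdots \to \TT_k = \TT'$ of length~$k$, each step~$\TT_{i-1} \to \TT_i$ is a left- or right-rotation at some address~$\alpha_i$, corresponding by the very construction in Subsection~\ref{SS:AAA} to the action of a generator $\gg_i = \ga{\alpha_i}^{\pm 1}$. The signed $\AAA$-word $\ww := \gg_1 \pdots \gg_k$ then satisfies $\TT \act \ww = \TT'$, and the uniqueness clause of Proposition~\ref{P:Geometry} forces $\ww$ to represent~$\ff$ in~$F$. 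Taking $k = \dist(\TT, \TT')$ yields $\LGA\ff \leqslant \dist(\TT, \TT')$.

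For the reverse inequality $\dist(\TT, \TT') \leqslant \LGA\ff$, the plan is to start from a shortest signed $\AAA$-word $\ww = \gg_1 \pdots \gg_m$ representing $\ff$, with $m = \LGA\ff$, and realize it as an $m$-step rotation sequence on~$\TT$. The potential obstacle is that some partial product $\ff_i := \gg_1 \pdots \gg_i$ might in principle fail to act on~$\TT$. I would circumvent this by invoking the ``moreover'' clause of Proposition~\ref{P:Action}\,\ITEM3 on the list $\ff_1, \pdots, \ff_m$ to obtain a tree $\widetilde\TT$ on which every~$\ff_i$ acts, arranged so that $\widetilde\TT$ is simultaneously a refinement of~$\TT$, say $\widetilde\TT = \TT^\sigma$ for some substitution~$\sigma$. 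Applying $\gg_1, \gg_2, \pdots, \gg_m$ in turn on~$\widetilde\TT$ then yields a rotation sequence of length~$m$ from~$\widetilde\TT$ to~$\widetilde\TT \act \ff = {\TT'}^\sigma$, and projecting along~$\sigma$ produces a rotation sequence of length at most~$m$ from~$\TT$ to~$\TT'$; this projection loses no rotations since every rotation dictated by~$\ff$ occurs at an address in the skeleton of~$\trm\ff$, which is contained in the skeleton of~$\TT$ by the assumption that $\TT \act \ff$ is defined.

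The main technical obstacle lies in this last projection step: one must verify that the rotations carried out on~$\widetilde\TT$ really do restrict to genuine rotations on~$\TT$, rather than involving addresses internal to the refinement~$\sigma$ that would disappear when descending to~$\TT$. This hinges on the fact that $\trm\ff$ already sits inside~$\TT$'s skeleton, so every address visited by the rotations along the sequence $\widetilde\TT_0, \pdots, \widetilde\TT_m$ is visible in~$\TT$; once that is pinned down, the equality $\dist(\TT, \TT') = \LGA\ff$ follows by combining the two inequalities.
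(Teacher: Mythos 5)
Your first inequality, $\LGA\ff \leqslant \dist(\TT,\TT')$, is correct and is the only part that really does follow ``directly'' from Proposition~\ref{P:Geometry}: a length~$k$ rotation sequence reads off as a signed $\AAA$-word of length~$k$ acting on~$\TT$, and the uniqueness clause forces that word to represent~$\ff = \cl{\SOL\TT{\TT'}}$. (The paper itself offers nothing more than this; the lemma is stated as an immediate consequence of Proposition~\ref{P:Geometry}.)

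The reverse inequality is where your proposal has a genuine gap, and it sits exactly where you flag it: the projection step. The assertion that ``every address visited by the rotations along the sequence $\widetilde\TT_0,\ldots,\widetilde\TT_m$ is visible in~$\TT$'' is not justified by the fact that the skeleton of~$\trm\ff$ is contained in that of~$\TT$ --- that inclusion only controls the two \emph{endpoints}. What you need is that some geodesic signed $\AAA$-word for~$\ff$ acts on~$\TT$ prefix by prefix, and nothing you say rules out that every geodesic word requires intermediate refinements: the minimal tree on which a word~$\ww$ acts can be a proper refinement of~$\trm{\cl\ww}$ (the paper's own example $\ga\alpha\inv\ga\alpha$ versus~$\eps$ shows the phenomenon; the formula $\trm{(\ff\mg)}=\trm\ff^\sigma$ from the proof of Proposition~\ref{P:Action} produces an \emph{unreduced} pair, so it does not give $\mathrm{skel}(\trm{\cl\ww})\supseteq\mathrm{skel}(\trm{\cl{\ww_i}})$ for prefixes~$\ww_i$). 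Concretely, once you pass to~$\widetilde\TT=\TT^\sigma$, a rotation in your sequence may occur at an address interior to one of the blocks~$\sigma(j)$, and then there is no corresponding rotation on~$\TT$ and no well-defined ``projection along~$\sigma$'' of the intermediate trees. The statement can be repaired, but it costs a real argument: since $\dist(\widetilde\TT,\widetilde\TT')\geqslant\LGA\ff=m$, your length~$m$ sequence is a geodesic between $\widetilde\TT$ and~$\widetilde\TT'$, and these two trees have all the subtrees~$\sigma(j)$ in common (spanning the same leaf intervals); one then invokes a common-subtree (common-diagonal) lemma in the spirit of Sleator--Tarjan--Thurston~\cite{STT} to conclude that a geodesic never breaks these common blocks, after which collapsing each~$\sigma(j)$ to a leaf turns the sequence into one of length~$\leqslant m$ from~$\TT$ to~$\TT'$. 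Without that collapsing lemma (or some substitute, e.g.\ an argument that geodesic words can always be normalized, without increasing length, into words acting on~$\trm\ff$), the inequality $\dist(\TT,\TT')\leqslant\LGA\ff$ is not established.
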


In order to establish (lower) bounds on~$\LGA\ff$, a natural approach is to use the syntatic properties of the relations of~$\RRR$.

\begin{lemma}
\label{L:Invariant}
For~$\ww$ a signed $\AAA$-word, denote by~$\Lgu\ww$ the number of letters~$\ga{1^\ii}^{\pm1}$ in~$\ww$.

\ITEM1 If $\uu, \uu'$ are $\RRR$-equivalent positive $\AAA$-words, then $\Lgu\uu = \Lgu{\uu'}$ holds.

\ITEM2 If $\ww, \ww'$ are signed $\AAA$-words, then $\ww \rev_{\RRR} \ww'$ implies $\Lgu\ww \geqslant\nobreak \Lgu{\ww'}$ holds.
\end{lemma}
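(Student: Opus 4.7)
Write $\pi(\ga\alpha) = 1$ when the address~$\alpha$ contains no digit~$0$ (the empty address counting) and $\pi(\ga\alpha) = 0$ otherwise; extend $\pi$ additively to signed $\AAA$-words with $\pi(\ga\alpha\inv) := \pi(\ga\alpha)$, so that $\pi(\ww) = \Lgu\ww$ for every signed $\AAA$-word~$\ww$. Part~\ITEM1 then amounts to checking that each relation of~$\RRR$ preserves $\Lgu{\cdot}$, a finite case check on the five schemes. Commutation is trivial. The first quasi-commutation $\ga{\alpha 11 \beta} \, \ga\alpha = \ga\alpha \, \ga{\alpha 1 \beta}$ is preserved because $\pi(\ga{\alpha 11 \beta})$ and $\pi(\ga{\alpha 1 \beta})$ both equal~$1$ exactly when $\alpha$ and~$\beta$ lie in~$\{1\}^*$. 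In the two other quasi-commutations, every address besides~$\alpha$ contains a digit~$0$, so has weight~$0$, and the two sides agree. Only the pentagon $\ga\alpha^2 = \ga{\alpha 1} \, \ga\alpha \, \ga{\alpha 0}$ is nontrivial: the left side weighs $2\pi(\ga\alpha)$, and on the right $\pi(\ga{\alpha 0}) = 0$ while $\pi(\ga{\alpha 1}) = \pi(\ga\alpha)$, giving $2\pi(\ga\alpha)$ as well.

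For part~\ITEM2 I would separately analyze the two kinds of right-reversing step. A deletion $\ga\alpha\inv \ga\alpha \to \eps$ decreases $\Lgu{\cdot}$ by $2\pi(\ga\alpha) \geqslant 0$, so it cannot increase. A replacement step $\ga\alpha\inv \ga\beta \to \vv \uu\inv$ comes from a relation $\ga\alpha \vv = \ga\beta \uu$ of~$\RRR$; applying part~\ITEM1 to this positive relation yields $\pi(\ga\alpha) + \Lgu\vv = \pi(\ga\beta) + \Lgu\uu$, and the target inequality $\Lgu\vv + \Lgu\uu \leqslant \pi(\ga\alpha) + \pi(\ga\beta)$ rearranges to $\Lgu\uu \leqslant \pi(\ga\alpha)$.

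The main (though minor) obstacle is this bound $\Lgu\uu \leqslant \pi(\ga\alpha)$, since a priori the pentagon---the only relation in which the two sides have different lengths---could have violated it. The verification is however immediate: in every presentation of a relation of~$\RRR$ in the form $\ma \vv = \mb \uu$ with $\ma, \mb$ single letters, the word~$\uu$ is either a single generator~$\ga\alpha$, or a single generator whose address contains a~$0$ (forcing $\Lgu\uu = 0$), or the two-letter word $\ga\alpha \ga{\alpha 0}$ coming from the pentagon; in this last case $\pi(\ga{\alpha 0}) = 0$, so $\Lgu\uu = \pi(\ga\alpha) = \pi(\ma)$. Hence replacement in fact preserves $\Lgu{\cdot}$ exactly, and the inequality in~\ITEM2 is entirely due to the deletion step.
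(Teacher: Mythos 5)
Your proof is correct and follows essentially the same route as the paper's: a direct inspection of the five relation schemes of~$\RRR$, observing that only the pentagon is nontrivial for~\ITEM1 and that the inequality in~\ITEM2 comes solely from the deletion steps $\ga\alpha\inv\ga\alpha \to \eps$ with $\alpha \in \{1\}^*$, the replacement steps preserving $\Lgu{\cdot}$ exactly. Your reduction of the replacement-step check to part~\ITEM1 together with the single bound $\Lgu\uu \leqslant \pi(\ga\alpha)$ is merely a slightly more systematic organization of the same case analysis.
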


\begin{proof}
In both cases, it suffices to inspect the relations of~$\RRR$. In the case of the pentagon relations, we have $\Lgu{\ga\alpha^2} = \Lgu{\ga{\alpha0} \ga\alpha \ga{\alpha1}}$, both being~$2$ for $\alpha$ in~$\{1\}^*$, and $0$ otherwise. Similarly, for~\ITEM2, we find $\Lgu{\ga\alpha\inv \ga{\alpha1}} = \Lgu{\ga\alpha \ga{\alpha0}\inv \ga\alpha\inv}$, both being~$2$ for $\alpha$ in~$\{1\}^*$, and $0$ otherwise. The inequality comes from $\Lgu{\ga\alpha\inv \ga\alpha} = 2 > 0 = \Lgu{\eps}$ for $\alpha$ in~$\{1\}^*$.
\end{proof}

Note that the counterpart of Lemma~\ref{L:Invariant}\ITEM2 involving left-reversing is false: $\ga\ea \ga0\inv$ is left-$\RRR$-reversible to~$\ga\ea\inv \ga1 \ga\ea$, and we have $\Lgu{\ga\ea \ga0\inv} = 1 < 3 = \Lgu{\ga\ea\inv \ga1 \ga\ea}$.

\begin{proposition}
\label{P:Invariant}
For every~$\ff$ of~$F$ and every $\AAA$-word~$\ww$ representing~$\ff$, we have
\begin{equation}
\LGA\ff \geqslant \Lgu{\DLR(\ww)} + \Lgu{\NLR(\ww)}.
\end{equation}
\end{proposition}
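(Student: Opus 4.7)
The plan is to exhibit a short chain of inequalities of the form
$\LGA\ff \geq \Lg\ww \geq \Lgu\ww \geq \Lgu{\NR(\ww)} + \Lgu{\DR(\ww)} \geq \Lgu{\NLR(\ww)} + \Lgu{\DLR(\ww)}$
for a minimal length signed $\AAA$-word $\ww$ representing~$\ff$. Before running the chain, I would first record that, by Proposition~\ref{P:MinFrac}, the $\equivp_{\RRR}$-classes of $\NLR(\ww)$ and $\DLR(\ww)$ depend only on the element $\cl\ww$; combined with Lemma~\ref{L:Invariant}\ITEM1, this says that the integers $\Lgu{\NLR(\ww)}$ and $\Lgu{\DLR(\ww)}$ are intrinsic to~$\ff$. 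Consequently, the right-hand side of the inequality to be proved does not depend on the choice of representative, and it is enough to establish it for \emph{some} signed $\AAA$-word $\ww$ representing~$\ff$.

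For the main chain I would take $\ww$ of minimal length, so that $\Lg\ww = \LGA\ff$; the inequality $\Lg\ww \geq \Lgu\ww$ is then trivial, as $\Lgu\ww$ just counts a subset of the letters of~$\ww$. Next, I right-$\RRR$-reverse $\ww$ to obtain $\ww \rev_{\RRR} \NR(\ww) \, \DR(\ww)\inv$, and apply Lemma~\ref{L:Invariant}\ITEM2 together with the (obvious) additivity of $\Lgu{}$ on concatenations of signed words, which gives $\Lgu\ww \geq \Lgu{\NR(\ww)} + \Lgu{\DR(\ww)}$. Finally, Lemma~\ref{L:MinFrac} applied with $\ww' = \ww$ furnishes a positive $\AAA$-word $\uu$ satisfying $\NR(\ww) \equivp_{\RRR} \NLR(\ww) \, \uu$ and $\DR(\ww) \equivp_{\RRR} \DLR(\ww) \, \uu$; invoking Lemma~\ref{L:Invariant}\ITEM1 and additivity once more, one deduces
$\Lgu{\NR(\ww)} + \Lgu{\DR(\ww)} = \Lgu{\NLR(\ww)} + \Lgu{\DLR(\ww)} + 2 \Lgu\uu$,
which is at least $\Lgu{\NLR(\ww)} + \Lgu{\DLR(\ww)}$. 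Concatenating the inequalities yields the proposition.

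There is no real obstacle: the argument is essentially bookkeeping once the three prior results---the $\equivp_{\RRR}$-invariance of $\Lgu{}$, its monotonicity under right-reversing, and the comparison of $(\NR, \DR)$ with $(\NLR, \DLR)$ provided by Lemma~\ref{L:MinFrac}---are in hand. The one mildly subtle point, worth emphasising in the write-up, is that we never need to track what happens to $\Lgu{}$ under \emph{left}-reversing, where the counterpart of Lemma~\ref{L:Invariant}\ITEM2 fails; the $\equiv_{\RRR}$-invariance of the right-hand side frees us from any commitment to a specific reversing strategy, so only the well-behaved right-reversing actually appears in the computation.
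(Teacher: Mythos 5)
Your proposal is correct and follows essentially the same route as the paper: both arguments rest on the chain $\Lg\ww \geqslant \Lgu\ww \geqslant \Lgu{\NR(\ww)} + \Lgu{\DR(\ww)} \geqslant \Lgu{\NLR(\ww)} + \Lgu{\DLR(\ww)}$ via Lemma~\ref{L:Invariant} and Lemma~\ref{L:MinFrac}, together with the $\equiv_{\RRR}$-invariance of the right-hand side supplied by Proposition~\ref{P:MinFrac}. The only difference is organisational (you invoke the invariance first to reduce to a geodesic representative, while the paper proves the bound for an arbitrary representative and transfers it afterwards), which does not change the substance of the argument.
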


\begin{proof}
Put $\ell = \Lgu{\DLR(\ww)} + \Lgu{\NLR(\ww)}$. By definition, the word~$\ww$ is right-$\RRR$-reversible to the word~$\NR(\ww) \DR(\ww)\inv$, so, by Lemma~\ref{L:Invariant}\ITEM2, we have 
$$\Lgu\ww \geqslant \Lgu{\NR(\ww) \DR(\ww)\inv} = \Lgu{\NR(\ww)} + \Lgu{\DR(\ww)}.$$
Next, it follows from Proposition~\ref{P:MinFrac} that there exist a positive $\AAA$-word~$\uu$ satisfying $\NR(\ww) \equivp_{\RRR} \NLR(\ww) \, \uu$ and $\DR(\ww) \equivp_{\RRR} \DLR(\ww) \, \uu$. Then, by Lemma~\ref{L:Invariant}\ITEM1, we deduce $\Lgu{\NR(\ww)} + \Lgu{\DR(\ww)} \geqslant \ell$, whence $\Lg\ww \geqslant \Lgu\ww \geqslant \ell$.
 
Now assume $\ww' \equiv_{\RRR} \ww$. By the result above, we have $\Lg{\ww'} \geqslant \Lgu{\NLR(\ww')} + \Lgu{\DLR(\ww')}$. By Proposition~\ref{P:MinFrac}, we have $\NLR(\ww') \equivp_{\RRR} \NLR(\ww)$ and $\DLR(\ww') \equivp_{\RRR} \DLR(\ww)$, whence, by Lemma~\ref{L:Invariant}\ITEM1, $\Lgu{\NLR(\ww')} + \Lgu{\DLR(\ww')} = \ell$. Thus $\Lg{\ww'} \geqslant \ell$ holds for every word~$\ww'$ that represents~$\cl\ww$. By definition, this means that $\LGA\ff \geqslant \ell$ is true.
\end{proof}

Of course, a symmetric criterion involving~$\{0\}^*$ instead of~$\{1\}^*$ may be stated.

\begin{example}
Let $\ff = \ga1 \ga{11}\inv \ga\alpha \ga{11}\inv$. Left-reversing the word $\ga1 \ga{11}\inv \ga\alpha \ga{11}\inv$ yields the word $\ga{111}\inv \ga{1111}\inv \ga1 \ga\ea$, which in turn is right-reversible to $\ga1 \ga\ea \ga1\inv \ga{11}\inv$. We conclude that $\NLR(\ww)$ is $\ga1 \ga\ea$ and $\DLR(\ww)$ is $\ga{11} \ga1$. Then Proposition~\ref{P:Invariant} gives $\LGA\ff \geqslant 4$, that is, the word~$\ga1 \ga{11}\inv \ga\alpha \ga{11}\inv$ is geodesic.
\end{example}

By construction, the elements~$\Blue\TT$ involved in proof of Proposition~\ref{P:Connection} are represented by $\AAA$-words all letters of which are of the form~$\ga\alpha$ with $\alpha$ in~$\{1\}^*$, and, therefore, these words are geodesic. However, elements of this type are quite special, and the criterion of Proposition~\ref{P:Invariant} is rarely useful. In particular, it follows from the construction that every element of~$F$ can be represented by a word of the form~$\Blue\TT\inv \Blue{\TT'}$ but, even when the fraction is irreducible, that is, when the elements represented by~$\Blue\TT$ and~$\Blue{\TT'}$ admit no common left-divisor in~$\Fs$, it need not be geodesic, as shows the example of $\ga\ea^{-\pp} \ga{1^\pp} \ga{1^{\pp-1}} \pdots \ga1$, an irreducible fraction of length~$2\pp$ which is $\RRR$-equivalent to the positive--negative word $\ga\ea \ga0^{-\pp} \ga\ea\inv$ of length~$\pp+2$.


\subsection{The embedding of~$\Fs$ into~$F$}
\label{SS:Quasigeod}

More powerful results can be obtained using the covering relation of Subsection~\ref{SS:Covering}. As an example,  we shall now establish that the embedding of the monoid~$\Fs$ into the group~$F$ provided by Proposition~\ref{P:DualFraction} is not an isometry, that is, there exist elements of~$\Fs$ whose length as elements of~$F$ is smaller than their length as elements of~$\Fs$. This result is slightly surprising: clearly, fractions need not be geodesic in general, but we might expect that, when an element of~$F$ belongs to~$\Fs$, then its length inside~$\Fs$ equals its length inside~$F$. 

\index{quasi-isometry}
\begin{definition}
\label{D:Quasi}
If $(X, d), (X', d')$ are metric spaces, a map~$\ff : X \to X'$ is a \emph{quasi-isometry} if there exist $\CC \geqslant 1$ and $\CC' \geqslant 0$ such that $\frac1\CC d(\ff(x, y)) - \CC' \leqslant d'(\ff(x, y)) \leqslant \CC d(x, y) + \CC'$ holds for all~$x, y$ in~$X$.
\end{definition}

The result we shall prove is as follows.

\begin{proposition}
\label{P:NotQuasi}
For~$\ff$ in~$\Fs$, let $\LGAp\ff$ denotes the $\AAA$-length of~$\ff$ in~$\Fs$, that is, the length of a shortest positive $\AAA$-word representing~$\ff$. Then the embedding of~$\Fs$ into~$F$ is not a quasi-isometry of~$(\Fs, \LGAp{})$ into~$(F, \LGA{})$.
\end{proposition}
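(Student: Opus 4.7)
The plan is to produce, for each integer~$\kk \geqslant 1$, an element~$\ff_\kk$ of~$\Fs$ for which the ratio $\LGAp{\ff_\kk}/\LGA{\ff_\kk}$ grows without bound as $\kk \to \infty$; this refutes the quasi-isometry property directly from Definition~\ref{D:Quasi}.

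For the upper bound on $\LGA{\ff}$, one uses Proposition~\ref{P:Geometry}, which identifies $\LGA{\ff}$ with the rotation distance $\dist(\trm\ff, \trp\ff)$, together with the Sleator--Tarjan--Thurston estimate from Subsection~\ref{SS:Diameter}: inside~$\Tam\nn$, that rotation distance is at most~$2\nn - 6$. Hence if $\trm{\ff_\kk}$ and $\trp{\ff_\kk}$ lie in~$\Tam{\nn_\kk}$, we get $\LGA{\ff_\kk} \leqslant 2\nn_\kk$.

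For the lower bound on $\LGAp{\ff}$, I would use the covering relation of Subsection~\ref{SS:Covering}. By Lemma~\ref{L:NewCovering} with~$\rr=1$, each simple rotation $\ga\alpha$ in any positive chain realising~$\ff$ adds exactly $|\TT_0|$ new covering pairs, where $\TT_0$ is the left subtree at~$\alpha$. Summing along a length-$\ell$ positive chain from $\trm\ff$ to $\trp\ff$ yields
\[ |\COV{\trp\ff}| - |\COV{\trm\ff}| \;\leqslant\; \ell \cdot \MM, \]
where $\MM$ is the maximum left-subtree size among the rotations of that chain. Hence $\LGAp{\ff} \geqslant (|\COV{\trp\ff}| - |\COV{\trm\ff}|)/\MM(\ff)$, with $\MM(\ff)$ minimising $\MM$ over all shortest positive chains realising~$\ff$. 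The construction then consists in exhibiting size-$\nn_\kk$ tree pairs $(\TT_\kk, \TT'_\kk)$ in~$\Tam{\nn_\kk}$ with (a)~covering-deficit $|\COV{\TT'_\kk}| - |\COV{\TT_\kk}|$ growing quadratically in~$\nn_\kk$, and (b)~every shortest positive chain from $\TT_\kk$ to $\TT'_\kk$ using only rotations whose left subtrees have size bounded independently of~$\kk$. Under (a) and (b), the corresponding element~$\ff_\kk$ satisfies $\LGAp{\ff_\kk} = \Omega(\nn_\kk^2)$ and $\LGA{\ff_\kk} = O(\nn_\kk)$, so the ratio $\LGAp{\ff_\kk}/\LGA{\ff_\kk}$ diverges at least linearly in~$\nn_\kk$, proving the non-quasi-isometry.

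The main obstacle is producing pairs satisfying both~(a) and~(b). The naive extremal candidate $(\CbR{\nn_\kk}, \CbL{\nn_\kk})$ realises~(a) but catastrophically fails~(b): the shortest chain $\ga\ea^{\nn_\kk-1}$ uses a final rotation of left-subtree size $\nn_\kk - 1$, collapsing the ratio to~$O(1)$. A successful construction must build trees in which every bulk rotation with a large left subtree is either inapplicable at the required addresses or forces an exit from the Tamari interval~$[\TT_\kk, \TT'_\kk]$. This can be carried out inductively, exploiting the characterisation of~$\leT$ as inclusion of covering relations (Corollary~\ref{C:Positive}\ITEM4) to control every competing positive chain between the chosen endpoints; once the obstructions are in place, the two bounds of the previous paragraphs combine into the desired super-linear separation.
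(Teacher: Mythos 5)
Your overall strategy is the right one and matches the paper's: exhibit a sequence $\ff_\pp$ in $\Fs$ with $\LGA{\ff_\pp} = O(\nn_\pp)$ and $\LGAp{\ff_\pp} = \Omega(\nn_\pp^2)$, getting the linear upper bound from the tree sizes and the quadratic lower bound from the covering relation. (Your use of the Sleator--Tarjan--Thurston diameter bound for the upper estimate is a legitimate, if cruder, substitute for the paper's explicit reversing computation, which produces a signed word of length $3\pp+1$ for trees of size $2\pp+2$.) But the proof as written stops exactly where the work begins: you never produce the pairs $(\TT_\kk, \TT'_\kk)$, and the existence of pairs satisfying your conditions (a) and (b) is the entire content of the proposition. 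The paper's witnesses are the zigzag trees $\TT_\pp = \Sp{(10)^\pp1}$ and $\TT'_\pp = \Sp{0^{\pp+1}1^\pp}$, joined by the positive word $\uu_\pp = \gah{(10)^\pp}1 \pdots \gah{10}\pp\,\gah\ea{\pp+1}$ of length $(\pp+1)(\pp+2)/2$.

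More seriously, the lower-bound mechanism you propose is not the one that works, and I do not see how to repair it as stated. Your inequality $\ell \geqslant (\lvert\COV{\TT'}\rvert - \lvert\COV\TT\rvert)/\MM$ requires a uniform bound~$\MM$ on the left-subtree sizes occurring in \emph{every} competing positive chain; a single rotation whose left subtree has size~$\Theta(\nn)$ already creates $\Theta(\nn)$ new covering pairs and collapses your bound to the trivial~$\Omega(\nn)$, and nothing in your condition~(b) explains how such rotations are excluded (they need not leave the interval $[\TT_\kk,\TT'_\kk]$ a priori, and restricting attention to \emph{shortest} chains begs the question). The paper's geodesy argument for~$\uu_\pp$ proceeds differently: it does not bound how many pairs a single step can add, but shows that two of the required pairs $(\jj,\ii)$ and $(\jj,\ii')$ with $\ii < \ii'$ cannot be created by the \emph{same} step unless $\ii' \COV{}\ii$ holds at that moment, which is impossible because coverings only grow along a positive chain and $\ii' \COV{} \ii$ fails in the target tree~$\TT'_\pp$. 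Applying this per-pair separation to the $1+2+\pdots+(\pp+1)$ new coverings yields the quadratic lower bound; without that argument, or the explicit trees it is tailored to, your outline remains a plan rather than a proof.
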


In order to establish Proposition~\ref{P:NotQuasi}, it is enough to exhibit a sequence of elements~$\ff_\pp$ of~$\Fs$ satisfying $\LGA{\ff_\pp} = o(\LGAp{\ff_\pp})$. This is what the next result provides.

\begin{lemma}
\label{L:Length}
For every $\pp \geqslant 1$, let $\uu_\pp$ be the $\AAAh$-word
$$\gah{(10)^\pp}1 \, \gah{(10)^{\pp-1}}2 \pdots \gah{10}\pp \, \gah\ea{\pp+1}.$$
Then, for every~$\pp$, we have $\LGA{\cl{\uu_\pp}} \leqslant 3\pp+1$ and $\LGAp{\cl{\uu_\pp}} = (\pp+1)(\pp+2)/2$.
\end{lemma}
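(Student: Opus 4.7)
The equality $\LGAp(\cl{\uu_\pp}) = (\pp+1)(\pp+2)/2$ and the inequality $\LGA(\cl{\uu_\pp}) \leq 3\pp+1$ will be proved separately. The upper bound $\LGAp \leq (\pp+1)(\pp+2)/2$ is immediate, since by definition each $\gah{(10)^{\pp-k}}{k+1}$ is a product of $k+1$ generators from~$\AAA$, so $\uu_\pp$ expands into a positive $\AAA$-word of length $\sum_{k=0}^\pp (k+1) = (\pp+1)(\pp+2)/2$.

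For the matching lower bound, the plan is to compute $\trm{\cl{\uu_\pp}}$ and $\trp{\cl{\uu_\pp}}$ explicitly. By induction on~$\pp$, the source is the ``zigzag'' tree $M_\pp$ recursively defined by $M_0 = \et(\et\et)$ and $M_\pp = \et \OP (M_{\pp-1} \OP \et)$, and $\trp{\cl{\uu_\pp}}$ has a similar recursive description. Tracing the rotations of $\uu_\pp$ through $M_\pp$ step by step, at every stage the left fragment $\TT_0$ of the rotated subtree is a single leaf~$\et$, so Lemma~\ref{L:NewCovering} produces exactly one new covering pair per rotation and yields $|\COV{\trp{\cl{\uu_\pp}}}| - |\COV{\trm{\cl{\uu_\pp}}}| = (\pp+1)(\pp+2)/2$. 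Since Lemma~\ref{L:NewCovering} implies that the covering relation only grows under left-rotations, the same increment is accumulated along \emph{any} positive sequence of rotations between these two trees; combined with the ``at least one new pair per rotation'' bound, the lower bound follows from a shape invariant for the Tamari interval $[M_\pp, \trp{\cl{\uu_\pp}}]$, preserved along length-minimizing sequences, that forbids rotations with $|\TT_0|>1$ — this invariant being enforced by the recursive ``minimal'' shape of~$M_\pp$ together with Corollary~\ref{C:Positive}\ITEM4.

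For the upper bound $\LGA(\cl{\uu_\pp}) \leq 3\pp+1$, I would exhibit an explicit short signed $\AAA$-word representing $\cl{\uu_\pp}$. Factoring $\uu_\pp = \vv_\pp \cdot \gah\ea{\pp+1}$ with $\vv_\pp = \gah{(10)^\pp}1 \pdots \gah{10}\pp$, the terminal block already has $\AAA$-length $\pp+1$. Using the pentagon relation and the quasi-commutations of Lemma~\ref{L:Pres3}, I would rewrite $\cl{\vv_\pp}$ as a signed $\AAA$-word of length $2\pp$: intuitively, the cascade of rotations along the zigzag that constitutes $\vv_\pp$ can be shortcut, once right-rotations are admitted, into $\pp$ positive followed by $\pp$ negative atomic moves (each level of the zig-zag is collapsed by one right-rotation and compensated by one left-rotation). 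Concatenating with the terminal $\gah\ea{\pp+1}$ yields a signed $\AAA$-word of length $2\pp+(\pp+1) = 3\pp+1$.

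The chief technical obstacle is the lower bound on $\LGAp$: in contrast to the two upper bounds, which are constructive, ruling out positive sequences that might temporarily wander through configurations admitting a rotation with $|\TT_0|>1$ requires careful control of the shape of all trees reachable along paths inside the Tamari interval $[M_\pp, \trp{\cl{\uu_\pp}}]$, and this is where the covering-relation machinery of Subsection~\ref{SS:Covering} is essential.
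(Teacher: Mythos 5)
Your overall architecture is the paper's: the two upper bounds are constructive and the crux is the lower bound $\LGAp{\cl{\uu_\pp}} \geqslant (\pp+1)(\pp+2)/2$, obtained from the covering relation. But your lower-bound argument has a genuine gap, and as stated it points in the wrong direction. By Lemma~\ref{L:NewCovering}, an atomic rotation $\ga\alpha$ applied to~$\TT$ adds exactly $L$ new covering pairs, where $L\geqslant1$ is the number of leaves of the $\alpha0$-subtree; since coverings only grow along positive words and the set of pairs to be added is fixed by the endpoints, the estimate ``at least one new pair per rotation'' yields only that every positive path has length \emph{at most} the number of new pairs --- an upper bound, not a lower one. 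What you actually need is ``at most one new pair per rotation along a minimizing path'', which is precisely your unproved ``shape invariant forbidding $|\TT_0|>1$''. You offer no argument for it: Corollary~\ref{C:Positive}\ITEM4 only compares the covering relations of the two endpoints and does not prevent an intermediate tree of a competing positive path from admitting, and using, a rotation whose left fragment has several leaves; and establishing the invariant directly is essentially equivalent to the conclusion you are after, so the reasoning is circular as it stands.

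The paper's Lemma~\ref{L:Lower} closes exactly this gap by a finer count. Every atomic rotation has a single \emph{critical index}~$\jj$ (the unique~$\jj$ with $\add\TT\jj\in\alpha10\{1\}^*$), and all pairs it creates share that first coordinate; grouping the pairs of $\COV{\TT'_\pp}\setminus\COV{\TT_\pp}$ by first coordinate, one shows that a single step can create both $(\jj,\ii)$ and $(\jj,\ii')$ with $\ii<\ii'<\jj$ only if $\ii'$ covers~$\ii$ in the tree at that moment --- which is impossible because $\ii'$ does not cover~$\ii$ in the final tree~$\TT'_\pp$ and coverings never decrease along positive words. Hence the $1+2+\cdots+(\pp+1)$ new pairs require that many separate steps. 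This monotonicity-of-coverings trick, applied pairwise inside each critical-index class, is the idea your proposal is missing. A secondary remark: for $\LGA{\cl{\uu_\pp}}\leqslant3\pp+1$ the paper writes down the explicit alternating signed word $\ww_\pp$ of length~$2\pp+1$ and checks $\ww_\pp\rev_{\RRR}\uu_\pp\,\gah\ea\pp\inv$, so that $\ww_\pp\,\gah\ea\pp$ is the desired word of length~$3\pp+1$; your length-$2\pp$ rewriting of the prefix $\vv_\pp$ is only asserted, so this half also still needs its explicit witness.
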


Establishing Lemma~\ref{L:Length} requires to prove two inequalities, namely an upper bound on~$\LGA{\cl{\uu_\pp}}$ and a lower bound on~$\LGAp{\cl{\uu_\pp}}$. As always, the first task is easier than the second.

\begin{lemma}
\label{L:Upper}
For every~$\pp \geqslant 1$, we have $\LGA{\cl{\uu_\pp}} \leqslant 3\pp+1$.
\end{lemma}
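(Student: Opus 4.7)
The goal is to exhibit a signed $\AAA$-word of length at most $3\pp+1$ representing the group element $\cl{\uu_\pp}$. The argument is inductive on~$\pp$, guided by the factorization $\uu_\pp = \sh{10}(\uu_{\pp-1})\cdot\gah\ea{\pp+1}$ (an immediate consequence of the definition together with the compatibility of $\sh{10}$ with multiplication), combined with the extended pentagon identity
$$\gah\ea{\pp+1} \equivp \gah1\pp\inv \cdot \ga\ea \cdot \gah\ea\pp,$$
which is the $\alpha = \ea$, $\rr = 1$, $\ms = \pp$, $\ii = 0$ instance of the last case of~\eqref{E:Closure} (equivalently, the extended pentagon pictured in Figure~\ref{F:ExtPentagon}).

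For $\pp \leq 3$ the bound is trivial: the positive word $\uu_\pp$ itself already has $\AAA$-length $(\pp+1)(\pp+2)/2 \leq 3\pp+1$. As a warm-up and to fix the base of the induction, for $\pp = 1$ the quasi-commutation $\ga{10}\ga\ea = \ga\ea\ga{01}$ (relation~\eqref{E:QComm} with $\alpha = \ea$) and the pentagon-derived identity $\ga{01}\ga0 = \ga0^2\ga{00}\inv$ yield the signed expression $\uu_1 \equiv \ga\ea\ga0^2\ga{00}\inv$ of length~$4$.

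For the inductive step with $\pp \geq 4$, substitute the extended pentagon to get
$$\uu_\pp \equiv \sh{10}(\uu_{\pp-1}) \cdot \gah1\pp\inv \cdot \ga\ea \cdot \gah\ea\pp.$$
The crux is a telescoping cancellation between the tail of $\sh{10}(\uu_{\pp-1})$ and the negative block $\gah1\pp\inv$. Relation~(c) of~\eqref{E:QComm} with $\alpha = 1$ yields the migration identity $\ga{100\gamma}\ga1\inv \equivp \ga1\inv\ga{10\gamma}$, valid for every address~$\gamma$. Each of the $\pp$ negative letters $\ga{10^\kk}\inv$ occurring in $\gah1\pp\inv$ thus migrates leftward past positive factors of $\sh{10}(\uu_{\pp-1})$, stripping one zero from each adjacent address. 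Combined with the inductive bound on $\cl{\uu_{\pp-1}}$ and the length~$\pp+1$ of the residual $\ga\ea\cdot\gah\ea\pp$ block, this systematic simplification yields a signed $\AAA$-word representing $\cl{\uu_\pp}$ of length at most $3\pp+1$.

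The main obstacle is making the telescoping precise: the induction hypothesis must be strengthened to track not merely $\LGA{\cl{\uu_{\pp-1}}}$ but also the trailing shape of a specific short signed representation of $\cl{\uu_{\pp-1}}$, so that all $\pp$ successive migrations actually apply and contribute the expected per-step saving. The patterns visible in small cases ($\pp = 1$: $\ga\ea\ga0^2\ga{00}\inv$; $\pp = 2$: $\ga\ea\ga{01}\ga{0101}\ga0\ga{00}^2\ga{000}\inv$) suggest an explicit candidate of the form $\ga\ea \cdot W_\pp$ with $W_\pp$ displaying a characteristic alternation of factors $\ga{(01)^\kk}$ on the left and $\ga{0^\kk}$ on the right, ending with a single negative letter. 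Verifying that this candidate represents $\cl{\uu_\pp}$ then reduces to a direct (but careful) sequence of applications of the quasi-commutation and pentagon relations.
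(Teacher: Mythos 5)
Your general strategy---produce a signed $\AAA$-word of length $3\pp+1$ equivalent to $\uu_\pp$---is the right one, and your base case $\uu_1 \equiv_{\RRR} \ga\ea\ga0^2\ga{00}\inv$ is correct. But the inductive step, which is the entire content of the lemma, is not carried out, and the length accounting you describe cannot close. Granting the (correct) factorization $\uu_\pp = \sh{10}(\uu_{\pp-1})\,\gah\ea{\pp+1}$ and the (correct) identity $\gah\ea{\pp+1} = \gah1\pp\inv\,\ga\ea\,\gah\ea\pp$ in~$F$, the resulting word has length $\LGA{\cl{\uu_{\pp-1}}} + 2\pp+1$. Even in the best case where the whole negative block $\gah1\pp\inv$ is absorbed into the shifted inductive representative at no cost---and the quasi-commutation ``migrations'' you invoke are length-preserving, so by themselves they absorb nothing: genuine cancellations $\ga\alpha\ga\alpha\inv\to\eps$ or pentagon contractions are required---the inductive bound $\LGA{\cl{\uu_{\pp-1}}}\leqslant 3\pp-2$ only yields $(3\pp-2)+(\pp+1)=4\pp-1$, which exceeds $3\pp+1$ as soon as $\pp\geqslant 3$. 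So the ``strengthened induction hypothesis'' you defer to is not a technical refinement but the missing heart of the argument: without an explicit candidate word for general~$\pp$ and a verified equivalence, there is no proof (your pattern $\ga\ea W_\pp$ is only exhibited for $\pp=1,2$). There is also a local imprecision: the letters of $\sh{10}(\uu_{\pp-1})$ have addresses beginning with $101$ or $10^\kk$ for various~$\kk$, so their interaction with the letters $\ga{10^\kk}\inv$ of $\gah1\pp\inv$ involves all three types of relations in~\eqref{E:QComm}, not the single relation (c) with $\alpha=1$ that you cite.

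For comparison, the paper's proof is direct rather than inductive on~$\cl{\uu_{\pp-1}}$: it writes down the explicit alternating signed word $\ww_\pp = \ga{(10)^\pp}\,\ga{(10)^{\pp-1}1}\inv\,\ga{(10)^{\pp-1}}\,\ga{(10)^{\pp-2}1}\inv\cdots\ga{101}\inv\,\ga{10}\,\ga1\inv\,\ga\ea$ of length $2\pp+1$ and checks, by an easy induction on the right-reversing diagram using the closed formulas of~\eqref{E:Closure}, that $\ww_\pp \rev_{\RRR} \uu_\pp\,\gah\ea\pp\inv$. Since reversing preserves $\equiv_{\RRR}$, this gives $\cl{\uu_\pp} = \cl{\ww_\pp\,\gah\ea\pp}$, and $\ww_\pp\,\gah\ea\pp$ has $\AAA$-length $(2\pp+1)+\pp = 3\pp+1$. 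That route has the advantage that the short word is explicit from the outset and that right-reversing is deterministic (for each pair $\ga\alpha, \ga\beta$ there is exactly one relation $\ga\alpha\cdots=\ga\beta\cdots$ in~$\RRR$), so the verification is a mechanical computation rather than a search for the right sequence of relation applications. If you wish to keep your framework, the cleanest repair is to take $\ww_\pp\,\gah\ea\pp$ as your explicit candidate and verify the equivalence by reversing.
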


\begin{proof}
For~$\pp \geqslant 1$, let $\ww_\pp = \ga{(10)^\pp} \, \ga{(10)^{\pp-1}1}\inv \, \ga{(10)^{\pp-1}} \, \ga{(10)^{\pp-2}1}\inv \, \ga{(10)^{\pp-2}} \pdots \ga{101}\inv \, \ga{10} \, \ga1\inv \ga\ea$. Then $\ww_\pp$ is a signed $\AAA$-word of length~$2\pp+1$. An easy induction using the formulas of~\eqref{E:Closure} shows that $\ww_\pp$ is right-reversible to the positive--negative word~$\uu_\pp \gah\ea\pp\inv$, see Figure~\ref{F:Upper}. As the latter word has $\AAA$-length~$3\pp+1$, the result follows.
\end{proof}

\begin{figure}[htb]
\begin{picture}(72,32)(3,0)
\put(-45,-47){\includegraphics{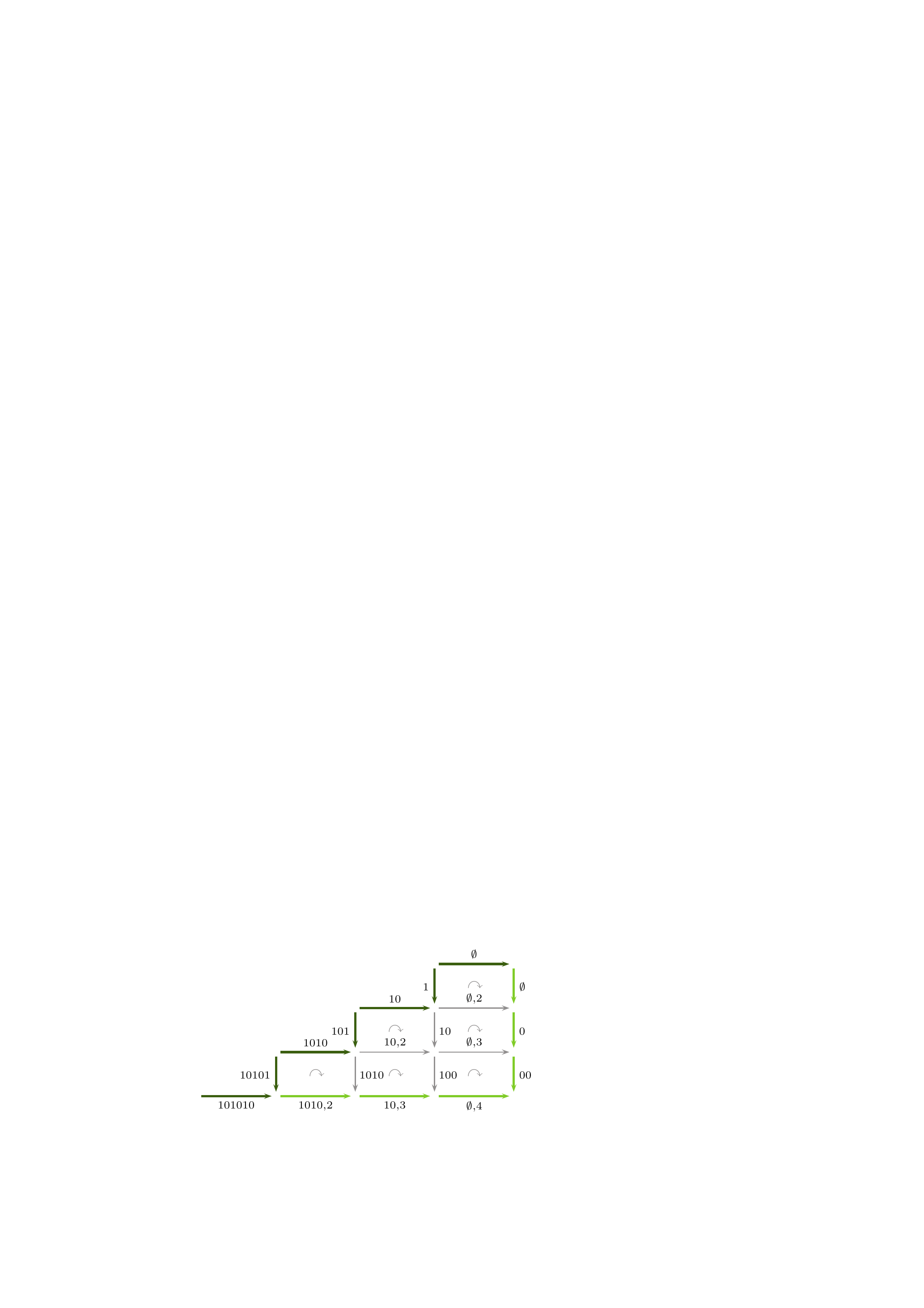}}
\end{picture}
\caption{\sf\smaller Right-reversing the signed $\AAA$-word~$\ww_\pp$, here with $\pp = 3$. The numerator is the word~$\uu_\pp$ of Lemma~\ref{L:Length}, whereas the denominator is the length~$\pp$ word~$\gah\ea\pp$ (as usual, the letters~``$a$'' have been skipped).}
\label{F:Upper}
\end{figure}

In order to complete the proof of Lemma~\ref{L:Length}, it remains to prove that $\LGAp{\cl{\uu_\pp}}$ is $\pp(\pp+3)/2$. As the length of~$\uu_\pp$ is~$\pp(\pp+3)/2$, the point is to prove:

\begin{lemma}
\label{L:Lower}
For every~$\pp \geqslant 1$, the word~$\uu_\pp$ is geodesic in~$\Fs$.
\end{lemma}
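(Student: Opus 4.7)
The plan is to prove $\LGAp{\cl{\uu_\pp}} \geq (\pp+1)(\pp+2)/2$ by analyzing, via the covering relation of Subsection~\ref{SS:Covering}, the rotation sequences between the minimal defining trees $\TT_\pp := \trm{\cl{\uu_\pp}}$ and $\TT'_\pp := \trp{\cl{\uu_\pp}}$.

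First I would describe $\TT_\pp$ and $\TT'_\pp$ explicitly. Since the letters of $\uu_\pp$ are the $(\pp+1)(\pp+2)/2$ generators $\ga{(10)^k 0^j}$ with $0 \leqslant k \leqslant \pp$ and $0 \leqslant j \leqslant \pp-k$, both trees have a zigzag shape in which every internal node has at least one leaf child. Using the identity $\MZ\TT = |\COV\TT|$, which follows by induction on the root-decomposition $\TT = \TT_0 \OP \TT_1$ (the only cross-covers pair the single rightmost leaf of $\TT_1$ with each leaf of $\TT_0$, contributing $|\TT_0|_{\rm leaves}$ on top of $|\COV{\TT_0}| + |\COV{\TT_1}|$), one obtains
\[
|\COV{\TT'_\pp} \setminus \COV{\TT_\pp}| \;=\; \wit(\cl{\uu_\pp}) \;=\; (\pp+1)(\pp+2)/2,
\]
with one new covering pair contributed by each generator of $\uu_\pp$.

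For any positive $\AAA$-word $\vv$ with $\cl\vv = \cl{\uu_\pp}$, Corollary~\ref{C:Positive} encodes $\vv$ as a rotation sequence of length $|\vv|$ from $\TT_\pp$ to $\TT'_\pp$; Lemma~\ref{L:NewCovering} shows that the rotation $\ga\alpha$ at an intermediate tree $\widetilde\TT$ adds exactly $|\widetilde\TT_0^\alpha|_{\rm leaves}$ new covering pairs. Summing across the sequence gives $\sum_i |\widetilde\TT_0^{\alpha_i}|_{\rm leaves} = (\pp+1)(\pp+2)/2$, and the lower bound $|\vv| \geqslant (\pp+1)(\pp+2)/2$ will follow once each summand is shown to equal~$1$: any rotation with $|\widetilde\TT_0^\alpha|_{\rm leaves} \geqslant 2$ would add multiple new covering pairs sharing a common $\jj$-coordinate, and the monotonicity $\COV{\widetilde\TT} \subseteq \COV{\TT'_\pp}$ combined with the explicit description of $\COV{\TT'_\pp} \setminus \COV{\TT_\pp}$ (in which the new pairs form a specific ``thin'' pattern readable from the zigzag structure) rules this out.

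The main obstacle is verifying this structural compatibility at every intermediate tree that can arise. A workable route is induction on~$\pp$: the outermost factor $\gah{(10)^\pp}1 = \ga{(10)^\pp}$ of~$\uu_\pp$ produces a distinguished covering pair whose $\jj$-coordinate no other generator in~$\uu_\pp$ can reach (it corresponds to a specific leaf deep inside the zigzag spine). Any positive $\vv \equivp \uu_\pp$ must therefore contain a letter responsible for this pair, and splitting $\vv$ at that letter reduces the problem, after handling the commutations and quasi-commutations that may push the isolated letter to the left, to a positive word representing a shifted copy of $\cl{\uu_{\pp-1}}$; the inductive hypothesis then supplies the required bound.
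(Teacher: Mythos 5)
You have the right framework, and it is essentially the paper's: pass to the covering relation, count the pairs in $\COV{\TT'_\pp}\setminus\COV{\TT_\pp}$ (your identity that $\MZ\TT$ equals the number of covering pairs of~$\TT$ is correct and does give the count $(\pp+1)(\pp+2)/2$), note that a rotation $\ga\alpha$ performed on an intermediate tree creates exactly as many new covering pairs as the $\alpha0$-subtree has leaves, all sharing the same first coordinate~$\jj$, and reduce the lemma to showing that no rotation occurring in a positive word from~$\TT_\pp$ to~$\TT'_\pp$ can create more than one of the required pairs. But that reduction is the easy half, and the proposal never actually establishes the key point. Your first argument rests on a false premise: the set $\COV{\TT'_\pp}\setminus\COV{\TT_\pp}$ is not ``thin''. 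For $k = 1 \wdots \pp+1$ the index $2\pp+2-k$ newly covers all of $0 \wdots k-1$, so up to $\pp+1$ of the required pairs share the same first coordinate, and a single rotation creating several of them at once is precisely the configuration that must be excluded; nothing in the shape of the target set alone does so.

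Your fallback induction on~$\pp$ cannot work as stated either: deleting the one letter of~$\vv$ responsible for the pair created by $\ga{(10)^\pp}$ lowers the length by~$1$, whereas $\Lg{\uu_\pp}-\Lg{\uu_{\pp-1}}=\pp+1$, so the residual element is not a shifted copy of~$\cl{\uu_{\pp-1}}$, and the inductive hypothesis would only yield the bound $1+\pp(\pp+1)/2$, short of the target by~$\pp$. What is missing is the separation argument the paper supplies: if a single rotation creates both $(\jj,\ii)$ and $(\jj,\ii+1)$, then $\ii+1$ must cover~$\ii$ in the intermediate tree where that rotation is applied; since coverings only accumulate along positive words, $\ii+1$ would then cover~$\ii$ in~$\TT'_\pp$, which fails for the relevant values of~$\ii$. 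Hence the $k$ new pairs with first coordinate $2\pp+2-k$ require $k$ distinct rotations, and summing over~$k$ gives $\LGAp{\cl{\uu_\pp}}\geqslant(\pp+1)(\pp+2)/2$. Without this (or an equivalent) argument the proof is incomplete.
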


\begin{proof}
With the notation of Conjecture~\ref{C:Distance}, let $\TT_\pp$ be the size~$2\pp+2$ tree $\Sp{(10)^\pp1}$. Then $\TT_\pp \act \uu_\pp$ is defined and equal to~$\TT'_\pp = \Sp{0^{\pp+1}1^\pp}$, see Figure~\ref{F:Lower}. 

We look at the covering relations that are satisfied in~$\TT'_\pp$ but not in~$\TT_\pp$. First, $2\pp+1$ covers~$0$ in~$\TT'_\pp$ but not in~$\TT_\pp$. We deduce that every $\AAA$-word transforming~$\TT_\pp$ to~$\TT'_\pp$ contains at least one step with critical index~$2\pp+1$, the critical index of~$\ga\alpha$ being defined as the unique~$\jj$ such that applying~$\ga\alpha$ adds at least one relation~$\jj \COV{} \ii$, that is, the unique~$\jj$ such that $\add\TT\jj$ lies in~$\alpha10\{1\}^*$.

Now, here is the point: $2\pp$ covers~$0$ and~$1$ in~$\TT'_\pp$, but not in~$\TT_\pp$. We deduce that every $\AAA$-word transforming~$\TT_\pp$ to~$\TT'_\pp$ contains at least one step with critical index~$2\pp$. But we claim that every such word must actually contain at least \emph{two} such steps, that is, one step cannot be responsible for the two new covering relations. Indeed, $2\pp$ covers~$2$ in~$\TT_\pp$, but does not cover~$1$. The only situation when a step adding $2\pp \COV{} 1$ in a tree~$\TT$ can simultaneously add~$2\pp \COV{} 0$ is when $1$ covers~$0$ in~$\TT$. But this cannot happen here, because we are considering positive $\AAA$-words only, so any possible covering satisfied at an intermediate step must remain in the final tree~$\TT'_\pp$. As $1$ does not cover~$0$ in~$\TT'_\pp$, it is impossible that $1$ covers~$0$ at any intermediate step. Thus two steps are needed to ensure $2\pp \COV{} 1$ and $2\pp \COV{} 0$.

The sequel is similar. In~$\TT'_\pp$, the number~$2\pp-1$ covers~$0, 1$, and~$2$, whereas, in~$\TT_\pp$, it covers only~$3$. Then at least three steps are needed to ensure $2\pp-1 \COV{} 2$, $2\pp-1 \COV{} 1$, and $2\pp-1 \COV{} 0$. Indeed, a step can cause $2\pp-1$ to simultaneously cover~$1$ and~$2$ only if $2$ covers~$1$ at the involved step, which cannot happen as $2$ does not cover~$1$ in~$\TT'_\pp$; the argument is then the same for~$1$ and~$0$ once we know that $2$ does not cover~$1$.

Similarly, $4$~steps are needed to force $2\pp-2$ to cover~$3$ to~$0$, then $5$~steps to force $2\pp-3$ to cover~$4$ to~$0$, etc., and $\pp+1$~steps to force $\pp+1$ to cover~$\pp$ to~$0$. We conclude that $1 + 2 + \cdots + (\pp+1)$, that is, $(\pp+1)(\pp+2)/2$, steps at least are needed to go from~$\TT_\pp$ to~$\TT'_\pp$. Hence $\uu_\pp$ is geodesic among positive $\AAA$-words.
\end{proof}

\begin{figure}[htb]
\begin{picture}(75,28)(0,0)
\put(-43,-45){\includegraphics{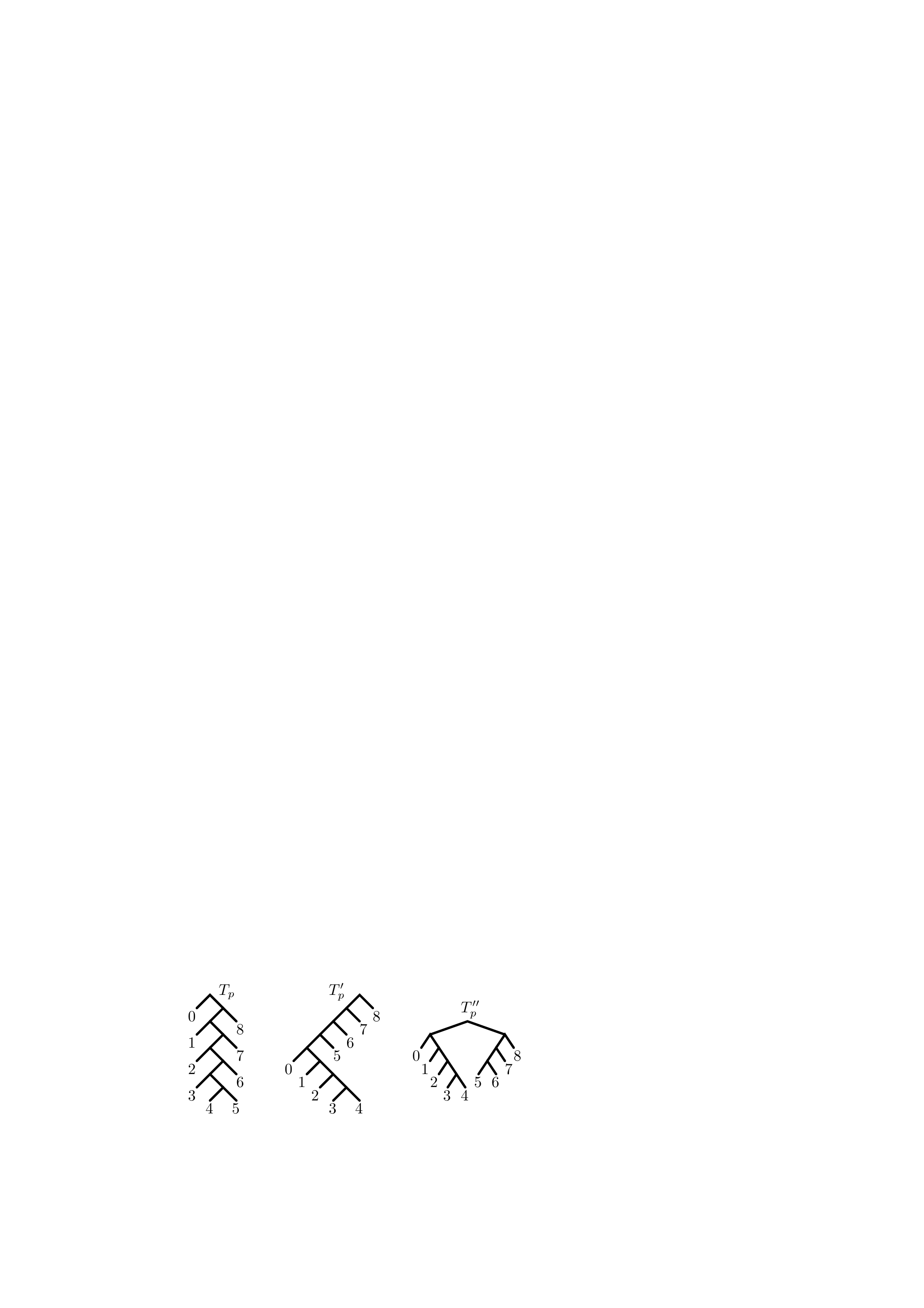}}
\end{picture}
\caption{\sf\smaller The trees of Lemma~\ref{L:Lower} with the leaf labelling involved in the covering relation, here in the case $\pp = 3$. The tree~$\TT''_\pp$ is the intermediate tree~$\TT_\pp \act \ww_\pp$ obtained after $2\pp+1$~steps: applying~$\gah\ea\pp$ to~$\TT''_\pp$ leads to~$\TT'_\pp$ in a total of $3\pp+1$~rotations.}
\label{F:Lower}
\end{figure}

Thus the proof of Proposition~\ref{P:NotQuasi} is complete. 

Finally, we translate the above arguments into the language of Tamari lattices.

\index{positive!distance}
\begin{definition}
For~$\TT, \TT'$ in~$\Tam\nn$ satisfying $\TT \leT \TT'$, the \emph{positive distance}~$\distp(\TT, \TT')$ from~$\TT$ to~$\TT'$ is the minimal number of left-rotations needed to transform~$\TT$ into~$\TT'$.
\end{definition}

\begin{proposition}
\label{P:Chains}
For every even~$\nn$, there exist $\TT, \TT'$ in~$\Tam\nn$ satisfying 
\begin{equation}
\label{E:Ratio}
\dist(\TT, \TT') \leqslant \frac{12}{\nn} \, \distp(\TT, \TT').
\end{equation}
\end{proposition}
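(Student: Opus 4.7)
The plan is to translate the non-quasi-isometry result of Proposition~\ref{P:NotQuasi} into the Tamari lattice by directly exploiting the explicit pair~$(\TT_\pp, \TT'_\pp)$ of Lemma~\ref{L:Lower} together with its associated word~$\uu_\pp$ of Lemma~\ref{L:Length}. Given an even integer~$\nn$, I would pick~$\pp$ so that~$\TT_\pp$ and~$\TT'_\pp$ lie in~$\Tam\nn$, adjusting sizes if necessary by iterating the direct-system embedding~$\iota_\nn$ of Subsection~\ref{SS:Expressions}, which preserves both~$\dist$ and~$\distp$ since appending a rightmost trivial leaf neither creates nor destroys rotations useful between the two trees; for the finitely many values of~$\nn$ too small for the construction, the trivial choice~$\TT = \TT'$ makes both sides of the desired inequality vanish.

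With $\TT = \TT_\pp$ and $\TT' = \TT'_\pp$, Lemma~\ref{L:Lower} gives $\TT' = \TT \act \uu_\pp$ and asserts that~$\uu_\pp$ is geodesic among positive $\AAA$-words, so by Corollary~\ref{C:Positive} we have $\TT \leT \TT'$ and
\[
\distp(\TT, \TT') = \LGAp{\cl{\uu_\pp}} = (\pp+1)(\pp+2)/2.
\]
On the other hand, Lemma~\ref{L:Upper} produces a signed $\AAA$-word of length~$3\pp+1$ representing the same element~$\cl{\uu_\pp}$ of~$F$, and the lemma preceding Lemma~\ref{L:Invariant} identifies this length with the rotation distance, yielding
\[
\dist(\TT, \TT') = \LGA{\cl{\uu_\pp}} \leq 3\pp+1.
\]

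Combining the two estimates gives
\[
\frac{\dist(\TT, \TT')}{\distp(\TT, \TT')} \leq \frac{2(3\pp+1)}{(\pp+1)(\pp+2)} \leq \frac{6}{\pp+1} = \frac{12}{\nn},
\]
where the middle inequality reduces to $2(3\pp+1) \leq 6(\pp+2)$, i.e.\ $2 \leq 12$. The essential content -- constructing~$\uu_\pp$, proving it geodesic in~$\Fs$, and exhibiting a much shorter signed representative in~$F$ -- has already been carried out in Lemmas~\ref{L:Lower} and~\ref{L:Upper}, so the only remaining obstacle is the routine bookkeeping needed to reconcile the size of the Lemma~\ref{L:Lower} trees with an arbitrary even~$\nn$ and to verify that iterating~$\iota_\nn$ changes neither~$\dist$ nor~$\distp$.
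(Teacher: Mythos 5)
Your proposal is correct and follows essentially the same route as the paper: write $\nn = 2\pp+2$, take the trees $\TT_\pp, \TT'_\pp$ of Lemma~\ref{L:Lower} (which already have size exactly~$\nn$, so the detour through iterated embeddings~$\iota_\nn$ is unnecessary---and your claim that $\iota_\nn$ preserves~$\dist$ would in any case need justification, since a geodesic in~$\Tam{\nn+1}$ could a priori leave the image of~$\iota_\nn$), and combine the bounds $\dist(\TT_\pp,\TT'_\pp) \leqslant 3\pp+1$ and $\distp(\TT_\pp,\TT'_\pp) = (\pp+1)(\pp+2)/2$ supplied by Lemmas~\ref{L:Upper} and~\ref{L:Lower}. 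Your final arithmetic agrees with the paper's sharper intermediate bound $\frac{12\nn-16}{\nn(\nn+2)} \leqslant \frac{12}{\nn}$.
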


\begin{proof}
Write $\nn = 2\pp+2$ and let $\TT_\pp, \TT'_\pp$ be the size~$\nn$ trees of the proof of Lemma~\ref{L:Lower}. Then we have $\dist(\TT_\pp, \TT'_\pp) \leqslant \nobreak 3\pp +\nobreak 1$ and $\distp(\TT_\pp, \TT'_\pp) = (\pp+1)(\pp+2)/2$, whence $\dist(\TT_\pp, \TT'_\pp) \leqslant \frac{12\nn-16}{\nn(\nn+2)}\, \distp(\TT_\pp, \TT'_\pp)$ in term of~$\nn$, and, a fortiori,~\eqref{E:Ratio}.
\end{proof}

\begin{corollary}
\label{C:Chains}
Chains are not geodesic in Tamari lattices; more precisely, for every~$\nn$, there exists a length~$\ell$ chain of~$\Tam\nn$ whose endpoints are at distance less than~$12\ell/\nn$.
\end{corollary}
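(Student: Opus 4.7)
The plan is to derive the corollary directly from Proposition~\ref{P:Chains}, the only new ingredient being the translation between sequences of rotations and saturated chains in the Tamari lattice. First I would observe that any sequence of $\ell$ successive left-rotations transforming~$\TT_0$ into~$\TT_\ell$ produces a saturated chain $\TT_0 \lT \TT_1 \lT \cdots \lT \TT_\ell$ of length~$\ell$, since every left-rotation is a covering relation in the Tamari order. In particular, for every comparable pair $\TT \leT \TT'$ there exists a saturated chain of length exactly~$\distp(\TT, \TT')$ with endpoints~$\TT$ and~$\TT'$.

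For even $\nn = 2\pp + 2$, I would then apply this to the trees~$\TT_\pp, \TT'_\pp$ introduced in the proof of Lemma~\ref{L:Lower}. Setting $\ell = \distp(\TT_\pp, \TT'_\pp) = (\pp+1)(\pp+2)/2$, we obtain a chain of length~$\ell$ in~$\Tam\nn$ whose endpoints are at distance at most~$3\pp+1$ by Lemma~\ref{L:Upper}. The ratio $\dist/\ell$ is then bounded by the quantity $(12\nn-16)/(\nn(\nn+2))$ already computed in the proof of Proposition~\ref{P:Chains}, which is strictly less than~$12/\nn$.

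To cover odd~$\nn$, I would transport the above picture from~$\Tam_{\nn-1}$ to~$\Tam_\nn$ through the embedding $\iota_{\nn-1}: \Tam_{\nn-1} \hookrightarrow \Tam_\nn$ with $\pp = (\nn-3)/2$. The key observation here is that every left-rotation performed in an embedded tree acts away from the grafted leaf and lifts to a left-rotation of the same address in~$\Tam_\nn$, so both the chain length and any upper bound on the distance between its endpoints transfer unchanged from~$\Tam_{\nn-1}$ to~$\Tam_\nn$. A short arithmetic check---namely that $(3\nn-7)/2 \leqslant 12 \cdot (\nn^2-1)/(8\nn)$ reduces to $28\nn \geqslant 12$---shows that the resulting ratio is still strictly less than~$12/\nn$. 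The very small values of~$\nn$ for which the construction degenerates are trivial, since $12\ell/\nn \geqslant \ell$ whenever $\nn \leqslant 12$. The only point deserving verification is the compatibility of rotations with~$\iota_{\nn-1}$, and this is immediate from the local nature of rotations together with the fact that $\iota_{\nn-1}$ modifies the skeleton only near the rightmost leaf.
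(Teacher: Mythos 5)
Your argument is correct and follows exactly the route the paper intends: the corollary is stated without proof as a direct translation of Proposition~\ref{P:Chains}, and your write-up supplies precisely that translation (a minimal positive rotation sequence from~$\TT_\pp$ to~$\TT'_\pp$ yields the length~$\ell=(\pp+1)(\pp+2)/2$ chain, while Lemma~\ref{L:Upper} bounds the distance by~$3\pp+1$). Your treatment of odd~$\nn$ via the embedding~$\iota_{\nn-1}$ --- which commutes with the action of~$F$, as noted in the proof of Proposition~\ref{P:Connection}, so both the chain and the distance bound transfer --- together with the trivial small cases, is a legitimate and welcome completion of a detail the paper silently skips.
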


We conclude with a few open questions.

\begin{question}
For~$\ww$ a positive $\AAA$-word, is the number~$\wit(\ww)$ of~\eqref{E:Wit} a least upper bound for the length of the words that are $\RRR$-equivalent to~$\ww$?
\end{question}

The answer is positive in the case of~$\ga\ea^\nn$. Indeed, we have $\lambda(\ga\ea^\nn) = \nn(\nn+1)/2$ and $\ga\ea^\nn \equivp_{\RRR} \gah{1^{\nn-1}}1 \, \gah{1^{\nn-2}}2 \pdots \gah1{\nn-1} \, \gah\ea\nn$. The general case is not known.

\begin{question}
Does the Polish normal form of Definition~\ref{D:PolishNF} satisfy some Fellow Traveler Property, that is, is the distance between the paths associated with the normal forms of elements~$\ff$ and~$\ff \ga\alpha^{\pm1}$ uniformly bounded?
\end{question}

A positive solution would provide a sort of infinitary automatic structure on~$F$. The question should be connected with a possible closure of Polish normal words under left- or right-$\RRR$-reversing

Finally, using mapping class groups and cell decompositions, D.\,Krammer constructed for every size~$\nn$ tree~$\TT$ an exotic lattice structure on~$\Tam\nn$ in which $\TT$ is the bottom element \cite{Kra}.

\begin{question}
Can the Krammer lattices be associated with submonoids of the Thompson group~$F$?
\end{question}

More generally, a natural combinatorial description of the Krammer lattices is still missing, but would be highly desirable. Connections with permutations and braids in the line of~\cite{BjW} can be expected. 




\begin{thebibliography}{10}

\bibitem{BaP}
{\relax J.L}.~Baril and {\relax J.M.}.~Pallo,  ``{Efficient lower and upper
  bounds of the diagonal-flip distance between triangulations}'', \emph{Inform.
  Proc. Letters} \textbf{100} (2006) 131--136.

\bibitem{BjW}
{\relax A}.~Bj\"orner and {\relax M}.~Wachs,  ``{Shellable nonpure complexes
  and posets. II}'', \emph{Trans. Amer. Math. Soc.} \textbf{349} (1997)
  3945--3975.

\bibitem{BrS}
{\relax M}.~Brin and {\relax C}.~Squier,  ``{Groups of piecewise linear
  homeomorphisms of the real line}'', \emph{Invent. Math.} \textbf{79} (1985)
  485--498.

\bibitem{CFP}
{\relax J.W}.~Cannon, {\relax W.J}.~Floyd, and {\relax W.R}.~Parry,
  ``{Introductory notes on Richard Thompson's groups}'', \emph{Enseign. Math.}
  \textbf{42} (1996) 215--257.

\bibitem{ClP}
A.~Clifford and G.~Preston, \emph{The algebraic theory of semigroups, volume
  1}, Amer. Math. Soc. Surveys, vol.~7, Amer. Math. Soc., 1961.

\bibitem{Dfg}
P.~Dehornoy,  ``The structure group for the associativity identity'',
  \emph{J.~Pure Appl. Algebra} \textbf{111} (1996) 59--82.

\bibitem{Dff}
\bysame,  ``Groups with a complemented presentation'', \emph{J.~Pure Appl.
  Algebra} \textbf{116} (1997) 115--137.

\bibitem{Dgd}
\bysame, \emph{Braids and Self-Distributivity}, Progress in Math., vol. 192,
  Birkh{\"a}user, 2000.

\bibitem{Dgj}
\bysame,  ``Study of an identity'', \emph{Algebra Universalis} \textbf{48}
  (2002) 223--248.

\bibitem{Dgp}
\bysame,  ``Complete positive group presentations'', \emph{J. of Algebra}
  \textbf{268} (2003) 156--197.

\bibitem{Dhb}
\bysame,  ``{Geometric presentations of Thompson's groups}'', \emph{J. Pure
  Appl. Algebra} \textbf{203} (2005) 1--44.

\bibitem{Dhw}
\bysame,  ``{On the rotation distance between binary trees}'', \emph{Advances
  in Math.} \textbf{223} (2010) 1316--1355.

\bibitem{Dia}
\bysame,  ``{The word reversing method}'', \emph{Intern. J. Alg. and Comput.}
  \textbf{21} (2011) 71--118.

\bibitem{Garside}
{\relax P}.~Dehornoy, {\relax with F}.~Digne, {\relax E}.~Godelle, {\relax
  D}.~Krammer, and {\relax J}.~Michel,  ``{Garside Theory}'', Book in progress,
  http://www.math.unicaen.fr/$\sim$garside/Garside.pdf.

\bibitem{Dei}
{\relax O}.~Deiser,  ``{Notes on the Polish Algorithm}'',
  http://page.mi.fu-berlin.de/deiser/wwwpublic/psfiles/polish.ps.

\bibitem{FrT}
{\relax H}.~Friedman and {\relax D}.~Tamari,  ``{Probl\`emes d'associativit\'e
  : Une structure de treillis finis induite par une loi demi-associative}'',
  \emph{J. Combinat. Th.} \textbf{2} (1967) 215--242.

\bibitem{Gub}
{\relax V.S}.~Guba,  ``{The Dehn function of Richard Thompson's group $F$ is
  quadratic}'', \emph{Invent. Math.} \textbf{163} (2006) 313--342.

\bibitem{HuT}
{\relax S}.~Huang and {\relax D}.~Tamari,  ``{Problems of associativity: A
  simple proof for the lattice property of systems ordered by a
  semi-associative law}'', \emph{J. Combinat. Th. Series A} \textbf{13} (1972)
  7--13.

\bibitem{Kra}
D.~Krammer,  ``{A class of Garside groupoid structures on the pure braid
  group}'', \emph{Trans. Amer. Math. Soc.} \textbf{360} (2008) 4029--4061.

\bibitem{Mac}
{\relax S}.~{Mac Lane}, \emph{{Natural associativity and commutativity}}, Rice
  University Studies, vol.~49, 1963.

\bibitem{McT}
{\relax R}.~McKenzie and {\relax R.J}.~Thompson,  ``{An elementary construction
  of unsolvable word problems in group theory}'', in \emph{Word Problems},
  Boone and {\it al}, eds., Studies in Logic, vol.~71, North Holland, 1973,
  457--478.

\bibitem{Pal0}
{\relax J.M}.~Pallo,  ``{Enumerating, ranking and unranking binary trees}'',
  \emph{The Computer Journal} \textbf{29} (1986) 171--175.

\bibitem{Pal1}
\bysame,  ``{An algorithm to compute the M\"obius function of the rotation
  lattice of binary trees}'', \emph{RAIRO Inform. Th\'eor. Applic.} \textbf{27}
  (1993) 341--348.

\bibitem{STT}
{\relax D}.~Sleator, {\relax R}.~Tarjan, and {\relax W}.~Thurston,  ``{Rotation
  distance, triangulations, and hyperbolic geometry}'', \emph{J. Amer. Math.
  Soc.} \textbf{1} (1988) 647--681.

\bibitem{Stn}
{\relax R}.~Stanley, \emph{{Enumerative Combinatorics, vol. 2}}, Cambridge
  Studies in Advances Math., no.~62, Cambridge Univ. Press, 2001.

\bibitem{Sta}
{\relax J.D}.~Stasheff,  ``{Homotopy associativity of $H$-spaces}'',
  \emph{Trans. Amer. Math. Soc.} \textbf{108} (1963) 275--292.

\bibitem{Sun}
{\relax Z}.~{\v Suni\'c},  ``{Tamari lattices, forests and Thompson monoids}'',
  \emph{Europ. J. Combinat.} \textbf{28} (2007) 1216--1238.

\bibitem{Tam}
{\relax D}.~Tamari,  ``{The algebra of bracketings and their enumeration}'',
  \emph{Nieuw Archief voor Wiskunde} \textbf{10} (1962) 131--146.

\bibitem{Tho}
{\relax R.J}.~Thompson,  ``{Embeddings into finitely generated simple groups
  which preserve the word problem}'', in \emph{Word Problems II}, Adian, Boone,
  and Higman, eds., Studies in Logic, North Holland, 1980, 401--441.
\end{thebibliography}
\end{document}